\numberwithin{section}{chapter}
\numberwithin{figure}{chapter}
\theoremstyle{remark}
\newtheorem{thm}{Theorem}[section]
\newtheorem{cor}[thm]{Corollary}
\newtheorem{prop}[thm]{Proposition}
\newtheorem{lem}[thm]{Lemma}
\newtheorem{ex}[thm]{Example}
\newtheorem{definition}[thm]{Definition}
\newcommand{\A}{\mathcal{A}}
\newcommand{\Z}{\mathbb{Z}}
\newcommand{\R}{\mathbb{R}}
\newcommand{\N}{\mathbb{N}}
\newcommand{\pos}{\mathbb{P}}
\newcommand{\zeroplus}{0^{+}}
\newcommand{\zerominus}{0^{-}}
\newcommand{\Lin}{\mathcal{L}}
\newcommand{\collin}{\mathcal{CL}}
\newcommand{\symn}{\mathfrak{S}_{n}}
\newcommand{\hypn}{\mathfrak{B}_n}
\newcommand{\colpermrn}{G_{r,n}}
\newcommand{\multn}{\mathfrak{M}_\alpha}
\newcommand{\multrn}{\mathfrak{M}_\alpha^r}
\newcommand{\hypmultn}{\overline{\mathfrak{M}}_\alpha}
\newcommand{\zigipi}{Z(I,\pi)}
\newcommand{\zigaipi}{Z^A(I,\pi)}
\newcommand{\zigbipi}{Z^B(I,\pi)}
\newcommand{\czigipi}{Z^c(I,\pi)}
\newcommand{\augzigipi}{Z^{(a)}(I,\pi)}
\newcommand{\chainipi}{C(I,\pi)}
\newcommand{\chainaipi}{C^A(I,\pi)}
\newcommand{\chainbipi}{C^B(I,\pi)}
\newcommand{\cchainipi}{C^c(I,\pi)}
\newcommand{\augchainipi}{C^{(a)}(I,\pi)}
\newcommand{\zerovec}{\vec{0}}
\newcommand{\dsum}{\displaystyle\sum}
\newcommand\qbinom[2]{\genfrac{[}{]}{0pt}{}{#1}{#2}_q}
\newcommand\qbinomplain[2]{\genfrac{[}{]}{0pt}{}{#1}{#2}}
\newcommand\mchoose[2]{\left(\!\! \binom{#1}{#2}\!\!\right)}
\newcommand\ceiling[1]{\left \lceil{#1}\right\rceil}
\newcommand\aversion[1]{{#1}^A}
\newcommand\bversion[1]{{#1}^B}
\newcommand\rversion[1]{{#1}_{(r)}}
\newcommand\augversion[1]{{#1}^{(a)}}
\newcommand\shuffle[1]{\shuf(#1)}
\newcommand{\shufflezeropi}{\shuf(\pi,\zerovec)}
\newcommand{\anchoredcolpermrn}{\colpermrn^0}
\newcommand{\rfoldx}{X_{(r)}}
\newcommand{\augzigsets}{Z(n)}
\newcommand{\augchainsets}{C(n)}
\newcommand{\OmegaA}{\Omega^A}
\newcommand{\OmegaB}{\Omega^B}
\newcommand{\Omegaaug}{\Omega^{(a)}}
\newcommand{\Omegaflag}{\Omega^{\mathrm{flag}}}
\newcommand{\cycliczigsets}{Z^c(n)}
\newcommand{\cyclicchainsets}{C^c(n)}
\DeclareMathOperator{\co}{co}
\DeclareMathOperator{\des}{des}
\DeclareMathOperator{\Des}{Des}
\DeclareMathOperator{\ides}{ides}
\DeclareMathOperator{\intdes}{intdes}
\DeclareMathOperator{\intDes}{intDes}
\DeclareMathOperator{\cDes}{cDes}
\DeclareMathOperator{\Desa}{Des_A}
\DeclareMathOperator{\desa}{des_A}
\DeclareMathOperator{\desb}{des_B}
\DeclareMathOperator{\Desb}{Des_B}
\DeclareMathOperator{\cdes}{cdes}
\DeclareMathOperator{\ades}{ades}
\DeclareMathOperator{\aDes}{aDes}
\DeclareMathOperator{\fdes}{fdes}
\DeclareMathOperator{\maj}{maj}
\DeclareMathOperator{\imaj}{imaj}
\DeclareMathOperator{\amaj}{amaj}
\DeclareMathOperator{\comaj}{comaj}
\DeclareMathOperator{\icomaj}{icomaj}
\DeclareMathOperator{\fmaj}{fmaj}
\DeclareMathOperator{\famaj}{famaj}
\DeclareMathOperator{\shuf}{Sh}
\begin{document}

% GSAS formatting requirements: Front page order is title, signature,
% copyright, dedication, acknowledgements, abstract, preface, table of
% contents, list of figures, list of tables.

% REQUIRED: The start of roman-numbered plain pages.
\frontmatter

% REQUIRED: Create the dissertation title page.
\makedisstitle

% REQUIRED: Create the signature page.  Add one line for each member
% of your dissertation committee, except for your advisor who is
% automatically added before the rest.
\begin{disssignatures}
\committeemember Susan Parker, Dept.~of Mathematics
\committeemember Jonathan Novak, Dept.~of Mathematics, Massachusetts Institute of Technology
\end{disssignatures}

\disscopyright % optional

\begin{dissdedication}
  \begin{center}
  To my parents, the first teachers I ever had.
  \end{center}
\end{dissdedication}

\begin{dissacknowledgments} % recommended

I would like to first thank my advisor Ira Gessel for his support over these past few years.
He has vast quantities of both knowledge and patience, two traits not often found together.
I would also like to thank the members of my dissertation defense committee and the faculty and staff of the Brandeis Mathematics Department.
In particular, I thank Susan Parker for her guidance and advice on both teaching and life.
Without the amazing faculty at St.\ Olaf College, I would never have seen the beauty of mathematics and never chosen to pursue graduate research.

Thank you to my friends both in and out of the mathematics department.
I always felt your support, even when I disappeared into my work.
While not my friends (yet), I would like to thank Doomtree for providing the soundtrack to my graduate career.  
 
I would also like to thank my family.
While it was hard being away from my Minnesota family, I was extremely fortunate to spend the last five years getting to know my Boston family.
They took me in immediately and I always felt loved and included.
Finally, I would like to thank my parents.
It was only through their constant support and sacrifice that I was able to make it this far.
\end{dissacknowledgments}

% REQUIRED: The dissertation abstract.
\begin{dissabstract}

We prove that the group algebra of the hyperoctahedral group contains a subalgebra corresponding to the flag descent number of Adin, Brenti, and Roichman.
This algebra is in fact the span of the basis elements of the type~$A$ and type~$B$ Eulerian descent algebras.
We describe a set of orthogonal idempotents which span the flag~descent algebra and prove that it contains the type~$A$ Eulerian descent algebra as a two-sided ideal.

Using a new colored analogue of Stanley's $P$-partitions, we prove the existence of a colored Eulerian descent algebra which is a subalgebra of the Mantaci-Reutenauer algebra.
We also describe a set of orthogonal idempotents that spans the colored Eulerian descent algebra and includes, as a special case, the familiar Eulerian idempotents in the group algebra of the symmetric group.

%  The GSAS limits you to up to 350 words of abstract text.
\end{dissabstract}

\begin{disspreface}

In 1976, Solomon \cite{Solomon1976} proved the existence of a subalgebra of the group algebra of the symmetric group whose basis elements are formal sums of permutations with the same descent set (the set of all $i$ such that $\pi(i) > \pi(i+1)$).
Solomon also proved that an analogous algebra can be defined for all finite Coxeter groups using the length function associated to a specified generating set.
We examine an ``Eulerian'' subalgebra of Solomon's descent algebra which is related to the descent number of permutations and investigate the connection between permutation enumeration and the group algebra of the symmetric group.
Our proof of the existence of the Eulerian descent algebra is a modified version of Petersen's proof in~\cite{Petersen2005}, which was in turn inspired by Gessel's work on multipartite $P$-partitions in~\cite{Gessel1984}.
We also study the relationship between the Eulerian descent algebra and the cyclic Eulerian descent algebra defined by Cellini in~\cite{Cellini1998}, showing that the cyclic Eulerian descent algebra is a left module over the Eulerian descent algebra.

Next we extend the results of Chapter~\ref{chpt: Symmetric Group} to the hyperoctahedral group, which has a combinatorial interpretation as the group of ``signed'' permutations of length~$n$.
This group is also the Coxeter group of type~$B_n$ and contains a type~$B$ analogue of the Eulerian descent algebra.
By altering the definition of the descent set of a signed permutation, we prove the existence of multiple algebras.
These proofs are extensions of those from Chapter~\ref{chpt: Symmetric Group} and use different versions of ``signed'' $P$-partitions, including Chow's $P$-partitions of type~$B$ defined in~\cite{ChowThesis2001} and Petersen's augmented $P$-partitions defined in~\cite{Petersen2005}.
While the existence of these algebras is not new, our examination of the relationships between the algebras contains new results.
We prove that some of these descent algebras are ideals in larger algebras, expanding on a result of Petersen~\cite{Petersen2005}.
For example, we show that the type~$A$ Eulerian descent algebra is a two-sided ideal in the algebra spanned by the basis elements from the type~$A$ and type~$B$ Eulerian descent algebras.
This fact implies that the flag descent number introduced by Adin, Brenti, and Roichman~\cite{AdinBrentiRoichman2001} can be used to define an algebra, a fact we also prove directly.

In 1995, Mantaci and Reutenauer~\cite{MantaciReutenauer1995} proved the existence of an algebra whose basis elements are formal sums of ``colored permutations'' with the same associated colored compositions (an extension of the descent set).
Colored permutations can be thought of as permutations from the symmetric group together with a choice of a ``color'' from a finite cyclic group for each letter in a given permutation.
The Mantaci-Reutenauer algebra is well studied and fulfills much the same role as Solomon's descent algebra does in the theory of the group algebra of the symmetric group (see~\cite{BaumannHohlweg2008, Poirier1998}).
While colored permutations do not form a Coxeter group, we show that there still exists an Eulerian subalgebra of the Mantaci-Reutenauer algebra when descents are defined using Steingr{\'{\i}}msson's definition in~\cite{Steingrimsson1994}.
Our proof uses colored posets and colored $P$-partitions which are different in character from those defined by Hsiao and Petersen in \cite{HsiaoPetersen2010}.
Finally, we describe a set of colored Eulerian idempotents which spans this colored Eulerian descent algebra and reduces to the well-known Eulerian idempotents in the group algebra of the symmetric group when considering permutations of a single color.

\end{disspreface}

\tableofcontents % REQUIRED

\listoftables % optional

\listoffigures % optional

% REQUIRED: The start of arabic-numbered fancy pages.
\mainmatter

%%%%%%%%%%%%%%%%%%%%%%%%%%%%%%%%%%%%%%%%%%%%%%%%%%%%%%%%%%%%%%%%%%%%%%%%%%%%%%%%%%%%%%%%%%%%%%%%

\chapter{The Symmetric Group}\label{chpt: Symmetric Group}

In 1976, Solomon \cite{Solomon1976} proved the existence of a subalgebra of the group algebra of the symmetric group whose basis elements are defined by comparing the length of permutations before and after composition with the adjacent transpositions that generate the symmetric group.
This length property has a combinatorial interpretation related to the set of ``descents'' of a permutation and thus Solomon's descent algebra can be connected to permutation enumeration.
We examine an ``Eulerian'' subalgebra of Solomon's descent algebra which is connected to the descent number of permutations and was first proven to exist by Loday \cite{Loday1989}.
We also study the relationship between the Eulerian descent algebra and Cellini's~\cite{Cellini1998} cyclic Eulerian descent algebra.
Chapter~\ref{chpt: Symmetric Group} is meant to be both an introduction to the some of these ``descent'' subalgebras of the group algebra of the symmetric group as well as a demonstration of techniques which we later extend to both ``signed'' and ``colored'' permutations.
The chapter is organized as follows.

We begin with permutation enumeration.
In Section~\ref{sec: P-partitions}, we review Stanley's theory of $P$-partitions and show how they can be used to count permutations by number of descents.
We extend this in Section~\ref{sec: descent and major index} to count permutations by descent number and major index.
These same methods can be used to count multiset permutations by descent number and major index but we instead use Gessel's barred permutation technique to derive such results in Section~\ref{sec: multiset}.

After using both barred permutations and $P$-partitions, we combine the techniques in Section~\ref{sec: Eulerian descent algebra} to prove the existence of the Eulerian descent algebra and describe certain Eulerian idempotents.
In Section~\ref{sec: cyclic Eulerian module}, we define the cyclic Eulerian descent algebra whose existence was originally proven by Cellini~\cite{Cellini1998}.
We then show that the cyclic Eulerian descent algebra is a left module over the Eulerian descent algebra.
Finally, we show in~Section \ref{sec: des ides maj imaj} how barred posets and $P$-partitions can be used to count permutations by descent number, inverse descent number, major index, and inverse major index.
We obtain a generating function originally due to Garsia and Gessel~\cite{GarsiaGessel1979}.

%%%%%%%%%%%%%%%%%%%%%%%%%%%%%%%%%%%%%%%%%%%%%%%%

\section{$P$-partitions}\label{sec: P-partitions}

We begin with the symmetric group.
Let $[n]$ denote the set $\{1,2,\ldots,n\}$.
The \emph{symmetric group} $\symn$ is defined to be the group of all bijections $\pi: [n] \rightarrow [n]$.
We write permutations in one-line notation as $\pi = \pi(1)\pi(2)\cdots\pi(n)$.
The descent set $\Des(\pi)$ of a permutation $\pi$ is defined to be the set of all $i \in [n-1]$ such that $\pi(i) > \pi(i+1)$ and the number of descents of $\pi$ is $\des(\pi) = |\Des(\pi) |$.
We define the major index of a permutation $\pi$ by
\[
\maj(\pi) = \sum_{i\in \Des(\pi)} i
\]
and the comajor index by
\[
\comaj(\pi) = \sum_{i\in \Des(\pi)} (n-i).
\]

\begin{ex}
The permutation $\pi = 51423$ is in $\mathfrak{S}_5$ and $\Des(\pi) = \{1,3\}$. Thus $\des(\pi) = 2$, $\maj(\pi) = 4$, and $\comaj(\pi) = 6$.
\end{ex}

The definition of the comajor index can be interpreted as counting the positions of the descents from right to left as opposed to the more traditional left to right.
Our first goal is to count permutations by descent number and for that we turn to the theory of $P$-partitions.

Suppose we wish to count nonnegative integer solutions to the inequalities $x_2 < x_1$ and $x_2 \leq x_3$.
There is no relation between $x_1$ and $x_3$ and so we break up solutions into two cases.
First, if we assume that $x_1\leq x_3$ then we have $x_2<x_1\leq x_3$.
Second, if we assume that $x_3 < x_1$ then we have $x_2 \leq x_3 < x_1$.
These assumptions neatly partition solutions into two disjoint sets.
However, if we had instead chosen to split solutions according to $x_1 <x_3$ and $x_3 \leq x_1$ then we would have obtained a less elegant division.
Assuming that $x_3 \leq x_1$ gives the inequality $x_2 \leq x_3 \leq x_1$ but with the added restriction that $x_2 < x_1$.

By splitting up solutions to the inequalities, we are in fact dividing $\R^n$ into half spaces by cutting along the hyperplanes $x_i=x_j$.
An elegant partition of solutions corresponds to a consistent way to assign points on the boundary hyperplanes to the solution sets.
The easiest way to describe such an assignment is through $P$-partitions.

The original definition of $P$-partitions is due to Stanley \cite{Stanley1972ordered} and the following overview is derived from his discussion in Chapter 3 of \cite{StanleyEC1} as well as Petersen's exposition in \cite{PetersenThesis}.
Unless otherwise stated, $P$ will represent a partial order on $[n]$ with relation $<_P$.
We represent posets visually through their Hasse diagrams in~which the relation $x <_P y$ is represented by placing $x$ below $y$ in the diagram and connecting them with a line.
See Figure \ref{fig: poset and extensions} for the Hasse diagram of the poset $1 >_P 2 <_P 3$.
Let $\N = \{0,1,2,\ldots\}$ denote the set of nonnegative integers.

\begin{definition}
Let $P$ be a partial order on $[n]$. A \emph{$P$-partition} is a function $f:[n]\rightarrow \N$ satisfying the following:
\begin{enumerate}
\item[(i)] $f(a) \leq f(b)$ if $a <_P b$
\item[(ii)] $f(a) < f(b)$ if $a<_P b$ and $a > b$ in $\Z$.
\end{enumerate}
\end{definition}

Stanley originally defined $P$-partitions to be order reversing which nicely yields results about the major index of permutations.
However, we choose to define $P$-partitions to be order preserving.
Order-preserving $P$-partitions seem more natural but yield results about the comajor index of permutations.
Moving between comajor index and major index is fairly straightforward as long as one keeps track of the number of descents.
We will comment more on this later in Lemma \ref{lem: maj comaj swap}.

We define $\Lin(P)$ to be the set of linear extensions of $P$, i.e., the set of permutations in $\symn$ that extend $P$ to a linear order.
A permutation $\pi$ is in $\Lin(P)$ if and only if whenever $a <_P b$ then $a$ comes before $b$ in $\pi$ and thus $\pi^{-1}(a) < \pi^{-1}(b)$.
Figure \ref{fig: poset and extensions} shows that $213$ and $231$ are the two linear extensions of the poset $1 >_P 2 <_P 3$.

\begin{figure}[htbp]
\[\xymatrix @!R @!C @=12pt{  & & & & & & 3\ar@{-}[dd] & 1\ar@{-}[dd] \\ & 1 \ar@{-}[ddr] & & 3 & & & &   \\  P: & &  & & & \Lin(P): & 1\ar@{-}[dd] & 3\ar@{-}[dd] \\  & & 2 \ar@{-}[uur] & & & &  &  \\  & & & & & & 2 & 2 }\]      
\caption{\; Linear extensions of the poset $P$.}
\label{fig: poset and extensions}
\end{figure}
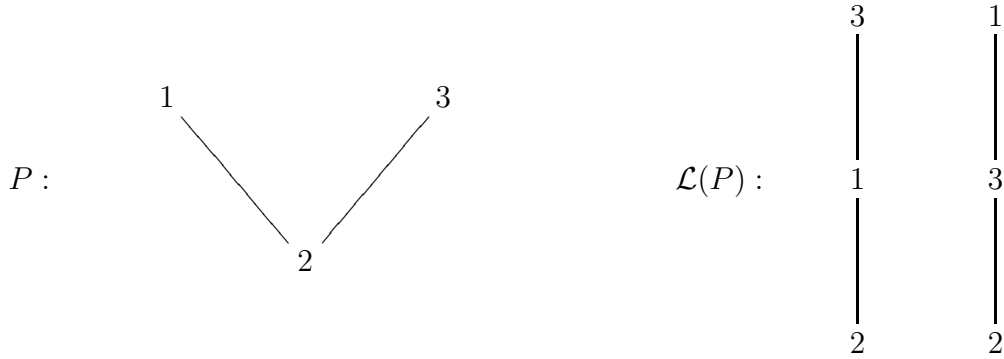

Every permutation $\pi \in \symn$ corresponds to the linear order
\[
\pi(1) <_P \pi(2) <_P \cdots <_P \pi(n).
\]
Thus $P$-partitions corresponding to this linear order are called $\pi$-partitions and can be thought of as functions $f:[n]\rightarrow \N$ such that $f(\pi(i)) \leq f(\pi(i+1))$ for every $i \in [n-1]$ and $f(\pi(i)) < f(\pi(i+1))$ whenever $\pi(i) > \pi(i+1)$.
Hence strict inequalities correspond to descents in permutations.
It is this correspondence that allows us to count permutations by number of descents.

Let $\A(P)$ denote the set of all $P$-partitions and let $\A(\pi)$ denote the set of all $\pi$-partitions.
We have the following Fundamental Theorem of $P$-partitions due to Stanley.

\begin{thm}(FTPP)
\[
\A(P) = \sum_{\pi \in \Lin(P)} \A(\pi)
\]
\end{thm}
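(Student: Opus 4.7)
The plan is to interpret the right-hand side as a disjoint-union decomposition of $\A(P)$ indexed by the linear extensions of $P$, and prove it by exhibiting a bijection between $P$-partitions and pairs $(\pi,f)$ with $\pi\in\Lin(P)$ and $f\in\A(\pi)$. Two things must be established: that $\A(\pi)\subseteq\A(P)$ whenever $\pi\in\Lin(P)$, and that every $P$-partition lies in $\A(\pi)$ for exactly one such $\pi$.

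For the inclusion $\A(\pi)\subseteq\A(P)$ when $\pi\in\Lin(P)$, suppose $f\in\A(\pi)$ and $a<_P b$. Since $\pi$ extends $P$, $a$ precedes $b$ in $\pi$, say $a=\pi(i)$ and $b=\pi(j)$ with $i<j$, and the chain $f(\pi(i))\leq f(\pi(i+1))\leq\cdots\leq f(\pi(j))$ forces $f(a)\leq f(b)$. If moreover $a>b$ in $\Z$, then the integer word $\pi(i)\pi(i+1)\cdots\pi(j)$ starts at a larger value than it ends and therefore contains at least one descent at some step $k$, which by the definition of a $\pi$-partition yields $f(\pi(k))<f(\pi(k+1))$ and hence $f(a)<f(b)$.

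For the harder direction I would build the unique $\pi$ from $f$ by a tie-broken sort. Define a total order $\prec_f$ on $[n]$ by declaring $a\prec_f b$ iff either $f(a)<f(b)$, or $f(a)=f(b)$ and $a<b$ in $\Z$, and let $\pi$ list the elements of $[n]$ in $\prec_f$-increasing order. Three things then need checking. First, $\pi\in\Lin(P)$: if $a<_P b$ then condition~(i) gives $f(a)\leq f(b)$, and condition~(ii) rules out the case $f(a)=f(b)$ with $a>b$ in $\Z$, so in every case $a\prec_f b$. Second, $f\in\A(\pi)$: by construction $f$ is weakly increasing along $\pi$, and whenever $\pi(i)>\pi(i+1)$ the tie-breaker forbids $f(\pi(i))=f(\pi(i+1))$, forcing strict inequality. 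Third, uniqueness: any $\pi'\in\Lin(P)$ with $f\in\A(\pi')$ must order $[n]$ by non-decreasing $f$-value, and must break each tie by placing the numerically smaller element first (otherwise that position would be a descent with equal $f$-values, violating the $\pi'$-partition condition), so $\pi'=\pi$.

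The main obstacle is picking the tie-breaking rule so that the resulting $\pi$ is simultaneously a linear extension of $P$ and makes $f$ a legitimate $\pi$-partition; the order-preserving convention adopted in Section~\ref{sec: P-partitions}, which demands strict inequality precisely when $a<_P b$ and $a>b$ in $\Z$, is exactly what lets the ``smaller-in-$\Z$-first'' rule discharge both jobs at once. Once this is in place the bijection $f\mapsto(\pi,f)$ is manifest and the disjoint-union identity follows.
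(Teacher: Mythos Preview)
Your proof is correct and complete; the tie-breaking construction is exactly right, and your three checks (that $\pi\in\Lin(P)$, that $f\in\A(\pi)$, and uniqueness) are all valid.

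However, the paper takes a different route: rather than building the bijection explicitly, it argues by induction on the number of incomparable pairs. Given incomparable $i,j$ with $i<j$ in $\Z$, it forms $P_{ij}$ (add $i<_P j$) and $P_{ji}$ (add $j<_P i$) and observes that $\A(P)=\A(P_{ij})\sqcup\A(P_{ji})$ and $\Lin(P)=\Lin(P_{ij})\sqcup\Lin(P_{ji})$; the result then follows by induction. Your approach is more constructive---it tells you in one line which $\pi$ a given $f$ belongs to, which is pleasant and occasionally useful algorithmically. The paper's inductive splitting is terser and, more to the point, is the template it reuses verbatim for the signed, augmented, and colored variants later on, where the ``sort by $f$-value, break ties by label'' recipe would require more care to state correctly (because the strict/weak pattern is governed by shifted or color-dependent comparisons). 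So both arguments are standard; yours is the explicit-bijection proof, theirs is the recursive one that generalizes most cleanly to the rest of the thesis.
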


\begin{proof}
The proof is by induction on the number of incomparable elements.
If $i$~and $j$ are incomparable in $P$ with $i < j$ in $\Z$ then let $P_{ij}$ denote the partial order $P$ together with the additional relation $i <_P j $ and let $P_{ji}$ denote the partial order $P$ together with the additional relation $j <_P i$.
Then $\Lin(P) = \Lin(P_{ij}) \sqcup \Lin(P_{ji})$ and so $\A(P) = \A(P_{ij}) \sqcup \A(P_{ji})$ and the theorem follows by induction.
\end{proof}

We define the order polynomial $\Omega_P(j)$ to be the number of $P$-partitions $f$ such that $f(i) \leq j$ for every $i \in [n]$.
We say that such an $f$ has parts less than or equal to~$j$.
The Fundamental Theorem of $P$-partitions yields the following corollary.

\begin{cor}
\[
\Omega_P (j) = \sum_{\pi \in \Lin(P)} \Omega_{\pi} (j)
\]
\end{cor}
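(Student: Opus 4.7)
The plan is to deduce the corollary directly from the Fundamental Theorem of $P$-partitions by restricting attention to $P$-partitions with bounded parts. First I would observe that the set $\A_j(P) := \{f \in \A(P) : f(i) \leq j \text{ for all } i \in [n]\}$ has cardinality $\Omega_P(j)$ by the very definition of the order polynomial, and likewise $\A_j(\pi) := \{f \in \A(\pi) : f(i) \leq j \text{ for all } i \in [n]\}$ has cardinality $\Omega_\pi(j)$.

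Next I would invoke the FTPP, which gives a disjoint union decomposition $\A(P) = \bigsqcup_{\pi \in \Lin(P)} \A(\pi)$. Intersecting this decomposition with the condition ``$f(i) \leq j$ for all $i \in [n]$'' preserves disjointness and yields
\[
\A_j(P) = \bigsqcup_{\pi \in \Lin(P)} \A_j(\pi).
\]
Taking cardinalities of both sides completes the proof.

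There is really no obstacle here; the corollary is essentially a bookkeeping consequence of the FTPP. The only subtlety worth noting is that the disjointness in FTPP is what allows us to sum (rather than, say, use inclusion-exclusion) on the right-hand side, so I would make sure to emphasize that the restriction $f(i)\leq j$ is applied uniformly across all the sets $\A(\pi)$, so the decomposition of $\A_j(P)$ remains a disjoint union.
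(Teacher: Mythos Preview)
Your proposal is correct and matches the paper's approach: the paper states this corollary immediately after the FTPP without proof, treating it as an obvious consequence, and your argument (restrict the disjoint union to $P$-partitions with parts at most $j$ and count) is precisely the intended reasoning.
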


Next suppose that $P$ is the union of two disjoint posets $P_1$ and $P_2$.
The map which sends a $P$-partition $f$ to the ordered pair $(g,h)$ where $g=f|_{P_1}$ and $h=f|_{P_2}$ is a bijection between $\A(P)$ and $\A(P_1)\times \A(P_2)$.
Thus we have the following theorem.

\begin{thm}
If $P_1$ and $P_2$ are partial orders on disjoint sets then
\[
\Omega_{P_1\sqcup P_2}(j) = \Omega_{P_1}(j) \Omega_{P_2}(j).
\]
\end{thm}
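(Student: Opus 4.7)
The plan is to leverage the bijection already described in the paragraph immediately preceding the theorem statement: take a $P$-partition $f$ on $P = P_1 \sqcup P_2$ and split it into its restrictions $g = f|_{P_1}$ and $h = f|_{P_2}$. First I would make the setup precise by writing the vertex set $[n]$ as a disjoint union $S_1 \sqcup S_2$ where $P_i$ is a partial order on $S_i$, and observing that the only $P$-relations are those inherited from $P_1$ together with those from $P_2$, with no cross-relations between $S_1$ and $S_2$.

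From here I would check the two directions of the bijection. For the forward direction, given a $P$-partition $f$, the restriction $g$ trivially satisfies conditions (i) and (ii) of the $P_1$-partition definition since every $P_1$-relation is a $P$-relation and the $\Z$-ordering comparison is unchanged; likewise for $h$. For the reverse direction, given an arbitrary pair $(g,h) \in \A(P_1)\times \A(P_2)$, I would glue them into a single function $f:[n]\to \N$ and note that since $P$ introduces no relations between $S_1$ and $S_2$, the $P$-partition conditions on $f$ reduce exactly to the $P_1$-partition conditions on $g$ and the $P_2$-partition conditions on $h$. Thus the map $f \mapsto (g,h)$ is a bijection $\A(P) \to \A(P_1)\times \A(P_2)$.

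Finally I would restrict attention to $P$-partitions with parts $\leq j$. Since $f(i) \leq j$ for all $i\in[n]$ if and only if $g(i) \leq j$ for all $i\in S_1$ and $h(i)\leq j$ for all $i\in S_2$, the bijection restricts to a bijection between the set counted by $\Omega_{P_1\sqcup P_2}(j)$ and the product of the sets counted by $\Omega_{P_1}(j)$ and $\Omega_{P_2}(j)$, yielding the desired identity.

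There is no real obstacle here; the only thing to be careful about is emphasizing that the disjointness of the ground sets is precisely what rules out any $P$-relation crossing between $P_1$ and $P_2$, so that no extra compatibility conditions arise when recombining an arbitrary pair $(g,h)$ into a single $P$-partition.
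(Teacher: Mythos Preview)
Your proposal is correct and is exactly the approach taken in the paper: the paper simply cites the bijection $f \mapsto (f|_{P_1}, f|_{P_2})$ between $\A(P_1\sqcup P_2)$ and $\A(P_1)\times\A(P_2)$ described in the paragraph immediately before the theorem, and your write-up just fills in the routine verification of that bijection and its restriction to parts bounded by $j$.
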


Our goal in this section is to show how $P$-partitions can be used to count permutations in $\symn$ by number of descents and so we define the $n$th Eulerian polynomial~$A_n(t)$~by
\[
A_n(t) = \sum_{\pi \in \symn} t^{\des(\pi)}.
\]
This is a somewhat nontraditional definition and what is classically referred to as the $n$th Eulerian polynomial would be, in our notation, $t A_n(t)$.

First suppose that $P$ is a linear order and $\Lin(P) = \{\pi\}$ with $\pi \in \symn$.
Then $\Omega_{\pi}(j) $ counts solutions to
\[
0\leq f(\pi(1)) \leq f(\pi(2)) \leq \cdots \leq f(\pi(n)) \leq j
\]
with $f(\pi(i)) < f(\pi(i+1))$ if $i \in \Des(\pi)$.
We want to reduce every strict inequality to a weak inequality and thus reduce to an easier counting problem.
Given a strict inequality in position~$i$, we subtract one from $f(\pi(k))$ for every $k > i$.
This shifts the strict inequality between $f(\pi(i))$ and $f(\pi(i+1))$ to a weak inequality.
After doing this for every strict inequality, we have a weakly increasing sequence of length $n$ with parts less than or equal to $j-\des(\pi)$.

\begin{ex}\label{ex: inequality shift}
Let $\pi = 51423$ and note that $\des(\pi) = 2$. Then we have
\[
0\leq f(5) < f(1) \leq f(4) < f(2) \leq f(3) \leq j
\]
which becomes
\[
0\leq f(5) \leq f(1)-1 \leq f(4)-1 \leq f(2)-2 \leq f(3)-2 \leq j-2.
\]
\end{ex}

The shifting above shows that $\Omega_\pi(j)$ is equal to the number of solutions to the inequalities
\[
0 \leq i_1 \leq \cdots \leq i_n \leq j-\des(\pi).
\]
Thus
\begin{equation}\label{eq: pi order poly formula}
\Omega_\pi(j) = \mchoose{j-\des(\pi)+1}{n} = \binom{j + n -\des(\pi)}{n}
\end{equation}
where $\left(\! \binom{a}{b} \! \right) = \binom{a+b-1}{b}$ is ``$a$ multichoose $b$'' and counts the number of ways to choose $b$ elements from a set of $a$ elements with repetitions.
In our case, we are choosing $n$ elements from the set $\{0,1,\ldots,j-\des(\pi)\}$ with repetitions.
Thus we have
\begin{align*}
\sum_{j\geq 0} \Omega_\pi (j)t^j &= \sum_{j\geq 0} \binom{j + n -\des(\pi)}{n} t^j  \\
&= t^{\des(\pi)} \sum_{j\geq 0} \binom{j + n}{n} t^j \\
&= \frac{t^{\des(\pi)}}{(1-t)^{n+1}}
\end{align*}
where the last equality follows from the binomial theorem
\[
\sum_{j \geq 0} \binom{j+n}{n} t^j = \frac{1}{(1-t)^{n+1}}.
\]
Since $\Omega_P(j) = \sum_{\pi \in \Lin(P)} \Omega_{\pi}(j)$, we see that
\[
\sum_{j\geq 0} \Omega_P(j)t^j = \frac{\dsum_{\pi \in \Lin(P)} t^{\des(\pi)}}{(1-t)^{|P|+1}}
\]
where $|P|$ denotes the cardinality of $|P|$.

To compute the $n$th Eulerian polynomial $A_n(t)$, we let $P$ be an antichain of $n$ elements, i.e., the set $[n]$ with no relations between the elements.
Then $\Lin(P) = \symn$ and we have
\[
\sum_{j\geq 0} \Omega_P(j)t^j = \frac{\dsum_{\pi \in \symn}t^{\des(\pi)}}{(1-t)^{n+1}} = \frac{A_n(t)}{(1-t)^{n+1}}.
\]
We know that $\Omega_P(j)$ is the product of the order polynomials of the $n$ singleton posets.
If $P(a)$ is the singleton poset $a\in [n]$ then $\Omega_{P(a)}(j)$ counts solutions to $0 \leq f(a) \leq j$ and so $\Omega_{P(a)}(j) = j+1$.
Since the order polynomials $\Omega_{P(a)}(j)$ are the same for all $a\in [n]$, we know that $\Omega_P(j) = (j+1)^n$ and we arrive at the following well-known theorem.

\begin{thm}
\[
\sum_{j\geq 0} (j+1)^nt^j = \frac{A_n(t)}{(1-t)^{n+1}}
\]
\end{thm}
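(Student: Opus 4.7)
The plan is to execute the computation that the author has been setting up in the paragraphs immediately preceding the theorem, by evaluating the generating function $\sum_{j\ge 0}\Omega_P(j)t^j$ in two different ways when $P$ is the antichain on $[n]$.

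First I would take $P$ to be the antichain on $[n]$, i.e.\ the poset on $[n]$ with no relations. Since every permutation of $[n]$ extends the empty order to a linear one, we have $\Lin(P)=\symn$. Combining this with the corollary of the Fundamental Theorem of $P$-partitions gives
\[
\Omega_P(j)=\sum_{\pi\in\symn}\Omega_\pi(j).
\]
Multiplying both sides by $t^j$, summing over $j\ge 0$, and invoking the identity $\sum_{j\ge 0}\Omega_\pi(j)t^j = t^{\des(\pi)}/(1-t)^{n+1}$ derived in the excerpt yields
\[
\sum_{j\ge 0}\Omega_P(j)t^j = \frac{\sum_{\pi\in\symn}t^{\des(\pi)}}{(1-t)^{n+1}}=\frac{A_n(t)}{(1-t)^{n+1}}
\]
by the definition of $A_n(t)$.

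Next I would compute $\Omega_P(j)$ directly by exploiting the disjoint-union theorem proved earlier. The antichain $P$ is the disjoint union of the $n$ singleton posets $P(1),\dots,P(n)$, so iterating that theorem gives
\[
\Omega_P(j)=\prod_{a=1}^{n}\Omega_{P(a)}(j).
\]
For each singleton poset $P(a)$, the order polynomial counts nonnegative integers $f(a)$ with $f(a)\le j$, so $\Omega_{P(a)}(j)=j+1$, whence $\Omega_P(j)=(j+1)^n$.

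Equating the two expressions for $\sum_{j\ge 0}\Omega_P(j)t^j$ proves the theorem. There is no real obstacle here: every ingredient (the FTPP corollary, the product formula for disjoint posets, and the single-linear-order computation $\Omega_\pi(j)=\binom{j+n-\des(\pi)}{n}$ together with $\sum_{j\ge 0}\binom{j+n}{n}t^j=1/(1-t)^{n+1}$) has already been established in the section, so the proof is essentially a two-line assembly of results that the author has carefully arranged.
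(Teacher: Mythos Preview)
Your proposal is correct and follows essentially the same approach as the paper: take $P$ to be the antichain on $[n]$ so that $\Lin(P)=\symn$, use the FTPP corollary and the computation $\sum_{j\ge 0}\Omega_\pi(j)t^j = t^{\des(\pi)}/(1-t)^{n+1}$ to get $A_n(t)/(1-t)^{n+1}$, and then compute $\Omega_P(j)=(j+1)^n$ via the product formula for disjoint singleton posets. The paper presents exactly this argument in the discussion leading up to the theorem statement.
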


%%%%%%%%%%%%%%%%%%%%%%%%%%%%%%%%%%%%%%%%%%%%%%%%

\section{Counting permutations in $\symn$ by descents and major index}\label{sec: descent and major index}

It is well-known that one can use $P$-partitions to count permutations in $\symn$ according to both major index and number of descents.
We include an overview of the relevant arguments here because we will need some of the results in Section \ref{sec: des ides maj imaj}.
We denote the sum $1+q+q^2+\cdots + q^{n-1}$ by $[n]_q$ and define $[n]_q ! := [n]_q[n-1]_q\cdots [1]_q$.
For $a \geq b \geq 0$, we define the $q$-binomial coefficient $\qbinom{a}{b}$ by
\[
\qbinom{a}{b} = \frac{[a]_q!}{[b]_q! \, [a-b]_q!}.
\]
The $q$-binomial coefficient $\qbinom{j+n}{n}$ ``$q$-counts'' the number of weakly increasing sequences of length $n$ with parts less than or equal to $j$ and thus
\[
\qbinom{j+n}{n} = \sum_{0 \leq i_1 \leq  \cdots \leq i_n \leq j} q^{i_1+\cdots +i_n}.
\]
Let $(t;q)_{n+1}$ denote the product $\prod_{i=0}^n (1-tq^i)$.
The $q$-binomial theorem states that
\[
\sum_{j \geq 0} \qbinom{j+n}{n} t^j = \frac{1}{(t;q)_{n+1}}
\]
which reduces to the binomial theorem when $q=1$.

We define the $q$-Eulerian polynomials $A_n(t,q)$ by
\[
A_n(t,q) = \sum_{\pi \in \symn} t^{\des(\pi)}q^{\maj(\pi)}.
\]
Stanley's order-reversing $P$-partitions yield results about the major index but, as we have defined them, $P$-partitions yield results about the comajor index.
We instead compute $A_n^{\co} (t,q) := \sum_{\pi \in \symn} t^{\des(\pi)}q^{\comaj(\pi)}$ and will be able to retrieve $A_n(t,q)$ due to the following lemma.

\begin{lem}\label{lem: maj comaj swap}
Let
\[
A_P(t,q) = \sum_{\pi \in \Lin (P)} t^{\des(\pi)} q^{\maj(\pi)} \qquad \text{and} \qquad A_P^{\co} (t,q) = \sum_{\pi \in \Lin (P)} t^{\des(\pi)} q^{\comaj(\pi)}
\]
for some poset $P$ with $|P| = n$.
Then $A_P(t,q) = A_P^{\co}(tq^n,q^{-1})$.
\end{lem}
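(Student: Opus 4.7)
The plan is to prove the identity by a direct algebraic manipulation that hinges on a single elementary observation relating $\maj$, $\comaj$, and $\des$ for a permutation in $\symn$. Specifically, for any $\pi$ with $|\pi|=n$,
\[
\maj(\pi) + \comaj(\pi) = \sum_{i \in \Des(\pi)} i + \sum_{i \in \Des(\pi)} (n-i) = n \cdot \des(\pi),
\]
since each descent contributes $i + (n-i) = n$. This is the main (and essentially only) content of the lemma; the rest is bookkeeping.

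Given this, I would rewrite $\comaj(\pi) = n\cdot \des(\pi) - \maj(\pi)$ and substitute into the definition of $A_P^{\co}(tq^n, q^{-1})$:
\begin{align*}
A_P^{\co}(tq^n, q^{-1}) &= \sum_{\pi \in \Lin(P)} (tq^n)^{\des(\pi)} (q^{-1})^{\comaj(\pi)} \\
&= \sum_{\pi \in \Lin(P)} t^{\des(\pi)} q^{n\cdot \des(\pi) - \comaj(\pi)} \\
&= \sum_{\pi \in \Lin(P)} t^{\des(\pi)} q^{\maj(\pi)} \\
&= A_P(t,q).
\end{align*}
Each step is immediate: the first is the definition, the second collects powers of $q$, the third uses the identity above, and the fourth is the definition of $A_P(t,q)$.

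There is no real obstacle here, since all permutations in $\Lin(P)$ have length $n = |P|$, so the substitution $t \mapsto tq^n$ uses a uniform value of $n$ across the entire sum. The only thing worth double-checking is the identity $\maj(\pi) + \comaj(\pi) = n\cdot \des(\pi)$, which follows directly from the definitions given just before the lemma. No properties of $P$-partitions, linear extensions, or Eulerian polynomials are needed beyond the fact that $\Lin(P) \subseteq \symn$.
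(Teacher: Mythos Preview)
Your proof is correct and follows essentially the same approach as the paper: both rely on the identity $\maj(\pi)+\comaj(\pi)=n\cdot\des(\pi)$ and then perform the direct substitution into $A_P^{\co}(tq^n,q^{-1})$ term by term.
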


\begin{proof}
Given a permutation $\pi \in \Lin(P)$, each $i \in \Des(\pi)$ contributes $i$ to $\maj(\pi)$ and $n-i$ to $\comaj(\pi)$.
Thus $\maj(\pi) + \comaj(\pi) = n \des(\pi)$ and so $\maj(\pi) = n \des(\pi) - \comaj(\pi)$.
Hence
\[
A_P^{\co}(tq^n, q^{-1}) = \sum_{\pi \in \Lin(P)} t^{\des(\pi)}q^{n \des(\pi)}q^{-\comaj(\pi)} = A_P(t,q).  \qedhere
\]
\end{proof}

Next define the weight of a $P$-partition $f$ to be $q^{f(1)+\cdots + f(|P|)}$ and define the order $q$-polynomial $\Omega_P(j,q)$ to be the sum of the weights of all $P$-partitions with parts less~than or equal to $j$.
As before, we reduce strict inequalities to weak inequalities but we need to see what this does to the weight of a given $P$-partition.
Using example~\ref{ex: inequality shift} with $\pi = 51423$, we end up with
\[
0\leq f(5) \leq f(1)-1 \leq f(4)-1 \leq f(2)-2 \leq f(3)-2 \leq j-2.
\]
Adding up these shifted parts of $f$ gives
\[
f(1)+f(2)+\cdots+f(5) -6 = f(1)+\cdots +f(5) - \comaj(\pi).
\]
In general, $\Omega_\pi(j,q)$ is $q^{\comaj(\pi)}$ times the sum of the weights of all weakly increasing sequences of length $n$ with parts less than or equal to $j-\des(\pi)$.
Thus
\[
\Omega_\pi(j,q) = q^{\comaj(\pi)} \qbinom{j + n -\des(\pi)}{n}
\]
and so
\begin{align*}
\sum_{j \geq 0} \Omega_\pi (j,q)t^j & = \sum_{j \geq 0} q^{\comaj(\pi)} \qbinom{ j + n -\des(\pi)}{n} t^j \\
& = t^{\des(\pi)}q^{\comaj(\pi)} \sum_{j \geq 0} \qbinom{ j + n }{ n }  t^j \\
& = \frac{ t^{\des(\pi)}q^{\comaj(\pi)}}{(t;q)_{n+1}}. 
\end{align*}
Hence we have the following theorem.

\begin{thm}
\[
\sum_{j \geq 0} \Omega_P (j,q)t^j = \frac{\dsum_{\pi \in \Lin(P)}t^{\des(\pi)}q^{\comaj(\pi)}}{(t;q)_{n+1}}
\]
\end{thm}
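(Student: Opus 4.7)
The plan is to combine the $q$-analogue of the Fundamental Theorem of $P$-partitions with the single-permutation computation already performed just before the theorem statement. First I would note that the FTPP proved earlier is a bijective/set-theoretic decomposition $\A(P) = \bigsqcup_{\pi \in \Lin(P)} \A(\pi)$, and that the weight $q^{f(1)+\cdots+f(n)}$ of a $P$-partition depends only on $f$, not on which linear extension we are viewing it through. Consequently the weighted identity
\[
\Omega_P(j,q) = \sum_{\pi \in \Lin(P)} \Omega_\pi(j,q)
\]
holds for every $j \geq 0$, by restricting the sum defining $\Omega_P(j,q)$ to each piece of the disjoint decomposition.

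Next I would multiply by $t^j$ and sum over $j \geq 0$, interchanging the finite sum over $\Lin(P)$ with the sum over $j$:
\[
\sum_{j \geq 0} \Omega_P(j,q) t^j = \sum_{\pi \in \Lin(P)} \sum_{j \geq 0} \Omega_\pi(j,q) t^j.
\]
The inner generating function is exactly the one computed in the paragraph immediately preceding the theorem, where the shifting-of-inequalities argument together with the $q$-binomial theorem yields
\[
\sum_{j \geq 0} \Omega_\pi(j,q) t^j = \frac{t^{\des(\pi)} q^{\comaj(\pi)}}{(t;q)_{n+1}}.
\]
Substituting this and pulling the $\pi$-independent denominator $(t;q)_{n+1}$ out of the sum immediately gives the claimed identity.

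There is no real obstacle here; the only point that deserves a word of care is to observe that the $q$-weight is genuinely preserved under the FTPP decomposition, so that the unweighted identity $\A(P) = \bigsqcup_\pi \A(\pi)$ upgrades to the weighted identity for $\Omega_P(j,q)$. Once that observation is in place the proof is a two-line consequence of the per-permutation generating function already in hand.
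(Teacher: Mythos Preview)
Your proposal is correct and matches the paper's approach exactly: the paper treats this theorem as an immediate consequence of the per-permutation generating function $\sum_{j\geq 0}\Omega_\pi(j,q)t^j = t^{\des(\pi)}q^{\comaj(\pi)}/(t;q)_{n+1}$ just derived, combined with the decomposition $\Omega_P(j,q)=\sum_{\pi\in\Lin(P)}\Omega_\pi(j,q)$ coming from the FTPP. Your care in noting that the weight $q^{f(1)+\cdots+f(n)}$ is intrinsic to $f$ (so the set-level FTPP upgrades to the weighted identity) is a good point to make explicit, but otherwise there is nothing to add.
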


To compute the $q$-Eulerian polynomial $A_n(t,q)$, we again let $P$ be an antichain of $n$ elements.
Then $\Lin(P) = \symn$ and we can easily compute $\Omega_{P(a)} (j,q)$ for a singleton poset $a \in [n]$.
The valid $P(a)$-partitions are of the form $0\leq f(a) \leq j$ and so $\Omega_{P(a)} (j,q) = [j+1]_q$.
The Fundamental Theorem of $P$-partitions implies that if $P_1$ and $P_2$ are disjoint posets then $\Omega_{P_1\sqcup P_2}(j,q) = \Omega_{P_1}(j,q)\Omega_{P_2}(j,q)$.
Thus $\Omega_P (j,q) = [j+1]_q^n$ and we arrive at the following theorem.

\begin{thm}\label{thm: fake Carlitz}
\begin{equation}\label{eq: fake Carlitz}
\sum_{j\geq 0} [j+1]_q^n t^j = \frac{\dsum_{\pi \in \symn}t^{\des(\pi)}q^{\comaj(\pi)}}{(t;q)_{n+1}}
\end{equation}
\end{thm}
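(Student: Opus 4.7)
The plan is to specialize the preceding theorem (the one giving $\sum_{j \ge 0} \Omega_P(j,q) t^j$ as a rational function whose numerator ranges over $\Lin(P)$) to the antichain on $[n]$, and then compute the left-hand side $\Omega_P(j,q)$ directly by decomposing the antichain into singletons. Since the antichain has no relations, every permutation of $[n]$ is a linear extension, i.e.\ $\Lin(P) = \symn$, so the numerator on the right-hand side of the preceding theorem becomes exactly $\sum_{\pi \in \symn} t^{\des(\pi)} q^{\comaj(\pi)}$, which is the numerator of the claimed identity.

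It then suffices to show $\Omega_P(j,q) = [j+1]_q^n$. First I would verify the $q$-analogue of the product formula for disjoint unions: if $P_1$ and $P_2$ are partial orders on disjoint sets, the restriction map $f \mapsto (f|_{P_1}, f|_{P_2})$ is a weight-preserving bijection between $\A(P_1 \sqcup P_2)$ and $\A(P_1) \times \A(P_2)$, since the exponent $f(1) + \cdots + f(n)$ appearing in $q^{f(1)+\cdots+f(n)}$ splits additively over the two pieces. This yields $\Omega_{P_1 \sqcup P_2}(j,q) = \Omega_{P_1}(j,q) \Omega_{P_2}(j,q)$, and by induction the same holds for any finite number of disjoint pieces.

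Now the antichain on $[n]$ is the disjoint union of $n$ singleton posets $P(a)$, one for each $a \in [n]$. A valid $P(a)$-partition is just an integer $f(a)$ with $0 \le f(a) \le j$, so
\[
\Omega_{P(a)}(j,q) = \sum_{i=0}^j q^i = [j+1]_q.
\]
Multiplying over the $n$ singletons gives $\Omega_P(j,q) = [j+1]_q^n$, and substituting into the preceding theorem produces the stated identity.

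There is essentially no obstacle here; the theorem is a clean corollary of the material already developed. The only thing requiring a small check is that the $P$-partition product decomposition continues to respect weights under the $q$-refinement, which is immediate because the weight is multiplicative on disjoint supports. Everything else is an unpacking of definitions.
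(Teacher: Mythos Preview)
Your proposal is correct and matches the paper's approach essentially line for line: specialize the preceding theorem to the antichain on $[n]$ so that $\Lin(P)=\symn$, compute $\Omega_{P(a)}(j,q)=[j+1]_q$ for each singleton, and multiply using the disjoint-union product formula. Your justification of the $q$-refined product formula via the weight-preserving restriction bijection is in fact slightly more careful than the paper's one-line appeal.
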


Using Lemma~\ref{lem: maj comaj swap}, we substitute $t \leftarrow tq^n$ and $q \leftarrow q^{-1}$ into equation~\eqref{eq: fake Carlitz} and have the following familiar result.

\begin{cor}\label{cor: Carlitz}
\begin{equation}\label{eq: Carlitz}
\sum_{j\geq 0} [j+1]_q^n t^j = \frac{A_n(t,q)}{(t;q)_{n+1}}
\end{equation}
\end{cor}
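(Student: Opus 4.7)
The plan is to derive the corollary from Theorem \ref{thm: fake Carlitz} by applying the substitution $t \mapsto tq^n$, $q \mapsto q^{-1}$, and using Lemma \ref{lem: maj comaj swap} to convert the numerator on the right-hand side from $\comaj$ to $\maj$. Since the antichain poset $P$ on $[n]$ has $\Lin(P) = \symn$, the lemma specializes to $A_n(t,q) = A_n^{\co}(tq^n, q^{-1})$, so the substitution does exactly the work needed on the numerator.

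The main things to check are that the denominator and the left-hand side of \eqref{eq: fake Carlitz} are both invariant (in the appropriate sense) under this substitution. For the denominator, I would reindex:
\[
(tq^n; q^{-1})_{n+1} = \prod_{i=0}^{n}(1 - tq^{n-i}) = \prod_{k=0}^{n}(1-tq^k) = (t;q)_{n+1}.
\]
For the left-hand side, the key identity is $[j+1]_{q^{-1}} = q^{-j}[j+1]_q$, obtained by factoring $q^{-j}$ out of $1 + q^{-1} + \cdots + q^{-j}$. Raising to the $n$th power gives a factor $q^{-nj}$, which cancels exactly against the $q^{nj}$ coming from $(tq^n)^j$, leaving the left-hand side unchanged.

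After these verifications, combining with Lemma \ref{lem: maj comaj swap} applied to the antichain turns \eqref{eq: fake Carlitz} directly into \eqref{eq: Carlitz}, completing the proof. I do not anticipate any real obstacle here; the only delicate point is the $q^{-1}$ bookkeeping on the left-hand side, and the whole argument is essentially a change of variables made consistent by Lemma \ref{lem: maj comaj swap}.
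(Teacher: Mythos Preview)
Your proposal is correct and follows essentially the same approach as the paper: substitute $t \leftarrow tq^n$, $q \leftarrow q^{-1}$ into equation~\eqref{eq: fake Carlitz}, invoke Lemma~\ref{lem: maj comaj swap} for the numerator, and verify that both $(t;q)_{n+1}$ and the left-hand side are invariant under this substitution. Your write-up in fact spells out the reindexing and the identity $[j+1]_{q^{-1}} = q^{-j}[j+1]_q$ slightly more explicitly than the paper does.
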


\begin{proof}
Note that $(tq^n;q^{-1})_{n+1} = (t;q)_{n+1}$ and
\[
[j+1]_{q^{-1}}^n (tq^n)^j = ([j+1]_{q^{-1}} q^j)^n t^j = [j+1]_q^n t^j.  \qedhere
\]
\end{proof}

Equation~\eqref{eq: Carlitz} is often referred to as the Carlitz identity since it appears in \cite{Carlitz1975} but it is actually a special case of a theorem by MacMahon~\cite{MacMahonCombAnalysis}.
In later chapters, we will discuss permutation statistics which were introduced to generalize this equation to the hyperoctahedral group and the colored permutation groups.
Theorem~\ref{thm: fake Carlitz} and Corollary~\ref{cor: Carlitz} suggest that there should be a bijection from $\symn$ to itself preserving descent number and exchanging major index with comajor index.
In fact, one relatively simple bijection is given by writing each permutation in reverse order and then exchanging $i$ with $n+1-i$ for each letter of the permutation.

\begin{thm}\label{thm: maj comaj bijection}
\[
\sum_{\pi \in \symn} t^{\des(\pi)} q^{\maj(\pi)} = \sum_{\pi \in \symn} t^{\des(\pi)} q^{\comaj(\pi)}
\]
\end{thm}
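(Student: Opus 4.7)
The plan is to exhibit the explicit bijection that the author hints at just before the theorem statement. Define $\phi \colon \symn \to \symn$ by
\[
\phi(\pi)(i) = n+1 - \pi(n+1-i).
\]
That is, $\phi$ reverses the one-line notation of $\pi$ and then replaces each letter $k$ by $n+1-k$. This is clearly a bijection (in fact an involution), so the main content is to show that $\phi$ preserves $\des$ and sends $\maj$ to $\comaj$.

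First I would check how $\phi$ acts on descent sets. A position $i \in [n-1]$ is a descent of $\sigma := \phi(\pi)$ precisely when $\sigma(i) > \sigma(i+1)$, i.e.\ when
\[
n+1 - \pi(n+1-i) > n+1 - \pi(n-i),
\]
which is equivalent to $\pi(n-i) > \pi(n+1-i)$. Setting $j = n-i$, this says exactly that $j \in \Des(\pi)$. Hence $\Des(\phi(\pi)) = \{\, n-j : j \in \Des(\pi)\,\}$.

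From this, I would immediately conclude two things. First, $|\Des(\phi(\pi))| = |\Des(\pi)|$, so $\des(\phi(\pi)) = \des(\pi)$. Second,
\[
\maj(\phi(\pi)) = \sum_{i \in \Des(\phi(\pi))} i = \sum_{j \in \Des(\pi)} (n-j) = \comaj(\pi).
\]
The theorem then follows by reindexing the sum over $\symn$ via the bijection $\phi$:
\[
\sum_{\pi \in \symn} t^{\des(\pi)} q^{\maj(\pi)} = \sum_{\pi \in \symn} t^{\des(\phi(\pi))} q^{\maj(\phi(\pi))} = \sum_{\pi \in \symn} t^{\des(\pi)} q^{\comaj(\pi)}.
\]

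There is no real obstacle here; everything reduces to the descent-set computation above. I note that the identity could alternatively be derived without any bijection by comparing Theorem~\ref{thm: fake Carlitz} and Corollary~\ref{cor: Carlitz} (both exhibit the same generating function $\sum_{j\geq 0}[j+1]_q^n t^j$ equal to the respective numerator divided by $(t;q)_{n+1}$), but the bijective proof above is more informative and matches the description the author gives just before the theorem.
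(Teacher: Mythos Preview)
Your proof is correct and essentially identical to the paper's: both use the involution $\sigma(i) = n+1-\pi(n+1-i)$ and verify that $i \in \Des(\pi)$ if and only if $n-i \in \Des(\sigma)$, giving $\des(\sigma)=\des(\pi)$ and $\maj(\sigma)=\comaj(\pi)$. Your added remark about the alternative non-bijective proof via Theorem~\ref{thm: fake Carlitz} and Corollary~\ref{cor: Carlitz} is also accurate.
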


\begin{proof}
We define a bijection from $\symn$ to itself as follows. Given $\pi$ in $\symn$, define a permutation $\sigma$ by setting $\sigma(i) = n+1-\pi(n+1-i)$ for $i = 1,\ldots, n$.
This is clearly an involution.
To see that this map exchanges major index with comajor index, suppose $i \in \Des(\pi)$.
Then $\pi(i) > \pi(i+1)$ and
\[
\sigma(n-i) = n+1-\pi(i+1) > n+1-\pi(i) = \sigma(n-i+1).
\]
Hence $n-i \in \Des(\sigma)$ and we conclude that $\maj(\pi) = \comaj(\sigma)$.
\end{proof}

%%%%%%%%%%%%%%%%%%%%%%%%%%%%%%%%%%%%%%%%%%%%%%%%

\section{Multiset enumeration}\label{sec: multiset}

One way to generalize permutations (thought of as words on letters in $[n]$) is to allow for the repetition of letters.
Words allowing such repetition are called \emph{multiset permutations} and are defined as follows.
If $\alpha = (\alpha_1, \ldots, \alpha_m)$ is a weak composition of $n$ then $\{1^{\alpha_1}, 2^{\alpha_2}, \ldots, m^{\alpha_m}\}$ denotes the multiset consisting of $\alpha_1$ 1's, $\alpha_2$ 2's, and so on.
The set of all permutations of this multiset is denoted by $\multn$.
Given $\pi \in \multn$, it is possible that $\pi(i) = \pi(i+1)$ and we do not count this as a descent.
Thus the descent set $\Des(\pi)$ of a multiset permutation $\pi \in \multn$ with $|\alpha| = n$ is defined to be the set of all $i \in [n-1]$ such that $\pi(i) > \pi(i+1)$.
The descent number, major index, and comajor index of a multiset permutation (being determined by the descent set and $n$) are defined as they are for permutations in $\symn$.

Our first goal is to count permutations of a multiset by descent number and major index.
Define
\[
M_\alpha(t,q)=\dsum_{\pi \in \multn} t^{\des(\pi)}q^{\maj(\pi)}
\]
to be the $q$-Eulerian polynomial for $\multn$.
The following theorem gives us a way to compute these $q$-Eulerian polynomials and is originally due to MacMahon~\cite{MacMahonCombAnalysis}.
The barred permutation technique we use to prove the theorem is due to Gessel~\cite{GesselThesis} and will be used and extended in later sections.
We note that the following theorem could also be derived from the work in the previous section.
Though $P$-partitions and barred permutations are basically equivalent, as we discuss after the proof, we~include this detail because we combine the two techniques in the next section to prove the existence of the Eulerian descent algebra.

Define a \emph{barred multiset permutation} to be a shuffle of a permutation $\pi \in \multn$ with a sequence of bars such that the letters of $\pi$ are weakly increasing between bars.
For example, if $\alpha = (2,1,1,2)$ then $241134 \in \multn$ and
\[
24\mid 113\mid 4 \mid \mid
\]
is an example of a barred multiset permutation with $4$ bars. 

\begin{thm}[MacMahon]\label{thm: MacMahon's Formula}
If $\alpha = (\alpha_1, \ldots, \alpha_m)$ and $|\alpha| = n$ then
\begin{equation}\label{eq: MacMahon's Formula}
\sum_{j\geq 0} t^j \prod_{i=1}^m  \qbinom{j+\alpha_i}{\alpha_i}  =\frac{M_\alpha(t,q)}{(t;q)_{n+1}}.
\end{equation}
\end{thm}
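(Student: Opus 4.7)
The plan is to carry out a two-way count of weighted barred multiset permutations, in the spirit of Gessel: assign each barred multiset permutation a $t,q$-weight, and compute the sum $\sum_{\text{barred}} \mathrm{weight}$ once by holding the underlying permutation $\pi \in \multn$ fixed and summing over bar placements (which will recover the right-hand side of \eqref{eq: MacMahon's Formula}) and once by holding the number of bars $j$ fixed and summing over all ways to place letters into the resulting slots (which will recover the left-hand side).

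The weighting I would use is the following. Given a barred multiset permutation with $j$ bars, label the $j+1$ slots between consecutive bars from right to left as $0, 1, \ldots, j$, let $w$ be the sum of the slot labels taken over the $n$ letters, and assign the weight $t^j q^w$. Equivalently, $w$ is the number of pairs $(\ell, b)$ such that the bar $b$ lies to the right of the letter $\ell$; this description will make both computations below bookkeeping exercises rather than algebraic surprises.

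For the fix-$\pi$ computation, I would parameterize the bar placement by nonnegative integers $b_0, b_1, \ldots, b_n$, where $b_i$ counts the bars inserted immediately after the $i$th letter of $\pi$ (and $b_0$ counts those placed to the far left). The monotonicity requirement between bars is equivalent to $b_i \geq 1$ for $i \in \Des(\pi)$ and is otherwise vacuous, and a short reindexing shows $w = \sum_{i=1}^{n} i\, b_i$. Shifting $b_i = c_i + [i \in \Des(\pi)]$ then peels off a factor of $t^{\des(\pi)} q^{\maj(\pi)}$ and leaves an unrestricted geometric sum that factors as $\prod_{i=0}^{n} 1/(1 - t q^i) = 1/(t;q)_{n+1}$. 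Summing over $\pi \in \multn$ produces the right-hand side of~\eqref{eq: MacMahon's Formula}.

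For the fix-$j$ computation, I would observe that specifying a barred multiset permutation with exactly $j$ bars is the same as specifying, for each letter $i \in [m]$, how many copies of $i$ sit in each of the $j+1$ slots, because within a slot the letters are forced into weakly increasing order. The contribution of the copies of $i$ to $w$ is $\sum_{k} k\, m_{i,k}$ where $m_{i,k}$ is the number of copies of $i$ in slot $k$, and the standard multiset generating function
\[
\sum_{\substack{m_0 + \cdots + m_j = \alpha_i \\ m_k \geq 0}} q^{m_1 + 2 m_2 + \cdots + j m_j} = \qbinom{j + \alpha_i}{\alpha_i},
\]
combined with the independence of the different letters, gives the summand $t^j \prod_{i=1}^{m} \qbinom{j + \alpha_i}{\alpha_i}$ of the left-hand side. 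The only real pitfall is picking the slot-labeling orientation so that the fix-$\pi$ side produces $\maj(\pi)$ rather than $\comaj(\pi)$; with the right-to-left convention above the two computations match on the nose and no auxiliary involution or change of variable (as in Corollary~\ref{cor: Carlitz}) is needed.
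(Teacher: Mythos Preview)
Your proposal is correct and follows essentially the same barred-permutation double count as the paper: weight bars in position $i$ by $tq^i$ (equivalently, letters in the $k$th compartment from the right by $q^k$), sum first over bar placements for fixed $\pi$ to get $t^{\des(\pi)}q^{\maj(\pi)}/(t;q)_{n+1}$, then over letter placements for fixed $j$ using independence across letter values to get $\prod_i \qbinom{j+\alpha_i}{\alpha_i}$. The only cosmetic difference is that you parameterize the fix-$\pi$ step explicitly via the tuple $(b_0,\ldots,b_n)$ and shift variables, whereas the paper phrases it as ``place one mandatory bar per descent, then bars freely''; the underlying argument is identical.
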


\begin{proof}
We prove this theorem by showing that both sides of the equation count weighted barred multiset permutations.
To create a barred multiset permutation given a multiset permutation $\pi \in \multn$, we must first place a bar in each descent of~$\pi$ to ensure that the letters are weakly increasing between bars.
After that, we are free to place any number of bars in any of the spaces between the letters of $\pi$ and on the ends.
These spaces are labeled $0,1,\ldots,n$ from left to right with space $i$ between $\pi(i)$~and~$\pi(i+1)$.
The label of a space counts the number of letters of $\pi$ to the left of the space. 

In order to count multiset permutations by descent number and major index, we weight each bar in space $i$ by $tq^i$ and define the weight of a barred multiset permutation to be the product of the weights of the bars.
Thus the initial bars placed in the descents of $\pi$ contribute a factor of $t^{\des(\pi)}q^{\maj(\pi)}$.
The sum of the weights of all barred multiset permutations with underlying permutation $\pi$ is
\[
t^{\des(\pi)}q^{\maj(\pi)}\prod_{i=0}^{n}(1+tq^i+(tq^i)^2+\cdots) = \frac{t^{\des(\pi)}q^{\maj(\pi)}}{(t;q)_{n+1}}
\]
and a choice of $(tq^i)^k$ in the expansion represents placing $k$ additional bars in space~$i$.
Summing over all $\pi \in \multn$ gives the right side of equation~\eqref{eq: MacMahon's Formula}.

Next we count barred multiset permutations in a different way and reinterpret the weighting of the bars.
Rather than beginning with a multiset permutation, we could instead begin with a fixed number of bars and think of placing letters between the bars.
If we start with $j$ bars then there are $j+1$ slots between the bars (and on the ends).
These slots are called \emph{compartments} and are labeled $0,1, \ldots, j$ from right to~left.
Note that this is the opposite of the labeling convention used for spaces.

Before, each barred multiset permutation was weighted by the product of the weights of each bar and each bar in space $i$ was weighted by $tq^i$.
Thus the exponent of $t$ counts the number of bars and if a bar is in position $i$ then there are $i$ letters of the permutation to the left of the bar.
However, we obtain the same weighting if we instead weight each bar by a factor of $t$ and weight each letter in compartment~$i$ by~$q^i$.
This is because weighting bars by the number of letters to the left of them counts each letter once for each bar to the right of the letter (which is the same as the compartment number).
Thus if we define the order $q$-polynomial $\Omega_\alpha(j,q)$ to be the sum of the weights of the letters over all barred multiset permutations  containing $j$ bars, weighting each letter in compartment $i$ by $q^i$, then we see that
\[
\sum_{j\geq 0} \Omega_\alpha(j,q)t^j  =\frac{M_\alpha(t,q)}{(t;q)_{n+1}}.
\]
The only step remaining is to compute $\Omega_\alpha(j,q)$. 

In the simplest case, $\alpha = (a)$ with $a \geq 0$ and $\multn$ consists of a single multiset permutation given by $a$ identical letters.
It is clear that $\Omega_{(a)}(j,q)$ should be the coefficient of $t^j$ in $1/(t;q)_{a+1}$ since $\multn$ consists of a single multiset permutation $\pi$ with $\des(\pi) = 0 = \maj(\pi)$ and hence $M_{(a)}(t,q)=1$.
The $q$-binomial theorem tells us that
\[
\frac{1}{(t;q)_{a+1}} = \sum_{j \geq 0} \qbinom{j+a}{a} t^j
\]
and hence
\[
\Omega_{(a)}(j,q) = \qbinom{j+a}{a}.
\]

Lastly, suppose $\alpha = (\alpha_1,\ldots, \alpha_m)$.
If we choose compartments for each of the $\alpha_1$~$1$'s, $\alpha_2$~$2$'s, and so on then there is a unique way to place the letters within a given compartment in weakly increasing order (up to permutation of identical letters).
Hence we can think of choosing compartments for the $\alpha_1$ $1$'s first, then choosing compartments for the $\alpha_2$ $2$'s, and so on.
The unique ordering implies that the order $q$-polynomial factors as $\Omega_\alpha(j,q) = \Omega_{(\alpha_1)}(j,q)\Omega_{(\alpha_2)}(j,q)\cdots \Omega_{(\alpha_m)}(j,q)$ and we see that
\[
\Omega_\alpha(j,q) = \prod_{i=1}^m  \qbinom{j+\alpha_i}{\alpha_i}.  \qedhere
\]
\end{proof}

If we set $\alpha = (1,1,\ldots, 1) $ with $|\alpha|=n$ then $\multn = \symn$ and the previous theorem (proven using barred permutations) reduces to Corollary~\ref{cor: Carlitz} (proven using $P$-partitions).
As previously stated, the two techniques are equivalent except for the ever-present choice of order-preserving $P$-partitions and comajor index versus order-reversing $P$-partitions and major index.
To count multiset permutations by number of descents and comajor index, we simply label spaces from right to left and compartments from left to right.
The rest of the proof holds in its entirety and is equivalent to ``$q$-counting'' order-preserving $P$-partitions.
Placing a letter $k$ in compartment $i$ is the same as requiring that the $P$-partition $f$ maps $k$ to $i$.
In the proof of the previous theorem, we $q$-counted order-reversing $P$-partitions.
The ordering choice made this explanation more complicated but prevented the need for the bijection from Theorem~\ref{thm: maj comaj bijection}.

%%%%%%%%%%%%%%%%%%%%%%%%%%%%%%%%%%%%%%%%%%%%%%%%

\section{The Eulerian descent algebra}\label{sec: Eulerian descent algebra}

Solomon's descent algebra contains a subalgebra induced by descent number called the \emph{Eulerian descent algebra} whose existence was originally proven by Loday \cite{Loday1989}.
Here the term ``induced'' means that if we define
\[
E_i = \sum_{\des(\pi) = i} \pi
\]
then the product $E_j E_k$ can always be written as a linear combination of the $E_i$ and so the $E_i$ form a basis for an algebra.

To prove that the Eulerian descent algebra exists, we compute
\[
A_\pi(s,t) := \sum_{\sigma \tau = \pi} s^{\des(\sigma)}t^{\des(\tau)}
\]
and show that $A_\pi(s,t)$ is determined by $\des(\pi)$.
This proves that the $E_i$ form a basis for an algebra because the coefficient of $s^jt^k$ in $A_{\pi}(s,t)$ is the coefficient of $\pi$ in the expansion of the product $E_j E_k$.
If we can show that $A_{\pi_1}(s,t) = A_{\pi_2}(s,t)$ whenever $\des(\pi_1)=\des(\pi_2)$ then every permutation with a given number of descents will have the same coefficient in $E_j E_k$ and thus we can always write $E_j E_k$ as a linear combination of the $E_i$.

Every permutation statistic describes an equivalence relation on $\symn$ and we can construct formal sums of the permutations within a given equivalence class as we have done here.
In general, we say a permutation statistic \emph{induces} an algebra if these formal sums of permutations form a basis for an algebra.

The proof of the existence of the Eulerian descent algebra presented here is a modified version of Petersen's proof in \cite{Petersen2005}, which was in turn inspired by Gessel's work on multipartite $P$-partitions in \cite{Gessel1984}.
Petersen's proof uses ``zig-zag'' posets and exploits properties of the set of linear extensions of these posets.
We use the same zig-zag posets as well as a second related collection of posets.

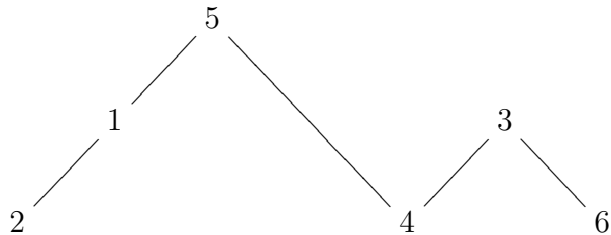
\begin{figure}[htbp]
\[\xymatrix @!R @!C {  & &  5 \ar@{-}[ddrr] & & & & \\   &  1\ar@{-}[ur] & &  &  & 3\ar@{-}[dr] & \\  2\ar@{-}[ur]   &  &  & &  4 \ar@{-}[ur]& & 6   }\] 
\caption{\; The zig-zag poset $\zigipi$ for $\pi = 215436$ and $I = \{3,5\}$.}
\label{fig: zig poset}
\end{figure}

For every $I \subseteq [n-1]$ and $\pi \in \symn$, define the \emph{zig-zag poset} $\zigipi$ by setting $\pi(i) <_Z \pi(i+1)$ if $i \notin I$ and $\pi(i) >_Z \pi(i+1)$ if $i \in I$.
See Figure \ref{fig: zig poset} for an example of a zig-zag poset.
Zig-zag posets are useful to us because of the following lemma describing their linear extensions.

\begin{lem}\label{lem: Zig-Zag Extensions}
Let $\sigma \in \symn$ and consider $\zigipi$ with $\pi \in \symn$ and $I \subseteq [n-1]$.
Then $\sigma \in \Lin(\zigipi)$ if and only if $\Des(\sigma^{-1}\pi) = I$.
\end{lem}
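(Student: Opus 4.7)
The plan is to unwind both sides of the biconditional into statements about the values $\sigma^{-1}(\pi(i))$ for consecutive $i$, and observe that they agree on the nose. Recall from the discussion of $\Lin(P)$ in Section~\ref{sec: P-partitions} that $\sigma$ is a linear extension of a poset $P$ precisely when $a <_P b$ implies $\sigma^{-1}(a) < \sigma^{-1}(b)$. In particular, for the zig-zag poset $Z(I,\pi)$ the only covering relations are between consecutive values $\pi(i)$ and $\pi(i+1)$, so $\sigma \in \Lin(Z(I,\pi))$ is equivalent to the system of conditions
\[
\sigma^{-1}(\pi(i)) < \sigma^{-1}(\pi(i+1)) \text{ if } i \notin I, \qquad \sigma^{-1}(\pi(i)) > \sigma^{-1}(\pi(i+1)) \text{ if } i \in I,
\]
for every $i \in [n-1]$.

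Next I would write $\sigma^{-1}\pi$ in one-line notation, observing that its $i$th entry is exactly $\sigma^{-1}(\pi(i))$. Hence $i \in \Des(\sigma^{-1}\pi)$ if and only if $\sigma^{-1}(\pi(i)) > \sigma^{-1}(\pi(i+1))$. Comparing this directly with the conditions displayed above, we see that the requirement ``$\Des(\sigma^{-1}\pi) = I$'' encodes exactly the same $n-1$ comparisons as ``$\sigma$ is a linear extension of $Z(I,\pi)$,'' once we note that the strict ``$<$'' in the poset definition is equivalent to $\neq$ for permutation values (no ties are possible since $\sigma^{-1}$ is a bijection).

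The argument is essentially a translation exercise, so there is no substantive obstacle; the only subtlety worth flagging is getting the direction of the inequality right between the poset relation on $Z(I,\pi)$ and the resulting descent condition on $\sigma^{-1}\pi$. Because I is defined so that $i \in I$ corresponds to $\pi(i) >_Z \pi(i+1)$, and because a linear extension is order-preserving with respect to $\sigma^{-1}$, this is exactly the condition that $i$ is a descent of the word $\sigma^{-1}\pi$. This makes both directions of the biconditional immediate, so the proof is essentially two symmetric lines of unpacking, with no induction or case analysis required.
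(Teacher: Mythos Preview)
Your proposal is correct and follows essentially the same approach as the paper: both arguments unwind the definition of $\sigma \in \Lin(Z(I,\pi))$ into the conditions $\sigma^{-1}(\pi(i)) < \sigma^{-1}(\pi(i+1))$ for $i \notin I$ and $\sigma^{-1}(\pi(i)) > \sigma^{-1}(\pi(i+1))$ for $i \in I$, and then identify this with $\Des(\sigma^{-1}\pi) = I$. Your version is slightly more explicit about the converse direction and the fact that no ties occur, but the content is the same.
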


\begin{proof}
Suppose $\sigma \in \Lin(\zigipi)$. We know $\pi(i) <_Z \pi(i+1)$ for $i \notin I$ and so $\sigma^{-1}(\pi(i)) < \sigma^{-1}(\pi(i+1))$ for $i \notin I$. Similarly, $\pi(i) >_Z \pi(i+1)$ for $i \in I$ and so $\sigma^{-1}(\pi(i)) > \sigma^{-1}(\pi(i+1))$ for $i \in I$.
This is equivalent to $\Des(\sigma^{-1}\pi) = I$.
\end{proof}

\begin{ex}
We know $Z(\{3\},2314)$ is the poset with $2<_Z 3 <_Z 1$ and $1 >_Z 4$.
Comparing $4$ with $2$ and $3$, we see that $\Lin(\zigipi) = \{2341,2431,4231\}$ and $\{\sigma^{-1}\pi \, : \, \sigma \in \Lin(\zigipi)\} = \{1243,1342,2341\}$.
Here $\Des(\sigma^{-1}\pi) =  \{3\}$ for every $\sigma \in \Lin(\zigipi)$.
\end{ex}

Next we consider a second collection of posets.
For every $I \subseteq [n-1]$ and $\pi \in \symn$, define the \emph{chain poset} $\chainipi$ by setting $\pi(i) <_C \pi(i+1)$ if $i \notin I$.
These posets are so~named because they consist of disjoint chains, each labeled by a subword of $\pi$.
The following lemma about linear extensions of chain posets is similar to that for zig-zag posets and follows from the same proof.

\begin{lem}\label{lem: Chain Extensions}
Let $\sigma \in \symn$ and consider $\chainipi$ with $\pi \in \symn$ and $I \subseteq [n-1]$.
Then $\sigma \in \Lin(\chainipi)$ if and only if $\Des(\sigma^{-1}\pi) \subseteq I$.
\end{lem}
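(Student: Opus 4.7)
The plan is to mimic the proof of Lemma \ref{lem: Zig-Zag Extensions} almost verbatim, with the only adjustment being that the chain poset $\chainipi$ lacks the relations $\pi(i) >_C \pi(i+1)$ that appear in the zig-zag poset when $i \in I$. Consequently the ``if and only if'' concludes with a set-inclusion rather than a set-equality.

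More concretely, the forward direction would proceed as follows. Suppose $\sigma \in \Lin(\chainipi)$. By definition of a linear extension, $\sigma^{-1}$ must be order-preserving on the relations of $\chainipi$. The only relations in $\chainipi$ are $\pi(i) <_C \pi(i+1)$ for $i \notin I$, so $\sigma^{-1}(\pi(i)) < \sigma^{-1}(\pi(i+1))$ whenever $i \notin I$. Rewriting this as $(\sigma^{-1}\pi)(i) < (\sigma^{-1}\pi)(i+1)$, we see that $i \notin \Des(\sigma^{-1}\pi)$ for every $i \notin I$. Contrapositively, $\Des(\sigma^{-1}\pi) \subseteq I$.

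For the converse, assume $\Des(\sigma^{-1}\pi) \subseteq I$. Then for every $i \notin I$, we have $i \notin \Des(\sigma^{-1}\pi)$, so $(\sigma^{-1}\pi)(i) < (\sigma^{-1}\pi)(i+1)$, i.e., $\sigma^{-1}(\pi(i)) < \sigma^{-1}(\pi(i+1))$. Since these are precisely the covering relations of $\chainipi$, it follows that $\sigma \in \Lin(\chainipi)$.

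There is essentially no obstacle here; the only subtle point to flag is \emph{why} the conclusion becomes $\subseteq$ rather than $=$. In the zig-zag case the poset simultaneously enforces ascents at $i \notin I$ \emph{and} descents at $i \in I$, pinning $\Des(\sigma^{-1}\pi)$ to be exactly $I$. In the chain case, positions $i \in I$ are unconstrained, so $\sigma^{-1}\pi$ is merely forbidden from having descents outside $I$, yielding $\Des(\sigma^{-1}\pi) \subseteq I$. This inclusion, rather than equality, is precisely what will later make chain posets useful for ``at most $I$'' enumeration arguments in contrast to the ``exactly $I$'' arguments handled by zig-zag posets.
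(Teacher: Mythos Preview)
Your proposal is correct and takes essentially the same approach as the paper, which does not write out a separate proof but simply remarks that the lemma ``is similar to that for zig-zag posets and follows from the same proof.'' Your write-up makes this explicit and correctly identifies why the equality in Lemma~\ref{lem: Zig-Zag Extensions} weakens to an inclusion here: the chain poset drops the relations $\pi(i) >_C \pi(i+1)$ for $i \in I$, leaving those positions unconstrained.
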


\begin{ex}
We know $C(\{3\},2314)$ is the poset with $2<_C 3 <_C 1$ and with no relation between $4$ and any other element.
Here $\Lin(\chainipi) = \Lin(\zigipi) \cup \{\pi\}$ and $\Des(\pi^{-1}\pi) = \varnothing \subseteq \{3\}$.
\end{ex}

The key step in proving the existence of the Eulerian descent algebra is extending the usefulness of the bars in barred multiset permutations to posets themselves.
We define barred versions of both zig-zag and chain posets.
First, a \emph{barred zig-zag poset} is defined to be a zig-zag poset $\zigipi$ with an arbitrary number of bars placed in each of the $n+1$ spaces such that between any two bars the elements of the poset (not necessarily their labels) are increasing.
This can be thought of as requiring at least one bar in each ``descent'' of the zig-zag poset, much like requiring bars in descents of barred multiset permutations.
Figure \ref{fig: barred zig poset} gives an example of a barred $\zigipi$ poset with $I = \{3\}$ and $\pi = 21543$.
In this example, we require at least one bar in space $3$ because $\pi(3) >_Z \pi(4)$.

\begin{figure}[htbp]
\[\xymatrix @!R @!C @R=18pt @C=10pt{ \ar@{-}[dddd] &  & \ar@{-}[dddd] & & &  &  \ar@{-}[dddd] &  \ar@{-}[dddd] & &  & \\  &  & & & &  5 \ar@{-}[ddrrr] &  &  & &  & \\  &  &  & 1\ar@{-}[urr] &  & & &  & & & 3 \\  & 2\ar@{-}[urr]  &  &  & & &  &  & 4 \ar@{-}[urr]& &   \\  &   &  &  & & &  &  & & &  }\] 
\caption{\; A barred $\zigipi$ poset for $I = \{3\}$ and $\pi = 21543$.}
\label{fig: barred zig poset}
\end{figure}

To construct a barred zig-zag poset we begin with a zig-zag poset $\zigipi$ then we must first place a bar in space $i$ for each $i \in I$.
We are then free to place any number of bars in any of the $n+1$ spaces.
Define $\Omega_{\zigipi}(j,k)$ to be the number of ordered pairs $(f,P)$ where $P$ is a barred $\zigipi$ poset with $k$ bars and $f$ is a $\zigipi$-partition with parts less than or equal to $j$.
Recall that $\sigma \in \Lin(\zigipi)$ if and only if $\Des(\sigma^{-1}\pi) = I$ and note that a barred $\zigipi$ poset must have at least $|I|$ bars.
If we begin with the zig-zag poset $\zigipi$ then there is a unique way to place the first $|I|$ bars (place one bar in space $i$ for each $i\in I$).
Next, there are
\[
\mchoose{n+1}{k-|I|} = \binom{n+k - |I|}{n}
\]
ways to place the remaining $k-|I|$ bars in the $n+1$ available spaces and hence there are $\binom{n+k-|I|}{n}$ barred $\zigipi$ posets with $k$ bars.
Thus
\[
\Omega_{\zigipi}(j,k) = \sum_{\sigma \in \Lin(\zigipi)} \Omega_{\sigma}(j) \binom{n+k-\des(\sigma^{-1}\pi)}{n} = \sum_{\sigma \in \Lin(\zigipi)} \Omega_{\sigma}(j) \Omega_{\sigma^{-1}\pi}(k).
\]
If we set $\tau = \sigma^{-1}\pi$ and sum over all $I \subseteq [n-1]$ then we have
\begin{equation}\label{eq: barred zig order sum}
\sum_{I \subseteq [n-1]} \Omega_{\zigipi}(j,k) = \sum_{\sigma \tau = \pi} \Omega_{\sigma}(j) \Omega_{\tau}(k).
\end{equation}

Next we define a \emph{barred chain poset} to be a chain poset $\chainipi$ with at least one bar in space $i$ for each $i \in I$ and with an arbitrary number of bars placed on either end.
No bars are allowed in space $j$ for $j \in [n-1] \setminus I$.
Thus we place at least one bar between each chain of the chain poset and allow for bars on either end.
Figure~\ref{fig: barred chain poset} gives an example of a barred $\chainipi$ poset with $I = \{1,3\}$ and $\pi = 21543$.

\begin{figure}[htbp]
\[\xymatrix @!R @!C @C=10pt{  \ar@{-}[ddd] &  & \ar@{-}[ddd] & & &   &  \ar@{-}[ddd] &  \ar@{-}[ddd] & &  & \\  &  &  & & &  5  &  &   & &   & 3 \\   & 2  &  & 1\ar@{-}[urr]  & & & &  & 4 \ar@{-}[urr] &  &  \\  & & & & & & &  & & &  }\] 
\caption{\; A barred $\chainipi$ poset for $I = \{1,3\}$ and $\pi = 21543$.}
\label{fig: barred chain poset}
\end{figure}

If we begin with a chain poset $\chainipi$ then we must first place a bar in space $i$ for each $i \in I$.
From there we are free to place any number of bars in space $i$ for any $i \in I$ and any number of bars on either end.
This creates a collection of bars with each compartment between adjacent bars containing at most one nonempty chain labeled by a subword of $\pi$.
Define $\Omega_{\chainipi}(j,k)$ to be the number of ordered pairs $(f,P)$ where $P$ is a barred $\chainipi$ poset with $k$ bars and $f$ is a $\chainipi$-partition with parts less than or equal to $j$.
Recall that $\sigma \in \Lin(\chainipi)$ if and only if $\Des(\sigma^{-1}\pi) \subseteq I$ and note that a barred $\chainipi$ poset must have at least $|I|$ bars.
If we begin with the chain poset $\chainipi$ then there is a unique way to place the first $|I|$ bars (place one bar in space $i$ for each $i\in I$).
Next there are
\[
\mchoose{|I|+2}{k-|I|} = \binom{k + 1}{k-|I|}
\]
ways to place the remaining $k-|I|$ bars in the $|I|+2$ available spaces and hence there are $\binom{k+1}{k-|I|}$ barred $\chainipi$ posets with $k$ bars.
Thus
\[
\Omega_{\chainipi}(j,k) = \sum_{\sigma \in \Lin(\chainipi)} \Omega_{\sigma}(j) \binom{k+1}{k-|I|}.
\]

The following lemma allows us to compare $P$-partitions for barred zig-zag posets and barred chain posets.

\begin{lem}\label{Barred Poset Extensions}
For every $\pi \in \symn$,
\[
\sum_{I \subseteq [n-1]}  \Omega_{\zigipi}(j,k) = \sum_{I \subseteq [n-1]} \Omega_{\chainipi}(j,k).
\]
\end{lem}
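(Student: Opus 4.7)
The plan is to expand both sides via the Fundamental Theorem of $P$-partitions together with Lemmas \ref{lem: Zig-Zag Extensions} and \ref{lem: Chain Extensions}, then swap the order of summation so that each sum is indexed by a single permutation $\sigma \in \symn$ instead of by subsets $I \subseteq [n-1]$. Once this is done, the identity reduces to a single Vandermonde convolution.

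First I would record the two explicit formulas already worked out in the paragraphs preceding the lemma:
\[
\Omega_{\zigipi}(j,k) = \sum_{\sigma \in \Lin(\zigipi)} \Omega_\sigma(j) \binom{n+k-|I|}{n}, \qquad \Omega_{\chainipi}(j,k) = \sum_{\sigma \in \Lin(\chainipi)} \Omega_\sigma(j) \binom{k+1}{k-|I|}.
\]
By Lemma \ref{lem: Zig-Zag Extensions}, each $\sigma \in \symn$ lies in $\Lin(\zigipi)$ for exactly one subset, namely $I = \Des(\sigma^{-1}\pi)$. Summing the left-hand side over all $I \subseteq [n-1]$ therefore collapses to
\[
\sum_{I \subseteq [n-1]} \Omega_{\zigipi}(j,k) = \sum_{\sigma \in \symn} \Omega_\sigma(j) \binom{n+k-\des(\sigma^{-1}\pi)}{n}.
\]

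By Lemma \ref{lem: Chain Extensions}, a permutation $\sigma$ contributes to $\Lin(\chainipi)$ for every $I \supseteq \Des(\sigma^{-1}\pi)$. Interchanging the order of summation on the right-hand side gives
\[
\sum_{I \subseteq [n-1]} \Omega_{\chainipi}(j,k) = \sum_{\sigma \in \symn} \Omega_\sigma(j) \sum_{I \supseteq \Des(\sigma^{-1}\pi)} \binom{k+1}{k-|I|}.
\]
Writing $d = \des(\sigma^{-1}\pi)$ and grouping the supersets $I$ of $\Des(\sigma^{-1}\pi)$ by cardinality $m$, there are $\binom{n-1-d}{m-d}$ such supersets, so the inner sum becomes $\sum_{m=d}^{n-1} \binom{n-1-d}{m-d}\binom{k+1}{k-m}$.

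The final (and only nontrivial) step is to evaluate this inner sum. After the substitution $m \mapsto m-d$, Vandermonde's convolution $\sum_{j} \binom{a}{j}\binom{b}{c-j} = \binom{a+b}{c}$ with $a = n-1-d$, $b = k+1$, $c = k-d$ collapses the inner sum to $\binom{n+k-d}{k-d} = \binom{n+k-\des(\sigma^{-1}\pi)}{n}$, which matches the coefficient of $\Omega_\sigma(j)$ on the zig-zag side. Since the equality then holds term-by-term in $\sigma$, the lemma follows. The main ``obstacle'' is really just the bookkeeping of which $I$'s produce a given $\sigma$ on each side, plus this Vandermonde step, which together convert the contrast between the two types of barred posets (a factor $\binom{n+k-|I|}{n}$ with an exact descent condition versus $\binom{k+1}{k-|I|}$ with a subset condition) into the same quantity after aggregation over $I$.
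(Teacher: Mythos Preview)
Your proof is correct, but it takes a different route from the paper's. Both arguments reduce the identity to showing that, for each fixed $\sigma \in \symn$ with $d = \des(\sigma^{-1}\pi)$, the number of barred zig-zag posets with $k$ bars having $\sigma$ as a linear extension equals the number of barred chain posets with $k$ bars having $\sigma$ as a linear extension. You prove this equality algebraically: the zig-zag count is $\binom{n+k-d}{n}$ directly, and you collapse the chain side $\sum_{I \supseteq \Des(\sigma^{-1}\pi)} \binom{k+1}{k-|I|}$ to the same binomial via Vandermonde's convolution. The paper instead gives a bijection: it maps each barred zig-zag poset to a barred chain poset by deleting the relation between $\pi(i)$ and $\pi(i+1)$ in every space $i$ that contains at least one bar, and observes that this preserves the number of bars and the set of linear extensions.

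The bijective argument is what the paper leans on repeatedly in later sections (cyclic, type~$A$, type~$B$, augmented, flag, colored), since it transfers verbatim once the appropriate barred posets are defined, without needing to recompute any binomial sum. Your Vandermonde approach is perfectly valid here and arguably more self-contained, but it would have to be redone (with slightly different binomial parameters) in each of those settings, whereas the bijection is a single structural observation that covers them all.
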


\begin{proof}
We first rewrite the equation as
\[
\sum_{\sigma \in \symn}  \Omega_{\sigma}(j)Z_{\sigma}(k) = \sum_{\sigma \in \symn} \Omega_{\sigma}(j)C_{\sigma}(k)
\]
where $Z_{\sigma}(k)$ is the number of barred zig-zag posets with $k$ bars such that $\sigma$ is a linear extension of the underlying zig-zag poset and $C_{\sigma}(k)$ is the number of barred chain posets with $k$ bars such that $\sigma$ is a linear extension of the underlying chain poset.
We must find a bijection showing that for every $\sigma \in \symn$, the number of barred zig-zag posets with $k$ bars such that $\sigma$ is a linear extension of the underlying zig-zag poset is the same as the number of barred chain posets with $k$ bars such that $\sigma$ is a linear extension of the underlying chain poset.
This bijection is simply the map that sends each barred zig-zag poset to the barred chain poset obtained by removing the relation between $\pi(i)$ and $\pi(i+1)$ for every space $i$ containing at least one bar.
Thus a barred zig-zag poset with underlying poset $\zigipi$ is mapped to a barred chain poset with underlying poset $C(J,\pi)$ and $J \supseteq I$ since barred zig-zag posets are required to have bars in space $i$ for every $i \in I$.
\end{proof}

The previous bijection maps Figure~\ref{fig: barred zig poset} to Figure~\ref{fig: barred chain poset}.
Now that all the pieces are in place, we are ready to prove the existence of the Eulerian descent algebra.

\begin{thm}\label{Symmetric Algebra Thm}
For every $\pi \in \symn$,
\begin{equation}\label{eq: Symmetric Algebra}
\sum_{j,k \geq 0} \binom{(j+1)(k+1) +n-1-\des(\pi)}{n} s^j t^k = \frac{A_{\pi}(s,t)}{(1-s)^{n+1}(1-t)^{n+1}}.
\end{equation}
\end{thm}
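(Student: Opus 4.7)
The plan is to unify the barred zig-zag and barred chain generating functions via Lemma~\ref{Barred Poset Extensions} and then identify the chain side with an ordinary $\pi$-partition count over a dilated range of values. I would begin by multiplying equation~\eqref{eq: barred zig order sum} by $s^j t^k$ and summing over $j, k \geq 0$. Using the identity $\sum_{j \geq 0} \Omega_\sigma(j)\, s^j = s^{\des(\sigma)}/(1-s)^{n+1}$ from Section~\ref{sec: P-partitions}, the right-hand side of the resulting equation immediately simplifies to $A_\pi(s,t)/((1-s)^{n+1}(1-t)^{n+1})$, which is the right-hand side of~\eqref{eq: Symmetric Algebra}. Thus the theorem reduces to the coefficient-wise identity
\[
\sum_{I \subseteq [n-1]} \Omega_{\zigipi}(j,k) = \binom{(j+1)(k+1) + n - 1 - \des(\pi)}{n}.
\]

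By Lemma~\ref{Barred Poset Extensions}, the left side may be replaced by $\sum_I \Omega_{\chainipi}(j,k)$. The crux of the argument is then a bijection between the disjoint union
\[
\bigsqcup_{I \subseteq [n-1]} \bigl\{(f,P) : P \text{ a barred } \chainipi \text{ poset with $k$ bars, } f \in \A(\chainipi),\ f \leq j\bigr\}
\]
and the set of $\pi$-partitions $g$ with parts in $\{0, 1, \ldots, (j+1)(k+1)-1\}$. Given $(f, P)$, I would label the $k+1$ compartments of $P$ by $0, 1, \ldots, k$ from left to right, let $c(i)$ denote the compartment containing $\pi(i)$, and set
\[
g(\pi(i)) := c(i)(j+1) + f(\pi(i)).
\]
The $\pi$-partition property is then verified by cases: if $i \notin I$ then $c(i) = c(i+1)$, so $g$ inherits the appropriate weak or strict inequality between $\pi(i)$ and $\pi(i+1)$ directly from $f$; if $i \in I$ then $c(i+1) \geq c(i) + 1$, so the leading-term jump of at least $j+1$ forces $g(\pi(i+1)) > g(\pi(i))$, which is more than enough regardless of the relative order of $\pi(i)$ and $\pi(i+1)$.

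The inverse map recovers $(c(i), f(\pi(i)))$ from $g(\pi(i))$ by division with remainder modulo $j+1$; the weak monotonicity of $g$ along $\pi$ forces $c$ to be nondecreasing in $i$, so declaring $I := \{i \in [n-1] : c(i+1) > c(i)\}$ and placing $c(i+1) - c(i)$ bars in space $i$ for $1 \leq i \leq n-1$, together with $c(1)$ bars on the left end and $k - c(n)$ bars on the right end, reconstructs the barred chain poset uniquely. Once the bijection is in place, equation~\eqref{eq: pi order poly formula} applied with $j$ replaced by $(j+1)(k+1)-1$ yields
\[
\sum_{I \subseteq [n-1]} \Omega_{\chainipi}(j,k) = \Omega_\pi\bigl((j+1)(k+1)-1\bigr) = \binom{(j+1)(k+1)+n-1-\des(\pi)}{n},
\]
which is the required identity. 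The main obstacle is verifying the bijection at positions $i \in I$ with $\pi(i) < \pi(i+1)$: no descent of $\pi$ occurs there, yet $g$ still receives a strict jump of at least $j+1$ from the compartment change, and it is precisely this forced strict inequality that makes the base-$(j+1)$ encoding invertible.
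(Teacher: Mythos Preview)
Your proposal is correct and takes essentially the same approach as the paper: both reduce to the coefficient identity via Lemma~\ref{Barred Poset Extensions} and then identify $\sum_I \Omega_{\chainipi}(j,k)$ with $\Omega_\pi((j+1)(k+1)-1)$ by shifting the values in compartment $c$ by $c(j+1)$. The paper phrases this last step as concatenating inequality chains over the intervals $[i(j+1),\, i(j+1)+j]$ rather than as an explicit base-$(j+1)$ encoding with inverse, but the content is identical.
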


\begin{proof}
We have
\begin{align*}
\frac{\dsum_{\sigma\tau = \pi} s^{\des(\sigma)} t^{\des(\tau)}}{(1-s)^{n+1}(1-t)^{n+1}} &=  \sum_{j,k \geq 0} \sum_{\sigma \tau = \pi} \binom{j+n}{n}\binom{k+n}{n}s^{j+\des(\sigma)} t^{k+\des(\tau)}\\
&= \sum_{j,k \geq 0} \sum_{\sigma \tau = \pi} \binom{j+n-\des(\sigma)}{n}\binom{k+n-\des(\tau)}{n}s^j t^k \\ 
&= \sum_{j,k \geq 0} \sum_{I \subseteq [n-1]} \Omega_{\zigipi}(j,k) s^j t^k
\end{align*}
where the last equality follows from equations~\eqref{eq: pi order poly formula} and \eqref{eq: barred zig order sum}.
By Lemma~\ref{Barred Poset Extensions}, we can switch from zig-zag posets to chain posets and have
\[
\frac{\dsum_{\sigma\tau = \pi} s^{\des(\sigma)} t^{\des(\tau)}}{(1-s)^{n+1}(1-t)^{n+1}} = \sum_{j,k \geq 0} \sum_{I \subseteq [n-1]} \Omega_{\chainipi}(j,k) s^j t^k.
\]
The only remaining step is to prove that
\[
\sum_{I \subseteq [n-1]} \Omega_{\chainipi}(j,k) =  \binom{(j+1)(k+1) +n-1-\des(\pi)}{n}.
\]

First we note that $\sum_{I \subseteq [n-1]} \Omega_{\chainipi}(j,k)$ counts ordered pairs $(f,P)$ where $P$ is a barred $\chainipi$ poset with $k$ bars for some $I \subseteq [n-1]$ and $f$ is a $\chainipi$-partition with parts less than or equal to $j$.
Fix a barred $\chainipi$ poset with $k$ bars and use the bars to define compartments labeled $0,\ldots,k$ from left to right.
Then define $\pi_i$ to be the (possibly empty) subword of $\pi$ in compartment $i$ and denote the length of $\pi_i$ by $L_i$.
Then
\[
\Omega_{\chainipi}(j) = \prod_{i=0}^{k} \Omega_{\pi_i}(j) 
\]
For $i=0,\ldots,k$, we let $\Omega_{\pi_i}(j)$ count solutions to the inequalities
\[
i(j+1) \leq s_{i_1} \leq  \cdots \leq s_{i_{L_i}} \leq i(j+1) +j
\]
with $s_{i_l} < s_{i_{l+1}}$ if $l \in \Des(\pi_i)$.
By concatenating these inequalities, we see that if we sum over all $I \subseteq [n-1]$ and all barred $\chainipi$ posets with $k$ bars then $\sum_{I \subseteq [n-1]} \Omega_{\chainipi}(j,k)$ is equal to the number of solutions to the inequalities
\[
0 \leq s_1 \leq \cdots \leq s_n \leq jk+j+k
\]
with $s_i < s_{i+1}$ if $i \in \Des(\pi)$.
Hence we conclude that
\[
\sum_{I \subseteq [n-1]} \Omega_{\chainipi}(j,k)  = \binom{(j+1)(k+1)+n-1-\des(\pi)}{n} .
\qedhere
\]
\end{proof}

Figure \ref{fig: every barred chain poset} depicts every barred $C(I,21)$ poset with $2$ bars and $I \subseteq \{1\}$.
Thus we can identify each barred $\chainipi$ poset with a barred permutation with underlying permutation $\pi$.
This provides a visual representation of the last step in the proof of the previous theorem.

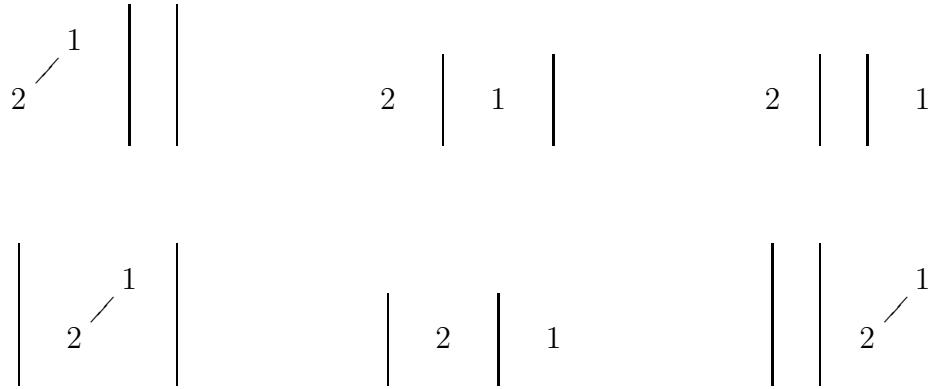
\begin{figure}[htbp]
\[\xymatrix  @R=8pt @C=8pt{ && \ar@{-}[ddd] & \ar@{-}[ddd] &&&&&&&&&&  \\   & 1 &&&&& \ar@{-}[dd]  && \ar@{-}[dd]  &&& \ar@{-}[dd] & \ar@{-}[dd] & \\  2\ar@{-}[ur] &&&& \qquad\qquad & 2 && 1 && \qquad\qquad & 2 &&& 1  \\    &&&&&&&&&&&&& \\ &&&&&&&&&&&&& \\  \ar@{-}[ddd] &&& \ar@{-}[ddd] &&&&&&& \ar@{-}[ddd] & \ar@{-}[ddd] && \\ && 1 &&& \ar@{-}[dd] && \ar@{-}[dd] &&&&&& 1 \\  & 2\ar@{-}[ur] &&&&& 2 &  & 1 &&&& 2 \ar@{-}[ur]  \\ &&&&&&&&&&&&& \\     }\] 
\caption{\; Barred $C(I,21)$ posets with $2$ bars and $I \subseteq \{1\}$.}
\label{fig: every barred chain poset}
\end{figure}

The previous theorem shows that $A_{\pi}(s,t)$ is determined by $\des(\pi)$ and thus proves the existence of the Eulerian descent algebra.
With a little work, we can find another basis for this algebra with nice properties.
Define the structure polynomial $\phi(x)$ in the group algebra of $\symn$ by
\[
\phi(x) = \dsum_{\pi \in \symn} \binom{x+n-1-\des(\pi)}{n} \pi.
\]
We have the following theorem of Petersen \cite[Theorem 1.1]{Petersen2005} although the idea originally appears in \cite{MielnikPlebanski1970}.

\begin{thm}
As polynomials in $x$ and $y$ with coefficients in the group algebra of the symmetric group,
\[
\phi(x)\phi(y) = \phi(xy).
\]
\end{thm}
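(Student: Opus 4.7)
The plan is to reduce the claimed identity $\phi(x)\phi(y)=\phi(xy)$ to Theorem~\ref{Symmetric Algebra Thm} by matching coefficients of each permutation and then invoking a polynomial identity argument. Since both sides of the claimed identity are polynomials in $x$ and $y$ with coefficients in the group algebra $\Q[\symn]$, it suffices to verify that, for every $\pi\in\symn$, the coefficient of $\pi$ on both sides agrees on a Zariski dense subset of $\Q^2$. The natural choice is to evaluate at positive integer pairs $(x,y)=(j+1,k+1)$ with $j,k\geq 0$, because Theorem~\ref{Symmetric Algebra Thm} is a generating function identity indexed precisely by such pairs.

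First I would compute the coefficient of $\pi$ in the product $\phi(x)\phi(y)$. By the definition of multiplication in the group algebra,
\[
[\pi]\,\phi(x)\phi(y)=\sum_{\sigma\tau=\pi}\binom{x+n-1-\des(\sigma)}{n}\binom{y+n-1-\des(\tau)}{n},
\]
while on the right side, $[\pi]\,\phi(xy)=\binom{xy+n-1-\des(\pi)}{n}$. So the theorem reduces to the polynomial identity
\[
\binom{xy+n-1-\des(\pi)}{n}=\sum_{\sigma\tau=\pi}\binom{x+n-1-\des(\sigma)}{n}\binom{y+n-1-\des(\tau)}{n}
\]
for each $\pi\in\symn$.

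Next, I would extract this identity from Theorem~\ref{Symmetric Algebra Thm}. Expanding the right side of equation~\eqref{eq: Symmetric Algebra} using $1/(1-s)^{n+1}=\sum_{j\geq 0}\binom{j+n}{n}s^j$ and similarly for $t$, the coefficient of $s^j t^k$ on the right is exactly $\sum_{\sigma\tau=\pi}\binom{j+n-\des(\sigma)}{n}\binom{k+n-\des(\tau)}{n}$, while the coefficient on the left is $\binom{(j+1)(k+1)+n-1-\des(\pi)}{n}$. Setting $x=j+1$ and $y=k+1$ gives precisely the needed identity for all positive integers $x,y$. Since both sides are polynomials in $x$ and $y$ of bounded degree, agreement on this infinite grid forces equality as polynomials, completing the proof.

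The only subtlety, and really the only place to be a bit careful, is the coefficient extraction: one must verify that the expansion of $s^{\des(\sigma)}/(1-s)^{n+1}$ really does give $\binom{j+n-\des(\sigma)}{n}$ as the coefficient of $s^j$ for \emph{every} $j\geq 0$, including $j<\des(\sigma)$ where the binomial coefficient vanishes because its upper argument is a nonnegative integer smaller than $n$. Beyond that bookkeeping, the argument is essentially a translation of the generating function identity of Theorem~\ref{Symmetric Algebra Thm} into the language of the structure polynomial.
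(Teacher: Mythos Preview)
Your proposal is correct and follows essentially the same approach as the paper: both extract the coefficient of $s^jt^k$ from equation~\eqref{eq: Symmetric Algebra} to obtain the binomial identity for all nonnegative integers $j,k$, then invoke the fact that two polynomials agreeing on all positive integer pairs must agree identically. Your write-up is somewhat more explicit about the coefficient extraction and the polynomial-identity step, but the underlying argument is the same.
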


\begin{proof}
By comparing coefficients of $s^jt^k$ on both sides of equation \eqref{eq: Symmetric Algebra}, we see that
\[
\binom{(j+1)(k+1) +n-1-\des(\pi)}{n} = \sum_{\sigma\tau = \pi}  \binom{j+n -\des(\sigma)}{n} \binom{k+n - \des(\tau)}{n}
\]
for all $j,k \geq 0$.
Multiplying both sides by $\pi$ and summing over all $\pi \in \symn$ shows that $\phi(x)\phi(y) = \phi(xy)$ for all positive integers. This implies that the theorem holds as polynomials in $x$ and $y$.
\end{proof}

If we expand the structure polynomial
\[
\phi(x) = \dsum_{\pi \in \symn} \binom{x+n-1-\des(\pi)}{n} \pi = \sum_{i=1}^n e_i x^i
\]
then the property $\phi(x)\phi(y) = \phi(xy)$ can be rewritten as
\[
\sum_{i,j = 1}^n e_ie_jx^iy^j = \sum_{i=1}^n e_i (xy)^i.
\]
From this we conclude that $e_i^2 = e_i$ and $e_i e_j = 0$ if $i \neq j$.
Hence the $e_i$ are orthogonal idempotents which also span the Eulerian descent algebra.

%%%%%%%%%%%%%%%%%%%%%%%%%%%%%%%%%%%%%%%%%%%%%%%%

\section{The cyclic Eulerian descent module}\label{sec: cyclic Eulerian module}

Define the \emph{cyclic descent set} $\cDes(\pi)$ of a permutation $\pi \in \symn$ to be the set of all $i \in [n]$ such that $\pi(i) > \pi(i+1)$ with $\pi(n+1) := \pi(1)$.
Thus $n \in \cDes(\pi)$ if and only if $\pi(n) > \pi(1)$.
We let $\cdes(\pi) = |\cDes(\pi)|$ denote the number of cyclic descents of $\pi$.
The group algebra of the symmetric group $\symn$ contains a subalgebra called the \emph{cyclic Eulerian descent algebra} which is induced by cyclic descent number.
The cyclic Eulerian descent algebra is the span of the basis elements $C_i$ defined by
\[
C_i = \sum_{\cdes(\pi) = i} \pi
\]
for $i=1,\ldots,n-1$.
This algebra is in fact isomorphic to the Eulerian descent algebra on $\mathfrak{S}_{n-1}$.
For a proof of the existence of the cyclic Eulerian descent algebra see Cellini \cite{CelliniI1995, Cellini1998} or Petersen \cite{Petersen2005}.
In this section, we prove an unexpected result about the relationship between the Eulerian descent algebra and the cyclic Eulerian descent algebra.
We compute
\[
A_\pi^c (s,t) := \sum_{\sigma \tau = \pi} s^{\des(\sigma)}t^{\cdes(\tau)}
\]
and show that $A_\pi^c (s,t)$ is determined by $\cdes(\pi)$.
This shows that the product $E_j C_k$ can be written as a linear combination of the basis elements for the cyclic Eulerian descent algebra.
Hence the cyclic Eulerian descent algebra is a left module over the Eulerian descent algebra.
Cellini \cite{Cellini1998} mentioned that these two algebras do not commute but it is not clear whether she knew about the module relationship or not.

As before, we define zig-zag and chain posets but the rules are slightly modified to correspond to cyclic descents.
Let $\cycliczigsets$ denote the set of all nonempty proper subsets of $[n]$.
For every $I \in \cycliczigsets$ and $\pi \in \symn$, define the \emph{cyclic zig-zag poset} $\czigipi$ by setting $\pi(i) <_{Z^c} \pi(i+1)$ if $i \notin I$ and $\pi(i) >_{Z^c} \pi(i+1)$ if $i \in I$ where $\pi(n+1) = \pi(1)$.
Both $I = \varnothing$ and $I=[n]$ would give $\pi(1) <_{Z^c} \pi(1)$ and thus are excluded from $\cycliczigsets$.
We include both $\pi(1)$ and $\pi(n+1)$ in diagrams of our cyclic zig-zag posets even though they represent the same element in the poset.
We choose to include both because it presents a cleaner looking Hasse diagram and because it helps with later discussion.
Figure \ref{fig: cyc zig poset} shows a cyclic zig-zag poset for $I = \{2,3\}$ and $\pi = 2143$.
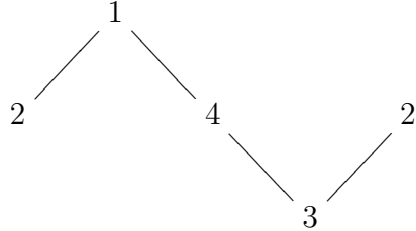
\begin{figure}[htbp]
\[\xymatrix @!R @!C {  & 1\ar@{-}[dr] & & &  \\  2\ar@{-}[ur] &  & 4\ar@{-}[dr] &   & 2  \\   & & & 3\ar@{-}[ur] &  }\] 
\caption{\; The cyclic zig-zag poset $\czigipi$ for $I = \{2,3\}$ and $\pi = 2143$.}
\label{fig: cyc zig poset}
\end{figure}

\begin{lem}\label{lem: Cyclic Zig-Zag Extensions}
Let $\sigma \in \symn$ and consider $\czigipi$ with $\pi \in \symn$ and $I \in \cycliczigsets$.
Then $\sigma \in \Lin(\czigipi)$ if and only if $\cDes(\sigma^{-1}\pi) = I$.
\end{lem}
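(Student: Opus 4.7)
My plan is to follow the proof of Lemma \ref{lem: Zig-Zag Extensions} essentially verbatim, with the one change being that the cyclic poset $\czigipi$ has an extra cover relation between $\pi(n)$ and $\pi(1)$ which must be accounted for. Since $\czigipi$ is generated, as a partial order, by the $n$ pairwise comparisons between consecutive entries of $\pi$ read cyclically, a permutation $\sigma$ lies in $\Lin(\czigipi)$ if and only if $\sigma^{-1}$ respects each of these $n$ comparisons in $\Z$. So the strategy is to verify this equivalence index by index and read off that $I = \cDes(\sigma^{-1}\pi)$.

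For each $i \in \{1,\ldots,n-1\}$, the argument from Lemma \ref{lem: Zig-Zag Extensions} goes through unchanged: $i \notin I$ is equivalent to $\pi(i) <_{Z^c} \pi(i+1)$, which (since $\sigma$ is a linear extension) is equivalent to $\sigma^{-1}\pi(i) < \sigma^{-1}\pi(i+1)$, i.e. $i \notin \Des(\sigma^{-1}\pi)$. For the wraparound index $i=n$, recall that $\pi(n+1) := \pi(1)$ in the definition of $\czigipi$ and of $\cDes$; thus $n \notin I$ is equivalent to $\pi(n) <_{Z^c} \pi(1)$, which by the linear extension property translates to $\sigma^{-1}\pi(n) < \sigma^{-1}\pi(1)$, which is precisely $n \notin \cDes(\sigma^{-1}\pi)$. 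Combining all $n$ equivalences gives $I = \cDes(\sigma^{-1}\pi)$.

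I do not expect any real obstacle here; the only item worth a quick sanity check is that the hypothesis $I \in \cycliczigsets$ (excluding $\varnothing$ and $[n]$) matches the fact that $\cDes(\tau)$ is always a nonempty proper subset of $[n]$ for any $\tau \in \symn$, so that the ``only if'' direction produces an $I$ in the correct range and the ``if'' direction never tries to interpret the forbidden values of $I$. This is immediate because any permutation has at least one descent and at least one ascent in the cyclic sense. The proof itself is a direct unfolding of the definitions, and no new combinatorial machinery beyond what was used in Lemma \ref{lem: Zig-Zag Extensions} is required.
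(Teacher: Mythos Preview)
Your proposal is correct and follows essentially the same approach as the paper: the paper's proof simply invokes the argument of Lemma~\ref{lem: Zig-Zag Extensions} and then notes the additional wraparound check that $\sigma^{-1}(\pi(n)) < \sigma^{-1}(\pi(1))$ when $n \notin I$ and $\sigma^{-1}(\pi(n)) > \sigma^{-1}(\pi(1))$ when $n \in I$, which is exactly what you do. Your extra remark that $\cDes(\tau)$ is always a nonempty proper subset of $[n]$ is a pleasant sanity check but is not needed for the argument itself.
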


\begin{proof}
The same arguments from the proof of Lemma \ref{lem: Zig-Zag Extensions} also show that $\sigma^{-1}(\pi(n)) < \sigma^{-1}(\pi(1))$ for $n \notin I$ and $\sigma^{-1}(\pi(n)) > \sigma^{-1}(\pi(1))$ for $n \in I$.
\end{proof}

\begin{ex}
If $\czigipi$ is the poset in Figure~\ref{fig: cyc zig poset} with $\pi = 2143$ and $I=\{2,3\}$ then $\Lin(\czigipi) = \{3241, 3421\} $ and $\{\sigma^{-1}\pi \, : \, \sigma \in \Lin(\czigipi)\} = \{2431, 3421\}$.
Here $\cDes(\sigma^{-1}\pi) =  \{2, 3\}$ for all $\sigma \in \Lin(\czigipi)$.
\end{ex}

Next we consider a second collection of posets.
Let $\cyclicchainsets$ denote the set of all nonempty subsets of $[n]$.
For every $I \in \cyclicchainsets$ and $\pi \in \symn$, define the \emph{cyclic chain poset} $\cchainipi$ by setting $\pi(i) <_{C^c} \pi(i+1)$ if $i \notin I$.
Note that we now allow $I=[n]$ since this corresponds to an antichain.
As before, $\pi(n+1) = \pi(1)$ and it will be useful to think about two different ways to visually represent a cyclic chain poset.
We can either leave $\pi(1)$ and $\pi(n+1)$ distinct in the diagram of a given cyclic chain poset even though they represent they same element or we can equate them in the diagram.
Doing so prefixes the chain ending in $\pi(n+1)$ to the chain beginning with $\pi(1)$.
Figure~\ref{fig: cyc chain poset} shows the two diagrams of the cyclic chain poset $\cchainipi$ for $I = \{2,3\}$ and $\pi = 2143$.
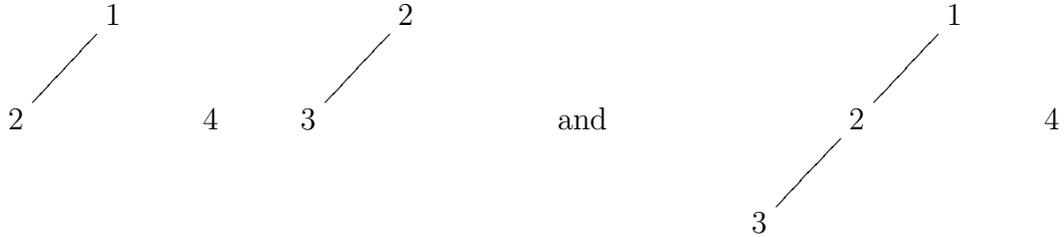
\begin{figure}[htbp]
\[\xymatrix @!R  {  & 1 & &  & 2 & & & & 1 & \\  2\ar@{-}[ur] &  & 4 & 3\ar@{-}[ur]   & & \qquad \text{and} \qquad & & 2\ar@{-}[ur] & & 4  \\  &&&&&& 3\ar@{-}[ur] &&&   }\] 
\caption{\; Two diagrams of $\cchainipi$ for $I = \{2,3\}$ and $\pi = 2143$.}
\label{fig: cyc chain poset}
\end{figure}

\begin{lem}\label{Cyclic Chain Extensions}
Let $\sigma \in \symn$ and consider $\cchainipi$ with $\pi \in \symn$ and $I \in \cyclicchainsets$.
Then $\sigma \in \Lin(\cchainipi)$ if and only if $\cDes(\sigma^{-1}\pi) \subseteq I$.
\end{lem}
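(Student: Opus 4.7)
The plan is to follow the same strategy as in the proof of Lemma \ref{lem: Chain Extensions}, modifying it only by incorporating the cyclic convention $\pi(n+1) = \pi(1)$ so that the relation $\pi(n) <_{C^c} \pi(1)$ (present when $n \notin I$) is handled on equal footing with the other covering relations $\pi(i) <_{C^c} \pi(i+1)$ for $i \in [n-1] \setminus I$.

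First I would record the guiding observation: $\sigma$ is a linear extension of $\cchainipi$ if and only if every covering relation $\pi(i) <_{C^c} \pi(i+1)$ for $i \notin I$ translates into the numerical inequality $\sigma^{-1}(\pi(i)) < \sigma^{-1}(\pi(i+1))$. For each such $i \in [n-1]$, this inequality is exactly the statement $i \notin \Des(\sigma^{-1}\pi)$, and since $\Des(\sigma^{-1}\pi) = \cDes(\sigma^{-1}\pi) \cap [n-1]$, it is also the statement $i \notin \cDes(\sigma^{-1}\pi)$. This is literally the content of the proof of Lemma \ref{lem: Chain Extensions}.

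The one new case is $i = n$ with $n \notin I$. Here the relation $\pi(n) <_{C^c} \pi(n+1)$, read with the cyclic convention, is $\pi(n) <_{C^c} \pi(1)$, and its being preserved by $\sigma^{-1}$ means $\sigma^{-1}(\pi(n)) < \sigma^{-1}(\pi(1))$. Recalling that $n \in \cDes(\sigma^{-1}\pi)$ is by definition the condition $\sigma^{-1}(\pi(n)) > \sigma^{-1}(\pi(1))$, this is precisely $n \notin \cDes(\sigma^{-1}\pi)$. Together with the $[n-1]$ case, the equivalence $\sigma \in \Lin(\cchainipi) \Longleftrightarrow \bigl(i \notin I \Rightarrow i \notin \cDes(\sigma^{-1}\pi) \text{ for all } i \in [n]\bigr)$ is just the contrapositive of $\cDes(\sigma^{-1}\pi) \subseteq I$, which gives both directions of the lemma simultaneously.

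There is essentially no obstacle here—the argument is bookkeeping added to Lemma \ref{lem: Chain Extensions}. The only subtlety worth noting is that the hypothesis $I \in \cyclicchainsets$ ensures $I \neq \varnothing$, so the collection of relations defining $\cchainipi$ never tries to close into a cycle and does in fact yield a well-defined partial order; without this, the ``if'' direction would be vacuously trivial but $\cchainipi$ itself would be ill-defined.
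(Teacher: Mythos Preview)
Your proof is correct and follows the same approach as the paper, which does not actually write out a proof for this lemma but leaves it implicit that the argument from Lemma~\ref{lem: Zig-Zag Extensions} (via Lemma~\ref{lem: Chain Extensions}) carries over with the cyclic boundary case $i=n$ handled exactly as you describe. Your remark about $I \neq \varnothing$ guaranteeing that $\cchainipi$ is a genuine partial order is a nice touch that the paper does not make explicit.
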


\begin{ex}
If $n=4$, $\pi = 2143$, and $I = \{2,3\}$ then $\cchainipi$ is the poset in Figure \ref{fig: cyc chain poset}.
Then $\Lin(\cchainipi) = \Lin(\czigipi) \cup \{3214, 4321\}$ and
\[
\{\sigma^{-1}\pi \, : \, \sigma = 3214, 4321\} = \{2341, 3412\}.
\]
Here $\cDes(2341) = \{3\}$ and $\cDes(3412) = \{2\}$.
\end{ex}

Next we define barred versions of both cyclic zig-zag posets and cyclic chain posets.
First, a \emph{barred cyclic zig-zag poset} is defined to be a cyclic zig-zag poset $\czigipi$ with an arbitrary number of bars placed in each of the $n$ internal spaces between $\pi(1)$ and $\pi(n+1)$ such that between any two bars the elements of the poset (not necessarily their labels) are increasing.
This can be thought of as requiring at least one bar in each ``descent'' of the cyclic zig-zag poset.
We do not allow bars on either end and so $\pi(1)$ will always be in the leftmost compartment and $\pi(n+1)$ will always be in the rightmost compartment.
We will see in later discussion that the equivalence of $\pi(1)$ and $\pi(n+1)$ does not cause problems with our definition of barred cyclic zig-zag posets.
Figure~\ref{fig: barred cyc zig poset} gives an example of a barred $\czigipi$ poset with $I = \{2,3\}$ and $\pi = 2143$.
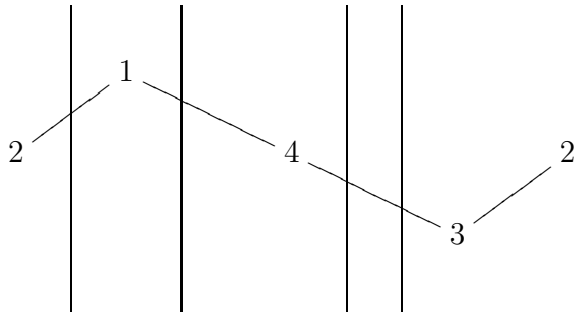
\begin{figure}[htbp]
\[\xymatrix @!R @!C @R=16pt @C=8pt{ & \ar@{-}[dddd] && \ar@{-}[dddd] &&& \ar@{-}[dddd] & \ar@{-}[dddd] &&& \\ && 1\ar@{-}[drrr] &&&&&&&& \\  2\ar@{-}[urr] &&&&& 4\ar@{-}[drrr] &&&&& 2  \\  &&&&&&&& 3\ar@{-}[urr] && \\  &&&&&&&&&&  }\] 
\caption{\; A barred $\czigipi$ poset with $I = \{2,3\}$ and $\pi = 2143$.}
\label{fig: barred cyc zig poset}
\end{figure}

To create a barred cyclic zig-zag poset, we begin with a cyclic zig-zag poset $\czigipi$ and must first place a bar in space $i$ for each $i \in I$.
From there we are free to place any number of bars in any of the $n$ spaces.
Define $\Omega_{\czigipi}(j,k)$ to be the number of ordered pairs $(f,P)$ where $P$ is a barred $\czigipi$ poset with $k$ bars and $f$ is a $\czigipi$-partition with parts less than or equal to $j$.
Recall that $\sigma \in \Lin(\czigipi)$ if and only if $\cDes(\sigma^{-1}\pi) = I$.
If we begin with the cyclic zig-zag poset $\czigipi$ then there is a unique way to place the first $|I|$ bars (place one bar in space $i$ for each $i\in I$).
Next there are
\[
\mchoose{n}{k-|I|} = \binom{k+n-1 - |I|}{n-1}
\]
ways to place the remaining $k-|I|$ bars in the $n$ allowable spaces and hence there are $\binom{k+n-1-|I|}{n-1}$ barred $\czigipi$ posets with $k$ bars.
Thus
\[
\Omega_{\czigipi}(j,k) = \sum_{\sigma \in \Lin(\czigipi)} \Omega_{\sigma}(j) \binom{k+n-1-\cdes(\sigma^{-1}\pi)}{n-1}.
\]
If we set $\tau = \sigma^{-1}\pi$ and sum over all $I \in \cycliczigsets$ then we have
\begin{equation}\label{eq: barred cyclic equiv}
\sum_{I \in \cycliczigsets} \Omega_{\czigipi}(j,k) = \sum_{\sigma \tau = \pi} \Omega_{\sigma}(j)\binom{k+n-1-\cdes(\tau)}{n-1} .
\end{equation}

We define a \emph{barred cyclic chain poset} to be a cyclic chain poset $\cchainipi$ with at least one bar in space $i$ for each $i \in I$.
No bars are allowed in space $j$ for any $j \notin I$.
Thus we place at least one bar between each chain of the chain poset.
We can again choose whether or not to equate $\pi(1)$ and $\pi(n+1)$ in our visual depiction of a given barred cyclic chain poset.
Figure \ref{fig: barred cyc chain poset} shows the two diagrams of a barred $\cchainipi$ poset with $I = \{1,2,3\}$ and $\pi = 2143$.
\begin{figure}[htbp]
\[\xymatrix @!R  @C=10pt{ & \ar@{-}[ddd] && \ar@{-}[ddd] && \ar@{-}[ddd] & \ar@{-}[ddd] &&&&&& \ar@{-}[ddd] && \ar@{-}[ddd] && \ar@{-}[ddd] & \ar@{-}[ddd]  \\  &&&&&&&& 2 &&& 2 &&&&&& \\  2 && 1 && 4 &&& 3\ar@{-}[ur]  && \qquad \text{and} \qquad & 3\ar@{-}[ur]  &&& 1 && 4 && \\  &&&&&&&&&&&&&&&&& \\ }\] 
\caption{\;  Two diagrams of a barred $\cchainipi$ poset with $I = \{1,2,3\}$ and $\pi = 2143$. }
\label{fig: barred cyc chain poset}
\end{figure}
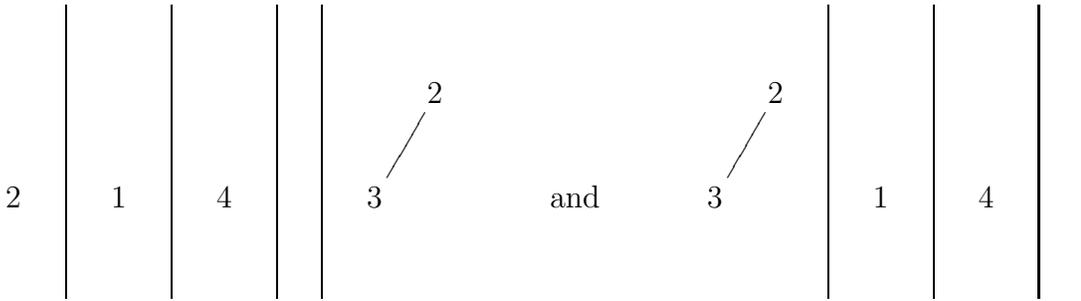

To create a barred cyclic chain poset, we begin with a cyclic chain poset $\cchainipi$ and must first place a bar in space $i$ for each $i \in I$.
From there we are free to place any number of bars in space $i$ for any $i \in I$.
This creates a collection of bars with each compartment between adjacent bars containing at most one nonempty chain.
Define $\Omega_{\cchainipi}(j,k)$ to be the number of ordered pairs $(f,P)$ where $P$ is a barred $\cchainipi$ poset with $k$ bars and $f$ is a $\cchainipi$-partition with parts less than or equal to $j$.
Recall that $\sigma \in \Lin(\cchainipi)$ if and only if $\cDes(\sigma^{-1}\pi) \subseteq I$.
If we begin with the cyclic chain poset $\cchainipi$ then there is a unique way to place the first $|I|$ bars (place one bar in space $i$ for each $i\in I$).
Next there are
\[
\mchoose{|I|}{k-|I|} = \binom{k - 1}{k-|I|}
\]
ways to place the remaining $k-|I|$ bars in the $|I|$ allowable spaces and hence there are $\binom{k-1}{k-|I|}$ barred $\cchainipi$ posets with $k$ bars.
Thus
\[
\Omega_{\cchainipi}(j,k) = \sum_{\sigma \in \Lin(\cchainipi)} \Omega_{\sigma}(j) \binom{k-1}{k-|I|}.
\]

The following lemma allows us to compare $P$-partitions for barred cyclic zig-zag posets and barred cyclic chain posets.

\begin{lem}\label{Barred Cyclic Poset Extensions}
For every $\pi \in \symn$,
\[
\sum_{I \in \cycliczigsets}  \Omega_{\czigipi}(j,k) = \sum_{I \in \cyclicchainsets} \Omega_{\cchainipi}(j,k).
\]
\end{lem}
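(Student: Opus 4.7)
The plan is to imitate the proof of Lemma~\ref{Barred Poset Extensions} in the cyclic setting. First I would rewrite both sides by reparametrizing via the linear extension $\sigma$ rather than the underlying subset $I$. Using Lemma~\ref{lem: Cyclic Zig-Zag Extensions}, the left-hand side becomes
\[
\sum_{I \in \cycliczigsets} \Omega_{\czigipi}(j,k) = \sum_{\sigma \in \symn} \Omega_\sigma(j)\, Z_\sigma(k),
\]
where $Z_\sigma(k)$ counts barred cyclic zig-zag posets with $k$ bars such that $\sigma$ is a linear extension of the underlying poset. By Lemma~\ref{Cyclic Chain Extensions}, the right-hand side rewrites analogously as $\sum_\sigma \Omega_\sigma(j)\, C_\sigma(k)$, with $C_\sigma(k)$ the corresponding count for barred cyclic chain posets. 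It then suffices to show $Z_\sigma(k) = C_\sigma(k)$ for every $\sigma \in \symn$.

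To exhibit this equality, I would use the same bijection as in the non-cyclic case: send a barred cyclic zig-zag poset to the barred cyclic chain poset obtained by erasing the cover relation between $\pi(i)$ and $\pi(i+1)$ (indices taken cyclically) for every space $i \in [n]$ that contains at least one bar, leaving the bar placement itself unchanged. By Lemma~\ref{lem: Cyclic Zig-Zag Extensions}, if $\sigma$ is a linear extension of the underlying zig-zag poset then that poset is forced to be $Z^c(I_0,\pi)$ with $I_0 := \cDes(\sigma^{-1}\pi)$, so the data counted by $Z_\sigma(k)$ is exactly the set of tuples $(b_1,\dots,b_n) \in \N^n$ with $b_i \geq 1$ for $i \in I_0$ and $\sum_i b_i = k$. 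On the chain side, Lemma~\ref{Cyclic Chain Extensions} says a barred cyclic chain poset with underlying $C^c(J,\pi)$ admits $\sigma$ as a linear extension precisely when $J \supseteq I_0$, and its bar placement satisfies $b_i \geq 1$ iff $i \in J$. Summing over $J$ and placements, the data counted by $C_\sigma(k)$ is again the tuples $(b_1,\dots,b_n)$ with $b_i \geq 1$ for $i \in I_0$ and $\sum_i b_i = k$ (with $J$ recovered as $\{i : b_i \geq 1\}$).

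The main step to verify, and essentially the only place where cyclicity matters, is that the $J$ produced on the chain side is always a valid element of $\cyclicchainsets$, i.e., nonempty. This follows from the fact that every permutation of $[n]$ (with $n \geq 2$) has at least one cyclic descent, so $I_0$ is nonempty and hence so is $J \supseteq I_0$. Unlike the non-cyclic case there are no end compartments to account for, so the $n$ spaces in the two types of barred cyclic posets align perfectly and the bijection reduces to the identity on bar-placement tuples, with only the interpretation of the underlying cover relations changing. I expect no other obstacles, since all bookkeeping about which bar placements are admissible is already handled by the lemmas on linear extensions.
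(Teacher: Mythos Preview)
Your proposal is correct and follows essentially the same approach as the paper: both rewrite each side as $\sum_{\sigma}\Omega_\sigma(j)$ times a bar-count and then exhibit the bijection that deletes the cover relation at every barred space. Your explicit tuple description and the verification that $J\supseteq I_0\neq\varnothing$ (so the resulting chain index lies in $\cyclicchainsets$) are details the paper leaves implicit, but the argument is the same.
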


\begin{proof}
We prove this lemma by showing that for every $\sigma \in \symn$, the number of barred cyclic zig-zag posets with $k$ bars such that $\sigma$ is a linear extension of the underlying cyclic zig-zag poset is the same as the number of barred cyclic chain posets with $k$ bars such that $\sigma$ is a linear extension of the underlying cyclic chain poset.
The desired bijection is the map that sends each barred cyclic zig-zag poset with underlying poset $\czigipi$ to the barred cyclic chain poset obtained by removing the relation between $\pi(i)$ and $\pi(i+1)$ for every space $i$ containing at least one bar.
\end{proof}

The bijection in the previous lemma maps Figure~\ref{fig: barred cyc zig poset} to Figure~\ref{fig: barred cyc chain poset}.
We are now ready to compute $A_{\pi}^c (s,t)$.

\begin{thm}\label{des cdes module}
For every $\pi \in \symn$,
\begin{equation}\label{eq: des cdes module}
\sum_{j,k \geq 0} (j+1)\binom{(j+1)k +n-1-\cdes(\pi)}{n-1} s^j t^k = \frac{A_{\pi}^c (s,t)}{(1-s)^{n+1}(1-t)^{n}}.
\end{equation}
\end{thm}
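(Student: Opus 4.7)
I will follow the template of Theorem~\ref{Symmetric Algebra Thm}, adapted to the cyclic setting. Expanding the right-hand side of~\eqref{eq: des cdes module} by the binomial series gives
\[
\frac{A_\pi^c(s,t)}{(1-s)^{n+1}(1-t)^n} = \sum_{j,k \geq 0} \sum_{\sigma\tau = \pi} \binom{j+n-\des(\sigma)}{n} \binom{k+n-1-\cdes(\tau)}{n-1} s^j t^k.
\]
Using $\Omega_\sigma(j) = \binom{j+n-\des(\sigma)}{n}$ from~\eqref{eq: pi order poly formula}, then equation~\eqref{eq: barred cyclic equiv} and Lemma~\ref{Barred Cyclic Poset Extensions}, the coefficient of $s^j t^k$ becomes $\sum_{I \in \cyclicchainsets} \Omega_{\cchainipi}(j,k)$. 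Thus the theorem reduces to the counting identity
\[
\sum_{I \in \cyclicchainsets} \Omega_{\cchainipi}(j,k) = (j+1)\binom{(j+1)k + n - 1 - \cdes(\pi)}{n-1}.
\]

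I would establish this bijectively, anchoring at $\pi(1)$ to break the cyclic symmetry. The left-hand side counts pairs $(P,f)$ where $P$ is a barred cyclic chain poset of $\pi$ with $k$ bars and $f$ is a $P$-partition with parts in $[0,j]$. Setting $v = f(\pi(1)) \in [0,j]$ accounts for the factor $(j+1)$. Cut the cycle just before $\pi(1)$ (at space $n$) and push any bars at that space to the right end, producing $k+1$ linear compartments labeled $0,1,\ldots,k$. For each $i \in [n]$, if $\pi(i)$ lies in linear compartment $c_i$, define
\[
V_{\pi(i)} := c_i(j+1) + f(\pi(i)),
\]
and let $W := v + k(j+1)$, which is the value $V_{\pi(n+1)}$ obtained by identifying $\pi(n+1)$ with $\pi(1)$ after wrapping around.

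The claim is that pairs $(P,f)$ with $v$ fixed are in bijection with weakly increasing sequences
\[
v = V_{\pi(1)} \leq V_{\pi(2)} \leq \cdots \leq V_{\pi(n)} \leq W
\]
whose strict inequalities occur at exactly the $n$ transitions indexed by $\cDes(\pi)$, where the inequality between $V_{\pi(n)}$ and $W$ is counted as position $n$. The forward direction follows from the $P$-partition axioms. The inverse uses Euclidean division by $j+1$: from a given $V$-sequence, set $c_i := \lfloor V_{\pi(i)}/(j+1) \rfloor$ and $f(\pi(i)) := V_{\pi(i)} \bmod (j+1)$, so that $b_i := c_{i+1} - c_i$ (with $c_{n+1} := k$) recovers the bar placement. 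The essential observation is that a bar in space $i$ forces $V_{\pi(i+1)} - V_{\pi(i)} \geq (j+1) - j = 1 > 0$, so bar-induced strict inequalities arise automatically from the decomposition and need not be imposed as part of the condition on $V$. By the usual shift trick that converts each of the $\cdes(\pi)$ strict inequalities into a weak one, the number of valid $V$-sequences for fixed $v$ equals $\binom{(j+1)k + n - 1 - \cdes(\pi)}{n-1}$, and summing over the $(j+1)$ choices of $v$ yields the desired identity.

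The main obstacle is getting the cyclic book-keeping right, specifically the case $b_n = 0$, in which linearization compartments $0$ and $k$ correspond to a single cyclic compartment and $\pi(n)$ has $c_n = k$. One must check that the endpoint inequality $V_{\pi(n)} \leq W$ (strict iff $n \in \cDes(\pi)$) unpacks precisely to the cyclic $P$-partition constraint $f(\pi(n)) \leq v$ (strict iff $n \in \cDes(\pi)$). Once this verification is in hand, the bijection goes through, the counting identity follows, and the proof is complete; in particular $A_\pi^c(s,t)$ depends on $\pi$ only through $\cdes(\pi)$, which is the statement needed to obtain the left module structure.
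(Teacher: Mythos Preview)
Your proposal is correct and follows essentially the same approach as the paper: both reduce to the counting identity via \eqref{eq: barred cyclic equiv} and Lemma~\ref{Barred Cyclic Poset Extensions}, then linearize the barred cyclic chain posets by cutting at space $n$, anchor at $s_1=f(\pi(1))$ to produce the factor $(j+1)$, and concatenate the per-compartment inequalities into a single chain of length $n-1$ ending at $s_1+k(j+1)$. Your Euclidean-division inverse and your explicit treatment of the $b_n=0$ boundary case make the bijection slightly more transparent than the paper's narrative version, but the argument is the same; one small wording fix---your phrase ``strict inequalities occur at exactly the $n$ transitions indexed by $\cDes(\pi)$'' should read ``strict inequalities are imposed at the positions in $\cDes(\pi)$,'' since bar-crossings can (and do) produce additional strict jumps, as you yourself note.
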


\begin{proof}
We have
\begin{align*}
\frac{\dsum_{\sigma\tau = \pi} s^{\des(\sigma)} t^{\cdes(\tau)}}{(1-s)^{n+1}(1-t)^n} &=  \sum_{j,k \geq 0} \sum_{\sigma \tau = \pi} \binom{j+n}{n}\binom{k+n-1}{n-1}s^{j+\des(\sigma)} t^{k+\cdes(\tau)}\\
&= \sum_{j,k \geq 0} \sum_{\sigma \tau = \pi} \binom{j+n-\des(\sigma)}{n}\binom{k+n-1-\cdes(\tau)}{n-1}s^j t^k \\ 
&= \sum_{j,k \geq 0} \sum_{I \in \cycliczigsets}  \Omega_{\czigipi}(j,k) s^j t^k
\end{align*}
where the last equality follows from equation \eqref{eq: barred cyclic equiv}.
Switching from cyclic zig-zag posets to cyclic chain posets by Lemma \ref{Barred Cyclic Poset Extensions}, we have
\[
\frac{\dsum_{\sigma\tau = \pi} s^{\des(\sigma)} t^{\cdes(\tau)}}{(1-s)^{n+1}(1-t)^n} = \sum_{j,k \geq 0} \sum_{I \in \cyclicchainsets}  \Omega_{\cchainipi}(j,k) s^j t^k.
\]
The only remaining step is to prove that
\[
\sum_{I \in \cyclicchainsets}  \Omega_{\cchainipi}(j,k) =  (j+1)\binom{(j+1)k +n-1-\cdes(\pi)}{n-1}.
\]

First we note that $\sum_{I \in \cyclicchainsets} \Omega_{\cchainipi}(j,k)$ counts ordered pairs $(f,P)$ where $P$ is a barred $\cchainipi$ poset with $k$ bars for some $I \in \cyclicchainsets$ and $f$ is a $\cchainipi$-partition with parts less than or equal to $j$.
Fix a barred $\cchainipi$ poset with $k$ bars, equating $\pi(1)$ and $\pi(n+1)$, and use the bars to define compartments labeled $0,\ldots,k-1$ from left to right.
Then define $\pi_i$ to be the (possibly empty) word in compartment $i$ and denote the length of $\pi_i$ by $L_i$.
Then
\[
\Omega_{\cchainipi}(j) = \prod_{i=0}^{k-1} \Omega_{\pi_i}(j) 
\]
For $i=0,\ldots,k-1$, we let $\Omega_{\pi_i}(j)$ count solutions to the inequalities
\[
i(j+1) \leq s_{i_1} \leq  \cdots \leq s_{i_{L_i}} \leq i(j+1) +j
\]
with $s_{i_l} < s_{i_{l+1}}$ if $l \in \Des(\pi_i)$.
By concatenating these inequalities, we see that if we sum over all $I \in \cyclicchainsets$ and all barred $\cchainipi$ posets with $k$ bars then we have $\pi$-partitions which seem to wrap around and describe a cyclic shuffle of $\pi$ with $k$ bars.
To compute a formula, we instead think of barred cyclic chain posets in which $\pi(1)$ and $\pi(n+1)$ are not visually equated.
The $k$ bars define $k+1$ compartments labeled $0,\ldots,k$ from left to right.
Again, define $\pi_i$ to be the (possibly empty) word in compartment $i$ and denote the length of $\pi_i$ by $L_i$.
For $i=0,\ldots,k-1$, we let $\Omega_{\pi_i}(j)$ count solutions to the inequalities
\[
i(j+1) \leq s_{i_1} \leq  \cdots \leq s_{i_{L_i}} \leq i(j+1) +j
\]
with $s_{i_l} < s_{i_{l+1}}$ if $l \in \Des(\pi_i)$.
For $i=k$, we count solutions to the inequalities
\[
k(j+1) \leq s_{k_1} \leq  \cdots \leq s_{n} \leq s_1+k(j+1)
\]
with $s_{k_l} < s_{k_{l+1}}$ if $l \in \Des(\pi_k)$ and with $s_{n} < s_1 + k(j+1)$ if $n \in \cDes(\pi)$.
There are $j+1$ choices for the value of $s_1$ and for each choice we concatenate these inequalities.
Thus if we sum over all $I \in \cyclicchainsets$ and all barred $\cchainipi$ posets with $k$ bars then we see that $\sum_{I \in \cyclicchainsets} \Omega_{\cchainipi}(j,k)$ is equal to the number of solutions to the inequalities
\[
a = s_1 \leq s_2 \leq \cdots \leq s_n \leq s_{n+1} = a+k(j+1)
\]
with $s_i < s_{i+1}$ if $i \in \cDes(\pi)$.
Hence we conclude that
\[
\sum_{I \in \cyclicchainsets}  \Omega_{\cchainipi}(j,k)  = (j+1)\binom{(j+1)k+n-1-\cdes(\pi)}{n-1}.  \qedhere
\]
\end{proof}

Figure \ref{fig: every barred cyclic chain poset} depicts every barred $C^c(I,213)$ poset with $2$ bars and $I \subseteq \{1,2,3\}$.
This gives a visual representation of the last step in the proof of the previous theorem.

\begin{figure}[htbp]
\[\xymatrix  @R=11pt @C=7pt{  &&& \ar@{-}[dddd] & \ar@{-}[dddd] &&&&&&&&&&& \ar@{-}[dddd] & \ar@{-}[dddd] &&& \\  && 3 &&&&&&& \ar@{-}[ddd] & \ar@{-}[ddd] &&&&&&&&& 2 \\  & 1\ar@{-}[ur] &&&&&&& 1 &&&& 2 &&&&&& 3\ar@{-}[ur] & \\  2\ar@{-}[ur] &&&&& 2 & \qquad & 2\ar@{-}[ur] &&&& 3\ar@{-}[ur] && \qquad & 2 &&& 1\ar@{-}[ur] && \\  &&&&&&&&&&&&&&&&&&& \\  &&&&&&&&&&&&&&&&&&& \\  & \ar@{-}[ddd] && \ar@{-}[ddd] &&&&& \ar@{-}[ddd] &&& \ar@{-}[ddd] &&&&& \ar@{-}[ddd] && \ar@{-}[ddd] & \\  &&&&& 2 &&&&& 3 &&&&& 1 &&&& \\  2 && 1 && 3\ar@{-}[ur] &&& 2 && 1\ar@{-}[ur] &&& 2 && 2\ar@{-}[ur] &&& 3 && 2 \\  &&&&&&&&&&&&&&&&&&& \\ \ar@{-}[rrrrrrrrrrrrrrrrrrr]  &&&&&&&&&&&&&&&&&&& \\ &&& \ar@{-}[dddd] & \ar@{-}[dddd] &&&&&&&&&&&&&&  \ar@{-}[dddd] &  \ar@{-}[dddd] \\  && 3 &&&&&&& \ar@{-}[ddd] && \ar@{-}[ddd] &&&&&& 1 && \\  & 1\ar@{-}[ur] &&&&&&& 1 &&&&&&&& 2\ar@{-}[ur] &&& \\  2\ar@{-}[ur] &&&&&&& 2\ar@{-}[ur] &&& 3 &&&&& 3\ar@{-}[ur] &&&& \\  &&&&&&&&&&&&&&&&&&& \\  &&&&&&&&&&&&&&&&&&& \\  &&&&&&&&&&&&&&&&&& \ar@{-}[dddd] & \ar@{-}[dddd] \\  & \ar@{-}[ddd] &&& \ar@{-}[ddd] &&&&& \ar@{-}[ddd] && \ar@{-}[ddd] &&&&&& 2 && \\  &&& 3 &&&&& 2 &&&&&&&& 3\ar@{-}[ur] &&& \\  2 && 1\ar@{-}[ur] &&&&& 3\ar@{-}[ur] &&& 1 &&&&& 1\ar@{-}[ur] &&&& \\  &&&&&&&&&&&&&&&&&&&   }\] 
\caption{\; Barred $C^c(I,213)$ posets with $2$ bars and $I \subseteq \{1,2,3\}$.}
\label{fig: every barred cyclic chain poset}
\end{figure}
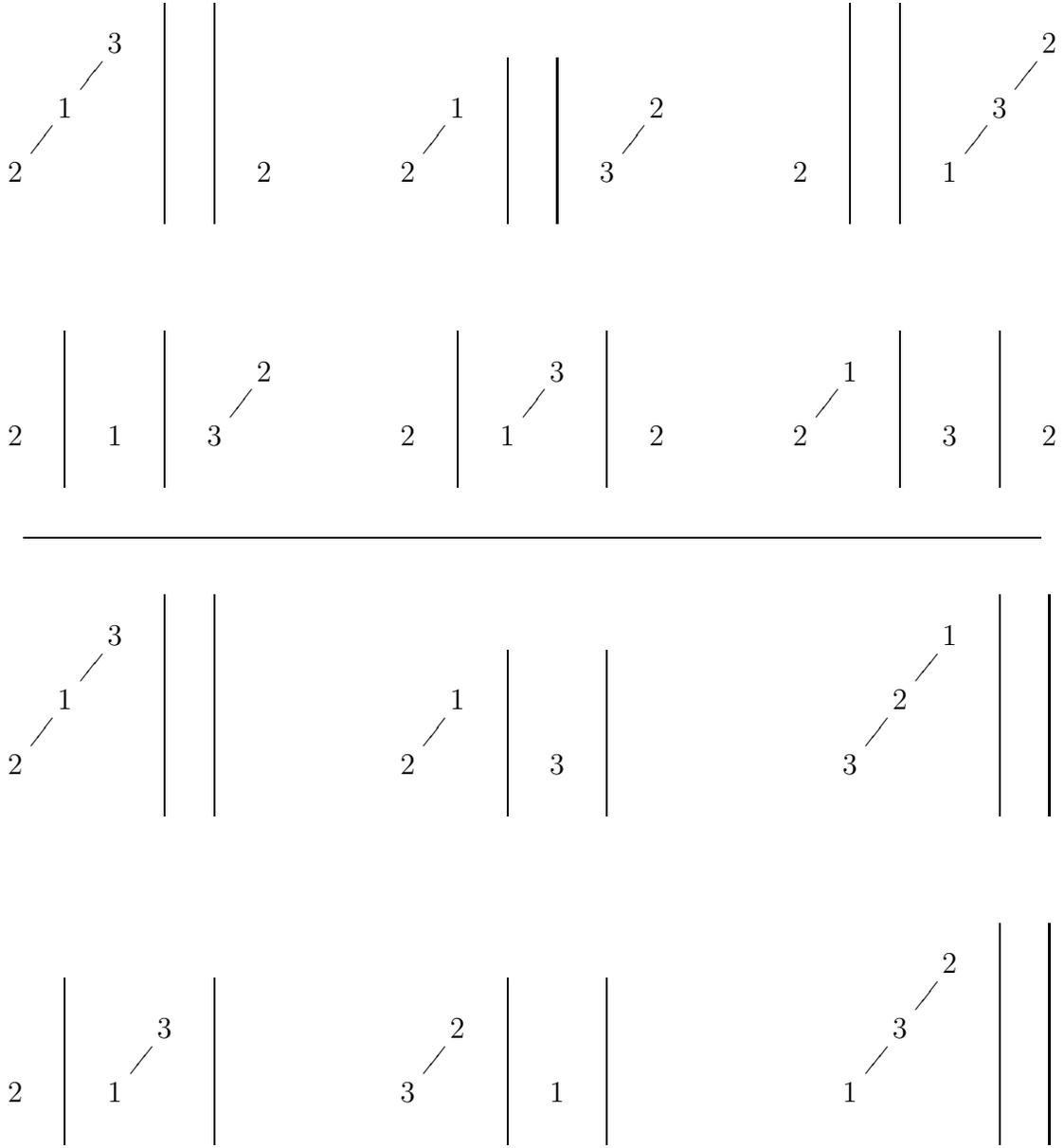

Define the cyclic structure polynomial $\psi(x)$ in the group algebra of $\symn$ by
\[
\psi(x) = \sum_{\pi \in \symn} \frac{x}{n}\binom{x+n-1-\cdes(\pi)}{n-1} \pi.
\]
Both Petersen \cite{Petersen2005} and Cellini \cite{Cellini1998} described orthogonal idempotents in the cyclic Eulerian descent algebra.
In particular, Petersen \cite{Petersen2005} showed that if we expand
\[
\psi(x)= \dsum_{\pi \in \symn} \frac{x}{n}\binom{x+n-1-\cdes(\pi)}{n-1} \pi = \dsum_{i=2}^n c_i x^i
\]
then the $c_i$ terms are orthogonal idempotents which span the cyclic Eulerian descent algebra.

\begin{thm}
As polynomials in $x$ and $y$ with coefficients in the group algebra of $\symn$,
\[
\phi(x)\psi(y) = \psi(xy).
\]
\end{thm}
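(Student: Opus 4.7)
The plan is to mimic the proof of the earlier theorem $\phi(x)\phi(y)=\phi(xy)$: extract a scalar polynomial identity from Theorem~\ref{des cdes module} by comparing coefficients, multiply by $\pi$, and sum over $\symn$. Concretely, expanding the right-hand side of equation~\eqref{eq: des cdes module} as
\[
\frac{A_\pi^c(s,t)}{(1-s)^{n+1}(1-t)^n} = \sum_{j,k\geq 0}\sum_{\sigma\tau=\pi}\binom{j+n-\des(\sigma)}{n}\binom{k+n-1-\cdes(\tau)}{n-1} s^j t^k,
\]
and matching the coefficient of $s^jt^k$ against the left-hand side gives the scalar identity
\[
(j+1)\binom{(j+1)k+n-1-\cdes(\pi)}{n-1} = \sum_{\sigma\tau=\pi}\binom{j+n-\des(\sigma)}{n}\binom{k+n-1-\cdes(\tau)}{n-1}
\]
for every $\pi\in\symn$ and all integers $j,k\geq 0$.

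Next I would make the substitution $x=j+1$ and $y=k$, turning the identity into
\[
x\binom{xy+n-1-\cdes(\pi)}{n-1} = \sum_{\sigma\tau=\pi}\binom{x+n-1-\des(\sigma)}{n}\binom{y+n-1-\cdes(\tau)}{n-1}.
\]
After multiplying both sides by $\frac{y}{n}\pi$ and summing over all $\pi\in\symn$, the right-hand side is exactly the coefficient-wise expansion of $\phi(x)\psi(y)$ in the group-algebra basis, while the left-hand side is $\psi(xy)$. Thus $\phi(x)\psi(y)=\psi(xy)$ holds whenever $x$ is a positive integer and $y$ is a nonnegative integer.

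Since both sides are polynomials in $x$ and $y$ with coefficients in the group algebra of $\symn$, and they agree on the infinite set $\{(x,y) : x\in\mathbb{Z}_{\geq 1},\, y\in\mathbb{Z}_{\geq 0}\}$, they must agree as polynomials, completing the proof. The only mild obstacle is the bookkeeping in the substitution $x=j+1$, $y=k$ (rather than $x=j+1$, $y=k+1$ as one might first guess from symmetry with the earlier theorem): the asymmetry reflects the fact that $\psi$ uses $(1-t)^n$ in the denominator of the generating function, not $(1-t)^{n+1}$, because the cyclic zig-zag posets have only $n$ interior spaces available for bars instead of $n+1$.
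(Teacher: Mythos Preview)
Your proof is correct and follows essentially the same approach as the paper: compare coefficients of $s^jt^k$ in equation~\eqref{eq: des cdes module}, multiply by $\frac{k}{n}\pi$ (your $\frac{y}{n}\pi$), sum over $\symn$, and then pass from integer values to polynomials. Your explicit substitution $x=j+1$, $y=k$ and the remark about the asymmetry are a bit more detailed than the paper's version, but the argument is the same.
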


\begin{proof}
By comparing coefficients of $s^jt^k$ on both sides of equation \eqref{eq: des cdes module}, we have
\[
(j+1)\binom{(j+1)k +n-1-\cdes(\pi)}{n-1} = \sum_{\sigma\tau = \pi}  \binom{j+n -\des(\sigma)}{n} \binom{k+n -1- \cdes(\tau)}{n-1}
\]
for all $j,k \geq 0$.
Multiplying both sides by $\frac{k}{n}\pi$ and summing over all $\pi \in \symn$ shows that $\phi(x)\psi(y) = \psi(xy)$ for all positive integers and thus the equation holds as polynomials in $x$ and $y$.
\end{proof}

The property $\phi(x)\psi(y) = \psi(xy)$ can be rewritten as
\[
\sum_{i=1}^n\sum_{j = 2}^n e_i c_j x^i y^j = \sum_{i=2}^n c_i (xy)^i.
\]
From this we conclude that $e_ic_i = c_i$ and $e_i c_j = 0$ if $i \neq j$.
Thus the cyclic Eulerian descent algebra is a left module over the Eulerian descent algebra.
However, it is not true that $\psi(x)\phi(y) = \psi(xy)$ nor is it true that $\psi(x)\phi(y) = \phi(xy)$.
The cyclic Eulerian descent algebra is not a right module over the Eulerian descent algebra.

\begin{ex}
Let $n=4$.
Then $C_1 = 1234+2341+3412+4123$ is the formal sum of all permutations in $\mathfrak{S}_4$ with one cyclic descent and is a basis element for the cyclic Eulerian descent algebra.
Also,
\[
E_1 = 1243+1324+1342+1423+2134+2314+2341+2413+3124+3412+4123
\]
is the formal sum of all permutations in $\mathfrak{S}_4$ with one descent and is a basis element for the Eulerian descent algebra.
We compute 
\begin{align*}
C_1 E_1 = & 3\cdot1234 + 2\cdot1243 + 1324 + 3\cdot1342 + 2\cdot1423 + 2\cdot2134 + 2\cdot2314 \\
&+ 3\cdot2341 + 3\cdot2413 + 2431 + 3\cdot3124 + 3142 + 2\cdot3241 + 3\cdot3412 \\
&+ 2\cdot3421 + 3\cdot4123 + 2\cdot4132 + 4213 + 3\cdot4231 + 2\cdot4312
\end{align*}
where the notation $j\cdot\pi$ means that the permutation $\pi$ has a coefficient of $j$.
Inspection reveals that both $1243$ and $1324$ have two cyclic descents  and both appear in $E_1$.
However, they have different coefficients in the product $C_1 E_1$ and so $C_1 E_1$ cannot be written as a linear combination of basis elements from either the Eulerian descent algebra or from the cyclic Eulerian descent algebra.
This result is somewhat surprising given that the cyclic Eulerian descent algebra on $\symn$ is isomorphic to the Eulerian descent algebra on $\mathfrak{S}_{n-1}$.
\end{ex}

%%%%%%%%%%%%%%%%%%%%%%%%%%%%%%%%%%%%%%%%%%%%%%%%

\section{Counting permutations and inverse permutations by descent number and major index}\label{sec: des ides maj imaj}

It would be nice to find a formula for
\[
A_{\pi}(t_1,t_2,q_1,q_2) := \sum_{\sigma \tau = \pi} t_1^{\des(\sigma)} t_2^{\des(\tau)} q_1^{\maj(\sigma)} q_2^{\maj(\tau)}.
\]
However, descent number and major index first fail to induce an algebra when ${n=5}$ and so $A_{\pi} (t_1,t_2,q_1,q_2)$ will not, in general, be determined solely by $\des(\pi)$ and $\maj(\pi)$.
Instead, we restrict our attention to the special case where $\pi$ is the identity permutation in $\symn$.
For every $\pi \in \symn$, let $\ides(\pi)$ denote $\des(\pi^{-1})$ and $\imaj(\pi)$ denote $\maj(\pi^{-1})$.
Then define
\[
A_n (t_1,t_2,q_1,q_2) = \sum_{\pi \in \symn} t_1^{\des(\pi)} t_2^{\ides(\pi)} q_1^{\maj(\pi)} q_2^{\imaj(\pi)}.
\]
Garsia and Gessel \cite{GarsiaGessel1979} originally found a generating function for these multivariate polynomials and the purpose of this section is to show how we can use barred posets to obtain the same result.
However, since we choose to work with order-preserving $P$-partitions, we first define $\icomaj(\pi) = \comaj(\pi^{-1})$ and compute
\[
A_n^{\co} (t_1,t_2,q_1,q_2) = \sum_{\pi \in \symn} t_1^{\des(\pi)} t_2^{\ides(\pi)} q_1^{\comaj(\pi)} q_2^{\icomaj(\pi)}.
\]

\begin{thm}
\[
\sum_{i,j \geq 0} t_1^i t_2^j \prod_{k = 0}^i \prod_{l = 0}^j \frac{1}{(1-zq_1^kq_2^l)} = \sum_{n\geq 0} z^n\,  \frac{A_n^{\co}(t_1,t_2,q_1,q_2)}{(t_1;q_1)_{n+1}(t_2;q_2)_{n+1}}
\]
\end{thm}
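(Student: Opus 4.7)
The plan is to extend the order $q$-polynomial computation behind Theorem~\ref{thm: fake Carlitz} by simultaneously tracking $P$-partitions for $\pi$ and for $\pi^{-1}$. From the proof of Theorem~\ref{thm: fake Carlitz}, we have
\[
\sum_{j \geq 0} \Omega_\pi(j, q)\, t^j = \frac{t^{\des(\pi)} q^{\comaj(\pi)}}{(t; q)_{n+1}}.
\]
Applying this identity once with $(t, q) = (t_1, q_1)$ for $\pi$ and once with $(t, q) = (t_2, q_2)$ for $\pi^{-1}$, and using $\ides(\pi) = \des(\pi^{-1})$ and $\icomaj(\pi) = \comaj(\pi^{-1})$, the right side rewrites as
\[
\sum_{n \geq 0} z^n\, \frac{A_n^{\co}(t_1, t_2, q_1, q_2)}{(t_1; q_1)_{n+1} (t_2; q_2)_{n+1}} = \sum_{i, j \geq 0} t_1^i t_2^j \sum_{n \geq 0} z^n \sum_{\pi \in \symn} \Omega_\pi(i, q_1)\, \Omega_{\pi^{-1}}(j, q_2).
\]
Equating coefficients of $t_1^i t_2^j$, the theorem reduces to showing that, for each fixed $i, j \geq 0$,
\[
\sum_{n \geq 0} z^n \sum_{\pi \in \symn} \Omega_\pi(i, q_1)\, \Omega_{\pi^{-1}}(j, q_2) = \prod_{k=0}^{i} \prod_{l=0}^{j} \frac{1}{1 - z q_1^k q_2^l}.
\]

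I would then recognize both sides as weighted enumerations of the same family of combinatorial objects. Expanding the product on the right shows that it is the generating function for length-$n$ sequences of pairs $\bigl((a_r, b_r)\bigr)_{r=1}^n$ in $[0, i] \times [0, j]$ arranged in weakly increasing lexicographic order, weighted by $z^n q_1^{\sum a_r} q_2^{\sum b_r}$. The left side counts triples $(\pi, f, g)$ with $\pi \in \symn$, $f : [n] \to [0, i]$ a $\pi$-partition, and $g : [n] \to [0, j]$ a $\pi^{-1}$-partition, weighted by $z^n q_1^{\sum f} q_2^{\sum g}$.

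The heart of the proof is a weight-preserving bijection between these families, which I would define by
\[
\Phi(\pi, f, g) = \bigl((f(\pi(k)),\, g(k))\bigr)_{k=1}^n.
\]
The sequence of first coordinates $f \circ \pi$ is weakly increasing by the $\pi$-partition property of $f$. If a tie $f(\pi(k)) = f(\pi(k+1))$ occurs, the $\pi$-partition condition forces $\pi(k) < \pi(k+1)$, and telescoping the $\pi^{-1}$-partition inequalities $g(\pi^{-1}(m)) \leq g(\pi^{-1}(m+1))$ over $m = \pi(k), \pi(k)+1, \ldots, \pi(k+1)-1$ then yields $g(k) \leq g(k+1)$; hence the image of $\Phi$ is lex-sorted. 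For the inverse, given a lex-sorted sequence $((a_k, b_k))_{k=1}^n$, I would set $g(k) := b_k$, let $\pi \in \symn$ be the permutation with $\pi(k)$ equal to the rank of $b_k$ in the stable sort of $(b_1, \ldots, b_n)$ (smaller $b$ first, ties broken by smaller position), and set $f(m) := a_{\pi^{-1}(m)}$.

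The main obstacle is verifying that this inverse lands in the prescribed set of triples. Descents of $\pi$ occur at $k$ exactly when $b_k > b_{k+1}$, since the stable-sort tiebreak prevents equal $b$-values from producing $\pi(k) > \pi(k+1)$. The lex-sort condition then forces $a_k < a_{k+1}$ whenever $b_k > b_{k+1}$, so $f$ satisfies the strict inequality at descents of $\pi$ required of a $\pi$-partition. A parallel argument, using that the stable-sort tiebreak gives strict $b$-inequality at any descent of $\pi^{-1}$, shows $g$ is a $\pi^{-1}$-partition. Weight preservation is immediate because $\sum_k f(\pi(k)) = \sum_m f(m)$. With the bijection established, the required identity of generating functions completes the proof.
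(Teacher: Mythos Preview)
Your proof is correct. The reduction to
\[
\sum_{n \geq 0} z^n \sum_{\pi \in \symn} \Omega_\pi(i, q_1)\, \Omega_{\pi^{-1}}(j, q_2) \;=\; \prod_{k=0}^{i} \prod_{l=0}^{j} \frac{1}{1 - z q_1^k q_2^l}
\]
is exactly right, and your bijection $\Phi(\pi,f,g)=\bigl((f(\pi(k)),g(k))\bigr)_{k=1}^n$ together with its stable-sort inverse is sound; the key point---that a descent of $\pi$ at $k$ forces $b_k>b_{k+1}$ and hence $a_k<a_{k+1}$ by the lex condition---is handled correctly.

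The route, however, differs from the paper's. The paper carries out the same first reduction (phrased as $\sum_{I}\Omega_{Z(I)}(i,j,q_1,q_2)$), but instead of bijecting triples $(\pi,f,g)$ directly with lex-sorted sequences it invokes the barred zig-zag $\to$ barred chain bijection of Lemma~\ref{Barred Poset Extensions} and then evaluates the chain side by placing naturally labeled chains of arbitrary length into the $j+1$ compartments, weighting a letter in compartment $l$ that is mapped to $k$ by $zq_1^kq_2^l$; the product formula drops out immediately. Your argument is the classical bipartite $P$-partition bijection of Gessel~\cite{Gessel1984} and is more self-contained and elementary, needing no poset machinery at all. The paper's argument is less direct but is there precisely to demonstrate that the barred-poset technology built in Section~\ref{sec: Eulerian descent algebra} recovers the Garsia--Gessel identity as well; that consistency is the point of Section~\ref{sec: des ides maj imaj}. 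Either approach proves the theorem.
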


\begin{proof}
Recall that if we define the order $q$-polynomial $\Omega_P(j,q)$ to be the sum of the weights of all $P$-partitions $f$ with parts less than or equal to $j$ where $f$ is weighted by $q$ to the sum of its parts then
\[
\sum_{j\geq 0} \Omega_P(j,q)t^j = \frac{\dsum_{\pi \in \Lin(P)}t^{\des(\pi)}q^{\comaj(\pi)}}{(t;q)_{|P|+1}}.
\]
Following the proof of the existence of the Eulerian descent algebra, we associate to each possible descent set $I \subseteq [n-1]$ the zig-zag poset $Z(I,12\cdots n)$ and the chain poset $C(I,12\cdots n)$.
For this section we denote them by $Z(I)$ and $C(I)$ respectively.
As before, we consider barred zig-zag posets and barred chain posets with the rules for bar placement the same as in Section \ref{sec: Eulerian descent algebra}.
Define $\Omega_{Z(I)}(i,j,q_1,q_2)$ to be the sum of the product of the weights of all ordered pairs $(f,P)$ where $P$ is a barred $Z(I)$ poset with $j$ bars and $f$ is a $Z(I)$-partition with parts less than or equal to $i$.
Given a barred $Z(I)$ poset, weight each bar in position $l$ by $q_2^{n-l}$ and define the weight of a barred $Z(I)$ poset to be the product of the weight of the bars.
Weight each $Z(I)$-partition $f$ by $q_1^{f(1)+\cdots+f(n)}$.
Then
\[
\Omega_{Z(I)}(i,j,q_1,q_2) = \sum_{\pi \in \Lin(Z(I))} q_1^{\comaj(\pi)} \qbinomplain{i+n-\des(\pi)}{n}_{q_1} q_2^{\icomaj(\pi)} \qbinomplain{j+n-\ides(\pi)}{n}_{q_2}
\]
and so we have
\[
\frac{A_n^{\co} (t_1,t_2,q_1,q_2)}{(t_1;q_1)_{n+1}(t_2;q_2)_{n+1}} = \sum_{i,j \geq 0} \sum_{I \subseteq [n-1]} \Omega_{Z(I)}(i,j,q_1,q_2) t_1^i t_2^j.
\]

Next we note that the bijection used in Lemma~\ref{Barred Poset Extensions} to move between zig-zag posets and chain posets shows that
\[
\sum_{I \subseteq [n-1]} \Omega_{Z(I)}(i,j,q_1,q_2) =  \sum_{I \subseteq [n-1]} \Omega_{C(I)}(i,j,q_1,q_2)
\]
where $\Omega_{C(I)}(i,j,q_1,q_2)$ is the sum of the products of the weights of all ordered pairs $(f,P)$ where $P$ is a barred $C(I)$ poset with $j$ bars and $f$ is a $C(I)$-partition with parts less than or equal to $i$.
Thus our goal is to compute
\[
\sum_{n \geq 0} z^n \sum_{i,j \geq 0} \sum_{I \subseteq [n-1]} \Omega_{C(I)}(i,j,q_1,q_2) t_1^i t_2^j.
\]

It turns out that we can find an elegant formula if we consider all $n \geq 0$ at the same time.
Assume there are $j$ bars creating $j+1$ compartments labeled $0,1,\ldots, j$ from left to right.
We place naturally labeled chains of arbitrary length in each compartment.
If we want the order $q$-polynomial for the chain in compartment $l$ we simply choose how many letters we want to map to each $k$ with $0 \leq k \leq i$.
We weight each letter by $z$ to keep track of the length of the chain and we also weight each letter in compartment $l$ by $q_2^l$.
As before, weighting bars by the number of letters to the right ($n-i$ for space $i$) is the same as weighting letters by the number of bars to the left (compartment number).
The sum of the weights of all such chains in compartment $l$ is
\[
\prod_{k = 0}^i \frac{1}{(1-zq_1^k q_2^l)}.
\]
We then take the product over all $l$ with $0\leq l \leq j$.
Finally, summing over $i$ and $j$ completes the proof.
\end{proof}

The last step is to convert from comajor index to major index. Lemma~\ref{lem: maj comaj swap} isn't immediately helpful since we are summing over $n$ but the bijection described in Theorem~\ref{thm: maj comaj bijection} has nice properties for $\ides$ and $\imaj$ as well.
Thus we arrive at the following theorem of Garsia and Gessel~\cite{GarsiaGessel1979}.

\begin{thm}[Garsia and Gessel]
\[
\sum_{i,j \geq 0} t_1^i t_2^j \prod_{k = 0}^i \prod_{l = 0}^j \frac{1}{(1-zq_1^k q_2^l)} = \sum_{n\geq 0} z^n \, \frac{A_n (t_1,t_2,q_1,q_2)}{(t_1;q_1)_{n+1}(t_2;q_2)_{n+1}}
\]
\end{thm}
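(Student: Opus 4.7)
The plan is to exploit the observation that the left-hand side of the stated identity is \emph{formally identical} to the left-hand side of the preceding comajor theorem. Consequently, it suffices to prove, for each $n \geq 0$, the coefficient equality
\[
A_n(t_1,t_2,q_1,q_2) = A_n^{\co}(t_1,t_2,q_1,q_2),
\]
after which the theorem follows immediately by comparing coefficients of $z^n$ in the two generating function identities.

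To establish this, I would lift the involution $\phi:\symn\to\symn$ from Theorem~\ref{thm: maj comaj bijection} given by $\phi(\pi)(i) = n+1-\pi(n+1-i)$. That theorem already records $\des(\phi(\pi)) = \des(\pi)$ and $\maj(\pi) = \comaj(\phi(\pi))$. The key extra ingredient is the identity $\phi(\pi)^{-1} = \phi(\pi^{-1})$, which I would verify by directly solving $\phi(\pi)(i)=j$: both sides evaluate to $n+1-\pi^{-1}(n+1-j)$. Applying Theorem~\ref{thm: maj comaj bijection} a \emph{second} time, now to $\pi^{-1}$, then immediately yields $\ides(\phi(\pi)) = \des(\phi(\pi)^{-1}) = \des(\phi(\pi^{-1})) = \des(\pi^{-1}) = \ides(\pi)$ and, by the same token, $\imaj(\pi) = \maj(\pi^{-1}) = \comaj(\phi(\pi^{-1})) = \icomaj(\phi(\pi))$.

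With these four statistic identities in hand, reindexing the sum defining $A_n(t_1,t_2,q_1,q_2)$ by $\sigma = \phi(\pi)$ (a bijection on $\symn$, being an involution) converts each monomial $t_1^{\des(\pi)} t_2^{\ides(\pi)} q_1^{\maj(\pi)} q_2^{\imaj(\pi)}$ into $t_1^{\des(\sigma)} t_2^{\ides(\sigma)} q_1^{\comaj(\sigma)} q_2^{\icomaj(\sigma)}$, whose sum is exactly $A_n^{\co}(t_1,t_2,q_1,q_2)$. The only non-routine step is the inversion-intertwining identity $\phi(\pi)^{-1} = \phi(\pi^{-1})$; everything else amounts to invoking the previous theorem twice, once on $\pi$ and once on $\pi^{-1}$, and then bookkeeping.
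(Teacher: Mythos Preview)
Your proposal is correct and follows essentially the same approach as the paper: both reduce to proving $A_n = A_n^{\co}$ via the involution $\phi(\pi)(i) = n+1-\pi(n+1-i)$, invoking Theorem~\ref{thm: maj comaj bijection} for the $(\des,\maj)$ part. The only difference is that the paper verifies the inverse-descent claim by directly tracking where $n-i \in \Des(\pi^{-1})$ lands, whereas you package this more cleanly via the intertwining identity $\phi(\pi)^{-1} = \phi(\pi^{-1})$ and a second application of Theorem~\ref{thm: maj comaj bijection}; this is a minor presentational variation, not a different route.
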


\begin{proof}
We prove the theorem by showing that for every $n\geq 0$ we have
\[
A_n (t_1,t_2,q_1,q_2) = A_n^{\co} (t_1,t_2,q_1,q_2).
\]
Consider the bijection from $\symn$ to itself mapping $\pi$ to $\sigma$ where $\sigma(i) = n + 1 - \pi(n+1-i)$ for $i = 1,\ldots , n$.
This amounts to writing $\pi$ backwards and then swapping $j$ with $n+1-j$ for each letter.
We proved in Theorem \ref{thm: maj comaj bijection} that this bijection preserves descent number and exchanges major index and comajor index.
It also preserves the number of inverse descents as well as exchanges $\imaj$ with $\icomaj$.
To see this, assume that $n-i \in \Des(\pi^{-1})$.
Then $n+1-i$ comes before $n-i$ in $\pi$.
If we first reverse $\pi$ then we have $n-i$ before $n+1-i$.
But then this implies that $n+1-(n-i) = i+1$ comes before $n+1-(n+1-i) = i$ in $\sigma$.
Thus $i \in \Des(\sigma^{-1})$ and so $\ides(\pi) = \ides(\sigma)$ and $\icomaj(\pi) = \imaj(\sigma)$.
\end{proof}

%%%%%%%%%%%%%%%%%%%%%%%%%%%%%%%%%%%%%%%%%%%%%%%%%%%%%%%%%%%%%%%%%%%%%%%%%%%%%%%%%%%%%%%%%%%%%%%%

\chapter{The Hyperoctahedral Group}

The goal of this chapter is to extend the work of Chapter~\ref{chpt: Symmetric Group} to the hyperoctahedral group which is also the Coxeter group of type~$B_n$.
This group has a combinatorial interpretation as the group of ``signed'' permutations of length~$n$ and contains a type~$B$ analogue of the Eulerian descent algebra.
However, by altering the definition of the descent set of a signed permutation, we produce multiple algebras.
The existence of these algebras is not new but we include existence proofs using barred versions of ``signed'' posets because we later examine the relationships between the various algebras by combining techniques from these individual proofs.
The main results of this chapter are the existence of an algebra induced by Adin, Brenti, and Roichman's flag descent number and the study of the relationships between the different algebras.
The chapter is organized as follows.

First, signed permutations and signed posets are defined in Section~\ref{sec: signed permutations and signed posets}.
In Section~\ref{sec: signed multiset permutations}, we count a signed generalization of multiset permutations by descents and major index using a variation of the barred permutation technique.
These formulas allow us to derive results of Chow and Gessel~\cite{ChowGessel2007} concerning the flag major index statistic defined by Adin and Roichman~\cite{AdinRoichman2001}.
In Sections~\ref{sec: type B Eulerian descent algebra}, \ref{sec: type A Eulerian descent algebra}, and \ref{sec: augmented descent algebra}, we use different versions of ``signed'' $P$-partitions to prove that several descent statistics induce algebras.
These include Chow's $P$-partitions of type~$B$ defined in~\cite{ChowThesis2001} and Petersen's augmented $P$-partitions defined in~\cite{Petersen2005}.
Adin, Brenti, and Roichman~\cite{AdinBrentiRoichman2001} introduced the flag descent number and we prove in Section~\ref{sec: flag descent algebra} that the flag descent number also induces an algebra.
Finally, in Section~\ref{sec: ideals}, we prove that some of the algebras of Sections~\ref{sec: type B Eulerian descent algebra}, \ref{sec: type A Eulerian descent algebra}, and \ref{sec: augmented descent algebra} are ideals in larger algebras, expanding on a result of Petersen~\cite{Petersen2005}.

%%%%%%%%%%%%%%%%%%%%%%%%%%%%%%%%%%%%%%%%%%%%%%%%

\section{Signed permutations and signed posets}\label{sec: signed permutations and signed posets}

Let $[0,n]$ denote the set $\{0,\ldots,n\}$ and let $\pm[n]$ denote the set
\[
\{-n,\ldots,-1,0,1,\ldots,n\}.
\]
The \emph{hyperoctahedral group} $\hypn$ is the group of all bijections $\pi:\pm[n] \rightarrow \pm[n]$ such that $\pi(-i) = -\pi(i)$ for $i=0,1,\ldots,n$.
Thus $\pi(0)=0$ and $\pi$ is determined by the images of the letters in $[n]$.
We call these bijections \emph{signed permutations} and write them in one-line notation as $\pi = \pi(1)\pi(2)\cdots\pi(n)$.
For readability, we write $\bar{a}$ for $-a$.

There are multiple ways to extend the notion of a descent set to signed permutations.
The first approach is to define the \emph{type $A$ descent set} $\Desa(\pi)$ of a signed permutation $\pi$ to be the set of all $i \in [n-1]$ such that $\pi(i) > \pi(i+1)$.
We let $\desa(\pi) = |\Desa(\pi)|$ denote the number of type $A$ descents of $\pi$.
In practice, this often produces uninteresting results since it fails to capture the additional signed structure of the letters.
It is included here for its use in Sections \ref{sec: type A Eulerian descent algebra} and \ref{sec: ideals}.

The much more frequently used definition is the \emph{type $B$ descent set} $\Desb(\pi)$ of a signed permutation $\pi$ which is defined to be the set of all $i \in [0,n-1]$ such that $\pi(i) > \pi(i+1)$.
Thus $0 \in \Desb(\pi)$ if and only if $\pi(1) < 0$.
This definition is consistent with the definition of descents for general Coxeter groups.
We let $\desb(\pi) = |\Desb(\pi) |$ denote the number of type $B$ descents of $\pi$.

The third descent set is the \emph{augmented descent set} $\aDes(\pi)$ of a signed permutation~$\pi$, defined to be the set of all $i\in [0,n]$ such that $\pi(i) > \pi(i+1)$ with $\pi(n+1) := 0$.
Thus $n\in \aDes(\pi)$ if and only if $\pi(n)>0$.
We let $\ades(\pi) = |\aDes(\pi)|$ denote the number of augmented descents of $\pi$.
Augmented descents are a natural extension of cyclic descents to $\hypn$ (see \cite{CelliniI1995}).

Finally, we define the major index of a signed permutation.
Since a descent in position $0$ does not count towards the sum of the positions of the descents of a signed permutation, we set
\[
\maj(\pi) = \sum_{i\in \Desa(\pi)} i = \sum_{i\in \Desb(\pi)} i
\]
and
\[
\amaj(\pi) = \sum_{i\in \aDes(\pi)} i.
\]

\begin{ex}
The signed permutation $\pi = \bar{5}14\bar{2}3$ is in $\mathfrak{B}_5$ and $\Desa(\pi) = \{3\}$, $\Desb(\pi) = \{0,3\}$, and $\aDes(\pi) = \{0,3,5\}$.
Thus $\desa(\pi) = 1$, $\desb(\pi)=2$, and $\ades(\pi) = 3$.
Lastly, $\maj(\pi) = 3$ and $\amaj(\pi) = 8$.
\end{ex}

In order to extend our previous results, we must extend the definition of \linebreak $P$-partitions to account for the addition of signs to the permutations.
The following definition and theory are due to Chow \cite{ChowThesis2001}, though Chow's results derive from Reiner's earlier work in \cite{Reiner1992, ReinerSP1993}.
Define a \emph{signed poset} to be a partial order $P$ on the set $\pm[n]$ such that if $x <_P y$ in $P$ then $\bar{x} >_P \bar{y}$ in $P$.
Every signed permutation $\pi$ can be represented by the signed poset
\[
\pi(\bar{n}) <_P \cdots  <_P \pi(\bar{1}) <_P 0 <_P \pi(1) <_P \cdots <_P \pi(n)
\]
and we denote by $P(\pi)$ the signed poset
\[
\pi(1) <_P \cdots <_P \pi(n)
\]
with no relation between $0$ and $\pi(i)$ for any $i \in [n]$.

We define the set $\Lin(P)$ to be the set of all signed permutations in $\hypn$ which are linear extensions of $P$.
A signed permutation $\pi$ is in $\Lin(P)$ if and only if $i <_P j$ implies that $\pi^{-1}(i) < \pi^{-1}(j)$.
See Figure \ref{fig: signed poset extensions} for an example of a signed poset $P$ and the linear extensions of $P$.

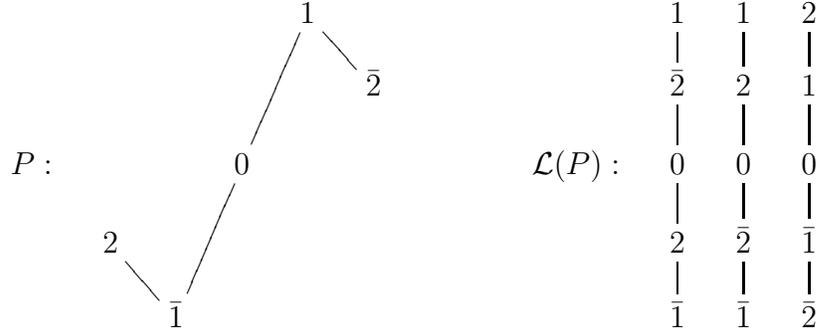
\begin{figure}[htbp]
\[\xymatrix @R=1pc @C=1pc  {& & & & 1\ar@{-}[dr]  \ar@{-}[ddl] & & & & & 1\ar@{-}[d] & 1\ar@{-}[d] & 2\ar@{-}[d]  \\   & & & & & \bar2 & & & & \bar2\ar@{-}[d] & 2\ar@{-}[d] & 1\ar@{-}[d] \\ P: & & & 0  \ar@{-}[ddl]  & & & & & \Lin(P): &  0\ar@{-}[d] & 0\ar@{-}[d] & 0\ar@{-}[d] \\  & 2\ar@{-}[dr] & & & & & & & & 2\ar@{-}[d] & \bar2\ar@{-}[d] & \bar1\ar@{-}[d]  \\ &  & \bar1 & &  & & & & & \bar1 & \bar1 & \bar2}\] 
\caption{\; Linear extensions of a signed poset $P$.}
\label{fig: signed poset extensions}
\end{figure}

We refrain from discussing ``signed'' $P$-partitions because we will need a different definition for each type of descent set.
However, we can use the barred permutation technique to count signed permutations by descent number and major index.

%%%%%%%%%%%%%%%%%%%%%%%%%%%%%%%%%%%%%%%%%%%%%%%%

\section{Multiset enumeration}\label{sec: signed multiset permutations}

There are two natural ways to extend the notion of a multiset permutation to a signed multiset permutation, both of which are seen in \cite{HaglundLoehrRemmel2005}.
One way is to consider all the different ways to arrange the elements of a multiset in which the multiplicities of $s$ and $\bar{s}$ are chosen independently.
In practice, this often produces uninteresting results since we lose the connection between $s$ and $\bar{s}$.
We adopt the second approach and define the set of signed multiset permutations as follows.
Given $\multn$ as in the unsigned case, define $\hypmultn$ to be the set of all signed words from $\multn$, that is, the set consisting of all words from $\multn$ together with all possible choices of signs for the letters of each word in $\multn$.
The goal of this section is to count words in $\hypmultn$ by descent number and major index for extensions of the different descent sets from the previous section.

The type $A$ descent set $\Desa(\pi)$ of a word $\pi \in \hypmultn$ is the set of all $i \in [n-1]$ such that $\pi(i) > \pi(i+1)$.
As before, $\desa(\pi) = |\Desa(\pi)|$.
We then define
\[
\desb(\pi)  = \left\{\begin{array}{ll} \desa(\pi), & \text{if } \pi(1) > 0 \\  \desa(\pi)+1, & \text{if } \pi(1) < 0  \end{array}\right.
\]
and
\[
\ades(\pi) = \left\{\begin{array}{ll} \desb(\pi), & \text{if } \pi(n) < 0 \\  \desb(\pi)+1, & \text{if } \pi(n) > 0  \end{array}\right. .
\]
We define $\maj(\pi)$ to be the sum of the positions of all type $A$ descents (which is also the sum of the positions of all type $B$ descents).
Similarly, we define $\amaj(\pi)$ to be the sum of the positions of all augmented descents.

In order to count permutations in $\hypmultn$ by descent number and major index, define a \emph{barred signed multiset permutation} to be a shuffle of a signed multiset permutation $\pi \in \hypmultn$ with a sequence of bars such that the letters of $\pi$ are weakly increasing between bars.
For example,
\[
24\mid1\mid\bar{3}\mid66\mid\mid \bar{5} 3
\]
is a barred signed multiset permutation with $5$ bars.
First we compute
\[
B_\alpha^A (t,q) := \sum_{\pi \in \hypmultn} t^{\desa(\pi)}q^{\maj(\pi)}.
\]
The technique is similar to the unsigned case and we count barred signed multiset permutations in two different ways.
To create a barred signed multiset permutation given a signed multiset permutation $\pi \in \hypmultn$, we must first place a bar in each descent of $\pi$ to ensure that the letters are weakly increasing between bars.
After that, we are free to place any number of bars in any of the spaces between the letters of $\pi$ and on the ends.
These spaces are labeled $0,1,\ldots,n$ from left to right with space $i$ between $\pi(i)$ and $\pi(i+1)$.
The label of a space counts the number of letters of $\pi$ to the left of the space. 

In order to count signed multiset permutations by type A descent number and major index, we weight each bar in space $i$ by $tq^i$ and define the weight of a barred signed multiset permutation to be the product of the weights of the bars.
Thus the initial bars placed in the descents of $\pi$ contribute a factor of $t^{\desa(\pi)}q^{\maj(\pi)}$.
The sum of the weights of all barred signed multiset permutations with underlying signed multiset permutation $\pi$ is
\[
t^{\desa(\pi)}q^{\maj(\pi)}\prod_{i=0}^{n}(1+tq^i+(tq^i)^2+\cdots) = \frac{t^{\desa(\pi)}q^{\maj(\pi)}}{(t;q)_{n+1}}
\]
and a choice of $(tq^i)^k$ in the expansion represents placing $k$ additional bars in space~$i$.
Summing over all $\pi \in \hypmultn$ yields
\[
\frac{B_\alpha^A (t,q)}{(t;q)_{n+1}}.
\]

We now count barred signed multiset permutations a different way and reinterpret the weighting of the bars.
Rather than beginning with a signed multiset permutation, we could instead begin with a fixed number of bars and think of placing letters between the bars.
Given $j$ bars there are $j+1$ compartments labeled $0,1, \ldots, j$ from right to~left.

Before, each barred signed multiset permutation was weighted by the product of the weights of the bars and each bar in space $i$ was weighted by $tq^i$.
Thus the exponent of $t$ counts the number of bars and if a bar is in position $i$ then there are $i$ letters of the signed multiset permutation to the left of the bar.
However, we obtain the same weighting if we instead weight each bar by a factor of $t$ and weight each letter in compartment $i$ by $q^i$.
Given a barred signed multiset permutation, we define $w(\pi)$ to be the product of the weights of the letters of $\pi$ and define the order $q$-polynomial $\OmegaA_\alpha (j,q):= \sum w(\pi)$ where the sum is over all barred signed multiset permutations containing $j$ bars.
Then
\[
\sum_{j\geq 0} \Omega_\alpha^A (j,q)t^j  = \frac{B_\alpha^A (t,q)}{(t;q)_{n+1}}.
\]
The only step remaining is to compute $\OmegaA_\alpha (j,q)$. 

If we place a collection of letters (possibly positive and negative) into a compartment then there is a unique way to place those letters in weakly increasing order (up to permutation of identical letters).
Thus each placement corresponds to a unique barred signed multiset permutation and this implies that the order $q$-polynomial $\OmegaA_\alpha (j,q)$ factors as
\[
\OmegaA_\alpha (j,q) = \OmegaA_{(\alpha_1)} (j,q) \OmegaA_{(\alpha_2)} (j,q) \cdots \OmegaA_{(\alpha_m)} (j,q).
\]
For $i=1,\ldots,m$, we must choose a sign and a compartment for each of the $\alpha_i$ $i$'s. If we choose $k$ letters to be negative then the sum of the products of their weights is $\qbinom{j+k}{k}$.
This would leave $\alpha_i-k$ letters positive and the sum of the products of their weights is $\qbinom{j+\alpha_i -k}{\alpha_i-k}$.
Summing over $k$, we see that
\begin{equation}\label{eq: type A omega sum}
\OmegaA_{(\alpha_i)} (j,q) = \sum_{k=0}^{\alpha_i}  \qbinom{j+k}{ k} \qbinom{j+\alpha_i-k }{ \alpha_i-k}
\end{equation}
and we have the following theorem.

\begin{thm}
\[
\sum_{j\geq 0} t^j \prod_{i=1}^m \left(\sum_{k=0}^{\alpha_i}  \qbinom{j+k}{ k} \qbinom{j+\alpha_i-k }{ \alpha_i-k}\right) = \frac{B_\alpha^A (t,q)}{(t;q)_{n+1}}
\]
\end{thm}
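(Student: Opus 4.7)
The plan is to assemble the two-sided bar-placement argument that has just been set up. By definition, a barred signed multiset permutation is an element $\pi \in \hypmultn$ shuffled with any number of bars, subject to the constraint that letters are weakly increasing between bars. I would weight each such object by the product of $tq^i$ taken over all bars in space $i$, where spaces are labeled $0,\dots,n$ from left to right, and compute the total weight of the set in two different ways.

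First I would group by the underlying $\pi$. At least one bar must appear in each space corresponding to a type-$A$ descent, contributing $t^{\desa(\pi)}q^{\maj(\pi)}$, and any remaining bars distribute freely among the $n+1$ spaces, contributing $\prod_{i=0}^n 1/(1-tq^i) = 1/(t;q)_{n+1}$. Summing over $\pi \in \hypmultn$ gives $B_\alpha^A(t,q)/(t;q)_{n+1}$, the right-hand side of the identity.

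Next I would regroup by the total number $j$ of bars and transfer the $q$-weight from bars to letters: a bar contributing $q^i$ because $i$ letters lie to its left is equivalent to each of those letters contributing $q^l$ where $l$ is the number of bars to its right---that is, the compartment index $l$ when compartments are labeled $0,\dots,j$ from right to left. This rewrites the total weight as $\sum_{j\geq 0} \OmegaA_\alpha(j,q)\,t^j$. Within each compartment the letters must appear in weakly increasing order, and letters of different absolute values behave independently, so the order $q$-polynomial factors as $\OmegaA_\alpha(j,q) = \prod_{i=1}^m \OmegaA_{(\alpha_i)}(j,q)$. Each factor is evaluated by conditioning on the number $k$ of the $\alpha_i$ copies of magnitude $i$ that carry a negative sign: the $k$ negatives distribute over $j+1$ compartments with total $q$-weight $\qbinom{j+k}{k}$ (the standard $q$-multichoose), and the $\alpha_i-k$ positives similarly contribute $\qbinom{j+\alpha_i-k}{\alpha_i-k}$, yielding \eqref{eq: type A omega sum}. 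Substituting back produces the left-hand side of the theorem.

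The main point to check carefully is the separation of the $q$-count into a convolution over negatives and positives. Inside a single compartment, the unique weakly-increasing arrangement forces all copies of $-i$ to precede all copies of $+i$, so each letter's $q$-weight depends only on the compartment it lies in and not on its position within it. This, together with the independence across absolute values (forced by the same weakly-increasing rule), is what lets $\OmegaA_{(\alpha_i)}(j,q)$ split as the product of two $q$-multichoose counts, and hence what makes \eqref{eq: type A omega sum} valid. Everything else is the bookkeeping already carried out in the preceding discussion.
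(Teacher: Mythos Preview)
Your proposal is correct and follows essentially the same approach as the paper: the double count of barred signed multiset permutations, first grouped by the underlying word $\pi$ to obtain $B_\alpha^A(t,q)/(t;q)_{n+1}$, then regrouped by the number of bars with the bar-to-letter weight transfer to obtain $\sum_{j\geq 0}\OmegaA_\alpha(j,q)t^j$, together with the factorization over absolute values and the conditioning on the number $k$ of negative letters to reach \eqref{eq: type A omega sum}. Your final paragraph spelling out why the negative/positive split is legitimate is a helpful clarification, but the argument itself is the one the paper gives in the discussion immediately preceding the theorem.
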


Next we compute
\[
B_\alpha^B (t,q) := \sum_{\pi \in \hypmultn} t^{\desb(\pi)}q^{\maj(\pi)}.
\]
We first begin with $\pi \in \hypmultn$ and place a bar in every type $B$ descent, weighting each bar in position $i$ by $tq^i$.
Next we are free to place bars in any of the positions $0,1,\ldots,n$ and thus the sum of the weights of all barred signed multiset permutations with underlying signed multiset permutation $\pi$ is $t^{\desb(\pi)}q^{\maj(\pi)}/(t;q)_{n+1}$.

Next we begin with $j$ bars which form $j+1$ labeled compartments.
Note that if we are counting barred signed multiset permutations by type $B$ descents then no negative letters can go in the leftmost compartment.
Thus we define $\OmegaB_\alpha (t,q)$ to be the sum of the products of the weights of the letters over all barred signed multiset permutations with $j$ bars such that there are no negative letters in the leftmost compartment.
As before, the order $q$-polynomial factors as
\[
\OmegaB_\alpha (j,q) = \OmegaB_{(\alpha_1)} (j,q) \OmegaB_{(\alpha_2)} (j,q) \cdots \OmegaB_{(\alpha_m)} (j,q).
\]
By considering the number of negative letters, we have
\begin{equation}\label{eq: type B omega sum}
\OmegaB_{(\alpha_i)} (j,q) = \sum_{k=0}^{\alpha_i}  \qbinom{j+k-1}{ k} \qbinom{j+\alpha_i-k }{ \alpha_i-k}
\end{equation}
and we have the following theorem.

\begin{thm}
\[
\sum_{j\geq 0} t^j \prod_{i=1}^m \left(\sum_{k=0}^{\alpha_i}  \qbinom{j+k-1}{ k} \qbinom{j+\alpha_i-k }{ \alpha_i-k}\right) = \frac{B_\alpha^B (t,q)}{(t;q)_{n+1}}
\]
\end{thm}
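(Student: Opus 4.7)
My plan is to mirror the proof of the previous theorem, adapting the barred permutation argument to account for the fact that type $B$ descents include position $0$ when $\pi(1)<0$. I would count barred signed multiset permutations in two different ways and match the results.

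First, fix $\pi \in \hypmultn$. I place a mandatory bar in space $i$ for each $i \in \Desb(\pi)$, weighting each bar in space $i$ by $tq^i$ as before. Note that a type $B$ descent at position $0$ contributes a factor of $tq^0 = t$, precisely accounting for the ``$+1$'' in the definition of $\desb$ when $\pi(1) < 0$. After placing the mandatory bars, I may add any number of additional bars in any of the $n+1$ spaces. Summing the resulting weights over all such extensions yields
\[
\frac{t^{\desb(\pi)}q^{\maj(\pi)}}{(t;q)_{n+1}},
\]
and summing over all $\pi \in \hypmultn$ gives the right-hand side $B_\alpha^B(t,q)/(t;q)_{n+1}$.

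Second, I interpret barred signed multiset permutations by starting with $j$ bars, creating $j+1$ compartments labeled $0,1,\ldots,j$ from right to left. As in the type $A$ proof, weighting each bar by $t$ and each letter in compartment $i$ by $q^i$ produces the same weight $tq^i$ per bar in space $i$. The new feature is a restriction: if any negative letter sits in the leftmost compartment (compartment $j$), then there is no bar at position $0$, yet $\pi(1) < 0$ would produce a type $B$ descent at position $0$. To guarantee that every type $B$ descent has a bar, I therefore forbid negative letters in the leftmost compartment. This restriction is the only change from the type $A$ case, and showing it is equivalent to the bar-placement condition is the main conceptual point.

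With the restriction in place, the order $q$-polynomial $\OmegaB_\alpha(j,q)$ factors over colors:
\[
\OmegaB_\alpha(j,q) = \prod_{i=1}^m \OmegaB_{(\alpha_i)}(j,q),
\]
since letters within a compartment have a unique weakly increasing arrangement (up to repetitions of identical signed letters). For a single color of multiplicity $\alpha_i$, I split by the number $k$ of negative letters among the $\alpha_i$. The $k$ negatives are placed with repetition into the $j$ allowable compartments (all except the leftmost), contributing $\qbinom{j+k-1}{k}$, while the $\alpha_i - k$ positives are placed with repetition into all $j+1$ compartments, contributing $\qbinom{j+\alpha_i - k}{\alpha_i - k}$. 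Summing over $k$ yields equation~\eqref{eq: type B omega sum}, and multiplying by $t^j$ and summing over $j$ gives the left-hand side. The only step that requires genuine care is verifying that ``no negatives in the leftmost compartment'' exactly encodes ``a bar is present whenever $\pi(1) < 0$''; everything else is a routine adaptation of the type $A$ computation.
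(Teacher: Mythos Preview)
Your proposal is correct and follows essentially the same argument as the paper: both count barred signed multiset permutations in two ways, observe that the type~$B$ descent at position $0$ forces a bar there whenever $\pi(1)<0$, translate this into the restriction that no negative letter occupies the leftmost compartment, and then factor $\OmegaB_\alpha(j,q)$ over the $\alpha_i$ with the same split by the number of negative letters. The only cosmetic difference is that you spell out a bit more explicitly why the ``no negatives in the leftmost compartment'' condition is equivalent to having a bar at position $0$ when $\pi(1)<0$.
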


Lastly, we consider the augmented descent number and compute
\[
B^{(a)}_\alpha (t,q) := \sum_{\pi \in \hypmultn} t^{\ades(\pi)} q^{\amaj(\pi)}.
\]
We first begin with $\pi \in \hypmultn$ and place a bar in every augmented descent, weighting each bar in position $i$ by $tq^i$.
Next we are free to place bars in any of the positions $0,1,\ldots,n$ and thus the sum of the weights of all barred signed multiset permutations with underlying signed multiset permutation $\pi$ is $t^{\ades(\pi)}q^{\amaj(\pi)}/(t;q)_{n+1}$.

Next we begin with $j$ bars which form $j+1$ labeled compartments.
Note that if we are counting barred signed multiset permutations with augmented descents then no negative letters can go in the leftmost compartment and no positive letters can go in the rightmost compartment.
Thus we define $\Omegaaug_\alpha (t,q)$ to be the sum of the products of the weights of the letters over all barred signed multiset permutations with $j$ bars such that there are no negative letters in the leftmost compartment and no positive letters in the rightmost compartment.
As before, the order $q$-polynomial factors as
\[
\Omegaaug_\alpha (j,q) = \Omegaaug_{(\alpha_1)} (j,q) \Omegaaug_{(\alpha_2)} (j,q) \cdots \Omegaaug_{(\alpha_m)} (j,q).
\]
By considering the number of negative letters, we have
\begin{equation}\label{eq: aug type omega sum}
\Omegaaug_{(\alpha_i)}(j,q) = \sum_{k=0}^{\alpha_i}  \qbinom{j+k-1}{ k} \qbinom{j+\alpha_i-k -1 }{ \alpha_i-k} q^{\alpha_i - k}
\end{equation}
where the additional factor of $q^{\alpha_i-k}$ accounts for the fact that positive letters are only allowed in compartments $1,2,\ldots, j$ instead of $0,1,\ldots,j-1$.

\begin{thm}
\[
\sum_{j\geq 0} t^j \prod_{i=1}^m \left(\sum_{k=0}^{\alpha_i}  \qbinom{j+k-1}{ k} \qbinom{j+\alpha_i-k -1}{ \alpha_i-k} q^{\alpha_i - k} \right) = \frac{\augversion{B}_\alpha (t,q)}{(t;q)_{n+1}}
\]
\end{thm}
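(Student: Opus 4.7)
The plan is to mimic the double-counting of barred signed multiset permutations used to prove the analogous identities for $B_\alpha^A(t,q)$ and $B_\alpha^B(t,q)$, adjusting the bar-placement rules to match the augmented descent statistic. Counting one way, I would start from $\pi \in \hypmultn$, place one mandatory bar in each augmented descent (contributing weight $t^{\ades(\pi)}q^{\amaj(\pi)}$), and then freely insert any number of additional bars in each of the $n+1$ spaces, with a bar in space $i$ weighted $tq^i$. Summing over $\pi \in \hypmultn$ and factoring out the geometric series from the unconstrained bars gives $\augversion{B}_\alpha(t,q)/(t;q)_{n+1}$.

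Counting the other way, I would first fix $j$ bars, creating $j+1$ compartments labeled $0, 1, \ldots, j$ from right to left, and transfer the weighting so that each bar contributes a factor $t$ and each letter in compartment $i$ contributes $q^i$. The requirement that bars appear in every augmented descent translates into two endpoint constraints: a negative $\pi(1)$ would force a bar in space $0$, so no negative letter may occupy the leftmost compartment (compartment $j$); similarly a positive $\pi(n)$ would force a bar in space $n$, so no positive letter may occupy the rightmost compartment (compartment $0$). Since letters within a compartment are forced into weakly increasing order, the resulting order $q$-polynomial factors as $\Omegaaug_\alpha(j,q) = \prod_{i=1}^m \Omegaaug_{(\alpha_i)}(j,q)$.

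To evaluate $\Omegaaug_{(\alpha_i)}(j,q)$, I would split according to the number $k$ of copies of $i$ chosen to be negative: the $k$ negatives fill a weakly increasing multiset of compartments drawn from $\{0, 1, \ldots, j-1\}$, contributing $\qbinom{j+k-1}{k}$, while the remaining $\alpha_i - k$ positives fill a weakly increasing multiset drawn from $\{1, 2, \ldots, j\}$, which is a shift of a sequence in $\{0, \ldots, j-1\}$ and hence contributes $\qbinom{j+\alpha_i-k-1}{\alpha_i-k}q^{\alpha_i-k}$. Summing over $k$ reproduces equation~\eqref{eq: aug type omega sum}, and substituting the product formula for $\Omegaaug_\alpha(j,q)$ into $\sum_{j\geq 0}\Omegaaug_\alpha(j,q)t^j$ yields the stated identity. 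The only subtlety beyond mimicking the type~$B$ argument is the asymmetry of the two forbidden endpoints: both extreme compartments exclude letters, but of opposite signs, and this asymmetry is precisely what produces the $q^{\alpha_i-k}$ factor on the positive side; I would take care that the right-to-left labeling of compartments remains consistent with the left-to-right labeling of spaces so that compartment~$0$ really is rightmost and compartment~$j$ really is leftmost under the weight $q^i$ per letter.
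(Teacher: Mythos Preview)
Your proposal is correct and follows essentially the same approach as the paper: both arguments double-count weighted barred signed multiset permutations, first building from $\pi$ by placing mandatory bars in augmented descents and then freely, and second building from $j$ fixed bars with the two endpoint constraints (no negatives in the leftmost compartment, no positives in the rightmost), then factoring $\Omegaaug_\alpha(j,q)$ and evaluating $\Omegaaug_{(\alpha_i)}(j,q)$ by splitting on the number $k$ of negative copies. Your treatment of the $q^{\alpha_i-k}$ factor via the shift from compartments $\{1,\ldots,j\}$ to $\{0,\ldots,j-1\}$ matches the paper's explanation exactly.
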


The previous theorems, while true, are not very elegant due the the fact that $\OmegaA_{(\alpha_i)} (j,q), \OmegaB_{(\alpha_i)}(j,q)$, and $\Omegaaug_{(\alpha_i)} (j,q)$ are all written as sums.
Is there a variation of these formulas in which, possibly after a substitution of variables, the sums can be evaluated?
Towards that end we choose to also keep track of the number of negative terms, weighting them by $z$.
We have
\begin{align}\label{eq: type A z weighted omega}
\OmegaA_{(\alpha_i)} (j,q,z) &= \sum_{k=0}^{\alpha_i}  \qbinom{j+k}{ k} \qbinom{j+\alpha_i-k }{ \alpha_i-k} z^k \\
\OmegaB_{(\alpha_i)} (j,q,z) &= \sum_{k=0}^{\alpha_i}  \qbinom{j+k-1}{ k} \qbinom{j+\alpha_i-k }{ \alpha_i-k} z^k \\
\Omegaaug_{(\alpha_i)} (j,q,z) &= \sum_{k=0}^{\alpha_i}  \qbinom{j+k-1}{ k} \qbinom{j+\alpha_i-k -1 }{ \alpha_i-k} q^{\alpha_i - k} z^k .
\end{align}

First consider equation \eqref{eq: type A z weighted omega}.
If we multiply both sides by $t^{\alpha_i}$ and sum over all $\alpha_i \geq 0$ then we have
\begin{equation}\label{eq: type A z weighted omega sum}
\sum_{\alpha_i \geq 0} \OmegaA_{(\alpha_i)} (j,q,z) t^{\alpha_i} = \frac{1}{(t;q)_{j+1} (tz;q)_{j+1}}.
\end{equation}
If we substitute $q \leftarrow q^2$ and $z \leftarrow q$ into equation \eqref{eq: type A z weighted omega sum} then the terms on the right side of the equation interleave.
Hence we have
\[
\sum_{\alpha_i \geq 0} \OmegaA_{(\alpha_i)} (j,q^2,q) t^{\alpha_i} = \frac{1}{(t;q)_{2j+2}}
\]
and we arrive at the following lemma for which we also include a combinatorial proof.

\begin{lem}
\[
\OmegaA_{(\alpha_i)} (j,q^2,q) =  \qbinom{2j+1+\alpha_i}{ \alpha_i}
\]
\end{lem}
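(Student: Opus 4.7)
The plan is to give both an algebraic derivation and a combinatorial bijection, mirroring what the author hints at with the phrase ``we also include a combinatorial proof.''

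For the algebraic approach, I would start from equation~\eqref{eq: type A z weighted omega sum}, namely
\[
\sum_{\alpha_i \geq 0} \OmegaA_{(\alpha_i)}(j,q,z)\, t^{\alpha_i} = \frac{1}{(t;q)_{j+1}(tz;q)_{j+1}}.
\]
Substituting $q \leftarrow q^2$ and $z \leftarrow q$ makes the two factors $(t;q^2)_{j+1}$ and $(tq;q^2)_{j+1}$ interleave into the single product $(t;q)_{2j+2}$, as the author already pointed out. The $q$-binomial theorem then gives
\[
\sum_{\alpha_i \geq 0} \OmegaA_{(\alpha_i)}(j,q^2,q)\, t^{\alpha_i} = \frac{1}{(t;q)_{2j+2}} = \sum_{\alpha_i \geq 0} \qbinom{2j+1+\alpha_i}{\alpha_i} t^{\alpha_i},
\]
and comparing coefficients of $t^{\alpha_i}$ yields the identity.

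For the combinatorial proof, I would interpret both sides as sums over weighted objects and construct an explicit bijection. The left side $\OmegaA_{(\alpha_i)}(j,q^2,q)$ counts distributions of $\alpha_i$ signed copies of a single letter into compartments $0,1,\ldots,j$: a positive copy in compartment $i$ contributes weight $q^{2i}$, and a negative copy in compartment $i$ contributes weight $q^{2i+1}$ (the extra factor of $q$ being the substituted $z$). The right side $\qbinom{2j+1+\alpha_i}{\alpha_i}$ is the generating function $\sum q^{s_1+\cdots+s_{\alpha_i}}$ over weakly increasing sequences $0 \leq s_1 \leq \cdots \leq s_{\alpha_i} \leq 2j+1$.

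The bijection is then the natural one: send each configuration to the sorted sequence obtained by recording $2i$ for every positive copy in compartment $i$ and $2i+1$ for every negative copy in compartment $i$. This map is invertible because each value $s \in \{0,1,\ldots,2j+1\}$ encodes both a compartment $\lfloor s/2 \rfloor$ and a sign (positive when $s$ is even, negative when $s$ is odd), and the multiset of values determines the weakly increasing sequence uniquely. Weights are preserved by construction, so the two sums are equal. There is no real obstacle here; the only point requiring a moment's care is verifying that the parity encoding is a genuine bijection between the two sets of objects, which follows immediately once the encoding rule is written down.
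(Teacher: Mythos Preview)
Your proposal is correct and matches the paper's approach on both fronts. The algebraic argument you give is precisely what the paper carries out in the paragraph immediately preceding the lemma statement; the paper then states the lemma and supplies only the combinatorial bijection as its formal proof, which is the same ``split each compartment in two'' encoding you describe (positive copies to even positions, negative copies to odd positions).
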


\begin{proof}
Rather than choosing the number of negative letters when adding them to compartments, we instead think of splitting each compartment in two.
Placing a term in the left subcompartment will indicate that it is negative and terms in the right subcompartment are positive.
Thus positive letters are in even compartments and so are weighted by $q^{2i}$.
Negative letters are in odd compartments and so are weighted by $q^{2i+1}$.
Thus each negative term needs to have the weight corrected.
The substitution $z=q$ accounts for the new bar in the middle of the compartment.
Hence we can think of beginning with $2j+2$ compartments and choosing $\alpha_i$ of them with repetition.
\end{proof}

The substitution $q \leftarrow q^2$ and $z \leftarrow q$ defines the flag major index of Adin and Roichman \cite{AdinRoichman2001}, denoted by $\fmaj(\pi)$, where
\[
\fmaj(\pi) = 2\maj(\pi) + N(\pi).
\]
Using the previous lemma we arrive at the following theorem.

\begin{thm}\label{thm: mult fmaja}
\[
\sum_{j \geq 0} t^j \prod_{i=1}^m \qbinom{2j+1+\alpha_i }{ \alpha_i} = \frac{\dsum_{\pi \in \hypmultn} t^{\desa(\pi)}q^{\fmaj(\pi)}}{(t;q^2)_{n+1}}
\]
\end{thm}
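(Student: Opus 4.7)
The plan is to combine the refined order polynomial $\OmegaA_{(\alpha_i)}(j,q,z)$ of equation \eqref{eq: type A z weighted omega}, which tracks not only the letters' weights but also the number of negative letters via $z$, with the barred signed multiset permutation argument used earlier in the section. First I would upgrade the weighting scheme: when counting barred signed multiset permutations, weight each negative letter by an additional factor of $z$. The same two-way count as before yields
\[
\sum_{j \geq 0} \OmegaA_\alpha(j,q,z)\, t^j = \frac{\sum_{\pi \in \hypmultn} t^{\desa(\pi)} q^{\maj(\pi)} z^{N(\pi)}}{(t;q)_{n+1}},
\]
where $N(\pi)$ denotes the number of negative letters of $\pi$, and the $z$-weighted order polynomial still factors as $\OmegaA_\alpha(j,q,z) = \prod_{i=1}^m \OmegaA_{(\alpha_i)}(j,q,z)$ by the compartment-by-compartment argument already given.

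Next I would perform the substitution $q \leftarrow q^2$, $z \leftarrow q$ on both sides. On the right, this converts $\maj$ and $N$ into the flag major index $\fmaj(\pi) = 2\maj(\pi) + N(\pi)$, and the denominator $(t;q)_{n+1}$ becomes $(t;q^2)_{n+1}$. On the left, the preceding lemma gives
\[
\OmegaA_{(\alpha_i)}(j, q^2, q) = \qbinom{2j+1+\alpha_i}{\alpha_i},
\]
so the product factorization yields $\OmegaA_\alpha(j,q^2,q) = \prod_{i=1}^m \qbinom{2j+1+\alpha_i}{\alpha_i}$. Assembling these pieces gives the claimed identity.

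There is essentially no obstacle: the proof is a direct corollary of the $z$-enriched version of the barred permutation count together with the combinatorial lemma for $\OmegaA_{(\alpha_i)}(j,q^2,q)$. The only point requiring care is verifying that the $z$ weighting survives the factorization of $\OmegaA_\alpha$, which it does because the number of negative letters is additive over compartments and is partitioned cleanly among the $m$ distinct letter values. Once that is noted, the substitution mechanically produces $\fmaj$ on one side and $(t;q^2)_{n+1}$ on the other, completing the argument.
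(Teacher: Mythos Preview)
Your proposal is correct and follows essentially the same approach as the paper: introduce the $z$-refined order polynomial tracking the number of negative letters, note the factorization $\OmegaA_\alpha(j,q,z)=\prod_i \OmegaA_{(\alpha_i)}(j,q,z)$, substitute $q\leftarrow q^2$ and $z\leftarrow q$, and invoke the preceding lemma evaluating $\OmegaA_{(\alpha_i)}(j,q^2,q)$. The paper presents this theorem as an immediate consequence of that lemma, and your write-up simply makes the implicit steps explicit.
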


We analogously define
\[
\famaj(\pi) = 2\amaj(\pi) + N(\pi).
\]
The following theorem allows us to count signed multiset permutations by the joint distributions $(\desb,\fmaj)$ and $(\ades,\famaj)$.

\begin{thm}\label{thm: mult fmajb famaj}
\begin{align}
\sum_{j \geq 0} t^j \prod_{i=1}^m \qbinom{2j+\alpha_i}{ \alpha_i} &= \frac{\dsum_{\pi \in \hypmultn} t^{\desb(\pi)}q^{\fmaj(\pi)}}{(t;q^2)_{n+1}} \\
\sum_{j \geq 0} t^j \prod_{i=1}^m \qbinom{2j-1+\alpha_i }{ \alpha_i} &= \frac{\dsum_{\pi \in \hypmultn} t^{\ades(\pi)}q^{\famaj(\pi)}}{(t;q^2)_{n+1}} 
\end{align}
\end{thm}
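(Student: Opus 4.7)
The plan is to mirror the proof of Theorem~\ref{thm: mult fmaja} separately for each of the two statistic pairs in the statement. I would first refine the order polynomials $\OmegaB_{(\alpha_i)}$ and $\Omegaaug_{(\alpha_i)}$ of equations~\eqref{eq: type B omega sum} and~\eqref{eq: aug type omega sum} by weighting each negative letter with an extra indeterminate $z$, producing the $z$-refined sums displayed just after equation~\eqref{eq: type A z weighted omega}. Summing each refined polynomial over $\alpha_i\ge 0$ with weight $t^{\alpha_i}$ and applying the $q$-binomial theorem twice collapses each double sum into a single product:
\[
\sum_{\alpha_i\ge 0}\OmegaB_{(\alpha_i)}(j,q,z)\,t^{\alpha_i} \;=\; \frac{1}{(tz;q)_j\,(t;q)_{j+1}},\qquad
\sum_{\alpha_i\ge 0}\Omegaaug_{(\alpha_i)}(j,q,z)\,t^{\alpha_i} \;=\; \frac{1}{(tz;q)_j\,(tq;q)_j}.
\]

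The key algebraic step is then the substitution $q\leftarrow q^2$ and $z\leftarrow q$, which by definition converts $q^{\maj}z^N$ into $q^{\fmaj}$ and $q^{\amaj}z^N$ into $q^{\famaj}$. Under this substitution the two Pochhammer symbols in each denominator interleave to fill in consecutive powers of $q$:
\[
(tq;q^2)_j\,(t;q^2)_{j+1} \;=\; \prod_{i=0}^{2j}(1-tq^i) \;=\; (t;q)_{2j+1},\qquad
(tq;q^2)_j\,(tq^2;q^2)_j \;=\; \prod_{i=1}^{2j}(1-tq^i).
\]
A final application of the $q$-binomial theorem to each reciprocal reads off the coefficient of $t^{\alpha_i}$ and gives a closed form for $\OmegaB_{(\alpha_i)}(j,q^2,q)$ and $\Omegaaug_{(\alpha_i)}(j,q^2,q)$ as a single $q$-binomial coefficient of the type appearing in the theorem.

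Finally, I would plug these closed forms into the $z$-refined generating-function identities
\[
\sum_{j\ge 0}\OmegaB_\alpha(j,q,z)\,t^j \;=\; \frac{B^B_\alpha(t,q,z)}{(t;q)_{n+1}},\qquad
\sum_{j\ge 0}\Omegaaug_\alpha(j,q,z)\,t^j \;=\; \frac{B^{(a)}_\alpha(t,q,z)}{(t;q)_{n+1}},
\]
which are the direct $z$-refinements of the identities underlying Section~\ref{sec: signed multiset permutations}, and apply the same substitution to both sides to recover the two displayed identities in the theorem. The main obstacle is the routine but error-prone bookkeeping of the interleaving; a parallel combinatorial verification, in the spirit of the lemma following Theorem~\ref{thm: mult fmaja}, splits each of the $j+1$ compartments created by $j$ bars into a negative and a positive sub-compartment, and then deletes the leftmost negative sub-compartment (for type~$B$) or both the leftmost negative and the rightmost positive sub-compartments (for the augmented case), leaving $2j+1$ and $2j$ positions respectively and reproducing the claimed $q$-binomial coefficients directly.
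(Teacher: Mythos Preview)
Your approach is essentially identical to the paper's: the paper also sums the $z$-refined order polynomials to obtain the Pochhammer products, performs the substitution $q\leftarrow q^2$, $z\leftarrow q$ so that the two factors interleave, and reads off the coefficient of $t^{\alpha_i}$ via the $q$-binomial theorem. Your augmented generating function $\frac{1}{(tz;q)_j\,(tq;q)_j}$ is in fact the correct one (the paper's displayed $(t;q)_j$ in place of $(tq;q)_j$ appears to be a typo), and your optional compartment-splitting verification mirrors the combinatorial lemma the paper gives in the type~$A$ case.
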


\begin{proof}
Following the case for type $A$ descents, we see that $\OmegaB_{(\alpha_i)} (j,q,z)$ is equal to the coefficient of $t^{\alpha_i}$ in
\[
\frac{1}{(tz;q)_j (t;q)_{j+1}}
\]
and $\Omegaaug_{(\alpha_i)} (j,q,z)$ is equal to the coefficient of $t^{\alpha_i}$ in
\[
\frac{1}{(tz;q)_j (t;q)_j}.
\]
Substituting $q \leftarrow q^2$ and $z \leftarrow q$, we have $1/(t;q)_{2j+1}$ and $1/(t;q)_{2j}$ respectively.
\end{proof}

If we assume that $\alpha = (1,\ldots,1)$ with $|\alpha| = n$ then Theorem \ref{thm: mult fmaja} and Theorem~\ref{thm: mult fmajb famaj} can be combined into the following theorem for $\hypn$.

\begin{thm}
\begin{align}
\label{eq: desa fmaj} \sum_{j \geq 0}   [2j+2]^n_q t^j &= \frac{\dsum_{\pi \in \hypn} t^{\desa(\pi)}q^{\fmaj(\pi)}}{(t;q^2)_{n+1}} \\ 
\label{eq: desb fmaj} \sum_{j \geq 0}   [2j+1]^n_q t^j &= \frac{\dsum_{\pi \in \hypn} t^{\desb(\pi)}q^{\fmaj(\pi)}}{(t;q^2)_{n+1}}  \\
\sum_{j \geq 0}   [2j]^n_q t^j &= \frac{\dsum_{\pi \in \hypn} t^{\ades(\pi)}q^{\famaj(\pi)}}{(t;q^2)_{n+1}} 
\end{align}
\end{thm}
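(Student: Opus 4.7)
The plan is simply to specialize the two previous theorems (Theorem~\ref{thm: mult fmaja} and Theorem~\ref{thm: mult fmajb famaj}) to the case $\alpha = (1,1,\ldots,1)$ with $|\alpha|=n$. When every $\alpha_i = 1$, the multiset $\{1^{\alpha_1},\ldots,n^{\alpha_n}\}$ reduces to the ordinary set $[n]$, each letter acquires an independent sign, and so the signed multiset permutations in $\hypmultn$ are exactly the signed permutations in $\hypn$. Moreover, under this specialization the descent sets $\Desa$, $\Desb$, and $\aDes$ defined in Section~\ref{sec: signed multiset permutations} agree with those defined in Section~\ref{sec: signed permutations and signed posets}, and likewise for $\maj$, $\fmaj$, $\amaj$, and $\famaj$. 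Hence the sums on the right-hand sides of the three equations from the preceding theorems collapse to the sums appearing on the right of the theorem to be proved.

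On the left-hand sides, the only thing to check is the evaluation of the products, using the identity $\qbinom{a}{1}_q = [a]_q$. Substituting $\alpha_i = 1$ gives
\[
\prod_{i=1}^n \qbinom{2j+1+\alpha_i}{\alpha_i}_q = \prod_{i=1}^n \qbinom{2j+2}{1}_q = [2j+2]_q^n,
\]
and analogously the product in Theorem~\ref{thm: mult fmajb famaj} for $(\desb,\fmaj)$ collapses to $[2j+1]_q^n$, while the product for $(\ades,\famaj)$ collapses to $[2j]_q^n$. Combining these three calculations with the three collapses on the right, the three equations in the theorem drop out immediately.

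Since each of the three equations is just a direct substitution into a result already proved, there is no real obstacle. The only place where one might pause is on the edge case of the augmented statistic when $j=0$: the factor $[2j]_q^n = 0$ on the left matches the fact that no $\pi \in \hypn$ can have $\ades(\pi) = 0$ because either $0 \in \aDes(\pi)$ (when $\pi(1)<0$) or $n \in \aDes(\pi)$ (when $\pi(n)>0$), so both sides of the $\ades$ identity vanish at the $j=0$ coefficient level and the equality is still consistent. With that sanity check recorded, the proof reduces to writing ``set $\alpha = (1,\ldots,1)$ in Theorems~\ref{thm: mult fmaja} and~\ref{thm: mult fmajb famaj}, and apply $\qbinom{a}{1}_q = [a]_q$.''
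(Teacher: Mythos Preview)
Your proposal is correct and is exactly the approach the paper takes: the theorem is stated immediately after Theorems~\ref{thm: mult fmaja} and~\ref{thm: mult fmajb famaj} with the remark that it follows by setting $\alpha=(1,\ldots,1)$, and the paper gives no further argument. Your use of $\qbinom{a}{1}_q=[a]_q$ and the $j=0$ sanity check for the augmented case are fine extra details but not required.
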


Equation \eqref{eq: desb fmaj} was originally proven by Chow and Gessel in \cite[Theorem 3.7]{ChowGessel2007} and equation \eqref{eq: desa fmaj} can be derived from their Lemma 4.3.

%%%%%%%%%%%%%%%%%%%%%%%%%%%%%%%%%%%%%%%%%%%%%%%%

\section{The type $B$ Eulerian descent algebra}\label{sec: type B Eulerian descent algebra}

The group algebra of the hyperoctahedral group $\hypn$ contains a ``descent'' algebra induced by the type $B$ descent set (see \cite{BergeronBergeronI1992, BergeronII1992} for a detailed study of this algebra).
Just as in the unsigned case, there also exists an algebra induced by the type~$B$ descent number, called the \emph{type $B$ Eulerian descent algebra}, whose existence was first proven by Bergeron and Bergeron \cite{BergeronBergeron1992}.
The purpose of this section is to show how our method from Chapter~\ref{chpt: Symmetric Group} generalizes to the signed case.
We define
\[
B_i = \sum_{\desb(\pi) = i} \pi
\]
for $i=0,\ldots,n$ and will show that together the $B_i$ form a basis for an algebra.
To prove that such an algebra exists, we compute $B_\pi(s,t) := \sum_{\sigma \tau = \pi} s^{\desb(\sigma)}t^{\desb(\tau)}$ and show that $B_\pi(s,t)$ is determined by $\desb(\pi)$.

The following definition of $P$-partitions of type~$B$ and the resulting theorems are taken from Chow \cite{ChowThesis2001}.
For every signed poset $P$, a \emph{$P$-partition of type $B$} is a function $f:\pm[n] \rightarrow \Z$ such that:
\begin{enumerate}
\item[(i)] $f(i) \leq f(j)$ if $i <_P j$
\item[(ii)] $f(i) < f(j)$ if $i <_P j$ and $i > j$ in $\Z$
\item[(iii)] $f(-i) = -f(i)$.
\end{enumerate}
Note that condition $(\mathrm{iii})$ implies that $f(0)=0$.
Let $\bversion{\A}(P)$ denote the set of all $P$-partitions of type~$B$.
We define the type~$B$ order polynomial, denoted by $\OmegaB_P(j)$, to be the number of type~$B$ $P$-partitions $f$ such that $f(i) \in \{-j, \ldots, 0, \ldots, j\}$ for every $i \in \pm[n]$.
We say that such $P$-partitions have parts less than or equal to $j$.

Every signed permutation $\pi \in \hypn$ can be represented by the signed poset
\[
\pi(\bar{n}) <_P \cdots <_P \pi(\bar{1}) <_P 0 <_P \pi(1) <_P \cdots <_P \pi(n).
\]
and so we denote by $\bversion{\A}(\pi)$ the set of all functions $f:\pm[n] \rightarrow \Z$ with $f(-i) = -f(i)$ for all $i \in [0,n]$ such that
\[
0 = f(\pi(0)) \leq f(\pi(1)) \leq \cdots \leq f(\pi(n))
\]
with $f(\pi(i)) < f(\pi(i+1))$ if $i \in \Desb(\pi)$.
Thus $\OmegaB_\pi(j)$ is equal to the number of integer solutions to the set of inequalities
\[
0 = i_0 \leq i_1 \leq \cdots \leq i_n \leq j  \qquad \text{with} \qquad i_k < i_{k+1} \text{ if } k \in \Desb(\pi)
\]
and so
\begin{equation}\label{eq: type b order poly formula}
\OmegaB_\pi(j) = \mchoose{j+1-\desb(\pi)}{n} = \binom{j+n-\desb(\pi)}{n}.
\end{equation}
We have the following Fundamental Theorem of $P$-partitions of Type~$B$.

\begin{thm}[FTPPB]
The set of all type $B$ $P$-partitions of a signed poset $P$ is the disjoint union of the set of all type $B$ $\pi$-partitions over all linear extensions~$\pi$ of $P$:
\[
\bversion{\A}(P) = \coprod_{\pi \in \Lin(P)} \bversion{\A}(\pi).
\]
\end{thm}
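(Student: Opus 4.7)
The plan is to mirror the inductive proof of the classical FTPP from Section~\ref{sec: P-partitions}, adapted to respect the additional symmetry of signed posets. I would induct on the number of incomparable pairs in $P$. In the base case, $P$ is itself a signed linear order corresponding to a unique signed permutation $\pi$, so $\Lin(P)=\{\pi\}$ and the statement $\bversion{\A}(P) = \bversion{\A}(\pi)$ is immediate.

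For the inductive step, I would choose an incomparable pair $\{i,j\}$ in $P$ with $i<j$ in $\Z$ and define $P_{ij}$ to be the signed poset obtained from $P$ by adjoining the relation $i <_P j$ together with its enforced symmetric consequence $-j <_P -i$; similarly $P_{ji}$ adjoins $j <_P i$ and $-i <_P -j$. Once I check that $P_{ij}$ and $P_{ji}$ really are signed posets (the nontrivial verification occurs when $j=-i$, where the added relation is self-conjugate), two parallel decompositions follow:
\[
\Lin(P) = \Lin(P_{ij}) \sqcup \Lin(P_{ji}) \qquad \text{and} \qquad \bversion{\A}(P) = \bversion{\A}(P_{ij}) \sqcup \bversion{\A}(P_{ji}).
\]
The partition of $\Lin(P)$ is clear because any signed linear extension of $P$ must order $i$ before or after $j$. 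For the partition of $\bversion{\A}(P)$, condition~(ii) of type-$B$ $P$-partitions makes $i <_P j$ a weak inequality $f(i) \leq f(j)$ (since $i<j$ in $\Z$) and $j <_P i$ a strict inequality $f(j) < f(i)$ (since $j>i$ in $\Z$), so each $f \in \bversion{\A}(P)$ lies in exactly one piece; condition~(iii), $f(-i)=-f(i)$, automatically supplies the symmetric constraint on $\{-j,-i\}$. Applying the inductive hypothesis to $P_{ij}$ and $P_{ji}$ and combining then yields the theorem.

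The main obstacle, as I see it, is the careful treatment of the exceptional incomparable pairs $\{i,-i\}$ and $\{0,i\}$, where the signed-poset axiom couples the added relation to itself. For $\{i,-i\}$ with $i<0$, adjoining $i <_P -i$ gives (weak, since $i<-i$ in $\Z$) $f(i) \leq f(-i)$, which via condition~(iii) reduces to $f(-i) \geq 0$; adjoining $-i <_P i$ gives the strict inequality $f(-i) < f(i)$, reducing to $f(-i) < 0$. These two integer conditions exhaust $\Z$, so the split is clean. An analogous argument, pivoting on $f(0)=0$, handles $\{0,i\}$ with $i>0$: one branch gives $f(i) \geq 0$, the other $f(i) \leq -1$. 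In every case the induction step goes through and the theorem follows.
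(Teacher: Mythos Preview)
Your proof is correct. The paper itself does not supply a proof of FTPPB; it states the theorem and attributes it to Chow~\cite{ChowThesis2001}. Your argument is the natural signed analogue of the paper's proof of the classical FTPP in Section~\ref{sec: P-partitions}: induct on incomparable pairs, split $P$ into $P_{ij}$ and $P_{ji}$, and observe that condition~(ii) makes the two branches $f(i)\leq f(j)$ and $f(j)<f(i)$ exhaust $\Z$. The only genuinely new content over the unsigned case is the handling of the self-conjugate pairs $\{i,-i\}$ and $\{0,i\}$, and your case analysis there is accurate. One small point you glossed over: after adjoining $i<_P j$ and $-j<_P -i$ and taking the transitive closure, you should note that the resulting order is still a signed poset (every relation generated by transitivity has its mirror generated the same way), but this is routine.
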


\begin{cor}\label{cor: type B order poly linear extensions}
\[
\OmegaB_P(j) = \sum_{\pi \in \Lin(P)} \OmegaB_\pi(j)
\]
\end{cor}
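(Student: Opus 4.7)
The plan is to derive this corollary directly from the Fundamental Theorem of $P$-partitions of Type~$B$ (FTPPB) stated immediately above it. Since FTPPB asserts the disjoint union decomposition
\[
\bversion{\A}(P) = \coprod_{\pi \in \Lin(P)} \bversion{\A}(\pi),
\]
the strategy is simply to restrict this identity of sets to the finite subsets of $P$-partitions whose parts lie in $\{-j,\ldots,0,\ldots,j\}$ and take cardinalities.

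More precisely, first I would observe that the condition ``$f$ has parts less than or equal to $j$'' (i.e.\ $f(i) \in \{-j,\ldots,j\}$ for all $i \in \pm[n]$) is a condition on the function $f$ that makes no reference to the poset $P$ or to any linear extension $\pi$. Consequently, if $\bversion{\A}(P,j)$ and $\bversion{\A}(\pi,j)$ denote the subsets of type~$B$ $P$-partitions (respectively $\pi$-partitions) with parts less than or equal to $j$, then intersecting both sides of the FTPPB decomposition with the set of functions bounded by $j$ yields
\[
\bversion{\A}(P,j) = \coprod_{\pi \in \Lin(P)} \bversion{\A}(\pi,j),
\]
still as a disjoint union.

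Then I would just pass to cardinalities. By definition, $\OmegaB_P(j) = |\bversion{\A}(P,j)|$ and $\OmegaB_\pi(j) = |\bversion{\A}(\pi,j)|$, so the disjoint union above gives
\[
\OmegaB_P(j) = \sum_{\pi \in \Lin(P)} \OmegaB_\pi(j),
\]
which is precisely the statement to be proved. There is no real obstacle here: the entire content of the corollary is packaged inside FTPPB, and the only thing to check is that the bound $|f(i)| \leq j$ is preserved under the decomposition — which is immediate because FTPPB partitions functions, not their value ranges. This is entirely analogous to the unsigned corollary of the original FTPP deduced earlier in Section~\ref{sec: P-partitions}.
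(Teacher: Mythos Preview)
Your proposal is correct and matches the paper's approach exactly: the paper states this as an immediate corollary of FTPPB with no explicit proof, and your argument---restricting the disjoint union decomposition to functions bounded by $j$ and taking cardinalities---is precisely the intended one-line deduction.
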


Next we consider the union of two disjoint posets.
We have the following theorem and corollary.

\begin{thm}
If $P_1$ and $P_2$ are two disjoint signed posets then there exists a bijection between the set of all $(P_1\sqcup P_2)$-partitions of type $B$ and the set ${\bversion{\A}(P_1) \times \bversion{\A}(P_2)}$:
\[
\bversion{\A}(P_1\sqcup P_2) \leftrightarrow \bversion{\A}(P_1) \times \bversion{\A}(P_2).
\]
\end{thm}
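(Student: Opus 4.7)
The plan is the obvious restriction-and-gluing bijection, exactly parallel to the unsigned version $\A(P_1 \sqcup P_2) \leftrightarrow \A(P_1) \times \A(P_2)$ proved earlier. Given a type~$B$ $(P_1 \sqcup P_2)$-partition $f$, I would define $\Phi(f) = (f|_{S_1}, f|_{S_2})$, where $S_1, S_2 \subseteq \pm[n]$ are the underlying sets of $P_1$ and $P_2$; the inverse map takes a pair $(g,h)$ and glues them on $S_1 \cup S_2$.

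First I would verify that each restriction is itself a type~$B$ $P$-partition. Conditions~(i) and~(ii) only constrain pairs $i <_P j$ lying within a single component of the disjoint union, so they transfer immediately from $f$ to each restriction. Condition~(iii), the antisymmetry $f(-i) = -f(i)$, transfers because by definition a signed poset is supported on a subset of $\pm[n]$ closed under negation; hence each $S_i$ is negation-closed and the relation restricts sensibly to $S_i$.

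Next I would verify the gluing direction. Given $(g,h) \in \bversion{\A}(P_1) \times \bversion{\A}(P_2)$, define $f$ on $S_1 \cup S_2$ by $f|_{S_1} = g$ and $f|_{S_2} = h$. Because $P_1 \sqcup P_2$ contains no relations between elements of different components, conditions~(i) and~(ii) for $f$ decouple into the corresponding conditions for $g$ and $h$ separately, and condition~(iii) holds on each $S_i$ by hypothesis. Consistency at any element common to both domains (in particular $0$, where $f(0)=0$ is forced by condition~(iii)) causes no conflict.

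The two constructions are manifestly mutually inverse, which establishes the bijection. There is no real obstacle here: the argument is essentially bookkeeping, and the only point worth highlighting is that the symmetry condition~(iii) splits cleanly across the disjoint union precisely because signed posets are required to live on negation-closed subsets of $\pm[n]$.
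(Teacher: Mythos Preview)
Your proposal is correct and follows exactly the same approach as the paper: the paper's proof simply states that the map sending $f$ to $(f|_{P_1}, f|_{P_2})$ is the desired bijection, without spelling out the verification of conditions (i)--(iii) or the inverse map. Your write-up is in fact more detailed than the paper's one-line argument.
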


\begin{proof}
Let $f$ be a $(P_1 \sqcup P_2)$-partition of type $B$.
The map that sends $f$ to the ordered pair $(g,h)$ where $g = f|_{P_1}$ and $h = f|_{P_2}$ is a bijection between $\bversion{\A}(P_1\sqcup P_2)$ and $\bversion{\A}(P_1) \times \bversion{\A}(P_2)$.
\end{proof}

\begin{cor}\label{cor: type B order poly product}
\[
\OmegaB_{P_1 \sqcup P_2}(j) = \OmegaB_{P_1}(j) \OmegaB_{P_2}(j)
\]
\end{cor}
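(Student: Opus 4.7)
The plan is to invoke the bijection established in the preceding theorem and then track the ``parts $\leq j$'' restriction through it. Since $\OmegaB_P(j)$ is defined as the number of type~$B$ $P$-partitions $f$ with $f(i) \in \{-j,\ldots,0,\ldots,j\}$ for every $i \in \pm[n]$, the proof amounts to checking that this size constraint is compatible with the decomposition $f \leftrightarrow (g,h)$ where $g = f|_{P_1}$ and $h = f|_{P_2}$.

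First I would recall the bijection $\bversion{\A}(P_1 \sqcup P_2) \leftrightarrow \bversion{\A}(P_1) \times \bversion{\A}(P_2)$ from the previous theorem. Then I would observe that $f$ has parts less than or equal to $j$ if and only if both $g$ and $h$ do, simply because the range condition on $f$ is a pointwise condition over $\pm[n] = \pm[n_1] \sqcup \pm[n_2]$ (after relabeling), and the bijection restricts $f$ to each side without altering values. Consequently, the bijection restricts to a bijection between the set of type~$B$ $(P_1 \sqcup P_2)$-partitions with parts of absolute value at most $j$ and the Cartesian product of the corresponding sets on $P_1$ and $P_2$. Taking cardinalities gives the desired factorization
\[
\OmegaB_{P_1 \sqcup P_2}(j) = \OmegaB_{P_1}(j)\,\OmegaB_{P_2}(j).
\]

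There is essentially no obstacle here; the only thing to be careful about is that the constraint defining the order polynomial is a pointwise bound on $|f(i)|$, so it passes through the restriction maps transparently. No interaction between the two posets is introduced because they are disjoint and there is no shared element (the bijection itself already handled the fact that $f(0)=0$ is forced by condition~(iii)). Thus the corollary follows immediately from the bijective theorem, exactly parallel to the unsigned product formula $\Omega_{P_1 \sqcup P_2}(j) = \Omega_{P_1}(j)\Omega_{P_2}(j)$ derived earlier in Section~\ref{sec: P-partitions}.
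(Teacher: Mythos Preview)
Your proposal is correct and matches the paper's approach: the corollary is stated without proof in the paper because it follows immediately from the bijection $\bversion{\A}(P_1\sqcup P_2) \leftrightarrow \bversion{\A}(P_1) \times \bversion{\A}(P_2)$, and your observation that the pointwise bound $|f(i)|\le j$ passes through the restriction maps is exactly the (implicit) reasoning.
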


For every $I \subseteq [0,n]$ and $\pi \in \hypn$, define the \emph{type $B$ zig-zag poset} $\zigbipi$ by setting $\pi(i) <_Z \pi(i+1)$ if $i \notin I$ and $\pi(i) >_Z \pi(i+1)$ if $i \in I$ with $\pi(0) = 0$.
Just as in the unsigned case, we have the following lemma about linear extensions of type $B$ zig-zag posets.

\begin{lem}\label{lem: type B zig-zag extensions}
If $\sigma, \pi \in \hypn$ and $I \subseteq [0,n]$ then $\sigma \in \Lin(\zigbipi)$ if and only if $\Desb(\sigma^{-1}\pi) = I$.
\end{lem}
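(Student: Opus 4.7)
The plan is to model the proof directly on that of Lemma~\ref{lem: Zig-Zag Extensions}, adapting it to the signed setting. Recall that $\sigma \in \Lin(P)$ for a signed poset $P$ means that $a <_P b$ implies $\sigma^{-1}(a) < \sigma^{-1}(b)$, and that every signed permutation fixes $0$ so $\sigma^{-1}(0)=0$. The defining relations of $\zigbipi$ compare only consecutive entries of the word $\pi(0)\pi(1)\cdots\pi(n)$, so linearity of $\sigma$ as an extension can be tested one adjacency at a time.

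First I would handle the forward direction. Assume $\sigma \in \Lin(\zigbipi)$. For each $i \in [0,n-1]$ with $i \notin I$, the definition gives $\pi(i) <_Z \pi(i+1)$, hence
\[
(\sigma^{-1}\pi)(i) = \sigma^{-1}(\pi(i)) < \sigma^{-1}(\pi(i+1)) = (\sigma^{-1}\pi)(i+1),
\]
so $i \notin \Desb(\sigma^{-1}\pi)$. Symmetrically, $i \in I$ yields $\pi(i) >_Z \pi(i+1)$ and therefore $(\sigma^{-1}\pi)(i) > (\sigma^{-1}\pi)(i+1)$, placing $i$ in $\Desb(\sigma^{-1}\pi)$. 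Combining these gives $\Desb(\sigma^{-1}\pi) = I$. The converse is the same chain of implications read in reverse: if $\Desb(\sigma^{-1}\pi) = I$, then for each adjacency $i \in [0,n-1]$ the inequality between $\sigma^{-1}(\pi(i))$ and $\sigma^{-1}(\pi(i+1))$ matches the defining relation of $\zigbipi$, so $\sigma$ respects every covering relation of the zig-zag and is therefore a linear extension.

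The main subtlety, compared with the proof of Lemma~\ref{lem: Zig-Zag Extensions}, lies at position $0$. The convention $\pi(0) = 0$ together with $\sigma^{-1}(0) = 0$ means $(\sigma^{-1}\pi)(0) = 0$, so the relation between $\pi(0)$ and $\pi(1)$ in $\zigbipi$ correctly translates into a sign condition on $(\sigma^{-1}\pi)(1)$: namely $0 \notin I$ forces $(\sigma^{-1}\pi)(1) > 0$ and $0 \in I$ forces $(\sigma^{-1}\pi)(1) < 0$, which is exactly the condition for $0 \in \Desb(\sigma^{-1}\pi)$. Once this is checked, the rest of the argument is identical to the symmetric-group case, and the anti-symmetry built into signed posets (i.e., $x <_P y \Leftrightarrow \bar y <_P \bar x$) is automatically respected by $\sigma^{-1}$ since $\sigma^{-1}(-x) = -\sigma^{-1}(x)$. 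I expect no real obstacle beyond bookkeeping at position $0$.
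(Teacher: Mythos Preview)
Your proposal is correct and matches the paper's approach: the paper does not give an explicit proof of this lemma but states it as following ``just as in the unsigned case,'' i.e., by the same argument as Lemma~\ref{lem: Zig-Zag Extensions}, which is precisely what you carry out. Your attention to the $i=0$ case (using $\sigma^{-1}(0)=0$ so that the zig-zag relation at position $0$ becomes the sign condition on $(\sigma^{-1}\pi)(1)$) is exactly the bookkeeping needed, and the paper leaves this implicit.
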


\begin{ex}
If $\pi = 2\bar{3}1$, and $I = \{0\}$ then $\zigbipi$ is the poset with ordering $0 >_Z 2<_Z \bar{3} <_Z 1$.
We have $\Lin(\zigbipi) = \{\bar{3}\bar{2}1, \bar{3}1\bar{2}, \bar{2}\bar{3}1, \bar{1}3\bar{2}, 13\bar{2}, 3\bar{2}1, 31\bar{2} \}$ and $\{\sigma^{-1}\pi \, | \, \sigma \in \Lin(\zigbipi)\} = \{\bar{3}\bar{2}\bar{1}, \bar{3}\bar{2}1, \bar{3}\bar{1}2, \bar{3}12, \bar{2}\bar{1}3, \bar{2}13, \bar{1}23 \}$.
Hence $\Desb(\sigma^{-1}\pi) = \{0\}$ for all $\sigma \in \Lin(\zigbipi)$.
\end{ex}

Next we consider a second collection of posets.
For every $I \subseteq [0,n]$ and $\pi \in \hypn$, define the \emph{type $B$ chain poset} $\chainbipi$ by setting $\pi(i) <_C \pi(i+1)$ if $i \notin I$ with $\pi(0) = 0$.

\begin{lem}\label{lem: type B chain extensions}
If $\sigma,\pi \in \hypn$ and $I \subseteq [0,n]$ then $\sigma \in \Lin(\chainbipi)$ if and only if $\Desb(\sigma^{-1}\pi) \subseteq I$.
\end{lem}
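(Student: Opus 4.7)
The plan is to mirror the proof strategy used for Lemma \ref{lem: Chain Extensions} in the unsigned case, which in turn follows the argument for the zig-zag extension lemma (here Lemma~\ref{lem: type B zig-zag extensions}). The key point is that $\chainbipi$ differs from $\zigbipi$ only in that the relations $\pi(i) >_C \pi(i+1)$ for $i \in I$ are dropped, so linear extensions of the chain poset are no longer forced to have a descent in position~$i$ when $i \in I$. This loosening is exactly what converts the equality $\Desb(\sigma^{-1}\pi) = I$ of the zig-zag lemma into an inclusion $\Desb(\sigma^{-1}\pi) \subseteq I$.

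More concretely, I would argue as follows. Recall that $\sigma \in \Lin(\chainbipi)$ means that whenever $a <_C b$ in $\chainbipi$ we have $\sigma^{-1}(a) < \sigma^{-1}(b)$, using the convention $\sigma^{-1}(0) = 0$ which comes from the condition $\pi(0) = 0$ in the definition of $\chainbipi$. The only defining cover relations of $\chainbipi$ are $\pi(i) <_C \pi(i+1)$ for $i \notin I$ (with $i \in [0,n-1]$). Hence $\sigma \in \Lin(\chainbipi)$ is equivalent to the statement that for every $i \in [0,n-1] \setminus I$,
\[
\sigma^{-1}(\pi(i)) < \sigma^{-1}(\pi(i+1)),
\]
or equivalently, setting $\tau = \sigma^{-1}\pi$ (so $\tau(0) = 0$), $\tau(i) < \tau(i+1)$ for every $i \in [0,n-1]\setminus I$. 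That condition is precisely $\Desb(\tau) \subseteq I$, which gives both implications at once.

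The two minor points I expect to need care on are the treatment of position $0$ and the observation that no relation on position $n$ needs to be checked. For position $0$: the relation $\pi(0) = 0 <_C \pi(1)$ (when $0 \notin I$) translates to $0 < \sigma^{-1}(\pi(1))$, i.e. $\tau(1) > 0$, which is exactly the condition ``$0 \notin \Desb(\tau)$'' in the type~$B$ convention; so the base case of the argument plugs in smoothly. For position $n$: since $\Desb$ is a subset of $[0,n-1]$ and $\chainbipi$ imposes no relation between $\pi(n)$ and anything to its right, this position is automatically consistent on both sides. No combinatorial obstacle remains, and the lemma follows by essentially the same short bookkeeping as the proof of Lemma~\ref{lem: Chain Extensions}.
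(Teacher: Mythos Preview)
Your proposal is correct and follows exactly the approach the paper intends: the paper states this lemma without proof, implicitly relying on the same argument as Lemma~\ref{lem: type B zig-zag extensions} (just as Lemma~\ref{lem: Chain Extensions} was proved by the same argument as Lemma~\ref{lem: Zig-Zag Extensions}). Your handling of position~$0$ and the irrelevance of position~$n$ is precisely the bookkeeping that makes the type~$B$ version go through.
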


Next we define barred versions of both type $B$ zig-zag posets and type $B$ chain posets.
First, a \emph{barred type $B$ zig-zag poset} is defined to be a type $B$ zig-zag poset $\zigbipi$ with an arbitrary number of bars placed in each of the $n+1$ spaces to the right of the $0$ element such that between any two bars the elements of the poset (not necessarily their labels) are increasing.
No bars are allowed to the left of the $0$ element.
Figure \ref{fig: barred type B zig-zag poset} gives an example of a barred $\zigbipi$ poset with $I = \{0,2\}$ and $\pi = 2\bar{3}1$.

\begin{figure}[htbp]
\[\xymatrix @!R @!C @C=10pt{ & \ar@{-}[ddd] &&&&& \ar@{-}[ddd]  & \ar@{-}[ddd]  &  & \ar@{-}[ddd]  \\   0 \ar@{-}[drrr]  &  & & & & \bar{3}\ar@{-}[drrr] & & & &   \\   & & & 2 \ar@{-}[urr]  & & & & & 1 &  \\  &&&&&&&&&   }\] 
\caption{\; A barred $\zigbipi$ poset for $I=\{0,2\}$ and $\pi = 2\bar{3}1$.}
\label{fig: barred type B zig-zag poset}
\end{figure}

To create a barred type $B$ zig-zag poset, we begin with a type $B$ zig-zag poset $\zigbipi$ and must first place a bar in space $i$ for each $i \in I$.
From there we are free to place any number of bars in any of the $n+1$ spaces.
Define $\OmegaB_{\zigbipi}(j,k)$ to be the number of ordered pairs $(f,P)$ where $P$ is a barred $\zigbipi$ poset with $k$ bars and $f$ is a $\zigbipi$-partition of type~$B$ with parts less than or equal to $j$.
Recall that $\sigma \in \Lin(\zigbipi)$ if and only if $\Desb(\sigma^{-1}\pi) = I$.
If we begin with the type $B$ zig-zag poset $\zigbipi$ then there is a unique way to place the first $|I|$ bars (place one bar in space $i$ for each $i\in I$).
Next there are
\[
\mchoose{n+1}{k-|I|} = \binom{k + n - |I|}{n}
\]
ways to place the remaining $k-|I|$ bars in the $n+1$ spaces and hence there are $\binom{k+n-|I|}{n}$ barred $\zigbipi$ posets with $k$ bars.
Thus
\begin{align*}
\OmegaB_{\zigbipi}(j,k) &= \sum_{\sigma \in \Lin(\zigbipi)} \OmegaB_{\sigma}(j) \binom{k+n-\desb(\sigma^{-1}\pi)}{n} \\
&= \sum_{\sigma \in \Lin(\zigbipi)} \OmegaB_{\sigma}(j) \OmegaB_{\sigma^{-1}\pi}(k).
\end{align*}
If we set $\tau = \sigma^{-1}\pi$ and sum over all $I \subseteq [0,n]$ we have
\begin{equation}\label{eq: barred type b zig-zag order polys}
\sum_{I \subseteq [0,n]} \OmegaB_{\zigbipi}(j,k) = \sum_{\sigma \tau = \pi} \OmegaB_{\sigma}(j) \OmegaB_{\tau}(k).
\end{equation}

Next we define a \emph{barred type $B$ chain poset} to be a type $B$ chain poset $\chainbipi$ with at least one bar in space $i$ for each $i \in I$ and with an arbitrary number of bars placed on the right end.
No bars are allowed in space $i$ for $i\in [0,n-1]\setminus I$ and no bars are allowed to the left of the $0$ element.
Thus we place at least one bar between each chain of the type $B$ chain poset and allow for bars on the right end.
Figure \ref{fig: barred type B chain poset} gives an example of a barred $\chainbipi$ poset with $I = \{0,2\}$ and $\pi = 2\bar{3}1$.

\begin{figure}[htbp]
\[\xymatrix @!R @!C @C=10pt{ & \ar@{-}[ddd] &&&& \ar@{-}[ddd]  & \ar@{-}[ddd]  &  & \ar@{-}[ddd]  \\     &  & & & \bar{3} & & & &   \\  0   & & 2 \ar@{-}[urr]  & & & & & 1 &  \\  &&&&&&&&   }\] 
\caption{\; A barred $\chainbipi$ poset for $I = \{0,2\}$ and $\pi = 2\bar{3}1$.}
\label{fig: barred type B chain poset}
\end{figure}

To create a barred type $B$ chain poset, we begin with a type $B$ chain poset $\chainbipi$ and must first place a bar in space $i$ for each $i \in I$.
From there we are free to place any number of bars in space $i$ for any $i \in I$ and can also place any number of bars on the right end.
This creates a collection of bars with each compartment containing at most one nonempty chain.
Define $\OmegaB_{\chainbipi} (j,k)$ to be the number of ordered pairs $(f,P)$ where $P$ is a barred $\chainbipi$ poset with $k$ bars and $f$ is a $\chainbipi$-partition of type $B$ with parts less than or equal to $j$.
Recall that $\sigma \in \Lin(\chainbipi)$ if and only if $\Desb(\sigma^{-1}\pi) \subseteq I$.
If we begin with the type $B$ chain poset $\chainbipi$ then there is a unique way to place the first $|I|$ bars (place one bar in space $i$ for each $i\in I$).
Next there are
\[
\mchoose{|I|+1}{k-|I|} = \binom{k}{k-|I|}
\]
ways to place the remaining $k-|I|$ bars in the $|I|+1$ allowable spaces and hence there are $\binom{k}{k-|I|}$ barred $\chainbipi$ posets with $k$ bars.
Thus
\[
\OmegaB_{\chainbipi}(j,k) = \sum_{\sigma \in \Lin(\chainbipi)} \OmegaB_{\sigma}(j) \binom{k}{k-|I|}.
\]

The following lemma shows that if $\sigma \in \hypn$ then the number of barred type $B$ zig-zag posets with $k$ bars such that $\sigma$ is a linear extension of the underlying type $B$ zig-zag poset is the same as the number of barred type $B$ chain posets with $k$ bars such that $\sigma$ is a linear extension of the underlying type $B$ chain poset.

\begin{lem}\label{lem: barred type B zig chain equiv}
For every $\pi \in \hypn$ and $j,k\geq0$,
\[
\sum_{I \subseteq [0,n]}  \OmegaB_{\zigbipi}(j,k)= \sum_{I \subseteq [0,n]} \OmegaB_{\chainbipi}(j,k).
\]
\end{lem}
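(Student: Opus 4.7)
My plan is to mirror the argument used for Lemma~\ref{Barred Poset Extensions} in the symmetric case, adapting only the bookkeeping to reflect that bars now live only in the $n+1$ spaces to the right of the $0$ element. The first step is to rewrite both sides of the identity in a ``$\sigma$-indexed'' form. Using Corollary~\ref{cor: type B order poly linear extensions} and the formula
\[
\OmegaB_{\zigbipi}(j,k) = \sum_{\sigma \in \Lin(\zigbipi)} \OmegaB_\sigma(j)\binom{k+n-\desb(\sigma^{-1}\pi)}{n},
\]
I would define $Z^B_\sigma(k)$ to be the number of barred type~$B$ zig-zag posets with $k$ bars whose underlying poset $\zigbipi$ (with $I \subseteq [0,n]$) has $\sigma$ as a linear extension, and $C^B_\sigma(k)$ the analogous count for barred type~$B$ chain posets. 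Exchanging the order of summation and using Lemmas~\ref{lem: type B zig-zag extensions} and \ref{lem: type B chain extensions}, the identity reduces to
\[
\sum_{\sigma \in \hypn} \OmegaB_\sigma(j)\, Z^B_\sigma(k) \;=\; \sum_{\sigma \in \hypn} \OmegaB_\sigma(j)\, C^B_\sigma(k),
\]
so it suffices to prove, for each fixed $\sigma \in \hypn$, the equality $Z^B_\sigma(k) = C^B_\sigma(k)$.

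The second step is to exhibit an explicit bijection realizing this equality. The map I would use is the same one that worked in the unsigned setting: given a barred type~$B$ zig-zag poset with underlying poset $\zigbipi$, scan the $n+1$ spaces to the right of $0$ and, for every space containing at least one bar, delete the relation between the two elements flanking that space. Since bars on the left of $0$ are forbidden in both setups, this transformation is well-defined, and the resulting configuration is a barred type~$B$ chain poset with underlying poset $\chainbipi[J]$ for some $J \supseteq I$, with the same number of bars. To go back, one reads off $J$ as precisely the set of spaces that contain bars and reintroduces the zig-zag relations between flanking elements, thus reconstructing the underlying $\zigbipi[J]$ (this recovers the original $I$ implicitly because the zig-zag rule fixes the comparisons). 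Since linear extensions of $\zigbipi[J]$ are linear extensions of $\chainbipi[J]$ whenever $\Desb(\sigma^{-1}\pi) = J$, whereas linear extensions of $\chainbipi[J]$ only require $\Desb(\sigma^{-1}\pi) \subseteq J$, the matching between ``zig-zag compatible with $\sigma$'' and ``chain compatible with $\sigma$'' lines up exactly when we let $J$ range over all supersets of $\Desb(\sigma^{-1}\pi)$.

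The main obstacle, as I see it, is not the combinatorics of the bijection itself but rather confirming that the boundary condition at $0$ (no bars may be placed to the left of the $0$ element in either object) is preserved by the map and its inverse, and checking that the forced bar at position $0$ when $0 \in I$ corresponds correctly to whether $\sigma(1) < 0$. Once these boundary details are handled, the rest of the proof is a routine verification that the inverse map (inserting all missing zig-zag relations) sends barred type~$B$ chain posets with $\sigma$ as a linear extension back to barred type~$B$ zig-zag posets with $\sigma$ as a linear extension, thereby establishing $Z^B_\sigma(k) = C^B_\sigma(k)$ and hence the lemma.
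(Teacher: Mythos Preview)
Your approach is essentially identical to the paper's: both reduce to showing $Z^B_\sigma(k)=C^B_\sigma(k)$ for each fixed $\sigma$ via the bijection that deletes the relation at every barred space. One small wording issue: in your description of the inverse you say you reconstruct $Z^B(J,\pi)$, but since $\sigma$ is fixed the underlying zig-zag must be $Z^B(I,\pi)$ with $I=\Desb(\sigma^{-1}\pi)$; the set $J$ of barred spaces only identifies which chain poset you came from, not a new zig-zag index.
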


\begin{proof}
We must find a bijection between the two types of barred signed posets that respects the number of bars.
Recall that $\sigma \in \Lin(\zigbipi)$ for a unique $I \subseteq [0,n]$ and that $\sigma \in \Lin(C^B(J,\pi))$ if and only if $J \supseteq I$.
The desired bijection is simply the map that sends each barred type $B$ zig-zag poset with underlying poset $\zigbipi$ to the barred type $B$ chain poset obtained from $\zigbipi$ by removing the relation between $\pi(i)$ and $\pi(i+1)$ for every space $i$ containing at least one bar.
\end{proof}

The bijection in the previous lemma maps Figure \ref{fig: barred type B zig-zag poset} to Figure \ref{fig: barred type B chain poset}.
Now that all the pieces are in place, we are ready to prove the existence of the type $B$ Eulerian descent algebra by computing $B_\pi (s,t)$.

\begin{thm}\label{thm: type B algebra}
For every $\pi \in \hypn$,
\begin{equation}\label{eq: type B algebra}
\sum_{j,k \geq 0} \binom{2jk+j+k +n-\desb(\pi)}{n} s^j t^k = \frac{B_\pi (s,t)}{(1-s)^{n+1}(1-t)^{n+1}}.
\end{equation}
\end{thm}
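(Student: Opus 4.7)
The strategy is to mirror the proof of Theorem~\ref{Symmetric Algebra Thm}, substituting the unsigned notions with their type~$B$ analogues developed in this section. First I would expand
\[
\frac{B_\pi(s,t)}{(1-s)^{n+1}(1-t)^{n+1}} = \sum_{j,k \geq 0} \sum_{\sigma\tau = \pi} \binom{j+n-\desb(\sigma)}{n}\binom{k+n-\desb(\tau)}{n} s^j t^k
\]
using the identity $\sum_{j \geq 0} \binom{j+n}{n}s^j = (1-s)^{-(n+1)}$ together with the index shifts $j \mapsto j - \desb(\sigma)$ and $k \mapsto k - \desb(\tau)$. The type~$B$ order polynomial formula~\eqref{eq: type b order poly formula} identifies the product of binomials with $\OmegaB_\sigma(j)\OmegaB_\tau(k)$, after which equation~\eqref{eq: barred type b zig-zag order polys} collapses the inner sum to $\sum_{I \subseteq [0,n]} \OmegaB_{\zigbipi}(j,k)$.

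Next, Lemma~\ref{lem: barred type B zig chain equiv} allows me to pass from barred type~$B$ zig-zag posets to barred type~$B$ chain posets, so the theorem reduces to proving
\[
\sum_{I \subseteq [0,n]} \OmegaB_{\chainbipi}(j,k) = \binom{2jk+j+k+n-\desb(\pi)}{n}.
\]

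This last identity is the main obstacle, and to establish it I would adapt the compartment technique from the unsigned case. Fix a barred type~$B$ chain poset with $k$ bars; the bars cut the positive side of the poset into $k+1$ compartments $0,1,\ldots,k$ ordered left to right, and since no bars may be placed to the left of $0$, the element $0$ is always in compartment~$0$. Because the chain poset has no cross-chain relations, the order polynomial factors as $\OmegaB_{\chainbipi}(j) = \prod_{l=0}^{k} \OmegaB_{\pi_l}(j)$, where $\pi_l$ denotes the subword of $\pi$ in compartment~$l$. The essential new feature of the type~$B$ case is that compartment~$0$ is forced to carry values in $[0,j]$ (since $f(0)=0$ together with $P$-partition monotonicity pins the chain containing $0$ to nonnegative values), which is a block of size $j+1$, while each compartment $l \geq 1$ has no link to $0$ and so its chain may take any values in $[-j,j]$, giving blocks of size $2j+1$. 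Assigning compartment~$0$ the shifted range $[0,j]$ and compartment $l \geq 1$ the range $[(2l-1)j+l,\,(2l+1)j+l]$, concatenation across compartments encodes the data as weakly increasing sequences $0 = s_0 \leq s_1 \leq \cdots \leq s_n \leq 2jk+j+k$, with $s_i < s_{i+1}$ forced precisely when $i \in \Desb(\pi)$---either by the type~$B$ $P$-partition strict condition within a chain or automatically by the unit gap between consecutive compartment ranges when a bar separates positions $i$ and $i+1$. Summing over all $I$ correctly absorbs the possible bar placements, and converting each of the $\desb(\pi)$ forced strict inequalities to a weak one yields $\binom{2jk+j+k+n-\desb(\pi)}{n}$ admissible sequences, completing the proof.
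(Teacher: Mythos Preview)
Your proposal is correct and follows essentially the same approach as the paper. The only cosmetic difference is that you describe the final bijection via concatenation of shifted compartment ranges (in the style the paper uses for the type~$A$ and symmetric-group cases), whereas the paper's type~$B$ proof phrases the same map as directly defining a type~$B$ $\pi$-partition $g$ by $g = f_i + i(2j+1)$ on the interval $[-j+i(2j+1),\,j+i(2j+1)]$; these are identical constructions.
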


\begin{proof}
First we see that
\begin{align*}
\frac{B_\pi(s,t)}{(1-s)^{n+1}(1-t)^{n+1}} &=  \sum_{j,k \geq 0} \sum_{\sigma \tau = \pi} \binom{j+n}{n}\binom{k+n}{n}s^{j+\desb(\sigma)} t^{k+\desb(\tau)}\\
&= \sum_{j,k \geq 0} \sum_{\sigma \tau = \pi} \binom{j+n-\desb(\sigma)}{n}\binom{k+n-\desb(\tau)}{n}s^j t^k \\ 
&= \sum_{j,k \geq 0} \sum_{I \subseteq [0,n]} \OmegaB_{\zigbipi}(j,k) s^j t^k
\end{align*}
where the last equality follows from equations \eqref{eq: type b order poly formula} and \eqref{eq: barred type b zig-zag order polys}.
The bijection in Lemma~\ref{lem: barred type B zig chain equiv} allows us to shift our focus from barred type $B$ zig-zag posets to barred type $B$ chain posets and shows that
\[
\frac{B_\pi(s,t)}{(1-s)^{n+1}(1-t)^{n+1}} = \sum_{j,k \geq 0} \sum_{I \subseteq [0,n]} \OmegaB_{\chainbipi}(j,k) s^j t^k.
\]
The only remaining step is to prove that
\[
\sum_{I \subseteq [0,n]} \OmegaB_{\chainbipi}(j,k) =  \binom{2jk+j+k+n-\desb(\pi)}{n}.
\]

First we note that $\sum_{I \subseteq [0,n]} \OmegaB_{\chainbipi}(j,k)$ counts ordered pairs $(f,P)$ where $P$ is a barred $\chainbipi$ poset for some $I \subseteq [0,n]$ and $f$ is a $\chainbipi$-partition with parts less than or equal to $j$.
If we use the bars to define compartments labeled $0,\ldots,k$ from left to right then we have a bijection between ordered pairs $(f,P)$ and $(k+1)$-tuples of the form $(f_0,f_1,\ldots,f_k)$ where $f_i$ is simply $f$ restricted to the chain in compartment~$i$.
Given such a $(k+1)$-tuple, we define a type~$B$ $\pi$-partition $g$ with parts less than or equal to $2jk+j+k$ as follows.
On the interval $[0,j]$, we set $g= f_0$.
Then for $i = 1,\ldots,k$ we set $g = f_i + i(2j+1)$ on the interval $[-j+i(2j+1) , j+i(2j+1)]$.
This shows that if we sum over all $I \subseteq [0,n]$ then $\sum_{I \subseteq [0,n]} \OmegaB_{\chainbipi}(j,k)$ is equal to the number of $\pi$-partitions of type $B$ with parts less than or equal to $2jk+j+k$.
Hence we conclude that
\[
\sum_{I \subseteq [0,n]} \OmegaB_{\chainbipi}(j,k)  = \binom{2jk+j+k +n-\desb(\pi)}{n} .  \qedhere
\]
\end{proof}

Figure \ref{fig: every barred type B chain poset} depicts every barred $C^B(I,\bar{2}1)$ poset with $2$ bars and $I \subseteq [0,1]$.
Thus we can identify each barred $\chainbipi$ poset with a barred signed permutation with underlying signed permutation $\pi$ such that $0$ is in the leftmost compartment.
This provides a visual representation of the proof of the previous theorem.

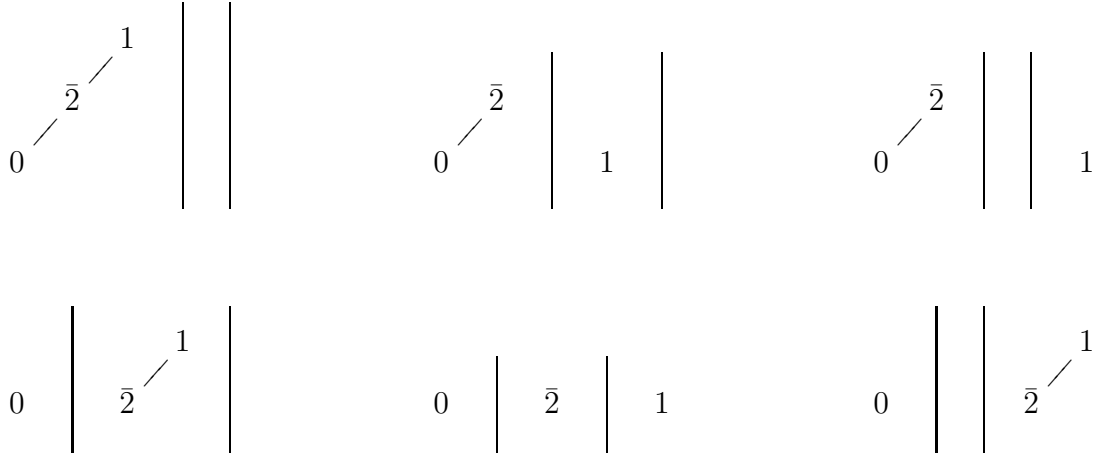
\begin{figure}[htbp]
\[\xymatrix  @R=8pt @C=8pt{ &&& \ar@{-}[dddd] & \ar@{-}[dddd] &&&&&&&&&& &  & \\   && 1 &&&&&& \ar@{-}[ddd]  && \ar@{-}[ddd]  &&&& \ar@{-}[ddd] & \ar@{-}[ddd] & \\  & \bar{2}\ar@{-}[ur] &&&&&& \bar{2} &&&&&& \bar{2} &&&  \\  0\ar@{-}[ur] &&&&& \qquad\qquad & 0\ar@{-}[ur] &&& 1 && \qquad\qquad & 0\ar@{-}[ur] &&&& 1 \\  &&&&&&&&&&&&&&&& \\ &&&&&&&&&&&&&&&& \\ & \ar@{-}[ddd] &&& \ar@{-}[ddd] &&&&&&&&& \ar@{-}[ddd] & \ar@{-}[ddd] && \\ &&& 1 &&&& \ar@{-}[dd] && \ar@{-}[dd] &&&&&&& 1 \\ 0 && \bar{2}\ar@{-}[ur] &&&& 0 && \bar{2} && 1 && 0 &&& \bar{2}\ar@{-}[ur] & \\ &&&&&&&&&&&&&&&& \\     }\] 
\caption{\; Barred $C^B(I,\bar{2}1)$ posets with $2$ bars and $I \subseteq [0,1]$.}
\label{fig: every barred type B chain poset}
\end{figure}

Define the type $B$ structure polynomial $\phi_B(x)$ in the group algebra of $\hypn$ by
\[
\phi_B(x) = \sum_{\pi \in \hypn} \binom{x+n-\desb(\pi)}{n} \pi.
\]
If we expand both sides of equation \eqref{eq: type B algebra} and compare the coefficients of $s^jt^k$, we have
\[
\binom{2jk+j+k +n-\desb(\pi)}{n} = \sum_{\sigma\tau = \pi}  \binom{j+n -\desb(\sigma)}{n} \binom{k+n - \desb(\tau)}{n}.
\]
This implies that $\phi_B(j)\phi_B(k) = \phi_B(2jk + j + k)$ for all $j,k \geq 0$ and so we have the following theorem, originally proven by Chow \cite{ChowThesis2001}.

\begin{thm}[Chow]\label{thm: desb order poly}
As polynomials in $x$ and $y$ with coefficients in the group algebra of $\hypn$,
\[
\phi_B(x)\phi_B(y) = \phi_B(2xy + x + y).
\]
\end{thm}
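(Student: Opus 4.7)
The plan is to mimic the unsigned-case argument following Theorem~\ref{Symmetric Algebra Thm} exactly, now using Theorem~\ref{thm: type B algebra} as the input. First I would take the identity
\[
\sum_{j,k \geq 0} \binom{2jk+j+k+n-\desb(\pi)}{n} s^jt^k = \frac{B_\pi(s,t)}{(1-s)^{n+1}(1-t)^{n+1}}
\]
and expand the right-hand side as a power series. Using the binomial theorem $\sum_{j\geq 0}\binom{j+n}{n}s^j = 1/(1-s)^{n+1}$ and the definition $B_\pi(s,t) = \sum_{\sigma\tau=\pi} s^{\desb(\sigma)} t^{\desb(\tau)}$, the coefficient of $s^jt^k$ on the right is
\[
\sum_{\sigma\tau=\pi} \binom{j+n-\desb(\sigma)}{n}\binom{k+n-\desb(\tau)}{n}.
\]

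Comparing this with the left-hand side yields, for every $\pi \in \hypn$ and all integers $j,k \geq 0$, the identity
\[
\binom{2jk+j+k+n-\desb(\pi)}{n} = \sum_{\sigma\tau=\pi} \binom{j+n-\desb(\sigma)}{n}\binom{k+n-\desb(\tau)}{n}.
\]
Next I would multiply both sides by $\pi$ and sum over all $\pi \in \hypn$. The left-hand side becomes $\phi_B(2jk+j+k)$ by definition of the structure polynomial. The right-hand side, after interchanging the order of summation and reindexing over ordered pairs $(\sigma,\tau)$ with $\sigma\tau = \pi$, becomes
\[
\left(\sum_{\sigma \in \hypn} \binom{j+n-\desb(\sigma)}{n}\sigma\right)\left(\sum_{\tau \in \hypn}\binom{k+n-\desb(\tau)}{n}\tau\right) = \phi_B(j)\phi_B(k),
\]
where the product is the product in the group algebra of $\hypn$.

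Thus $\phi_B(j)\phi_B(k) = \phi_B(2jk+j+k)$ for all nonnegative integers $j,k$. Since both sides are polynomials in $x$ and $y$ (with coefficients in the group algebra) that agree on the infinite set $\mathbb{N}\times\mathbb{N}$, they agree as polynomials, giving $\phi_B(x)\phi_B(y) = \phi_B(2xy+x+y)$. There is no real obstacle here: Theorem~\ref{thm: type B algebra} has already done the combinatorial heavy lifting, and the only thing to be careful about is the bookkeeping in the coefficient extraction and the standard ``polynomial identities valid on $\mathbb{N}$ are valid identically'' step.
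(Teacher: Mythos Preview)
Your proposal is correct and is essentially identical to the paper's own argument: the paper also compares coefficients of $s^jt^k$ in equation~\eqref{eq: type B algebra} to obtain the binomial identity, deduces $\phi_B(j)\phi_B(k)=\phi_B(2jk+j+k)$ for all nonnegative integers, and then passes to polynomials. You have simply spelled out the ``multiply by $\pi$ and sum'' step and the polynomial-extension step that the paper leaves implicit.
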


If we substitute $x \leftarrow (x-1)/2$ and $y \leftarrow (y-1)/2$ in the previous theorem, we see that $\phi_B((x-1)/2)\phi_B((y-1)/2) = \phi_B((xy-1)/2)$.
As before, if we expand
\[
\phi_B((x-1)/2) = \sum_{\pi \in \hypn} \binom{\frac{x-1}{2}+n-\desb(\pi)}{n} \pi = \sum_{i=0}^n b_i x^i
\]
then the $b_i$ form a set of orthogonal idempotents which span the type $B$ Eulerian descent algebra.
These orthogonal idempotents were originally described by Bergeron and Bergeron in \cite{BergeronBergeron1992}.

%%%%%%%%%%%%%%%%%%%%%%%%%%%%%%%%%%%%%%%%%%%%%%%%

\section{The type $A$ Eulerian descent algebra}\label{sec: type A Eulerian descent algebra}

The group algebra of the hyperoctahedral group $\hypn$ contains a subalgebra induced by the type $A$ descent number called the \emph{type $A$ Eulerian descent algebra}.
We define
\[
A_i = \sum_{\desa(\pi) = i} \pi
\]
for $i=0,\ldots,n-1$ and will show that together the $A_i$ form a basis for an algebra.
To prove that such an algebra exists, we compute $A_\pi(s,t) := \sum_{\sigma \tau = \pi} s^{\desa(\sigma)}t^{\desa(\tau)}$ and show that $A_\pi(s,t)$ is determined by $\desa(\pi)$.

Our first step is to define a type $A$ version of Chow's $P$-partitions of type $B$.
We do this by ignoring the $0$ element in signed posets.
Let $\aversion{[n]}$ denote the set $\{-n,\ldots,-1,1,\ldots,n\}$ and let $\aversion{\Z}$ denote a modified version of the integers where we replace $0$ with both $\zeroplus$ and $\zerominus$ satisfying $-\zeroplus = \zerominus$.
For every signed poset $P$, a $P$-partition of type $A$ is a function $f:\aversion{[n]} \rightarrow \aversion{\Z}$ such that:
\begin{enumerate}
\item[(i)] $f(i) \leq f(j)$ if $i <_P j$
\item[(ii)] $f(i) < f(j)$ if $i <_P j$ and $i > j$ in $\Z$
\item[(iii)] $f(-i) = -f(i)$.
\end{enumerate}
Let $\aversion{\A}(P)$ denote the set of all $P$-partitions of type $A$.
The type $A$ order polynomial, denoted by $\OmegaA_P(j)$, is the number of type $A$ $P$-partitions with parts in the set $\{-j,\ldots,j\}$.

The reason we use $\aversion{\Z}$ instead of $\Z$ is illustrated by the following example in $\mathfrak{B}_1$.
Consider the two signed posets $P_1$ and $P_2$ where $P_1$ is given by $0 <_P 1$ and $P_2$ is given by $0 <_P \bar{1}$.
If we had chosen $\Z$ as the codomain in our definition then the set $\A(P_1)$ would include the map $f$ defined by $f(1) = 0$.
However, the set $\A(P_2)$ would not contain the map $f$ defined by $f(\bar{1})=0$.
This is problematic because we would like our definition of type~$A$ $P$-partitions to treat both posets as being the same.
The use of $\aversion{\Z}$ corrects for this.

Every signed permutation $\pi \in \hypn$ can be represented by the signed poset
\[
\pi(\bar{n}) <_P \cdots <_P \pi(\bar{1}) <_P 0 <_P \pi(1) <_P \cdots <_P \pi(n)
\]
and so we denote by $\aversion{\A}(\pi)$ the set of all functions $f:\aversion{[n]} \rightarrow \aversion{\Z}$ with $f(-i) = -f(i)$ for all $i \in [n]$ such that
\[
\zeroplus \leq f(\pi(1)) \leq \cdots \leq f(\pi(n))
\]
with $f(\pi(i)) < f(\pi(i+1))$ if $i \in \Desa(\pi)$.
Thus
\begin{equation}\label{eq: type A order poly for pi}
\OmegaA_\pi(j) = \mchoose{j+1-\desa(\pi)}{n} = \binom{j+n-\desa(\pi)}{n}.
\end{equation}
Just as in the type $B$ case, we have a Fundamental Theorem of $P$-partitions of Type~$A$.

\begin{thm}[FTPPA]
The set of all type $A$ $P$-partitions of a signed poset~$P$ is the disjoint union of the set of all type $A$ $\pi$-partitions over all linear extensions $\pi$ of $P$:
\[
\aversion{\A}(P) = \coprod_{\pi \in \Lin(P)} \aversion{\A}(\pi).
\]
\end{thm}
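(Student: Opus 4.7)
The plan is to adapt the inductive proof of the Fundamental Theorem of $P$-partitions from Section~\ref{sec: P-partitions} to the signed setting, in the same spirit as the FTPPB. I would induct on the number of $\pm$-orbits of incomparable pairs in $P$, since the signed structure requires that relations be adjoined in sign-dual pairs. For the base case, if $P$ is already a signed linear order then $\Lin(P) = \{\pi\}$ for a unique $\pi \in \hypn$, and the equality $\aversion{\A}(P) = \aversion{\A}(\pi)$ is immediate from the definitions.

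For the inductive step, pick incomparable $i, j \in \aversion{[n]}$ with $i < j$ in $\Z$, and define $P_{ij}$ to be $P$ with the adjoined relations $i <_P j$ and its sign-dual $-j <_P -i$ (collapsing to a single relation in the self-dual case $j = -i$); define $P_{ji}$ analogously. That these remain valid signed partial orders follows from the incomparability hypothesis together with the $\pm$-symmetry of $P$. Every $\sigma \in \Lin(P)$ lies in exactly one of $\Lin(P_{ij})$ or $\Lin(P_{ji})$ according to whether $\sigma^{-1}(i) < \sigma^{-1}(j)$, so $\Lin(P) = \Lin(P_{ij}) \sqcup \Lin(P_{ji})$.

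The key step is the corresponding partition of $\aversion{\A}(P)$. Given $f \in \aversion{\A}(P)$, assign $f$ to $\aversion{\A}(P_{ij})$ if $f(i) \leq f(j)$ and to $\aversion{\A}(P_{ji})$ if $f(j) < f(i)$. This is consistent with the defining conditions: since $i < j$ in $\Z$, condition~(ii) does not strengthen the $P_{ij}$ inequality beyond $f(i) \leq f(j)$; since $j > i$ in $\Z$, condition~(ii) does upgrade the $P_{ji}$ inequality to the strict form $f(j) < f(i)$. The two cases are disjoint and exhaustive, yielding $\aversion{\A}(P) = \aversion{\A}(P_{ij}) \sqcup \aversion{\A}(P_{ji})$. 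Applying the inductive hypothesis to each of $P_{ij}$ and $P_{ji}$ and recombining with the decomposition of $\Lin(P)$ then gives the theorem.

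The main obstacle is the self-dual case $j = -i$, where only one relation is adjoined and the resulting constraint $f(i) \leq f(-i) = -f(i)$ must interact correctly with the modified codomain $\aversion{\Z}$. This is precisely the phenomenon the author highlights in the example of $P_1$ and $P_2$: the reason $\zeroplus$ and $\zerominus$ are kept distinct is so that the dichotomy $f(i) \leq f(-i)$ versus $f(-i) < f(i)$ remains both exhaustive and disjoint on the boundary, where $f$ may land on one of the two ``zeros''. Once the order on $\aversion{\Z}$ is extended so that all pairs of values are comparable (with $\zerominus < \zeroplus$, consistent with $\zerominus = -\zeroplus$), the dichotomy goes through and the induction closes.
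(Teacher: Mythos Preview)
Your proposal is correct and follows the same inductive argument on incomparable pairs that the paper uses for the original FTPP in Section~\ref{sec: P-partitions}; the paper itself gives no separate proof for FTPPA (nor for FTPPB), simply indicating that the earlier argument carries over. Your explicit handling of the sign-dual relation and of the self-dual case $j=-i$, together with the role of $\zerominus<\zeroplus$ in making the dichotomy exhaustive, is a useful clarification of a point the paper leaves implicit.
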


\begin{cor}
\[
\OmegaA_P(j) = \sum_{\pi \in \Lin(P)} \OmegaA_\pi(j)
\]
\end{cor}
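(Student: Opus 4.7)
The plan is to derive the corollary as a direct bookkeeping consequence of the Fundamental Theorem of $P$-partitions of Type~$A$ (FTPPA) stated immediately above it. Since $\OmegaA_P(j)$ is by definition the number of elements of $\aversion{\A}(P)$ whose parts all lie in $\{-j,\dots,j\}\subseteq \aversion{\Z}$, the strategy is simply to restrict the set-level identity $\aversion{\A}(P)=\coprod_{\pi\in\Lin(P)}\aversion{\A}(\pi)$ to this bounded subset and then count.

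First I would define, for each signed poset $Q$, the finite subset $\aversion{\A}_j(Q)\subseteq \aversion{\A}(Q)$ consisting of those $f:\aversion{[n]}\to\aversion{\Z}$ in $\aversion{\A}(Q)$ with $f(i)\in\{-j,\ldots,j\}$ for every $i\in \aversion{[n]}$, so that $|\aversion{\A}_j(Q)|=\OmegaA_Q(j)$. The condition ``all parts bounded by $j$ in absolute value'' refers only to the values of $f$ and is independent of the poset structure, so for each $\pi\in\Lin(P)$ the inclusion $\aversion{\A}(\pi)\hookrightarrow \aversion{\A}(P)$ of FTPPA restricts to an inclusion $\aversion{\A}_j(\pi)\hookrightarrow \aversion{\A}_j(P)$. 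Thus FTPPA specializes to
\[
\aversion{\A}_j(P)=\coprod_{\pi\in\Lin(P)}\aversion{\A}_j(\pi),
\]
still a disjoint union because the original union was disjoint.

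Taking cardinalities of both sides gives
\[
\OmegaA_P(j)=|\aversion{\A}_j(P)|=\sum_{\pi\in\Lin(P)}|\aversion{\A}_j(\pi)|=\sum_{\pi\in\Lin(P)}\OmegaA_\pi(j),
\]
which is the desired equality. There is no real obstacle here; the only substantive ingredient is FTPPA itself, which has already been asserted. The only thing to check is that the bound $|f(i)|\le j$ is genuinely a pointwise constraint that survives passage between $\aversion{\A}(P)$ and the $\aversion{\A}(\pi)$, and this is immediate from the construction, since linear extensions only strengthen the order relations and do not alter the codomain $\aversion{\Z}$.
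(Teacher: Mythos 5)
Your proof is correct and follows exactly the route the paper intends: the corollary is stated without proof as an immediate consequence of FTPPA, and your argument---restricting the disjoint-union decomposition to the finite subsets with bounded parts and taking cardinalities---is precisely that implicit argument made explicit.
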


Again we consider the union of two disjoint posets and have the following theorem and corollary.

\begin{thm}
\looseness+1 If $P_1$ and $P_2$ are two disjoint signed posets then there exists a bijection between the set of all $(P_1\sqcup P_2)$-partitions of type $A$ and the set $\aversion{\A}(P_1) \times \aversion{\A}(P_2)$.
%\[
%\aversion{\A}(P_1\sqcup P_2) \leftrightarrow \aversion{\A}(P_1) \times \aversion{\A}(P_2).
%\]
\end{thm}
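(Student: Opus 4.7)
The plan is to mimic the one-line proof of the analogous theorem for type~$B$ $P$-partitions. Given a type~$A$ $P$-partition $f \in \aversion{\A}(P_1 \sqcup P_2)$, I would define $g = f|_{P_1}$ and $h = f|_{P_2}$ and claim that the map $f \mapsto (g,h)$ is the desired bijection. The inverse map sends a pair $(g,h) \in \aversion{\A}(P_1) \times \aversion{\A}(P_2)$ to the function $f$ on the ground set of $P_1 \sqcup P_2$ obtained by setting $f = g$ on the ground set of $P_1$ and $f = h$ on the ground set of $P_2$. These two maps are visibly inverse to each other once we verify that they land in the claimed sets.

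The key point is that all three conditions in the definition of a type~$A$ $P$-partition decouple across the disjoint union. Conditions~(i) and~(ii) only constrain pairs $x <_P y$, and since $P_1$ and $P_2$ are disjoint signed posets, every such pair in $P_1 \sqcup P_2$ lies entirely inside one of the two components; hence these conditions on $f$ are equivalent to the conjunction of the corresponding conditions on $g$ and on $h$. Condition~(iii) is a pointwise statement on the ground set together with its negatives, and since each signed ground set is closed under $x \mapsto -x$ and the two ground sets are disjoint, this condition also splits componentwise. I do not anticipate a real obstacle here; the only place where care might seem warranted is the modified codomain $\aversion{\Z}$ with its separate symbols $\zeroplus$ and $\zerominus$, but because $0 \notin \aversion{[n]}$ there is no element whose image needs to be chosen consistently between the two components, so the restriction and gluing operations pose no difficulty.
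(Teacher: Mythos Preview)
Your proposal is correct and matches the paper's proof exactly: the paper also defines $g = f|_{P_1}$ and $h = f|_{P_2}$ and asserts that $f \mapsto (g,h)$ is the desired bijection. Your additional remarks about the three conditions decoupling and the irrelevance of the $\zeroplus/\zerominus$ issue are sound and simply make explicit what the paper leaves to the reader.
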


\begin{proof}
Let $f$ be a $(P_1 \sqcup P_2)$-partition of type $A$.
The map that sends $f$ to the ordered pair $(g,h)$ where $g = f|_{P_1}$ and $h = f|_{P_2}$ is a bijection between $\aversion{\A}(P_1\sqcup P_2)$ and $\aversion{\A}(P_1) \times \aversion{\A}(P_2)$.
\end{proof}

\begin{cor}
\[
\OmegaA_{P_1 \sqcup P_2}(j) = \OmegaA_{P_1}(j) \OmegaA_{P_2}(j)
\]
\end{cor}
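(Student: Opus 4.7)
The plan is to derive the corollary directly from the bijection constructed in the theorem immediately preceding it, which sends a type $A$ $(P_1 \sqcup P_2)$-partition $f$ to the pair $(f|_{P_1}, f|_{P_2})$. The only thing left to check is that restricting the codomain to bounded parts on both sides of the bijection yields matching counts.

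First I would observe that the condition ``$f$ has parts in $\{-j, \ldots, j\}$'' is pointwise on $\aversion{[n_1 + n_2]}$: it holds for $f$ if and only if it holds for each of the restrictions $f|_{P_1}$ and $f|_{P_2}$ separately, since every element of $\aversion{[n_1 + n_2]}$ lies in exactly one of the two disjoint summands and the sign-flipping condition $f(-i) = -f(i)$ preserves the bound. Consequently, the bijection $\aversion{\A}(P_1 \sqcup P_2) \to \aversion{\A}(P_1) \times \aversion{\A}(P_2)$ restricts to a bijection between the set of type $A$ $(P_1 \sqcup P_2)$-partitions with parts in $\{-j, \ldots, j\}$ and the Cartesian product of the analogous restricted sets for $P_1$ and $P_2$.

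Taking cardinalities and using $|X \times Y| = |X| \cdot |Y|$ yields $\OmegaA_{P_1 \sqcup P_2}(j) = \OmegaA_{P_1}(j)\, \OmegaA_{P_2}(j)$, which is the desired identity. There is no real obstacle here; the work was already done in proving the preceding theorem, and the corollary is essentially a bookkeeping statement about restricting a bijection to a subset defined by a pointwise condition. The result mirrors the type $B$ analogue (Corollary \ref{cor: type B order poly product}) and the unsigned case, and the proof structure is identical.
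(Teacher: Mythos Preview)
Your proposal is correct and matches the paper's approach exactly: the paper states this corollary without proof, treating it as an immediate consequence of the bijection $\aversion{\A}(P_1\sqcup P_2) \leftrightarrow \aversion{\A}(P_1) \times \aversion{\A}(P_2)$ from the preceding theorem, and you have simply spelled out the implicit step that the bound on parts is a pointwise condition preserved by restriction.
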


For every $I \subseteq [n-1]$ and $\pi \in \hypn$, define the \emph{type $A$ zig-zag poset} $\zigaipi$ by setting $\pi(i) <_Z \pi(i+1)$ if $i \notin I$ and $\pi(i) >_Z \pi(i+1)$ if $i \in I$.
We also define for every $I \subseteq [n-1]$ and $\pi \in \hypn$ the \emph{type $A$ chain poset} $\chainaipi$ by setting $\pi(i) <_C \pi(i+1)$ if $i \notin I$.
Note that for both types of signed posets there is no relation between $0$ and $\pi(i)$ for any $i \in [n]$.
We have the following lemmas about linear extensions of these two classes of signed posets.

\begin{lem}\label{lem: type A zig-zag extensions}
If $\sigma, \pi \in \hypn$ and $I \subseteq [n-1]$ then $\sigma \in \Lin(\zigaipi)$ if and only if $\Desa(\sigma^{-1}\pi) = I$.
\end{lem}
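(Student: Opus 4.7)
The plan is to mimic the proof of Lemma~\ref{lem: Zig-Zag Extensions} essentially verbatim, since the only extra structure in $\hypn$ is the sign condition, which does not interact with the comparison $\pi(i)$ versus $\pi(i+1)$ for $i \in [n-1]$ (in particular, none of the relevant comparisons involve the element $0$ of the poset).

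First I would record the defining property of linear extensions in this setting: a signed permutation $\sigma \in \hypn$ lies in $\Lin(\zigaipi)$ iff $a <_Z b$ in $\zigaipi$ implies $\sigma^{-1}(a) < \sigma^{-1}(b)$ as integers. Since $\zigaipi$ has no relation between $0$ and any $\pi(i)$, the covering relations to check are exactly the pairs $\{\pi(i), \pi(i+1)\}$ for $i \in [n-1]$.

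Then I would translate these $n-1$ conditions one at a time. For $i \notin I$ we have $\pi(i) <_Z \pi(i+1)$, hence $\sigma \in \Lin(\zigaipi)$ forces $\sigma^{-1}(\pi(i)) < \sigma^{-1}(\pi(i+1))$, which is exactly the statement that $i \notin \Desa(\sigma^{-1}\pi)$. For $i \in I$ we have $\pi(i) >_Z \pi(i+1)$, so the analogous argument gives $\sigma^{-1}(\pi(i)) > \sigma^{-1}(\pi(i+1))$, i.e., $i \in \Desa(\sigma^{-1}\pi)$. Conjoining these $n-1$ equivalences yields $\Desa(\sigma^{-1}\pi) = I$, and conversely any such $\sigma$ automatically preserves the relations of $\zigaipi$.

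There is essentially no obstacle here; the only place to be even mildly careful is to note that $\sigma^{-1} \in \hypn$, so $\sigma^{-1}\pi$ is a bona fide element of $\hypn$ and its type $A$ descent set is well-defined, and that the absence of any relations involving $0$ in $\zigaipi$ means we do not need to check the position $i = 0$ (which is exactly what distinguishes $\Desa$ from $\Desb$). Thus the proof is a one-paragraph adaptation of the symmetric group argument, and I would simply write ``The proof is identical to that of Lemma~\ref{lem: Zig-Zag Extensions}, noting that the poset $\zigaipi$ has no relations involving $0$ and so only the comparisons $\pi(i)$ versus $\pi(i+1)$ for $i \in [n-1]$ need be verified.''
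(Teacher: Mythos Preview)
Your proposal is correct and matches the paper's treatment: the paper states this lemma without proof, implicitly relying on the identical argument for Lemma~\ref{lem: Zig-Zag Extensions}, which is exactly what you do. Your additional remark that $\zigaipi$ has no relation involving $0$ (so only positions $i\in[n-1]$ need checking) is the right justification for why $\Desa$ rather than $\Desb$ appears.
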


\begin{lem}\label{lem: type A chain extensions}
If $\sigma,\pi \in \hypn$ and $I \subseteq [n-1]$ then $\sigma \in \Lin(\chainaipi)$ if and only if $\Desa(\sigma^{-1}\pi) \subseteq I$.
\end{lem}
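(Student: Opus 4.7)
The plan is to imitate the proof of Lemma~\ref{lem: Chain Extensions} (the unsigned chain-extension lemma) essentially verbatim, paying attention only to one small extra point caused by the signed-poset symmetry. By the definition given earlier for linear extensions of signed posets, $\sigma \in \Lin(\chainaipi)$ iff for every relation $x <_C y$ in $\chainaipi$ we have $\sigma^{-1}(x) < \sigma^{-1}(y)$. So the first thing I would do is enumerate the relations of $\chainaipi$: the ``visible'' ones are $\pi(i) <_C \pi(i+1)$ for each $i \in [n-1] \setminus I$, but because $\chainaipi$ is a signed poset it also carries the symmetric relations $-\pi(i+1) <_C -\pi(i)$ for the same $i$, and no relation involving $0$ is imposed.

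Next I would observe that the symmetric relations impose no new condition on $\sigma$. Indeed, since $\sigma \in \hypn$ satisfies $\sigma^{-1}(-a) = -\sigma^{-1}(a)$, the inequality $\sigma^{-1}(-\pi(i+1)) < \sigma^{-1}(-\pi(i))$ is equivalent to $-\sigma^{-1}(\pi(i+1)) < -\sigma^{-1}(\pi(i))$, which rearranges to $\sigma^{-1}(\pi(i)) < \sigma^{-1}(\pi(i+1))$, i.e., the condition already extracted from the positive chain. Hence $\sigma \in \Lin(\chainaipi)$ is equivalent to the conjunction, over all $i \in [n-1] \setminus I$, of $(\sigma^{-1}\pi)(i) < (\sigma^{-1}\pi)(i+1)$.

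Finally I would translate this in terms of the type $A$ descent set: the displayed condition says that no $i \in [n-1] \setminus I$ lies in $\Desa(\sigma^{-1}\pi)$, which is exactly $\Desa(\sigma^{-1}\pi) \subseteq I$. This gives both directions simultaneously, completing the proof. There is no genuine obstacle here; the only subtlety worth flagging is verifying that the signed-poset symmetry contributes no extra constraint, and that $0$ plays no role because $\chainaipi$ deliberately omits all relations between $0$ and the elements $\pi(i)$.
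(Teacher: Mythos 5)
Your proposal is correct and takes essentially the same approach the paper intends: the paper simply states this lemma, implicitly deferring to the proof of the unsigned version (Lemma~\ref{lem: Chain Extensions}), and you carry out exactly that transfer, with the right care about why the symmetric relations in a signed poset and the missing $0$-relations impose nothing new. The remark that $\sigma^{-1}(-a)=-\sigma^{-1}(a)$ makes the mirrored constraints redundant is the only genuinely signed point, and you handled it.
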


Next we define barred versions of both type $A$ zig-zag posets and type $A$ chain posets.
First, a \emph{barred type $A$ zig-zag poset} is defined to be a type $A$ zig-zag poset $\zigaipi$ with an arbitrary number of bars placed in each of the $n+1$ spaces such that between any two bars the elements of the poset (not necessarily their labels) are increasing.
Figure \ref{fig: barred type A zig-zag poset} gives an example of a barred $\zigaipi$ poset with $I = \{2\}$ and $\pi = 2\bar{3}1$.

\begin{figure}[htbp]
\[\xymatrix @R=30pt @C=13pt{  \ar@{-}[ddd] &&&& \ar@{-}[ddd]  & \ar@{-}[ddd]  &  & \ar@{-}[ddd]  \\    & & & \bar{3}\ar@{-}[drrr] & & & &   \\   & 2 \ar@{-}[urr]  & & & & & 1 &  \\  &&&&&&&   }\] 
\caption{\; A barred $\zigaipi$ poset for $I=\{2\}$ and $\pi = 2\bar{3}1$.}
\label{fig: barred type A zig-zag poset}
\end{figure}

To create a barred type $A$ zig-zag poset, we begin with a type $A$ zig-zag poset $\zigaipi$ and must first place a bar in space $i$ for each $i \in I$.
From there we are free to place any number of bars in any of the $n+1$ spaces.
Define $\OmegaA_{\zigaipi}(j,k)$ to be the number of ordered pairs $(f,P)$ where $P$ is a barred $\zigaipi$ poset with $k$ bars and $f$ is a $\zigaipi$-partition of type~$A$ with parts less than or equal to $j$.
Recall that $\sigma \in \Lin(\zigaipi)$ if and only if $\Desa(\sigma^{-1}\pi) = I$.
If we begin with the type $A$ zig-zag poset $\zigaipi$ then there is a unique way to place the first $|I|$ bars (place one bar in space $i$ for each $i\in I$).
Next there are
\[
\mchoose{n+1}{k-|I|} = \binom{k + n - |I|}{n}
\]
ways to place the remaining $k-|I|$ bars in the $n+1$ allowable spaces and hence there are $\binom{k+n-|I|}{n}$ barred $\zigaipi$ posets with $k$ bars.
Thus
\begin{align*}
\OmegaA_{\zigaipi}(j,k) &= \sum_{\sigma \in \Lin(\zigaipi)} \OmegaA_{\sigma}(j) \binom{k+n-\desa(\sigma^{-1}\pi)}{n} \\
&= \sum_{\sigma \in \Lin(\zigaipi)} \OmegaA_{\sigma}(j) \OmegaA_{\sigma^{-1}\pi}(k).
\end{align*}
If we set $\tau = \sigma^{-1}\pi$ and sum over all $I \subseteq [n-1]$ then we have
\begin{equation}\label{eq: barred type A zig-zag order polys}
\sum_{I \subseteq [n-1]} \OmegaA_{\zigaipi}(j,k) = \sum_{\sigma \tau = \pi} \OmegaA_{\sigma}(j) \OmegaA_{\tau}(k).
\end{equation}

Next we define a \emph{barred type $A$ chain poset} to be a type $A$ chain poset $\chainaipi$ with at least one bar in space $i$ for each $i \in I$ and with an arbitrary number of bars placed on either end.
No bars are allowed in space $i$ for $i\in [n-1]\setminus I$.
Thus we place at least one bar between each chain of $\chainaipi$ and allow for bars on either end.
Figure \ref{fig: barred type A chain poset} gives an example of a barred $\chainaipi$ poset with $I = \{2\}$ and $\pi = 2\bar{3}1$.

\begin{figure}[htbp]
\[\xymatrix@C=1pc{ \ar@{-}[ddd] &&&& \ar@{-}[ddd]  & \ar@{-}[ddd]  &  & \ar@{-}[ddd]  \\     & & & \bar{3} & & & &   \\   & 2 \ar@{-}[urr]  & & & & & 1 &  \\ &&&&&&&   }\] 
\caption{\; A barred $\chainaipi$ poset for $I = \{2\}$ and $\pi = 2\bar{3}1$.}
\label{fig: barred type A chain poset}
\end{figure}

To create a barred type $A$ chain poset, we begin with a type $A$ chain poset $\chainaipi$ and must first place a bar in space $i$ for each $i \in I$.
From there we are free to place any number of bars in space $i$ for any $i \in I$ and place any number of bars on either end.
This creates a collection of bars with each compartment containing at most one nonempty chain labeled by a subword of $\pi$.
Define $\OmegaA_{\chainaipi} (j,k)$ to be the number of ordered pairs $(f,P)$ where $P$ is a barred $\chainaipi$ poset with $k$ bars and $f$ is a $\chainaipi$-partition of type $A$ with parts less than or equal to $j$.
Recall that $\sigma \in \Lin(\chainaipi)$ if and only if $\Desa(\sigma^{-1}\pi) \subseteq I$.
If we begin with the type $A$ chain poset $\chainaipi$ then there is a unique way to place the first $|I|$ bars (place one bar in space $i$ for each $i\in I$).
Next there are
\[
\mchoose{|I|+2}{k-|I|} = \binom{k+1}{k-|I|}
\]
ways to place the remaining $k-|I|$ bars in the $|I|+2$ allowable spaces and hence there are $\binom{k+1}{k-|I|}$ barred $\chainaipi$ posets with $k$ bars.
Thus
\[
\OmegaA_{\chainaipi}(j,k) = \sum_{\sigma \in \Lin(\chainaipi)} \OmegaA_{\sigma}(j) \binom{k+1}{k-|I|}.
\]

The following lemma shows that if $\sigma \in \hypn$ then the number of barred type $A$ zig-zag posets with $k$ bars such that $\sigma$ is a linear extension of the underlying type $A$ zig-zag poset is the same as the number of barred type $A$ chain posets with $k$ bars such that $\sigma$ is a linear extension of the underlying type $A$ chain poset.

\begin{lem}\label{lem: barred type A zig chain equiv}
Let $\pi \in \hypn$ and $j,k\geq0$.
Then
\[
\sum_{I \subseteq [n-1]}  \OmegaA_{\zigaipi}(j,k)= \sum_{I \subseteq [n-1]} \OmegaA_{\chainaipi}(j,k).
\]
\end{lem}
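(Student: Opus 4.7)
The plan is to mirror the bijective argument used in Lemma~\ref{Barred Poset Extensions} for the symmetric group and in Lemma~\ref{lem: barred type B zig chain equiv} for the type~$B$ case; only the descent conventions (swap $\Desb$ for $\Desa$), the flavor of signed poset, and the indexing set ($[n-1]$ rather than $[0,n]$) need to change. First I would reindex both sides by linear extensions. For fixed $\sigma \in \hypn$, let $Z^A_\sigma(k)$ denote the number of barred type~$A$ zig-zag posets with $k$ bars whose underlying poset has $\sigma$ as a linear extension, and let $C^A_\sigma(k)$ denote the analogous count for barred type~$A$ chain posets. By Lemma~\ref{lem: type A zig-zag extensions} there is a unique $I = \Desa(\sigma^{-1}\pi) \subseteq [n-1]$ for which $\sigma \in \Lin(\zigaipi)$, while by Lemma~\ref{lem: type A chain extensions}, $\sigma \in \Lin(\chainaipi)$ precisely when $I \supseteq \Desa(\sigma^{-1}\pi)$. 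This lets the two sums in the lemma be rewritten as $\sum_{\sigma\in\hypn} \OmegaA_\sigma(j)\, Z^A_\sigma(k)$ and $\sum_{\sigma\in\hypn} \OmegaA_\sigma(j)\, C^A_\sigma(k)$ respectively, reducing the claim to the single-$\sigma$ identity $Z^A_\sigma(k) = C^A_\sigma(k)$.

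To establish this identity I would exhibit a bar-count-preserving bijection. The forward map takes a barred type~$A$ zig-zag poset (with underlying poset $\zigaipi$ and $k$ bars) and deletes the order relation between $\pi(i)$ and $\pi(i+1)$ for every space $i$ containing at least one bar, yielding a barred type~$A$ chain poset with underlying $C^A(J,\pi)$ where $J$ is exactly the set of spaces containing bars; note $J \supseteq I$ since the barred zig-zag rules force a bar in every space $i \in I$. The inverse map, starting from a barred type~$A$ chain poset with underlying $C^A(J,\pi)$ and $\sigma$ a linear extension, recovers $I = \Desa(\sigma^{-1}\pi) \subseteq J$ (which is determined by $\sigma$ alone), and then reinserts $\pi(i) <_Z \pi(i+1)$ for $i \in [n-1] \setminus I$ and $\pi(i) >_Z \pi(i+1)$ for $i \in I$, leaving every bar untouched.

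The main (and really only) obstacle is verifying well-definedness of the inverse: the resulting configuration must satisfy the barred zig-zag requirement that every space $i \in I$ contains at least one bar. This is immediate from $I \subseteq J$ together with the definition of a barred chain poset, which already places a bar in every space indexed by $J$. Hence the two maps are mutually inverse, each preserves the number of bars and the property of admitting $\sigma$ as a linear extension, and the lemma follows.
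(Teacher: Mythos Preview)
Your proposal is correct and follows essentially the same approach as the paper: both argue by the bar-count-preserving bijection that deletes the relation between $\pi(i)$ and $\pi(i+1)$ at every space containing a bar, relying on the fact that $\sigma$ lies in $\Lin(\zigaipi)$ for a unique $I$ and in $\Lin(C^A(J,\pi))$ exactly when $J\supseteq I$. Your write-up is in fact more explicit than the paper's (you spell out the reindexing by $\sigma$ and the inverse map), but the underlying argument is identical.
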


\begin{proof}
Recall that $\sigma \in \Lin(\zigaipi)$ for a unique $I \subseteq [n-1]$ and that \linebreak $\sigma \in \Lin(C^A(J,\pi))$ if and only if $J \supseteq I$.
The desired bijection is simply the map that sends each barred type $A$ zig-zag poset with underlying poset $\zigaipi$ to the barred type $A$ chain poset obtained from $\zigaipi$ by removing the relation between $\pi(i)$ and $\pi(i+1)$ for every space $i$ containing at least one bar.
\end{proof}

The bijection in the previous lemma maps Figure \ref{fig: barred type A zig-zag poset} to Figure \ref{fig: barred type A chain poset}.
Now that all the pieces are in place, we are ready to prove the existence of the type $A$ Eulerian descent algebra by computing $A_\pi (s,t)$.

\begin{thm}\label{thm: type A algebra}
For every $\pi \in \hypn$,
\begin{equation}\label{eq: type A algebra}
\sum_{j,k \geq 0} \binom{2(j+1)(k+1)+n-1-\desa(\pi)}{n} s^j t^k = \frac{A_\pi (s,t)}{(1-s)^{n+1}(1-t)^{n+1}}.
\end{equation}
\end{thm}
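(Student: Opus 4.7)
The plan is to follow the proof of Theorem~\ref{thm: type B algebra} almost verbatim, substituting the type~$A$ versions of each ingredient. I would first expand $1/\bigl((1-s)^{n+1}(1-t)^{n+1}\bigr)$ using binomial coefficients and shift the summation indices to obtain $\frac{A_\pi(s,t)}{(1-s)^{n+1}(1-t)^{n+1}} = \sum_{j,k \geq 0} \sum_{\sigma \tau = \pi} \binom{j+n-\desa(\sigma)}{n}\binom{k+n-\desa(\tau)}{n}s^j t^k$, which by equation~\eqref{eq: type A order poly for pi} and equation~\eqref{eq: barred type A zig-zag order polys} becomes $\sum_{j,k \geq 0} \sum_{I \subseteq [n-1]} \OmegaA_{\zigaipi}(j,k) s^j t^k$. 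Applying Lemma~\ref{lem: barred type A zig chain equiv} then replaces barred type~$A$ zig-zag posets with barred type~$A$ chain posets.

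The remaining step is to prove that $\sum_{I \subseteq [n-1]} \OmegaA_{\chainaipi}(j,k) = \binom{2(j+1)(k+1)+n-1-\desa(\pi)}{n}$. By equation~\eqref{eq: type A order poly for pi} the right-hand side is $\OmegaA_\pi\bigl(2(j+1)(k+1)-1\bigr)$, so it suffices to biject ordered pairs $(f,P)$---where $P$ is a barred $\chainaipi$ poset containing $k$ bars for some $I \subseteq [n-1]$ and $f$ is a type~$A$ $\chainaipi$-partition with parts at most $j$---to type~$A$ $\pi$-partitions $g$ with parts at most $2(j+1)(k+1)-1$.

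The idea of the bijection is to merge the $k+1$ compartment-level partitions into a single partition on an enlarged range. Within a single compartment, a type~$A$ $P$-partition may take any value in the $(2j+2)$-element set $\{-j,\ldots,-1,\zerominus,\zeroplus,1,\ldots,j\} \subseteq \aversion{\Z}$; this is the key difference from the type~$B$ case, in which the leftmost compartment was constrained to the $(j+1)$-element half-range $\{0,1,\ldots,j\}$ while subsequent compartments had $2j+1$ values each. Let $\phi \colon \{0,1,\ldots,2j+1\} \to \{-j,\ldots,j\} \subseteq \aversion{\Z}$ be the unique order-preserving bijection. Label compartments $0,1,\ldots,k$ from left to right and define $g(\pi(l)) = i(2j+2) + \phi^{-1}(f(\pi(l)))$ when $\pi(l)$ lies in compartment $i$. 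Since the blocks $\{i(2j+2),\ldots,(i+1)(2j+2)-1\}$ are disjoint and ordered, $g$ is strictly increasing across compartment boundaries, and within a compartment $g$ inherits monotonicity and strictness-at-descents from $f$. The inverse map assigns $\pi(l)$ to compartment $\lfloor g(\pi(l))/(2j+2)\rfloor$ (identifying $\zeroplus$ with the integer~$0$), with the bar placement determined by the transitions in compartment number and $f$ recovered from the residue inside its block via $\phi$.

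The main difficulty I expect is verifying the bijection cleanly in the presence of the split zero; in particular, one must check that the constraint $g(\pi(1)) \geq \zeroplus$ of a type~$A$ $\pi$-partition emerges automatically from the fact that $\pi(1)$ lies in some compartment $b(1) \geq 0$, and that the total bar count telescopes to exactly $k$ when one counts bars to the left of $\pi(1)$, between successive letters, and to the right of $\pi(n)$. Once the bijection is established, the theorem follows by the same chain of equalities as in the proof of Theorem~\ref{thm: type B algebra}.
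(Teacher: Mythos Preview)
Your proposal is correct and follows the paper's proof essentially step for step: the same expansion, the same passage via \eqref{eq: type A order poly for pi} and \eqref{eq: barred type A zig-zag order polys} to barred zig-zag posets, the same switch to chain posets via Lemma~\ref{lem: barred type A zig chain equiv}, and the same compartment-by-compartment concatenation using blocks of size $2j+2$. The only cosmetic difference is that the paper bypasses the split-zero issue by counting integer solutions to $0 \leq s_1 \leq \cdots \leq s_n \leq 2jk+2j+2k+1$ directly rather than phrasing the target as a type~$A$ $\pi$-partition in $\aversion{\Z}$; this makes your worry about $g(\pi(1)) \geq \zeroplus$ moot, since one never re-enters $\aversion{\Z}$ after leaving it.
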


\begin{proof}
First we see that
\begin{align*}
\frac{A_\pi(s,t)}{(1-s)^{n+1}(1-t)^{n+1}} &=  \sum_{j,k \geq 0} \sum_{\sigma \tau = \pi} \binom{j+n}{n}\binom{k+n}{n}s^{j+\desa(\sigma)} t^{k+\desa(\tau)}\\
&= \sum_{j,k \geq 0} \sum_{\sigma \tau = \pi} \binom{j+n-\desa(\sigma)}{n}\binom{k+n-\desa(\tau)}{n}s^j t^k \\ 
&= \sum_{j,k \geq 0} \sum_{I \subseteq [n-1]} \OmegaA_{\zigaipi}(j,k) s^j t^k
\end{align*}
where the last equality follows from equations \eqref{eq: type A order poly for pi} and \eqref{eq: barred type A zig-zag order polys}.
The bijection in Lemma~\ref{lem: barred type A zig chain equiv} allows us to shift our focus from barred type $A$ zig-zag posets to barred type $A$ chain posets and shows that
\[
\frac{A_\pi(s,t)}{(1-s)^{n+1}(1-t)^{n+1}} = \sum_{j,k \geq 0} \sum_{I \subseteq [n-1]} \OmegaA_{\chainaipi}(j,k) s^j t^k.
\]
The only remaining step is to prove that
\[
\sum_{I \subseteq [n-1]} \OmegaA_{\chainaipi}(j,k) =  \binom{2(j+1)(k+1)+n-1-\desa(\pi)}{n}.
\]

First we note that $\sum_{I \subseteq [n-1]} \OmegaA_{\chainaipi}(j,k)$ counts ordered pairs $(f,P)$ where $P$ is a barred $\chainaipi$ poset with $k$ bars for some $I \subseteq [n-1]$ and $f$ is a $\chainaipi$-partition of type~$A$ with parts less than or equal to $j$.
Fix a barred $\chainaipi$ poset with $k$ bars and use the bars to define compartments labeled $0,\ldots,k$ from left to right.
Then define $\pi_i$ to be the (possibly empty) subword of $\pi$ in compartment $i$ and denote the length of $\pi_i$ by $L_i$.
Then
\[
\OmegaA_{\chainaipi}(j) = \prod_{i=0}^{k} \OmegaA_{P(\pi_i)}(j).
\]
For $i=0,\ldots,k$, we let $\OmegaA_{P(\pi_i)}(j)$ count solutions to the inequalities
\[
i(2j+2) \leq s_{i_1} \leq  \cdots \leq s_{i_{L_i}} \leq i(2j+2) + 2j+1
\]
with $s_{i_l} < s_{i_{l+1}}$ if $l \in \Desa(\pi_i)$.
By concatenating these inequalities, we see that if we sum over all $I \subseteq [n-1]$ and all barred $\chainaipi$ posets with $k$ bars then $\sum_{I \subseteq [n-1]} \OmegaA_{\chainaipi}(j,k)$ is equal to the number of solutions to the inequalities
\[
0 \leq s_1 \leq \cdots \leq s_n \leq 2jk+2j+2k+1
\]
with $s_i < s_{i+1}$ if $i \in \Desa(\pi)$.
Hence we conclude that
\[
\sum_{I \subseteq [n-1]} \OmegaA_{\chainaipi}(j,k)  = \binom{2(j+1)(k+1)+n-1-\desa(\pi)}{n} . \qedhere
\]
\end{proof}

Define the type $A$ structure polynomial $\phi_A(x)$ in the group algebra of $\hypn$ by
\[
\phi_A(x) = \sum_{\pi \in \hypn} \binom{x+n-1-\desa(\pi)}{n} \pi.
\]
If we expand both sides of equation \eqref{eq: type A algebra} and compare the coefficients of $s^jt^k$ we have
\[
\binom{2(j+1)(k+1)+n-1-\desa(\pi)}{n} = \sum_{\sigma\tau = \pi}  \binom{j+n -\desa(\sigma)}{n} \binom{k+n - \desa(\tau)}{n} .
\]
This implies that $\phi_A(j+1)\phi_A(k+1) = \phi_A(2(j+1)(k+1))$ for all $j,k \geq 0$ and so we have the following theorem.

\begin{thm}
As polynomials in $x$ and $y$ with coefficients in the group algebra of $\hypn$,
\[
\phi_A(x)\phi_A(y) = \phi_A(2xy).
\]
\end{thm}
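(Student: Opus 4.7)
The plan is to mimic exactly the proof of the analogous identity $\phi(x)\phi(y)=\phi(xy)$ from Section~\ref{sec: Eulerian descent algebra}, now applied to Theorem~\ref{thm: type A algebra} and the type~$A$ structure polynomial $\phi_A$. The author has already done the main combinatorial work: comparing coefficients of $s^j t^k$ on both sides of equation~\eqref{eq: type A algebra} yields
\[
\binom{2(j+1)(k+1)+n-1-\desa(\pi)}{n} = \sum_{\sigma\tau = \pi}  \binom{j+n -\desa(\sigma)}{n} \binom{k+n - \desa(\tau)}{n}
\]
for every $\pi \in \hypn$ and every $j,k \geq 0$. This is the identity that will drive everything.

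From here the plan is to multiply both sides of the displayed identity by $\pi$ and sum over all $\pi \in \hypn$. On the left side, by the definition
\[
\phi_A(x) = \sum_{\pi \in \hypn} \binom{x+n-1-\desa(\pi)}{n}\pi,
\]
the resulting expression is exactly $\phi_A(2(j+1)(k+1))$. On the right side, the standard group-algebra computation shows that the coefficient of $\pi$ in the product $\phi_A(j+1)\phi_A(k+1)$ is precisely $\sum_{\sigma\tau=\pi} \binom{j+n-\desa(\sigma)}{n}\binom{k+n-\desa(\tau)}{n}$, so the right side becomes $\phi_A(j+1)\phi_A(k+1)$. Hence the equality
\[
\phi_A(j+1)\phi_A(k+1) = \phi_A(2(j+1)(k+1))
\]
holds in the group algebra of $\hypn$ for every pair of nonnegative integers $j,k$, which is to say $\phi_A(x)\phi_A(y)=\phi_A(2xy)$ holds for every pair of positive integers $x,y$.

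The final step is the standard polynomial-identity argument: each coordinate (in the group basis $\{\pi : \pi \in \hypn\}$) of $\phi_A(x)\phi_A(y)$ and of $\phi_A(2xy)$ is a polynomial in $x$ and $y$, and two polynomials in two variables that agree on the infinite Zariski-dense subset $\{(x,y)\in\mathbb{Z}_{>0}^2\}$ of the affine plane must be identical. Therefore $\phi_A(x)\phi_A(y)=\phi_A(2xy)$ as polynomials in $x$ and $y$ with coefficients in the group algebra of $\hypn$. The only place where one might trip up is the shift of variables (one has to match $x=j+1,y=k+1$ rather than $x=j,y=k$, which corresponds to the $+1$'s appearing in equation~\eqref{eq: type A algebra}); but this causes no trouble because the identity still holds on an infinite set of values of $x$ and $y$, which is all that is needed for the polynomial extension. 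In particular, all of the real work is already packaged in Theorem~\ref{thm: type A algebra}, and no new combinatorial input is required.
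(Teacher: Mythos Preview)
Your proposal is correct and follows essentially the same argument as the paper: compare coefficients of $s^jt^k$ in equation~\eqref{eq: type A algebra}, multiply by $\pi$ and sum over $\hypn$ to obtain $\phi_A(j+1)\phi_A(k+1)=\phi_A(2(j+1)(k+1))$ for all $j,k\geq 0$, and then extend to a polynomial identity. The paper is slightly more terse, but the content is identical.
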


If we substitute $x \leftarrow x/2$ and $y \leftarrow y/2$ in the previous theorem then we see that $\phi_A(x/2)\phi_A(y/2) = \phi_A(xy/2)$.
Thus if we expand
\[
\phi_A(x/2) = \sum_{\pi \in \hypn} \binom{\frac{x}{2}+n-1-\desa(\pi)}{n} \pi = \sum_{i=1}^n a_i x^i
\]
the $a_i$ form a set of orthogonal idempotents which span the type $A$ Eulerian descent algebra.

%%%%%%%%%%%%%%%%%%%%%%%%%%%%%%%%%%%%%%%%%%%%%%%%

\section{The augmented Eulerian descent algebra}\label{sec: augmented descent algebra}

The group algebra of the hyperoctahedral group $\hypn$ contains a subalgebra induced by the augmented descent number called the \emph{augmented Eulerian descent algebra}.
Cellini \cite{CelliniII1995} first proved the existence of this algebra which is a natural analogue of the cyclic Eulerian descent algebra from Chapter \ref{chpt: Symmetric Group}.
We define
\[
B^{(a)}_i = \dsum_{\ades(\pi) = i} \pi
\]
for $i=1,\ldots,n$ and will show that together the $B^{(a)}_i$ form a basis for an algebra.
To prove that such an algebra exists, we compute $B^{(a)}_\pi(s,t) := \sum_{\sigma \tau = \pi} s^{\ades(\sigma)}t^{\ades(\tau)}$ and show that $B^{(a)}_\pi(s,t)$ is determined by $\ades(\pi)$.

The following definition of augmented $P$-partitions and the resulting theorems are due to Petersen \cite{Petersen2005}.
Let $X=\{x_0,x_1,\ldots, x_{\infty} \}$ be a countable totally ordered set with a maximal element and total order
\[
x_0 < x_1 < \cdots < x_{\infty} .
\]
Define $\pm X$ to be the set $\{ -x_{\infty} \ldots, -x_1, x_0, x_1, \ldots, x_{\infty} \}$ with total order
\[
-x_{\infty} < \cdots < -x_1 < x_0 < x_1 < \cdots < x_{\infty}.
\]
For any signed poset $P$, an augmented $P$-partition is a function $f:\pm[n] \rightarrow \pm X$ such that:
\begin{enumerate}
\item[(i)] $f(i) \leq f(j)$ if $i <_P j$
\item[(ii)] $f(i) < f(j)$ if $i <_P j$ and $i > j$ in $\Z$
\item[(iii)] $f(-i) = -f(i)$
\item[(iv)] if $0 < i$ in $\Z$, then $f(i) < x_{\infty}$.
\end{enumerate}
Let $\augversion{\A}(P)$ denote the set of all augmented $P$-partitions.
For the rest of this section we set $X=\{0,\ldots, j\}$ and define the augmented order polynomial, denoted by $\Omegaaug_P(j)$, to be the number of augmented $P$-partitions.

Every signed permutation $\pi \in \hypn$ can be represented by the signed poset
\[
\pi(\bar{n}) <_P \cdots <_P \pi(\bar{1}) <_P 0 <_P \pi(1) <_P \cdots <_P \pi(n).
\]
We denote by $\augversion{\A}(\pi)$ the set of all functions $f:\pm [n] \rightarrow \pm X$ with $f(-i) = -f(i)$ for all $i \in [n]$ such that
\[
x_0 = f(\pi(0)) \leq f(\pi(1)) \leq \cdots \leq f(\pi(n)) \leq f(\pi(n+1)) = x_{\infty}
\]
with $f(\pi(i)) < f(\pi(i+1))$ if $i \in \aDes(\pi)$.
Thus $\Omegaaug_\pi(j)$ is equal to the number of integer solutions to the set of inequalities
\[
0 = i_0 \leq i_1 \leq \cdots \leq i_n \leq i_{n+1} = j  \qquad \text{with} \qquad i_k < i_{k+1} \text{ if } k \in \aDes(\pi).
\]
Thus
\begin{equation}\label{eq: aug order poly for pi}
\Omegaaug_\pi(j) = \mchoose{j+1-\ades(\pi)}{n} = \binom{j+n-\ades(\pi)}{n}.
\end{equation}
We have a Fundamental Theorem of Augmented $P$-partitions.

\begin{thm}
The set of all augmented $P$-partitions for a signed poset $P$ is the disjoint union of the set of all augmented $\pi$-partitions over all linear extensions $\pi$ of $P$:
\[
\augversion{\A}(P) = \coprod_{\pi \in \Lin(P)} \augversion{\A}(\pi).
\]
\end{thm}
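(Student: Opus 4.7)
The plan is to adapt the inductive proof of the Fundamental Theorem of $P$-partitions, and its type~$B$ analogue, to the augmented setting. I will induct on the number of incomparable pairs of elements of $\pm[n]$ in the signed poset $P$. The base case is when $P$ is already a linear order on $\pm[n]$, necessarily with $0$ in the middle; then $\Lin(P) = \{\pi\}$ for a unique $\pi \in \hypn$, and the equality $\augversion{\A}(P) = \augversion{\A}(\pi)$ is immediate from the definitions.

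For the inductive step I first handle the generic case: suppose $i, j \in \pm[n]$ are incomparable in $P$ with $i < j$ in $\Z$ and $j \neq -i$. By the signed-poset condition, $-j$ and $-i$ are also incomparable, and one checks that adjoining $i <_P j$ together with $-j <_P -i$ yields a valid signed poset $P_{ij}$; analogously, adjoining $j <_P i$ together with $-i <_P -j$ yields $P_{ji}$. The same combinatorial argument used in the unsigned case shows $\Lin(P) = \Lin(P_{ij}) \sqcup \Lin(P_{ji})$. On the augmented $P$-partition side, the defining conditions of $\augversion{\A}(P_{ij})$ differ from those of $\augversion{\A}(P)$ only by the requirement $f(i) \leq f(j)$ (weak, since $i < j$ in $\Z$), while $\augversion{\A}(P_{ji})$ additionally requires $f(j) < f(i)$ (strict, since $j > i$ in $\Z$). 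These two conditions are complementary, so every augmented $P$-partition lies in exactly one of $\augversion{\A}(P_{ij})$ and $\augversion{\A}(P_{ji})$. The signed symmetry $f(-i) = -f(i)$ and the boundedness condition $f(i) < x_\infty$ for $i > 0$ are automatically preserved by both splits.

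The remaining case is where I expect the main obstacle. Suppose every incomparable pair in $P$ consists of an element and its negative, and pick such an $i > 0$. Then $0$ must also be incomparable to $i$, since any relation between $0$ and $i$ would, via the signed condition, force $i$ and $-i$ to be comparable. I split on the sign of $f(i)$: adjoining $0 <_P i$ and $-i <_P 0$ captures augmented $P$-partitions with $x_0 \leq f(i)$, while adjoining $i <_P 0$ and $0 <_P -i$ captures those with $f(i) < x_0$ (strict, since $i > 0$ in $\Z$). Both resulting signed posets have strictly fewer incomparable pairs, so induction closes the argument. The main subtlety throughout is ensuring that the strict-versus-weak inequality convention of condition (ii) in the definition of augmented $P$-partitions interacts correctly with the signed symmetry in each split; this is handled uniformly by the rule that strict inequalities appear precisely when the added relation runs against the natural order on $\Z$.
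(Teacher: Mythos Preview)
Your proof is correct and follows exactly the inductive strategy the paper uses for the ordinary Fundamental Theorem of $P$-partitions in Section~\ref{sec: P-partitions}. In fact, the paper states the augmented version without proof (the definition and result are attributed to Petersen), so you are supplying an argument the paper omits; the paper's only model is the one-paragraph proof of the unsigned FTPP, and your generic step matches it.

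One small point about your case analysis: your ``remaining case'' is vacuous. You correctly observe that if $i$ and $-i$ are incomparable then $0$ and $i$ must also be incomparable; but the pair $(0,i)$ is not of the form ``an element and its negative,'' so the hypothesis that \emph{every} incomparable pair has that form can never hold. Since $0 \in \pm[n]$ and your generic case allows either element of the pair to be $0$, the pair $(0,i)$ is already covered there, and the induction terminates using only the generic step plus the base case. Your treatment of the remaining case is not wrong---splitting on $(0,i)$ is literally an instance of the generic step---just redundant. Alternatively, you could drop the restriction $j \neq -i$ in the generic case altogether: when $j = -i$ the two relations $i <_P j$ and $-j <_P -i$ coincide, and the dichotomy $f(i) \leq f(-i)$ versus $f(-i) < f(i)$ remains complementary (it simply partitions on the sign of $f(i)$, using $f(-i) = -f(i)$).
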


\begin{cor}
\[
\Omegaaug_P(j) = \sum_{\pi \in \Lin(P)} \Omegaaug_\pi(j)
\]
\end{cor}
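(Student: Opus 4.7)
The plan is to derive this corollary immediately from the Fundamental Theorem of Augmented $P$-partitions (FTAPP) stated just above, by taking cardinalities. Specifically, I would fix the totally ordered set $X = \{0,1,\ldots,j\}$ (with $0 = x_0$ and $j = x_\infty$), so that by the definition of the augmented order polynomial we have $\Omegaaug_P(j) = |\augversion{\A}(P)|$ and $\Omegaaug_\pi(j) = |\augversion{\A}(\pi)|$ for every $\pi \in \Lin(P)$.

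The FTAPP asserts the set-theoretic identity
\[
\augversion{\A}(P) = \coprod_{\pi \in \Lin(P)} \augversion{\A}(\pi),
\]
and the key point is that this decomposition is valid for any choice of codomain $\pm X$, including the finite one used here. Indeed, the proof of the FTAPP proceeds by sorting a given augmented $P$-partition $f$ according to the unique linear extension $\pi$ of $P$ for which $f(\pi(1)) \leq f(\pi(2)) \leq \cdots \leq f(\pi(n))$, with strict inequalities at the augmented descents forced by conditions (ii) and (iv). This sorting procedure depends only on the comparability of values in $\pm X$, not on which particular totally ordered set is used.

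Taking cardinalities of both sides of the disjoint union then yields
\[
\Omegaaug_P(j) = |\augversion{\A}(P)| = \sum_{\pi \in \Lin(P)} |\augversion{\A}(\pi)| = \sum_{\pi \in \Lin(P)} \Omegaaug_\pi(j),
\]
which is the desired identity. Since all the work is packaged into the FTAPP (whose proof mirrors the type $A$ and type $B$ cases by induction on the number of incomparable pairs in $P$), there is no real obstacle here; the corollary is a one-line consequence. The only subtlety worth flagging is simply that one must observe the FTAPP holds uniformly in $X$, so that the finite truncation used to define $\Omegaaug_P(j)$ preserves the disjoint decomposition.
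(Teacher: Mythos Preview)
Your proposal is correct and matches the paper's approach exactly: the corollary is stated immediately after the Fundamental Theorem of Augmented $P$-partitions with no separate proof, since the paper has already fixed $X = \{0,\ldots,j\}$ before stating the theorem, so taking cardinalities of the disjoint union is all that is needed. Your additional remark about the FTAPP holding uniformly in $X$ is fine but unnecessary here, as the paper's convention already makes the finite codomain the ambient setting.
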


Again we consider the union of two disjoint posets and have the following theorem and corollary.

\begin{thm}
If $P_1$ and $P_2$ are two disjoint signed posets then there exists a bijection between the set of all augmented $(P_1\sqcup P_2)$-partitions and the set ${\augversion{\A}(P_1) \times \augversion{\A}(P_2)}$:
\[
\augversion{\A}(P_1\sqcup P_2) \leftrightarrow \augversion{\A}(P_1) \times \augversion{\A}(P_2).
\]
\end{thm}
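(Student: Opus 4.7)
The proof plan mirrors exactly the arguments already used for the analogous statements about $P$-partitions of type~$B$ (Theorem after Corollary~\ref{cor: type B order poly linear extensions}) and of type~$A$ in Section~\ref{sec: type A Eulerian descent algebra}. The plan is to exhibit the restriction map explicitly and verify that all four defining conditions of an augmented $P$-partition split cleanly across the disjoint union.

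First I would define the forward map $\Phi \colon \augversion{\A}(P_1 \sqcup P_2) \to \augversion{\A}(P_1) \times \augversion{\A}(P_2)$ by $\Phi(f) = (f|_{P_1}, f|_{P_2})$, where the restriction is taken on the underlying elements of $\pm[n]$ belonging to each signed poset. I would then check, condition by condition, that $g := f|_{P_1}$ is in fact an augmented $P_1$-partition: conditions (i) and (ii) hold because any comparison $i <_{P_1} j$ is also a comparison $i <_{P_1 \sqcup P_2} j$; condition (iii) holds because $P_1$ is itself a signed poset, so $i$ lies in $P_1$ iff $-i$ does, and thus $g(-i) = f(-i) = -f(i) = -g(i)$; and condition (iv) transfers verbatim since it is a pointwise statement about positive integers. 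The same verification applies to $h := f|_{P_2}$.

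Next I would construct the inverse. Given $(g,h) \in \augversion{\A}(P_1) \times \augversion{\A}(P_2)$, define $f \colon \pm[n] \to \pm X$ by $f(i) = g(i)$ if $i$ is in the support of $P_1$ and $f(i) = h(i)$ if $i$ is in the support of $P_2$. Since $P_1$ and $P_2$ are disjoint, this is well defined. The crucial point is that because $P_1 \sqcup P_2$ contains no relations between elements of $P_1$ and elements of $P_2$, every relation $i <_{P_1 \sqcup P_2} j$ lies entirely within one of the two posets, so conditions (i) and (ii) for $f$ reduce to the same conditions for either $g$ or $h$. Condition (iii) follows because each of $g$ and $h$ already satisfies it on their respective supports (which are closed under negation since $P_1$ and $P_2$ are signed posets), and condition (iv) is inherited pointwise. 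Showing $\Phi$ and its inverse are mutually inverse is then immediate from the definitions.

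The main obstacle, if any, is a bookkeeping one: making sure that disjointness of signed posets is interpreted correctly so that the ``$0$'' element (and hence the boundary condition $f(0) = 0$ implicit in condition (iii)) is handled consistently — in practice one takes $P_1$ and $P_2$ to have disjoint supports inside $\pm[n]$, and any $0$ that appears in one poset is not in the other. Once this convention is fixed, the entire argument is formal and essentially identical to the disjoint-union theorems for the type~$A$ and type~$B$ versions, so no new ideas are required.
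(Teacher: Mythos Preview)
Your proposal is correct and takes exactly the same approach as the paper: the paper's proof consists of the single sentence that the restriction map $f \mapsto (f|_{P_1}, f|_{P_2})$ is a bijection, and you have simply spelled out the routine verifications in more detail than the paper does.
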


\begin{proof}
Let $f$ be an augmented $(P_1 \sqcup P_2)$-partition.
The map that sends $f$ to the ordered pair $(g,h)$ where $g = f|_{P_1}$ and $h = f|_{P_2}$ is a bijection between $\augversion{\A}(P_1\sqcup P_2)$ and $\augversion{\A}(P_1) \times \augversion{\A}(P_2)$.
\end{proof}

\begin{cor}
\[
\Omegaaug_{P_1 \sqcup P_2}(j) = \Omegaaug_{P_1}(j) \Omegaaug_{P_2}(j)
\]
\end{cor}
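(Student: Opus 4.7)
The statement is an immediate corollary of the preceding bijection theorem, so the proof plan is very short. The strategy is to verify that the bijection $f \mapsto (f|_{P_1}, f|_{P_2})$ between $\augversion{\A}(P_1 \sqcup P_2)$ and $\augversion{\A}(P_1) \times \augversion{\A}(P_2)$ is compatible with the size-$j$ truncation that defines $\Omegaaug_{P}(j)$, and then invoke the multiplicative principle for cardinalities of Cartesian products.

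First I would fix $X = \{0, 1, \ldots, j\}$ and recall that $\Omegaaug_{P}(j)$ is by definition the cardinality of the set of augmented $P$-partitions with values in $\pm X$. Then I would observe that the restriction map $f \mapsto (f|_{P_1}, f|_{P_2})$ provided by the preceding theorem sends an augmented $(P_1 \sqcup P_2)$-partition with values in $\pm X$ to a pair $(g, h)$ where both $g$ and $h$ take values in $\pm X$, and that this remains a bijection when both sides are restricted to functions taking values in $\pm X$. This is because the four defining conditions of an augmented $P$-partition are local to each connected component of $P_1 \sqcup P_2$, so restricting the allowed codomain does not interact with the disjoint-union structure.

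Having established that the bijection restricts to finite sets of the correct size, the formula
\[
\Omegaaug_{P_1 \sqcup P_2}(j) = \Omegaaug_{P_1}(j)\, \Omegaaug_{P_2}(j)
\]
follows immediately from $|A \times B| = |A|\,|B|$.

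There is no real obstacle in this argument since all of the substantive work has already been done in the preceding theorem; the only thing to be careful about is confirming that the bijection respects the truncation of the codomain to $\pm X$, which is obvious from the definitions.
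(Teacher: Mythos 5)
Your proposal is correct and matches the paper's (implicit) argument: the corollary follows immediately from the bijection $\augversion{\A}(P_1\sqcup P_2)\leftrightarrow \augversion{\A}(P_1)\times\augversion{\A}(P_2)$ by restricting to the finite set $X=\{0,\ldots,j\}$ and taking cardinalities, and the paper states it without further proof precisely because this is routine.
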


Let $\augzigsets$ denote the set of every nonempty proper subset of $[0,n]$.
For every $I \in \augzigsets$ and $\pi \in \hypn$, define the \emph{augmented zig-zag poset} $\augzigipi$ by setting $\pi(i) <_Z \pi(i+1)$ if $i \notin I$ and $\pi(i) >_Z \pi(i+1)$ if $i \in I$ with $\pi(0) = 0 = \pi(n+1)$.
Note that both $I=\varnothing$ and $I = [0,n]$ would yield posets with $0 <_Z 0$ and so are excluded.
We have the following lemma about linear extensions of augmented zig-zag posets.

\begin{lem}\label{lem: aug zig-zag extensions}
If $\sigma, \pi \in \hypn$ and $I \in \augzigsets$ then $\sigma \in \Lin(\augzigipi)$ if and only if $\aDes(\sigma^{-1}\pi) = I$.
\end{lem}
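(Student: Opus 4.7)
The plan is to follow exactly the template established in the proofs of Lemmas~\ref{lem: Zig-Zag Extensions}, \ref{lem: Cyclic Zig-Zag Extensions}, and \ref{lem: type B zig-zag extensions}, adapted to the augmented case. The key observation is that for a signed poset $P$, a signed permutation $\sigma \in \hypn$ is a linear extension of $P$ precisely when $a <_P b$ implies $\sigma^{-1}(a) < \sigma^{-1}(b)$ in $\Z$. I will apply this biconditionally, sweeping over all $i \in [0,n]$, to read off the augmented descent set of $\sigma^{-1}\pi$.

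First I would record the definition of $\augzigipi$: for each $i \in [0,n]$ we have either $\pi(i) <_Z \pi(i+1)$ (if $i \notin I$) or $\pi(i) >_Z \pi(i+1)$ (if $i \in I$), with the convention $\pi(0) = 0 = \pi(n+1)$. Then, assuming $\sigma \in \Lin(\augzigipi)$, I would apply $\sigma^{-1}$ to each of these comparisons. For $i \notin I$ this yields $\sigma^{-1}(\pi(i)) < \sigma^{-1}(\pi(i+1))$, i.e., $(\sigma^{-1}\pi)(i) < (\sigma^{-1}\pi)(i+1)$, so $i \notin \aDes(\sigma^{-1}\pi)$; for $i \in I$ the reversed inequality gives $i \in \aDes(\sigma^{-1}\pi)$. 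Combining these over all $i \in [0,n]$ yields $\aDes(\sigma^{-1}\pi) = I$. The converse is the same argument read backwards, using that the relations in $\augzigipi$ are generated by the covering relations $\pi(i) \lessgtr_Z \pi(i+1)$.

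The only subtle points are the boundary cases $i = 0$ and $i = n$, which need to be checked by hand because they involve the sentinel $0 = \pi(0) = \pi(n+1)$. For $i = 0$: since $\sigma^{-1}(0) = 0$, the comparison $0 <_Z \pi(1)$ (resp.\ $0 >_Z \pi(1)$) translates to $0 < (\sigma^{-1}\pi)(1)$ (resp.\ $0 > (\sigma^{-1}\pi)(1)$), which matches the convention that $0 \in \aDes(\sigma^{-1}\pi)$ iff $(\sigma^{-1}\pi)(1) < 0$. For $i = n$, the comparison between $\pi(n)$ and $\pi(n+1) = 0$ similarly matches the convention $n \in \aDes(\sigma^{-1}\pi)$ iff $(\sigma^{-1}\pi)(n) > 0$. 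Finally, the reason $I$ must lie in $\augzigsets$ (i.e., be a nonempty proper subset of $[0,n]$) is exactly to rule out the degenerate signed posets with $0 <_Z 0$, and this compatibility condition aligns with the range of possible augmented descent sets.

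I do not expect any real obstacle: the argument is essentially a notational bookkeeping exercise verifying that the definitions have been set up consistently. The only thing to watch is the endpoint conventions $\pi(0) = 0 = \pi(n+1)$, so I would write out the $i=0$ and $i=n$ cases explicitly rather than bundling them with the generic $1 \le i \le n-1$ case, to make clear that they behave as advertised.
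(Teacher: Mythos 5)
Your proposal is correct and follows precisely the template the paper uses for this family of lemmas; the paper simply states this lemma without proof, implicitly deferring to the arguments given for Lemmas~\ref{lem: Zig-Zag Extensions}, \ref{lem: Cyclic Zig-Zag Extensions}, and \ref{lem: type B zig-zag extensions}. Your explicit treatment of the boundary positions $i=0$ and $i=n$ with the sentinels $\pi(0)=0=\pi(n+1)$ is accurate and matches the paper's conventions.
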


\begin{ex}
If $\pi = \bar{2}1$ and $I = \{0,2\}$ then $\augzigipi$ is the poset with ordering $0 >_Z \bar{2} <_Z 1 >_Z 0$.
We have $\Lin(\augzigipi) = \{12, 21 \}$ and $\{\sigma^{-1}\pi \, : \, \sigma \in \Lin(\augzigipi)\} \linebreak = \{ \bar{2}1, \bar{1}2 \}$.
Hence $\aDes(\sigma^{-1}\pi) = \{0,2\}$ for all $\sigma \in \Lin(\augzigipi)$.
\end{ex}

Next we consider a second collection of posets.
Let $\augchainsets$ denote the set of every nonempty subset of $[0,n]$.
For every $I \in \augchainsets$ and $\pi \in \hypn$, define the \emph{augmented chain poset} $\augchainipi$ by setting $\pi(i) <_C \pi(i+1)$ if $i \notin I$ with $\pi(0) = 0 = \pi(n+1)$.
We now allow for the case $I=[0,n]$ and $\augversion{C}([0,n],\pi)$ is an antichain.

\begin{lem}\label{lem: aug chain extensions}
If $\sigma,\pi \in \hypn$ and $I \in \augchainsets$ then $\sigma \in \Lin(\augchainipi)$ if and only if $\aDes(\sigma^{-1}\pi) \subseteq I$.
\end{lem}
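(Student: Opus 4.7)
The plan is to mirror the proof of Lemma~\ref{lem: type B chain extensions} for type~$B$ chain posets (and likewise the unsigned Lemma~\ref{lem: Chain Extensions}), since the augmented chain poset differs from the type~$B$ chain poset only in the addition of the second sentinel $\pi(n+1) := 0$ on the right end. The statement is a direct unpacking of the definition of linear extension and of augmented descent, so no new machinery should be needed; what little care the argument requires goes into the two endpoint cases $i = 0$ and $i = n$.

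First I would translate the linear extension condition into coordinates. By definition, $\sigma \in \Lin(\augchainipi)$ if and only if for every $i \in [0,n]$ with $i \notin I$ we have $\pi(i) <_C \pi(i+1)$ and hence $\sigma^{-1}(\pi(i)) < \sigma^{-1}(\pi(i+1))$ in $\Z$, where by convention $\pi(0) = 0 = \pi(n+1)$ and, since $\sigma \in \hypn$, also $\sigma^{-1}(0) = 0$. Equivalently, $\sigma \in \Lin(\augchainipi)$ iff the complement $[0,n] \setminus I$ is contained in the set
\[
S(\sigma,\pi) := \{\, i \in [0,n] \;:\; \sigma^{-1}(\pi(i)) < \sigma^{-1}(\pi(i+1)) \,\},
\]
which is precisely $[0,n] \setminus \aDes(\sigma^{-1}\pi)$, since by the definition of augmented descent $i \in \aDes(\sigma^{-1}\pi)$ iff $(\sigma^{-1}\pi)(i) > (\sigma^{-1}\pi)(i+1)$ with $(\sigma^{-1}\pi)(0) = 0 = (\sigma^{-1}\pi)(n+1)$.

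For the interior indices $1 \le i \le n-1$, this reduction is identical to the one already carried out for Lemma~\ref{lem: Zig-Zag Extensions}, so nothing new happens there. The only mild points to verify are the two endpoints. For $i = 0$: the relation $0 = \pi(0) <_C \pi(1)$ forces $0 = \sigma^{-1}(0) < \sigma^{-1}(\pi(1))$, i.e.\ $(\sigma^{-1}\pi)(1) > 0$, which is exactly the condition $0 \notin \aDes(\sigma^{-1}\pi)$. Symmetrically, for $i = n$: the relation $\pi(n) <_C \pi(n+1) = 0$ forces $\sigma^{-1}(\pi(n)) < 0$, i.e.\ $(\sigma^{-1}\pi)(n) < 0$, which is exactly $n \notin \aDes(\sigma^{-1}\pi)$. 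Combining these with the interior cases yields
\[
\sigma \in \Lin(\augchainipi) \iff [0,n] \setminus I \subseteq [0,n] \setminus \aDes(\sigma^{-1}\pi) \iff \aDes(\sigma^{-1}\pi) \subseteq I,
\]
completing the proof.

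There is no real obstacle here; the only thing that could go wrong is mishandling the sentinel conventions at $i=0$ and $i=n$, so I would write out those two endpoint computations explicitly rather than folding them silently into the general case, as has been done in the parallel lemmas earlier in this chapter.
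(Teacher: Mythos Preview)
Your proof is correct. The paper states this lemma without proof, just as it does for the parallel Lemmas~\ref{lem: type B chain extensions} and~\ref{lem: type A chain extensions}; all of these are left as straightforward variations of the unsigned Lemma~\ref{lem: Chain Extensions}, and your argument is precisely the intended one.
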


Next we define barred versions of both augmented zig-zag posets and augmented chain posets.
First, a \emph{barred augmented zig-zag poset} is defined to be an augmented zig-zag poset $\augzigipi$ with an arbitrary number of bars placed in each of the $n+1$ spaces between the two $0$ elements such that between any two bars the elements of the poset (not necessarily their labels) are increasing.
No bars are allowed on either end.
Figure \ref{fig: barred aug zig-zag poset} gives an example of a barred $\augzigipi$ poset with $I = \{0,2\}$ and $\pi = 2\bar{3}1$.

\begin{figure}[htbp]
\[\xymatrix @!R @!C @R = 30pt @C=10pt{ & \ar@{-}[ddd] &&&&& \ar@{-}[ddd]  & \ar@{-}[ddd]  &  & \ar@{-}[ddd] & \\   0 \ar@{-}[drrr]  &  & & & & \bar{3}\ar@{-}[drrr] & & & && 0  \\   & & & 2 \ar@{-}[urr]  & & & & & 1\ar@{-}[urr] &&  \\  &&&&&&&&&&   }\] 
\caption{\; A barred $\augzigipi$ poset for $I=\{0,2\}$ and $\pi = 2\bar{3}1$.}
\label{fig: barred aug zig-zag poset}
\end{figure}

To create a barred augmented zig-zag poset, we begin with an augmented zig-zag poset $\augzigipi$ and must first place a bar in space $i$ for each $i \in I$.
From there we are free to place any number of bars in any of the $n+1$ spaces.
Define $\Omegaaug_{\augzigipi}(j,k)$ to be the number of ordered pairs $(f,P)$ where $P$ is a barred $\augzigipi$ poset with $k$ bars and $f$ is an augmented $\augzigipi$-partition with parts less than or equal to $j$.
Recall that $\sigma \in \Lin(\augzigipi)$ if and only if $\aDes(\sigma^{-1}\pi) = I$.
If we begin with the augmented zig-zag poset $\augzigipi$ then there is a unique way to place the first $|I|$~bars (place one bar in space $i$ for each $i\in I$).
Next there are
\[
\mchoose{n+1}{k-|I|} = \binom{k + n - |I|}{n}
\]
ways to place the remaining $k-|I|$ bars in the $n+1$ spaces and hence there are $\binom{k+n-|I|}{n}$ barred $\augzigipi$ posets with $k$ bars.
Thus
\begin{align*}
\Omegaaug_{\augzigipi}(j,k) &= \sum_{\sigma \in \Lin(\augzigipi)} \Omegaaug_{\sigma}(j) \binom{k+n-\ades(\sigma^{-1}\pi)}{n} \\
&= \sum_{\sigma \in \Lin(\augzigipi)} \Omegaaug_{\sigma}(j) \Omegaaug_{\sigma^{-1}\pi}(k).
\end{align*}
If we set $\tau = \sigma^{-1}\pi$ and sum over all $I \in \augzigsets$ then we have
\begin{equation}\label{eq: barred aug zig-zag order polys}
\sum_{I \in \augzigsets} \Omegaaug_{\augzigipi}(j,k) = \sum_{\sigma \tau = \pi} \Omegaaug_{\sigma}(j) \Omegaaug_{\tau}(k).
\end{equation}

Next we define a \emph{barred augmented chain poset} to be an augmented chain poset $\augchainipi$ with at least one bar in space $i$ for each $i \in I$.
No bars are allowed in space~$i$ for $i\in [0,n]\setminus I$ and no bars are allowed on either end.
Thus we place at~least one bar between each chain of the augmented chain poset.
Figure \ref{fig: barred aug chain poset} gives an example of a barred $\augchainipi$ poset with $I = \{0,2,3\}$ and $\pi = 2\bar{3}1$.

\begin{figure}[htbp]
\[\xymatrix @!R @!C @R=32pt @C=15pt{ & \ar@{-}[ddd] &&&& \ar@{-}[ddd]  & \ar@{-}[ddd]  &  & \ar@{-}[ddd]  & \\     &  & & & \bar{3} & & & &&   \\  0   & & 2 \ar@{-}[urr]  & & & & & 1 && 0  \\  &&&&&&&&&   }\] 
\caption{\; A barred $\augchainipi$ poset for $I = \{0,2,3\}$ and $\pi = 2\bar{3}1$.}
\label{fig: barred aug chain poset}
\end{figure}

To create a barred augmented chain poset, we begin with an augmented chain poset $\augchainipi$ and must first place a bar in space $i$ for each $i \in I$.
From there we are free to place any number of bars in space $i$ for any $i \in I$.
This creates a collection of bars with each compartment containing at most one nonempty chain.
Define $\Omegaaug_{\augchainipi} (j,k)$ to be the number of ordered pairs $(f,P)$ where $P$ is a barred $\augchainipi$ poset with $k$ bars and $f$ is an augmented $\augchainipi$-partition with parts less than or equal to $j$.
Recall that $\sigma \in \Lin(\augchainipi)$ if and only if $\aDes(\sigma^{-1}\pi) \subseteq I$.
If we begin with the augmented chain poset $\augchainipi$ then there is a unique way to place the first $|I|$ bars (place one bar in space $i$ for each $i\in I$).
Next there are
\[
\mchoose{|I|}{k-|I|} = \binom{k-1}{k-|I|}
\]
ways to place the remaining $k-|I|$ bars in the $|I|$ allowable spaces and hence there are $\binom{k-1}{k-|I|}$ barred $\augchainipi$ posets with $k$ bars.
Thus
\[
\Omegaaug_{\augchainipi}(j,k) = \sum_{\sigma \in \Lin(\augchainipi)} \Omegaaug_{\sigma}(j) \binom{k-1}{k-|I|}.
\]

The following lemma shows that if $\sigma \in \hypn$ then the number of barred augmented zig-zag posets with $k$ bars such that $\sigma$ is a linear extension of the underlying augmented zig-zag poset is the same as the number of barred augmented chain posets with $k$ bars such that $\sigma$ is a linear extension of the underlying augmented chain poset.

\begin{lem}\label{lem: barred aug zig chain equiv}
Let $\pi \in \hypn$ and $j,k\geq0$.
Then
\[
\sum_{I \in \augzigsets}  \Omegaaug_{\augzigipi}(j,k)= \sum_{I \in \augchainsets} \Omegaaug_{\augchainipi}(j,k).
\]
\end{lem}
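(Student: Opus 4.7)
The plan is to mirror the proofs of the earlier zig-zag/chain equivalence lemmas (in particular Lemmas~\ref{Barred Poset Extensions}, \ref{Barred Cyclic Poset Extensions}, \ref{lem: barred type B zig chain equiv}, and \ref{lem: barred type A zig chain equiv}), adapted to the augmented setting. The first step is to rewrite both sides as sums over $\sigma \in \hypn$ rather than over $I$. Using the formulas derived just above the lemma, together with Lemmas~\ref{lem: aug zig-zag extensions} and~\ref{lem: aug chain extensions}, each side becomes
\[
\sum_{\sigma \in \hypn} \Omegaaug_\sigma(j) \, N_\sigma(k),
\]
where $N_\sigma(k)$ counts barred augmented zig-zag (respectively barred augmented chain) posets with $k$ bars whose underlying poset has $\sigma$ as a linear extension. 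Thus it suffices to exhibit, for each fixed $\sigma \in \hypn$, a bijection between these two collections preserving the total number of bars.

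The bijection is the natural one. Given a barred augmented zig-zag poset built from some $\augzigipi$ with $I \in \augzigsets$, remove the cover relation between $\pi(i)$ and $\pi(i+1)$ in every space $i \in [0,n]$ that contains at least one bar, while keeping the bars and their positions fixed. Let $J$ be the set of spaces that now contain at least one bar (so $J \supseteq I$ since $\augzigipi$ is required to have a bar in each space of $I$). The resulting configuration has at least one bar in each space $i \in J$, no bars in any space $i \in [0,n] \setminus J$, and no bars on either end — this is precisely a barred augmented chain poset with underlying poset $\augversion{C}(J,\pi)$. Moreover $J \in \augchainsets$ because $I \in \augzigsets$ implies $J$ is nonempty. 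The inverse map re-inserts the zig-zag relation at every space with no bar, and we note that re-inserting at a space $i$ with no bar is consistent with $i \notin J \supseteq I$, so this recovers exactly the zig-zag poset $\augzigipi$ we started from.

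The key point to verify is that $\sigma$ is a linear extension of $\augzigipi$ if and only if $\sigma$ is a linear extension of $\augversion{C}(J,\pi)$ for the $J$ produced above, which is immediate from the definitions: the chain poset relations are a subset of the zig-zag poset relations, and the removed relations occur precisely at spaces containing bars, so they are precisely the relations $\sigma$ might or might not satisfy that correspond to descents of $\sigma^{-1}\pi$ in $I$. Since the bijection preserves both the number of bars and the set of signed permutations that are linear extensions, summing over $\sigma$ gives the claimed equality.

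The main thing to be careful about is the boundary behavior peculiar to the augmented case: both barred augmented zig-zag posets and barred augmented chain posets forbid bars outside the $n+1$ internal spaces $[0,n]$, so the bijection has nothing to do on the ends. In particular the treatment of the ``wrap-around'' position $n$ (i.e.\ the implicit $\pi(n+1) = 0$) is symmetric between the two constructions: a bar in space $n$ removes the relation between $\pi(n)$ and $0$ in the zig-zag poset and simultaneously splits off the final chain in the chain poset, so the bijection is well-defined there just as at any interior space.
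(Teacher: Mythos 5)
Your proof takes essentially the same approach as the paper's: fix $\sigma$, observe the underlying zig-zag poset with $\sigma$ as a linear extension is the unique $\augzigipi$ with $I=\aDes(\sigma^{-1}\pi)$, and match each bar configuration supported on a set $J\supseteq I$ with the barred chain poset over $\augversion{C}(J,\pi)$ carrying the same bars. One small slip in wording: the inverse re-inserts relations at spaces \emph{with} bars (their directions are recovered from $\aDes(\sigma^{-1}\pi)$ for the fixed $\sigma$), not at spaces without bars, where the chain relation already agrees with the zig-zag relation -- but your overall argument is organized correctly around the fixed-$\sigma$ bijection, so this does not affect the validity of the proof.
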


\begin{proof}
We must find a bijection between the two types of barred posets that respects the number of bars.
Recall that $\sigma \in \Lin(\augzigipi)$ for a unique $I \in \augzigsets$ and that $\sigma \in \Lin(C^{(a)}(J,\pi))$ if and only if $J \supseteq I$.
The desired bijection is simply the map that sends each barred augmented zig-zag poset with underlying poset $\augzigipi$ to the barred augmented chain poset obtained from $\augzigipi$ by removing the relation between $\pi(i)$ and $\pi(i+1)$ for every space $i$ containing at least one bar.
\end{proof}

The bijection in the previous lemma maps Figure \ref{fig: barred aug zig-zag poset} to Figure \ref{fig: barred aug chain poset}.
Now that all the pieces are in place, we are ready to prove the existence of the augmented descent algebra by computing $B^{(a)}_\pi (s,t)$.

\begin{thm}\label{thm: augmented algebra}
For every $\pi \in \hypn$,
\begin{equation}\label{eq: augmented algebra}
\sum_{j,k \geq 0} \binom{2jk + n-\ades(\pi)}{n} s^j t^k = \frac{B^{(a)}_\pi (s,t)}{(1-s)^{n+1}(1-t)^{n+1}}.
\end{equation}
\end{thm}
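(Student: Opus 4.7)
The plan is to follow the template established in the proofs of Theorem~\ref{thm: type B algebra} and Theorem~\ref{thm: type A algebra}. First I would apply the binomial identity $1/(1-x)^{n+1} = \sum_{j \geq 0} \binom{j+n}{n} x^j$ to both denominators on the right-hand side and reindex the resulting sum to pull the augmented descent numbers into the binomial coefficients, giving
\[
\frac{B^{(a)}_\pi(s,t)}{(1-s)^{n+1}(1-t)^{n+1}} = \sum_{j,k \geq 0}\,\sum_{\sigma\tau = \pi} \binom{j+n-\ades(\sigma)}{n} \binom{k+n-\ades(\tau)}{n}\, s^j t^k.
\]
By equation~\eqref{eq: aug order poly for pi} the binomials equal $\Omegaaug_\sigma(j)$ and $\Omegaaug_\tau(k)$, and equation~\eqref{eq: barred aug zig-zag order polys} lets me recognize the inner sum as $\sum_{I \in \augzigsets} \Omegaaug_{\augzigipi}(j,k)$. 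Applying Lemma~\ref{lem: barred aug zig chain equiv} then switches from barred augmented zig-zag posets to barred augmented chain posets, so what remains is to prove the combinatorial identity
\[
\sum_{I \in \augchainsets} \Omegaaug_{\augchainipi}(j,k) = \binom{2jk + n - \ades(\pi)}{n} = \Omegaaug_\pi(2jk).
\]

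For this last step, I would interpret the left-hand side as counting ordered pairs $(f, P)$ where $P$ is a barred $\augchainipi$-poset with $k$ bars (for some $I \in \augchainsets$) and $f$ is an augmented $P$-partition with parts $\leq j$. Since no bars are allowed on either end, the $k$ bars divide the sequence $\pi(0), \pi(1), \ldots, \pi(n), \pi(n+1)$ into $k+1$ compartments labeled $0, 1, \ldots, k$ from left to right, with the left $0$ always in compartment $0$ and the right $0$ always in compartment $k$. I would define a map from $(f, P)$ to an augmented $\pi$-partition $g$ with parts $\leq 2jk$ by the shift $g(\pi(i)) = f(\pi(i)) + 2j\,l(i)$, where $l(i) \in \{0,1,\ldots,k\}$ is the compartment containing $\pi(i)$. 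Under this map the endpoints get $g$-values $0$ and $2jk$, and the chain constraints within each compartment translate into the weakly-increasing structure of $g$ with strict inequalities exactly at the positions in $\aDes(\pi)$.

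The main obstacle will be verifying that this shift actually gives a bijection onto augmented $\pi$-partitions with parts $\leq 2jk$. The subtlety comes from condition~(iv) of augmented $P$-partitions, which treats positive and negative letters asymmetrically: positive letters satisfy $f < j$ while negative letters satisfy $f > -j$. Consequently, positive letters in compartment $l$ end up with $g$-values in $[2jl - j,\, 2jl + j - 1]$ and negative letters in $[2jl - j + 1,\, 2jl + j]$, with appropriate truncation when the chain is anchored to the left or right $0$. The key verification is that together with the signs of the letters of $\pi$ these intervals tile $\{0, 1, \ldots, 2jk\}$, and that the inverse map---reading off $l(i)$ from $g(\pi(i))$ together with the sign of $\pi(i)$ and then placing $l(i+1) - l(i)$ bars at position $i$ for $i = 0, 1, \ldots, n$---is well-defined. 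Once this bijection is in hand, the identity $\sum_{I \in \augchainsets} \Omegaaug_{\augchainipi}(j,k) = \Omegaaug_\pi(2jk) = \binom{2jk + n - \ades(\pi)}{n}$ follows, completing the proof.
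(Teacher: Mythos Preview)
Your proposal is correct and follows essentially the same approach as the paper's proof: expand via the binomial theorem, invoke equations~\eqref{eq: aug order poly for pi} and~\eqref{eq: barred aug zig-zag order polys}, apply Lemma~\ref{lem: barred aug zig chain equiv}, and then establish the key identity $\sum_{I \in \augchainsets} \Omegaaug_{\augchainipi}(j,k) = \binom{2jk+n-\ades(\pi)}{n}$ by a compartment-shifting bijection. The only notable difference is bookkeeping in that last step: the paper shifts compartment $i$ into the disjoint interval $[j+1+(i-1)(2j+1),\,j+1+(i-1)(2j+1)+2j]$ and then collapses $k$ boundary pairs via Petersen's augmented lexicographic equivalence $(2ij+i+j)\sim(2ij+i+j+1)$, whereas your shift $g = f + 2jl$ produces intervals that already overlap at the single points $(2l+1)j$, with the sign of the letter resolving which compartment it came from---these are two descriptions of the same bijection.
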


\begin{proof}
First we see that
\begin{align*}
\frac{B^{(a)}_\pi(s,t)}{(1-s)^{n+1}(1-t)^{n+1}} &=  \sum_{j,k \geq 0} \sum_{\sigma \tau = \pi} \binom{j+n}{n}\binom{k+n}{n}s^{j+\ades(\sigma)} t^{k+\ades(\tau)}\\
&= \sum_{j,k \geq 0} \sum_{\sigma \tau = \pi} \binom{j+n-\ades(\sigma)}{n}\binom{k+n-\ades(\tau)}{n}s^j t^k \\ 
&= \sum_{j,k \geq 0} \sum_{I \in \augzigsets} \Omegaaug_{\augzigipi}(j,k) s^j t^k
\end{align*}
where the last equality follows from equations \eqref{eq: aug order poly for pi} and \eqref{eq: barred aug zig-zag order polys}.
The bijection in Lemma~\ref{lem: barred aug zig chain equiv} allows us to shift our focus from barred augmented zig-zag posets to barred augmented chain posets and shows that
\[
\frac{B^{(a)}_\pi(s,t)}{(1-s)^{n+1}(1-t)^{n+1}} = \sum_{j,k \geq 0} \sum_{I \in \augchainsets} \Omegaaug_{\augchainipi}(j,k) s^j t^k.
\]
The only remaining step is to prove that
\[
\sum_{I \in \augchainsets} \Omegaaug_{\augchainipi}(j,k) =  \binom{2jk+n-\ades(\pi)}{n}.
\]

First we note that $\sum_{I \in \augchainsets} \Omegaaug_{\augchainipi}(j,k)$ counts ordered pairs $(f,P)$ where $P$ is a barred $\augchainipi$ poset with $k$ bars for some $I \in \augchainsets$ and $f$ is an augmented $\augchainipi$-partition with parts less than or equal to $j$.
Fix a barred $\augchainipi$ poset with $k$ bars and use the bars to define compartments labeled $0,\ldots,k$ from left to right.
Then define $\pi_i$ to be the (possibly empty) subword of $\pi$ in compartment $i$ and denote the length of $\pi_i$ by $L_i$.
Then
\[
\Omegaaug_{\augchainipi}(j) = \Omegaaug_{\pi_0}(j)\Omegaaug_{\overleftarrow{\pi_k}}(j)\prod_{i=1}^{k-1} \Omegaaug_{P(\pi_i)}(j) 
\]
where $\overleftarrow{\pi_{k}}$ is the word obtained by writing $\pi_{k}$ in reverse and negating each letter.
First we let $\Omegaaug_{\pi_0}(j)$ count solutions to the inequalities
\[
0 = s_{0_0} \leq s_{0_1} \leq  \cdots \leq s_{0_{L_0}} \leq s_{0_{L_0+1}} = j
\]
with $s_{0_l} < s_{0_{l+1}}$ if $l \in \aDes(\pi_0)$.
For $i=1,\ldots,k-1$, we let $\Omegaaug_{P(\pi_i)}(j)$ count solutions to the inequalities
\[
j+1+(i-1)(2j+1) = s_{i_0} \leq s_{i_1} \leq  \cdots \leq s_{i_{L_i}} \leq s_{i_{L_i+1}} = j+1+ (i-1)(2j+1) + 2j
\]
with $s_{i_l} < s_{i_{l+1}}$ if $l \in \aDes(\pi_i)$.
Finally we let $\Omegaaug_{\overleftarrow{\pi_k}}(j)$ count solutions to the inequalities
\[
j+1+(k-1)(2j+1) = s_{k_0} \leq s_{k_1} \leq  \cdots \leq s_{k_{L_k}} \leq s_{k_{L_k+1}} = 2jk+k
\]
with $s_{k_l} < s_{k_{l+1}}$ if $l \in \aDes(\pi_k)$.
We would like to concatenate these inequalities as before but we must make one correction to account for what happens when $\pi(l)$ crosses from one compartment to the next.

If $\pi(l) > 0$ then when $\pi(l)$ is the last letter of $\pi_i$ we have $s_{i_{L_i}} \neq 2ij+i+j$ but when $\pi(l)$ is the first letter of $\pi_{i+1}$ we do allow $s_{(i+1)_1} = 2ij+i+j+1$.
Similarly, if $\pi(l) < 0$ then when $\pi(l)$ is the last letter of $\pi_i$ we allow $s_{i_{L_i}} = 2ij+i+j$ but when $\pi(l)$ is the first letter of $\pi_{i+1}$ we do not allow $s_{(i+1)_1} = 2ij+i+j+1$.
Thus we follow Petersen's augmented lexicographic ordering \cite{Petersen2005} and form an equivalence class of points by declaring that $(2ij+i+j) \sim (2ij+i+j+1)$ for $i=0,\ldots,k-1$.
The equivalence relation returns a set of size $2jk$.
We then see that if we sum over all $I \in \augchainsets$ and all barred $\augchainipi$ posets with $k$ bars then $\sum_{I \in \augchainsets} \Omegaaug_{\augchainipi}(j,k)$ is equal to the number of solutions to the inequalities
\[
0 = s_0 \leq s_1 \leq \cdots \leq s_n \leq s_{n+1} = 2jk
\]
with $s_i < s_{i+1}$ if $i \in \aDes(\pi)$.
Hence
\[
\sum_{I \in \augchainsets} \Omegaaug_{\augchainipi}(j,k)  = \binom{2jk+n-\ades(\pi)}{n} .  \qedhere
\]
\end{proof}

Define the augmented structure polynomial $\psi(x)$ in the group algebra of $\hypn$ by
\[
\psi(x) = \sum_{\pi \in \hypn} \binom{x+n-\ades(\pi)}{n} \pi.
\]
If we expand both sides of equation \eqref{eq: augmented algebra} and compare the coefficients of $s^jt^k$ we have
\[
\binom{2jk+n-\ades(\pi)}{n} = \sum_{\sigma\tau = \pi}  \binom{j+n -\ades(\sigma)}{n} \binom{k+n - \ades(\tau)}{n} .
\]
This implies that $\psi(j)\psi(k) = \psi(2jk)$ for all $j,k \geq 0$ and so we have the following theorem, originally proven by Petersen \cite{Petersen2005}.

\begin{thm}[Petersen]\label{thm: ades order poly}
As polynomials in $x$ and $y$ with coefficients in the group algebra of $\hypn$,
\[
\psi(x)\psi(y) = \psi(2xy).
\]
\end{thm}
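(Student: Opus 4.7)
The plan is to follow exactly the template the author has already established in the proofs of Theorem \ref{thm: desb order poly} and its type~$A$ analogue. The heavy lifting has already been done in Theorem \ref{thm: augmented algebra}: the key identity
\[
\sum_{j,k \geq 0} \binom{2jk + n-\ades(\pi)}{n} s^j t^k = \frac{B^{(a)}_\pi (s,t)}{(1-s)^{n+1}(1-t)^{n+1}}
\]
contains all the information we need. So my first step is to multiply both sides of this equation by $(1-s)^{n+1}(1-t)^{n+1}$, expand the denominator on the right as a product of two geometric series in the binomial form $\sum_{j\geq 0}\binom{j+n}{n}s^j$ and $\sum_{k\geq 0}\binom{k+n}{n}t^k$, and then extract coefficients of $s^jt^k$ on both sides.

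After a shift of indices on the right-hand side (replacing $j$ by $j-\ades(\sigma)$ and $k$ by $k-\ades(\tau)$), coefficient comparison will yield the pointwise identity
\[
\binom{2jk+n-\ades(\pi)}{n} = \sum_{\sigma\tau = \pi}  \binom{j+n -\ades(\sigma)}{n} \binom{k+n - \ades(\tau)}{n}
\]
valid for every $\pi \in \hypn$ and all integers $j,k \geq 0$. Multiplying through by $\pi$ and summing over all $\pi \in \hypn$, the left-hand side collapses to $\psi(2jk)$ by the definition of $\psi$, while on the right I swap the order of summation (first over $\sigma,\tau \in \hypn$, noting $\pi = \sigma\tau$ ranges over all of $\hypn$) to recognize it as the product $\psi(j)\psi(k)$ in the group algebra. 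This establishes $\psi(j)\psi(k) = \psi(2jk)$ for all nonnegative integers $j,k$.

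Finally, both $\psi(x)\psi(y)$ and $\psi(2xy)$ are polynomials in $x$ and $y$ with coefficients in the (finite-dimensional) group algebra of $\hypn$, and they agree on the infinite set $\N \times \N$; hence they must agree as polynomials. There is really no hard step here: the whole difficulty was absorbed into Theorem \ref{thm: augmented algebra}, whose proof required the bijection between barred augmented zig-zag and chain posets and the augmented lexicographic equivalence relation. The only minor bookkeeping point to watch is the index-shift when extracting coefficients, so that the binomials on the right appear with the correct arguments $j+n-\ades(\sigma)$ and $k+n-\ades(\tau)$ rather than the unshifted versions.
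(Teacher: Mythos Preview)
Your proposal is correct and follows essentially the same approach as the paper: the paper likewise extracts the coefficient of $s^jt^k$ from equation~\eqref{eq: augmented algebra} to obtain the identity $\binom{2jk+n-\ades(\pi)}{n} = \sum_{\sigma\tau = \pi}\binom{j+n-\ades(\sigma)}{n}\binom{k+n-\ades(\tau)}{n}$, concludes $\psi(j)\psi(k)=\psi(2jk)$ for all $j,k\geq 0$, and then passes to the polynomial identity.
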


If we substitute $x \leftarrow x/2$ and $y \leftarrow y/2$ in the previous theorem then we see that $\psi(x/2)\psi(y/2) = \psi(xy/2)$.
As before, if we expand
\[
\psi(x/2) = \sum_{\pi \in \hypn} \binom{\frac{x}{2}+n-\ades(\pi)}{n} \pi = \sum_{i=1}^n \augversion{b}_i x^i
\]
then the $\augversion{b}_i$ form a set of orthogonal idempotents which span the augmented Eulerian descent algebra.
These orthogonal idempotents were originally described by Petersen in \cite{Petersen2005}.

At this point it is natural to ask whether or not the augmented descent set (as opposed to number) induces an algebra.
In $\mathfrak{B}_3$, the two signed permutations with augmented descent set $\{0,1\}$ are $\bar{1}\bar{3}\bar{2}$ and $\bar{2}\bar{3}\bar{1}$.
The only signed permutation with augmented descent set $\{0\}$ is $\bar{3}\bar{2}\bar{1}$.
However,
\[
(\bar{3}\bar{2}\bar{1})(\bar{1}\bar{3}\bar{2}+\bar{2}\bar{3}\bar{1}) = 312 + 213
\]
and both of those signed permutations have augmented descent set $\{1,3\}$.
The signed permutation $2\bar{1}3$ has the same augmented descent set.
Hence this product cannot be written as a linear combination of basis elements which implies that the augmented descent set does not induce an algebra.
This is somewhat unexpected since such an algebra would fit nicely between the Mantaci-Reutenauer algebra described in Chapter \ref{chpt: Colored Permutation Groups} and the augmented Eulerian descent algebra.

%%%%%%%%%%%%%%%%%%%%%%%%%%%%%%%%%%%%%%%%%%%%%%%%

\section{The flag descent algebra}\label{sec: flag descent algebra}

Adin, Brenti, and Roichman \cite{AdinBrentiRoichman2001} defined a permutation statistic called the flag descent number during their efforts extending the Carlitz identity to $\hypn$.
The flag descent number $\fdes(\pi)$ of a signed permutation $\pi$ is
\[
\fdes(\pi) = \desa(\pi) + \desb(\pi) .
\]
The flag descent statistic thus counts a descent in position $0$ once and all other descents twice.
We can tell whether or not a permutation begins with a negative letter by analyzing the parity of $\fdes(\pi)$.

The goal of this section is to show that the group algebra of the hyperoctahedral group $\hypn$ contains a subalgebra induced by flag descent number called the \emph{flag descent algebra}.
We define
\[
F_i = \dsum_{\fdes(\pi) = i} \pi
\]
for $i=0,\ldots,2n-1$ and will show that together the $F_i$ form a basis for an algebra.
To prove that such an algebra exists, we compute $F_\pi(s,t) := \sum_{\sigma \tau = \pi} s^{\fdes(\sigma)}t^{\fdes(\tau)}$ and show that $F_\pi(s,t)$ is determined by $\fdes(\pi)$.

The following definition is due to Gessel \cite{GesselPersonal}.
For any signed poset $P$, the \emph{flag order quasi-polynomial}, denoted by $\Omegaflag_P(j)$, is defined to be the number of $P$-partitions of type $B$ with parts less than or equal to $j$ such that $f(i) \equiv j \mod{2}$ for $i=1,\ldots,n$.
We call such $P$-partitions \emph{flag $P$-partitions}.

Every signed permutation $\pi \in \hypn$ can be represented by the signed poset
\[
\pi(\bar{n}) <_P \cdots <_P \pi(\bar{1}) <_P 0 <_P \pi(1) <_P \cdots <_P \pi(n)
\]
and so $\Omegaflag_\pi(j)$ is equal to the number of integer solutions to the set of inequalities
\[
0 = i_0 \leq i_1 \leq i_2 \leq \cdots \leq i_n \leq j
\]
with $i_k < i_{k+1}$ if $k \in \Desb(\pi)$ and $i_k \equiv j \mod{2}$ for $k=1,\ldots,n$.
As before, we would like to reduce strict inequalities to weak inequalities but we do so in two steps.
First, if $i_0 < i_1$ then we subtract $1$ from $i_k$ for $k \geq 1$.
Next, if $i_l < i_{l+1}$ for $l>0$ then we subtract $2$ from $i_k$ for $k > l$.
Thus, after reducing strict inequalities to weak inequalities, we see that $\Omegaflag_\pi(j)$ is equal to the number of integer solutions to the set of inequalities
\[
0 \leq i_1 \leq i_2 \leq \cdots \leq i_n \leq j-\fdes(\pi)
\]
with $i_k \equiv j-\fdes(\pi) \mod{2}$ for $k=1,\ldots,n$.
Hence
\begin{equation}\label{eq: flag order poly formula}
\Omegaflag_\pi(j) = \mchoose{\ceiling{\frac{1}{2}(j+1-\fdes(\pi))}}{n} = \binom{\ceiling{\frac{1}{2}(j-1-\fdes(\pi))}+n}{n}.
\end{equation}
Since $\Omegaflag_P(j)$ counts certain $P$-partitions of type $B$, Corollaries \ref{cor: type B order poly linear extensions} and \ref{cor: type B order poly product} give us the following theorems.

\begin{thm}
\[
\Omegaflag_P(j) = \sum_{\pi \in \Lin(P)} \Omegaflag_\pi(j)
\]
\end{thm}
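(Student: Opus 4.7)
The plan is to derive this identity as a straightforward consequence of the Fundamental Theorem of $P$-partitions of Type~$B$ (FTPPB), by observing that the flag constraint is a condition on the values of the function $f$ alone, and therefore passes through the set-theoretic decomposition given by FTPPB.

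More precisely, let $\bversion{\A}(P,j)$ denote the set of type~$B$ $P$-partitions with parts in $\{-j,\ldots,j\}$, and let $\mathcal{F}(P,j) \subseteq \bversion{\A}(P,j)$ be the subset of flag $P$-partitions, namely those $f$ satisfying the additional parity condition $f(i) \equiv j \pmod{2}$ for every $i \in [n]$ (note that this is equivalent to the same congruence holding for every $i \in \pm[n]$, since $f(-i)=-f(i)$). By definition, $\Omegaflag_P(j) = |\mathcal{F}(P,j)|$. The first step is to invoke FTPPB to write
\[
\bversion{\A}(P,j) \;=\; \coprod_{\pi \in \Lin(P)} \bversion{\A}(\pi,j).
\]

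Second, I will intersect both sides of this disjoint union with the parity constraint. Since being a flag $P$-partition depends only on the values of $f$ and the integer $j$, and not at all on whether we are viewing $f$ as a $P$-partition or as a $\pi$-partition for some linear extension $\pi$, the restriction commutes with the decomposition:
\[
\mathcal{F}(P,j) \;=\; \bversion{\A}(P,j) \cap \{f : f(i)\equiv j \pmod 2\} \;=\; \coprod_{\pi \in \Lin(P)} \mathcal{F}(\pi,j).
\]
Taking cardinalities yields $\Omegaflag_P(j) = \sum_{\pi \in \Lin(P)} \Omegaflag_\pi(j)$, as desired.

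There is no real obstacle here: the content is entirely carried by FTPPB, and the only thing to verify is that the flag parity condition is compatible with the canonical partition of $P$-partitions into $\pi$-partitions. This is immediate because the parity condition is intrinsic to the function $f$ and is unaffected by the poset structure used to organize the set of $P$-partitions. The analogous statements for disjoint unions (that $\Omegaflag_{P_1 \sqcup P_2}(j) = \Omegaflag_{P_1}(j)\,\Omegaflag_{P_2}(j)$) would follow from the same principle applied to the bijection $\bversion{\A}(P_1 \sqcup P_2) \leftrightarrow \bversion{\A}(P_1) \times \bversion{\A}(P_2)$ used earlier in the type~$B$ theory.
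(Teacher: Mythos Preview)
Your proposal is correct and takes essentially the same approach as the paper: the paper simply remarks that since $\Omegaflag_P(j)$ counts certain $P$-partitions of type~$B$, Corollary~\ref{cor: type B order poly linear extensions} (the order-polynomial form of FTPPB) immediately yields the result. Your argument spells out the one-line justification the paper leaves implicit, namely that the parity constraint depends only on the values of $f$ and therefore passes through the disjoint-union decomposition.
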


\begin{thm}
If $P_1$ and $P_2$ are two disjoint signed posets then
\[
\Omegaflag_{P_1 \sqcup P_2}(j) = \Omegaflag_{P_1}(j) \Omegaflag_{P_2}(j).
\]
\end{thm}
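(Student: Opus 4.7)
The plan is to mirror the proof of the analogous disjoint-union statement for type~$B$ $P$-partitions and observe that the extra parity constraint defining flag $P$-partitions is pointwise, so it is preserved under the natural restriction map.

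First, I would invoke the bijection from the type~$B$ setting: if $f$ is a $(P_1\sqcup P_2)$-partition of type~$B$, the map sending $f$ to the pair $(g,h)$ with $g=f|_{P_1}$ and $h=f|_{P_2}$ is a bijection between $\bversion{\A}(P_1\sqcup P_2)$ and $\bversion{\A}(P_1)\times\bversion{\A}(P_2)$, because the defining order relations (i), (ii) and the sign antisymmetry (iii) involve only comparisons among elements lying in a common component. Moreover, this restriction bijection clearly preserves the ``parts less than or equal to~$j$'' property on each side.

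Next, I would check that the flag constraint $f(i)\equiv j\pmod{2}$ survives restriction in both directions. Since the condition is imposed separately on each index $i\in[n_1+n_2]$, a flag $(P_1\sqcup P_2)$-partition $f$ restricts to partitions $g,h$ that individually satisfy $g(i)\equiv j\pmod{2}$ for $i\in P_1$ and $h(i)\equiv j\pmod{2}$ for $i\in P_2$; conversely, gluing a pair of flag partitions of $P_1$ and $P_2$ yields a type~$B$ partition of $P_1\sqcup P_2$ all of whose values have the required parity. Thus the restriction bijection cuts down to a bijection between the set of flag $(P_1\sqcup P_2)$-partitions with parts at most $j$ and the cartesian product of the corresponding flag partition sets for $P_1$ and $P_2$. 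Counting both sides gives
\[
\Omegaflag_{P_1\sqcup P_2}(j)=\Omegaflag_{P_1}(j)\,\Omegaflag_{P_2}(j).
\]

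There is essentially no hard step here: the only thing to verify is that the parity requirement behaves well under restriction and gluing, which is immediate from its pointwise nature, after which the result follows from the already-established disjoint-union statement for type~$B$ $P$-partitions.
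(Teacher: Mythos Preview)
Your proposal is correct and matches the paper's reasoning exactly: the paper simply remarks that since $\Omegaflag_P(j)$ counts certain type~$B$ $P$-partitions (those satisfying the pointwise parity condition), the product formula follows immediately from the already-established type~$B$ disjoint-union bijection. You have just spelled out in more detail the one observation the paper leaves implicit, namely that the parity constraint is pointwise and hence preserved under restriction and gluing.
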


Next we define barred versions of both type $B$ zig-zag posets and type $B$ chain posets in a way that corresponds to the flag descent statistic.
First, a \emph{flag-barred zig-zag poset} is defined to be a type $B$ zig-zag poset $\zigbipi$ with an arbitrary number of bars placed in space $0$ and an even number of bars placed in each of the $n$ other spaces such that between any two bars the elements of the poset (not necessarily their labels) are increasing.
No bars are allowed to the left of the $0$ element.
Figure \ref{fig: flag-barred zig-zag poset} gives an example of a flag-barred $\zigbipi$ poset with $I = \{0,3\}$ and $\pi = 2\bar{3}14$.

\begin{figure}[htbp]
\[\xymatrix @!R @!C @R=16pt @C=5pt{  & \ar@{-}[dddd] & \ar@{-}[dddd] & \ar@{-}[dddd] & & & & & & \ar@{-}[dddd] & \ar@{-}[dddd] & & \ar@{-}[dddd] & \ar@{-}[dddd] & \ar@{-}[dddd] & \ar@{-}[dddd] & & \ar@{-}[dddd] & \ar@{-}[dddd]  \\  & & & & & & & & & & & 1\ar@{-}[drrrrr] & & & & & & &  \\  0\ar@{-}[drrrrr] & & & & & & & & \bar{3}\ar@{-}[urrr] & & & & & & & & 4 & &  \\  & & & & & 2\ar@{-}[urrr] & & & & & & & & & & & & & \\  &&&&&&&&&&&&&&&&&& \\ }\] 
\caption{\; A flag-barred $\zigbipi$ poset with $I = \{0,3\}$ and $\pi = 2\bar{3}14$.}
\label{fig: flag-barred zig-zag poset}
\end{figure}

To create a flag-barred zig-zag poset, we begin with a type $B$ zig-zag poset $\zigbipi$ and must first place one bar in space $0$ if $0 \in I$ and two bars in space $i$ if $i \in I$ and $i \neq 0$.
From there we are free to place any number of bars in space $0$ and any even number of bars in any of the other $n$ spaces.
Define $\Omegaflag_{\zigbipi} (j,k)$ to be the number of ordered pairs $(f,P)$ where $P$ is a flag-barred $\zigbipi$ poset with $k$ bars and $f$ is a flag $\zigbipi$-partition with parts less than or equal to $j$.
Recall that $\sigma \in \Lin(\zigbipi)$ if and only if $\Desb(\sigma^{-1}\pi) = I$.
For notational ease, we define the flag descent number of a set $I \subseteq [0,n-1]$ by
\[
\fdes(I) = \left\{\begin{array}{cl} 2 |I|, & \text{if } 0 \notin I \\ 2|I|-1, & \text{if } 0 \in I \end{array}\right. .
\]
If we begin with the poset $\zigbipi$ then there is a unique way to place the first $\fdes(I)$ bars.
Next we must place the remaining $k-\fdes(I)$ bars and there are two cases.

If $k-\fdes(I)$ is even then an even number of bars must go in space $0$.
Let $a = (k-\fdes(I))/2$.
Next, by placing $2l$ bars in space $0$ and summing on $l$, we see that there are
\begin{align*}
\sum_{l=0}^{a} \mchoose{n}{a-l} &= \sum_{l=0}^{a} \binom{n +a-l-1}{n-1} \\
&= \binom{a+n}{n} \\
&= \binom{\ceiling{\frac{1}{2}(k-1-\fdes(I))} + n}{n}
\end{align*}
ways to place the remaining $k-|I|$ bars.

If $k-\fdes(I)$ is odd then an odd number of bars must go in space $0$.
Let $a = (k-\fdes(I)-1)/2$.
Next, by placing $2l+1$ bars in space $0$ and summing on $l$, we see that there are
\begin{align*}
\sum_{l=0}^{a} \mchoose{n}{a-l} &= \sum_{l=0}^{a} \binom{n +a-l-1}{n-1} \\
&= \binom{a+n}{n} \\
&= \binom{\ceiling{\frac{1}{2}(k-1-\fdes(I))} + n}{n}
\end{align*}
ways to place the remaining $k-|I|$ bars.
Hence we see that
\begin{align*}
\Omegaflag_{\zigbipi}(j,k) &= \sum_{\sigma \in \Lin(\zigbipi)} \Omegaflag_{\sigma}(j) \binom{\ceiling{\frac{1}{2}(k-1-\fdes(\sigma^{-1}\pi))} +n}{n} \\
&= \sum_{\sigma \in \Lin(\zigbipi)} \Omegaflag_{\sigma}(j) \Omegaflag_{\sigma^{-1}\pi}(k).
\end{align*}
If we set $\tau = \sigma^{-1}\pi$ and sum over all $I \subseteq [0,n-1]$ then
\begin{equation}\label{eq: flag-barred zig-zag order polys}
\sum_{I \subseteq [0,n-1]} \Omegaflag_{\zigbipi}(j,k) = \sum_{\sigma \tau = \pi} \Omegaflag_{\sigma}(j) \Omegaflag_{\tau}(k).
\end{equation}

Next we define a \emph{flag-barred chain poset} to be a poset $\chainbipi$ with at least one bar in space $0$ if $0 \in I$ and an even positive number of bars in space $i$ for $i\in I$, $i \neq 0$.
We also allow an even number of bars placed on the right end.
No bars are allowed in space $i$ for $i\in [0,n-1]\setminus I$ and no bars are allowed to the left of the $0$ element.
Figure \ref{fig: flag-barred chain poset} gives an example of a flag-barred $\chainbipi$ poset with $I = \{0,2,3\}$ and $\pi = 2\bar{3}14$.

\begin{figure}[htbp]
\[\xymatrix @!R @!C @C=8pt {  & \ar@{-}[ddd] & \ar@{-}[ddd] & \ar@{-}[ddd] & & &  \ar@{-}[ddd] & \ar@{-}[ddd] & & \ar@{-}[ddd] & \ar@{-}[ddd] & \ar@{-}[ddd] & \ar@{-}[ddd] & & \ar@{-}[ddd] & \ar@{-}[ddd]  \\  &&&&& \bar{3}  &&&  &&&&& &&   \\  0 &&&& 2\ar@{-}[ur] &&&& 1 &&&&& 4 &&  \\  &&&&&&&&&&&&&&&   }\] 
\caption{\; A flag-barred $\chainbipi$ poset with $I = \{0,2,3\}$ and $\pi = 2\bar{3}14$.}
\label{fig: flag-barred chain poset}
\end{figure}

To create a flag-barred chain poset, we begin with a poset $\chainbipi$ and must first place a bar in space $0$ if $0\in I$ and two bars in space $i$ for each $i \in I$, $i \neq 0$.
From there we are free to place any number of bars in space $0$ and any even number of bars in the other spaces indicated by $I$ and on the right end.
This creates a collection of bars with each compartment containing at most one nonempty chain labeled by a subword of $\pi$ and with $0$ in the leftmost compartment.
Also, all the chains (except possibly for a singleton $0$ element) are in compartments with the same parity.
Define $\Omegaflag_{\chainbipi} (j,k)$ to be the number of ordered pairs $(f,P)$ where $P$ is a flag-barred $\chainbipi$ poset with $k$ bars and $f$ is a flag $\chainbipi$-partition with parts less than or equal to $j$.

The following lemma shows that if $\sigma \in \hypn$ then the number of flag-barred zig-zag posets with $k$ bars such that $\sigma$ is a linear extension of the underlying type $B$ zig-zag poset is the same as the number of flag-barred chain posets with $k$ bars such that $\sigma$ is a linear extension of the underlying type $B$ chain poset.

\begin{lem}\label{lem: flag-barred zig chain equiv}
Let $\pi \in \hypn$ and $j,k\geq0$.
Then
\[
\sum_{I \subseteq [0,n-1]}  \Omegaflag_{\zigbipi}(j,k)= \sum_{I \subseteq [0,n-1]} \Omegaflag_{\chainbipi}(j,k).
\]
\end{lem}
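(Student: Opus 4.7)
The plan is to follow the same strategy used to prove Lemma~\ref{lem: barred type B zig chain equiv}, adapted to respect the parity conditions on bars that appear in the flag setting. First, I would regroup both sides of the identity by summing over $\sigma\in\hypn$ instead of over~$I$. By Lemma~\ref{lem: type B zig-zag extensions}, every $\sigma \in \hypn$ is a linear extension of $\zigbipi$ for exactly one $I$, namely $I = \Desb(\sigma^{-1}\pi)$; by Lemma~\ref{lem: type B chain extensions}, it is a linear extension of $\chainbipi$ for every $J \supseteq \Desb(\sigma^{-1}\pi)$. Since flag $P$-partitions are $P$-partitions of type~$B$ subject to an additional parity condition on their parts, the Fundamental Theorem of $P$-partitions of Type~$B$ yields the analogous identity $\Omegaflag_P(j) = \sum_{\sigma \in \Lin(P)} \Omegaflag_\sigma(j)$. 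Consequently both sides of the identity in the lemma can be rewritten as $\sum_{\sigma \in \hypn} \Omegaflag_\sigma(j)\,N_\sigma(k)$, where $N_\sigma(k)$ counts flag-barred zig-zag (respectively flag-barred chain) posets with $k$ bars for which $\sigma$ is a linear extension of the underlying type~$B$ poset. It therefore suffices to biject these two families for each fixed $\sigma$.

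The bijection is the same one used throughout the chapter: send each flag-barred zig-zag poset with underlying poset $\zigbipi$ to the flag-barred chain poset obtained by erasing the covering relation between $\pi(i)$ and $\pi(i+1)$ for every space $i$ containing at least one bar. This map keeps the bars and their positions fixed, so the total number of bars and the set of linear extensions of the underlying type~$B$ poset are both preserved. The inverse map reinserts the missing relation at every bar-free space $i \in [0,n-1]$, recovering a valid type~$B$ zig-zag poset.

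The only step that genuinely requires checking is that the parity conditions on bars translate correctly, and this is the place I would be most careful. On the zig-zag side, any number of bars is allowed in space~$0$ and an even number in each of spaces $1,\ldots,n$. On the chain side, any positive number is allowed in space~$0$ whenever $0 \in J$, an even positive number is required in each space $i \in J\setminus\{0\}$ with $i<n$, an even number is allowed on the right end (space $n$), and no bars may appear in spaces outside $J$. Because the bijection leaves the bars themselves untouched, the parity of the count in each individual space is preserved; since $J$ is by construction exactly the set of positions in $[0,n-1]$ containing a bar, the ``no bars outside~$J$'' condition is automatic, and for $1 \le i \le n-1$ the ``even and at least one'' requirement reduces to ``even and at least two,'' which is exactly what the zig-zag requires. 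Hence the bijection is well defined in both directions and the lemma follows.
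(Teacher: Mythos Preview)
Your proposal is correct and follows essentially the same approach as the paper: you use the identical bijection (erase the relation between $\pi(i)$ and $\pi(i+1)$ at every space containing a bar), and your additional verification that the parity constraints on bars are preserved is a welcome expansion of what the paper leaves implicit. One small slip: in describing the inverse, you write ``reinserts the missing relation at every bar-free space,'' but the chain poset already carries relations at bar-free spaces; the relations that need to be reinserted are at the \emph{bar-containing} spaces $i\in J$, with direction dictated by whether $i\in I=\Desb(\sigma^{-1}\pi)$.
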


\begin{proof}
We must find a bijection between the two types of barred posets that respects the number of bars.
Recall that $\sigma \in \Lin(\zigbipi)$ for a unique $I \subseteq [0,n-1]$ and that $\sigma \in \Lin(C^{B}(J,\pi))$ if and only if $J \supseteq I$.
The desired bijection is simply the map that sends each flag-barred zig-zag poset with underlying poset $\zigbipi$ to the flag-barred chain poset obtained from $\zigbipi$ by removing the relation between $\pi(i)$ and $\pi(i+1)$ for every space $i$ containing at least one bar.
\end{proof}

The bijection in the previous lemma maps Figure \ref{fig: flag-barred zig-zag poset} to Figure \ref{fig: flag-barred chain poset}.
Now that all the pieces are in place, we are ready to prove the existence of the flag descent algebra by computing $F_\pi (s,t)$.

\begin{thm}\label{thm: flag descent algebra}
For every $\pi \in \hypn$,
\begin{equation}\label{eq: flag descent algebra}
\sum_{j,k \geq 0} \binom{\ceiling{\frac{1}{2}(jk+j+k-1-\fdes(\pi))}+n}{n} s^j t^k = \frac{F_\pi (s,t)}{(1-s)(1-s^2)^{n}(1-t)(1-t^2)^{n}}.
\end{equation}
\end{thm}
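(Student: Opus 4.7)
The plan is to follow the same strategy as the proofs of Theorems~\ref{thm: type B algebra}, \ref{thm: type A algebra}, and \ref{thm: augmented algebra}, now using the flag-barred zig-zag and chain posets just introduced. First, I would establish the single-variable identity
\[
\sum_{j\ge 0}\Omegaflag_\pi(j)\,s^j \;=\; \frac{s^{\fdes(\pi)}}{(1-s)(1-s^2)^n}.
\]
This follows from \eqref{eq: flag order poly formula} once one observes that the coefficient of $s^j$ in $1/((1-s)(1-s^2)^n)$ is $\binom{\lfloor j/2\rfloor+n}{n}$, since this generating function counts tuples of nonnegative integers $(a_0,a_1,\ldots,a_n)$ with $a_0 + 2(a_1+\cdots+a_n) = j$; shifting the variable by $s^{\fdes(\pi)}$ then gives the coefficient of $s^j$ as $\Omegaflag_\pi(j)$.

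Multiplying the $s$ and $t$ variable identities together, then applying \eqref{eq: flag-barred zig-zag order polys} followed by Lemma~\ref{lem: flag-barred zig chain equiv}, one obtains
\[
\frac{F_\pi(s,t)}{(1-s)(1-s^2)^n(1-t)(1-t^2)^n}
\;=\;\sum_{j,k\ge 0}\sum_{I\subseteq[0,n-1]}\Omegaflag_{\chainbipi}(j,k)\,s^j t^k.
\]
Comparing this with the left-hand side of \eqref{eq: flag descent algebra} and using \eqref{eq: flag order poly formula} with parameter $jk+j+k$, the theorem reduces to the key combinatorial identity
\[
\sum_{I\subseteq[0,n-1]}\Omegaflag_{\chainbipi}(j,k)\;=\;\Omegaflag_\pi(jk+j+k).
\]

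I would establish this by a bijection in the spirit of the final step of Theorem~\ref{thm: type B algebra}. A pair $(f,P)$ counted on the left consists of a flag-barred $\chainbipi$ poset $P$ with $k$ bars (for some $I$) together with a flag $\chainbipi$-partition $f$ with parts at most $j$. The bars cut $\pi$ into ordered compartments, and the restriction of $f$ to each compartment is a flag sub-partition on that subchain. The plan is to glue these sub-partitions into a single flag $\pi$-partition $g$ with parts at most $jk+j+k$ by applying a compartment-dependent shift so that the compartments tile $[0,jk+j+k]$. The new feature compared with the type~$B$ setting is that bars in space~$0$ may appear singly while bars in all other allowed spaces (and on the right end) appear in pairs, so the shifts must be chosen to respect the global parity condition $g(i)\equiv jk+j+k\pmod 2$.

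The main obstacle is carrying out this gluing rigorously. One must simultaneously check that the shift rule respects the chain inequalities (weak within a compartment, strict at each type~$B$ descent of $\pi$ that lies in the same compartment or is separated by bars), that the anchoring $g(0)=0$ is preserved, and that the parity condition on $g$ holds regardless of the individual parities of $j$ and $k$. Granting this, the bijection shows that the sum of $\Omegaflag_{\chainbipi}(j,k)$ over all $I\subseteq[0,n-1]$ equals the number of flag $\pi$-partitions with parts at most $jk+j+k$, which is the right-hand side of the displayed identity. The two sides of \eqref{eq: flag descent algebra} then agree term by term, and since $F_\pi(s,t)$ depends only on $\fdes(\pi)$, the elements $F_i$ span the flag descent algebra.
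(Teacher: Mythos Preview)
Your approach is exactly the paper's, and your reduction to
\[
\sum_{I\subseteq[0,n-1]}\Omegaflag_{\chainbipi}(j,k)=\Omegaflag_\pi(jk+j+k)
\]
is precisely what the paper proves. The obstacle you flag is genuine, and the paper resolves it not by a single uniform shift rule but by a case split on the parity of $k$.

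When $k$ is even, the nonempty compartments of any flag-barred $\chainbipi$ poset are the even-numbered ones $0,2,\ldots,k$; compartment $0$ contains the anchored $0$ element together with a (possibly empty) prefix $\pi_0$, and compartments $2,\ldots,k$ contain free chains. The paper shifts compartment $2i$ by $j+2+(i-1)(2j+2)$ (deliberately skipping one value between consecutive blocks to preserve parity) and concatenates to a flag $\pi$-partition anchored at $0$ with parts at most $jk+j+k$ and the correct parity.

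When $k$ is odd, compartment $0$ is the singleton $\{0\}$ (since there must be an odd number of bars in space $0$), and the nonempty compartments are $1,3,\ldots,k$, each carrying a free chain with no anchoring. The concatenation now produces an \emph{unanchored} (type~$A$) inequality chain with parts at most $jk+j+k-1$, governed by $\Desa(\pi)$ rather than $\Desb(\pi)$. Matching this count to $\Omegaflag_\pi(jk+j+k)$ then requires one more parity split, on $\fdes(\pi)$, to verify the ceiling identity. So your outline is correct, but the ``gluing'' step cannot be done uniformly: the parity of $k$ dictates whether the resulting $\pi$-partition is anchored or not, and this is the missing ingredient in your proposal.
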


\begin{proof}
First we expand the right side of equation \eqref{eq: flag descent algebra} to get
\[
\sum_{j,k \geq 0} \sum_{\sigma \tau = \pi} \binom{\ceiling{\frac{1}{2}(j-1)}+n}{n}\binom{\ceiling{\frac{1}{2}(k-1)}+n}{n}s^{j+\fdes(\sigma)} t^{k+\fdes(\tau)}
\]
which is equal to 
\[
\sum_{j,k \geq 0} \sum_{\sigma \tau = \pi} \binom{\ceiling{\frac{1}{2}(j-1-\fdes(\sigma))}+n}{n} \binom{\ceiling{\frac{1}{2}(k-1-\fdes(\tau))}+n}{n} s^j t^k.
\]
Equations \eqref{eq: flag order poly formula} and \eqref{eq: flag-barred zig-zag order polys} imply that
\[
\frac{F_\pi(s,t)}{(1-s)(1-s^2)^{n}(1-t)(1-t^2)^{n}} = \sum_{j,k \geq 0} \sum_{I \subseteq [0,n-1]} \Omegaflag_{\zigbipi}(j,k) s^j t^k.
\]
The bijection in Lemma \ref{lem: flag-barred zig chain equiv} allows us to shift our focus from flag-barred zig-zag posets to flag-barred chain posets and shows that
\[
\frac{F_\pi(s,t)}{(1-s)(1-s^2)^{n}(1-t)(1-t^2)^{n}} = \sum_{j,k \geq 0} \sum_{I \subseteq [0,n-1]} \Omegaflag_{\chainbipi}(j,k) s^j t^k.
\]
The only remaining step is to prove that
\[
\sum_{I \subseteq [0,n-1]} \Omegaflag_{\chainbipi}(j,k) = \binom{\ceiling{\frac{1}{2}(jk+j+k-1-\fdes(\pi))}+n}{n}.
\]
There are two cases to consider.

First, assume that $k$ is even.
We note that $\sum_{I \subseteq [0,n-1]} \Omegaflag_{\chainbipi}(j,k)$ counts ordered pairs $(f,P)$ where $P$ is a flag-barred $\chainbipi$ poset with $k$ bars for some $I \subseteq [0,n-1]$ and $f$ is a flag $\chainbipi$-partition with parts less than or equal to $j$.
Fix a flag-barred $\chainbipi$ poset with $k$ bars and use the bars to define compartments labeled $0,\ldots,k$ from left to right.
Then define $\pi_i$ to be the (possibly empty) subword of $\pi$ in compartment $i$ and denote the length of $\pi_i$ by $L_i$.
Then
\[
\Omegaflag_{\chainbipi}(j) = \Omegaflag_{\pi_0}(j) \prod_{i=1}^{k/2} \Omegaflag_{P(\pi_{2i})}(j). 
\]
First we let $\Omegaflag_{\pi_0}(j)$ count solutions to the inequalities
\[
0=s_{0_0} \leq s_{0_1} \leq  \cdots \leq s_{0_{L_0}} \leq j
\]
with $s_{0_l} < s_{0_{l+1}}$ if $l \in \Desb(\pi_0)$ and with $s_{0_l} \equiv j \mod{2}$ for $l=1,\ldots,L_0$.
For $i=1,\ldots,k/2$, we let $\Omegaflag_{P(\pi_{2i})}(j)$ count solutions to the inequalities
\[
j+2+(i-1)(2j+2) \leq s_{i_1} \leq  \cdots \leq s_{i_{L_{2i}}} \leq j+2+(i-1)(2j+2) + 2j
\]
with $s_{i_l} < s_{i_{l+1}}$ if $l \in \Desa(\pi_i)$ and with $s_{i_l} \equiv j \mod{2}$ for $l=1,\ldots,L_{2i}$.
Note that we use $\Desa$ instead of $\Desb$ because a descent at the beginning of $\pi_{2i}$ does not affect the number of flag $P(\pi_{2i})$-partitions.
Also, we purposefully skip the values $j+1+i(2j+2)$ for $i=0,\ldots,k/2-1$ so that our solutions will all have the same parity.
By concatenating these inequalities, we see that if we sum over all $I \subseteq [0,n-1]$ and all flag-barred $\chainbipi$ posets with $k$ bars then $\sum_{I \subseteq [0,n-1]} \Omegaflag_{\chainbipi}(j,k)$ is equal to the number of solutions to the inequalities
\[
0 = s_0 \leq s_1 \leq \cdots \leq s_n \leq jk+j+k
\]
with $s_i < s_{i+1}$ if $i \in \Desb(\pi)$ and with $s_i \equiv j \mod{2}$ for $i=1,\ldots,n$.
The number of these flag $\pi$-partitions is
\[
\binom{\ceiling{\frac{1}{2}(jk+j+k+1-\fdes(\pi))}+n-1}{n} = \binom{\ceiling{ \frac{1}{2}(jk+j+k-1-\fdes(\pi))}+n}{n}.
\]

Next we consider the case where $k$ is odd.
Fix a flag-barred $\chainbipi$ poset with $k$ bars and use the bars to define compartments labeled $0,\ldots,k$ from left to right.
Then define $\pi_i$ to be the (possibly empty) subword of $\pi$ in compartment $i$ and denote the length of $\pi_i$ by $L_i$.
Then
\[
\Omegaflag_{\chainbipi}(j) = \prod_{i=0}^{(k-1)/2} \Omegaflag_{P(\pi_{2i+1})}(j) .
\]
For $i=0,\ldots,(k-1)/2$, we let $\Omegaflag_{P(\pi_{2i+1})}(j)$ count solutions to the inequalities
\[
i(2j+2) \leq s_{i_1} \leq  \cdots \leq s_{i_{L_{2i+1}}} \leq i(2j+2) + 2j
\]
with $s_{i_l} < s_{i_{l+1}}$ if $l \in \Desa(\pi_i)$ and with $s_{i_l} \equiv j \mod{2}$ for $l=1,\ldots,L_{2i+1}$.
We again use $\Desa$ instead of $\Desb$ because a descent at the beginning of $\pi_{2i+1}$ does not affect the number of flag $P(\pi_{2i+1})$-partitions.
We purposefully skip the values $i(2j+2)-1$ for $i=1,\ldots,(k-1)/2$ so that our solutions will all have the same parity.
By concatenating these inequalities, we see that if we sum over all $I \subseteq [0,n-1]$ and all flag-barred $\chainbipi$ posets with $k$ bars then $\sum_{I \subseteq [0,n-1]} \Omegaflag_{\chainbipi}(j,k)$ is equal to the number of solutions to the inequalities
\[
0 \leq s_1 \leq \cdots \leq s_n \leq jk+j+k-1
\]
with $s_i < s_{i+1}$ if $i \in \Desa(\pi)$ and with $s_i \equiv j \mod{2}$ for $i=1,\ldots,n$.
The number of these flag $\pi$-partitions is
\[
\binom{\ceiling{ \frac{1}{2}(jk+j+k-2\desa(\pi))}+n-1}{n}.
\]
If $\fdes(\pi)$ is even then $\fdes(\pi) = 2\desa(\pi)$ and so the number of flag $\pi$-partitions is
\[
\binom{\ceiling{ \frac{1}{2}(jk+j+k-\fdes(\pi))}+n-1}{n}  = \binom{\ceiling{ \frac{1}{2}(jk+j+k-1-\fdes(\pi))}+n}{n}
\]
because $jk+j+k-\fdes(\pi)$ is odd.
If $\fdes(\pi)$ is odd then $\fdes(\pi) = 2\desa(\pi) + 1$ and we see that the number of flag $\pi$-partitions is
\[
\binom{\ceiling{\frac{1}{2}(jk+j+ k-(\fdes(\pi)-1))}+n-1}{n} = \binom{\ceiling{ \frac{1}{2}(jk+j+k-1-\fdes(\pi))}+n}{n}
\]
which completes the proof.
\end{proof}

Define the flag structure quasi-polynomial $\zeta(x)$ in the group algebra of $\hypn$ by
\[
\zeta(x) = \sum_{\pi \in \hypn} \binom{\ceiling{\frac{1}{2}(x-\fdes(\pi))}+n-1}{n} \pi.
\]
The coefficient of $s^jt^k$ on the left side of equation \eqref{eq: flag descent algebra} is
\[
\binom{\ceiling{ \frac{1}{2}(jk+j + k-1-\fdes(\pi))} +n}{n}.
\]
If we expand the right side of equation \eqref{eq: flag descent algebra} we see that the coefficient of $s^jt^k$ is
\[
\sum_{\sigma\tau = \pi}  \binom{\ceiling{ \frac{1}{2}(j-1-\fdes(\sigma))}+n}{n} \binom{\ceiling{ \frac{1}{2}(k-1-\fdes(\tau)) }+n}{n} .
\]
This implies that $\zeta(j+1)\zeta(k+1) = \zeta((j+1)(k+1))$ for all $j,k \geq 0$ and so we have the following theorem.

\begin{thm}
As quasi-polynomials in $x$ and $y$ with coefficients in the group algebra of $\hypn$,
\[
\zeta(x)\zeta(y) = \zeta(xy).
\]
\end{thm}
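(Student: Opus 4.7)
The plan mirrors the strategy used in the preceding sections for $\phi_A$, $\phi_B$, and $\psi$: first derive a pointwise identity at positive integer arguments, then upgrade to a quasi-polynomial identity. First I would compare the coefficients of $s^j t^k$ on the two sides of equation \eqref{eq: flag descent algebra}; as already indicated just before the theorem statement, this gives, for all integers $j, k \geq 0$,
\[
\binom{\ceiling{\frac{1}{2}(jk+j+k-1-\fdes(\pi))}+n}{n} = \sum_{\sigma\tau=\pi}\binom{\ceiling{\frac{1}{2}(j-1-\fdes(\sigma))}+n}{n}\binom{\ceiling{\frac{1}{2}(k-1-\fdes(\tau))}+n}{n}.
\]
Multiplying both sides by $\pi$ and summing over $\pi \in \hypn$ yields $\zeta(j+1)\zeta(k+1) = \zeta((j+1)(k+1))$ for all $j, k \geq 0$, i.e., $\zeta(a)\zeta(b) = \zeta(ab)$ at every pair of positive integers $(a,b)$.

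The second and main step is to promote this pointwise identity to a quasi-polynomial identity. For fixed $\pi \in \hypn$, the scalar $\binom{\ceiling{(x-\fdes(\pi))/2}+n-1}{n}$ is a quasi-polynomial in $x$ of period $2$: on each parity class $x \equiv 0$ or $1 \pmod{2}$, it is a polynomial in $x$ of degree $n$. Consequently $\zeta(x)\zeta(y)$ restricts to a polynomial in $(x,y)$ on each of the four parity classes of $(x,y)$ modulo $2$. The key observation is that on each such parity class the parity of $xy$ is determined as well (it is odd if and only if both $x$ and $y$ are odd), so $\zeta(xy)$ likewise restricts to a polynomial in $(x,y)$ on each parity class.

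On each of the four parity classes, both $\zeta(x)\zeta(y)$ and $\zeta(xy)$ are thus polynomials in $(x,y)$ of bounded degree, and they agree on the infinite subset of pairs of positive integers belonging to that class. A standard Zariski-density argument, applied one variable at a time, then forces the two polynomials to coincide on each class, which is precisely quasi-polynomial equality. The main obstacle, and the point most easily glossed over, is the verification that $\zeta(xy)$ is genuinely polynomial in $(x,y)$ once the parities of $x$ and $y$ are fixed; this is what makes the period-$2$ quasi-polynomial framework close up correctly under the product argument $xy$, and is the reason the flag descent identity sits one level above the honest polynomial identities of Theorems \ref{thm: desb order poly} and \ref{thm: ades order poly}.
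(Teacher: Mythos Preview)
Your proposal is correct and follows the same approach as the paper: compare coefficients of $s^jt^k$ in equation~\eqref{eq: flag descent algebra} to obtain $\zeta(j+1)\zeta(k+1)=\zeta((j+1)(k+1))$ for all $j,k\geq 0$, then pass from this pointwise identity to the quasi-polynomial identity. In fact you supply more detail than the paper does on the second step---the paper simply asserts the theorem after establishing the integer-point identity, whereas you correctly isolate the key reason the upgrade works, namely that fixing the parities of $x$ and $y$ also fixes the parity of $xy$, so both sides restrict to genuine polynomials on each of the four parity classes.
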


Because $\zeta(x)$ is a quasi-polynomial, we cannot simply expand $\zeta(x)$ in powers of~$x$.
However, we can still find a set of orthogonal idempotents which span the flag descent algebra.
First note that
\[
\zeta(x) = \sum_{i = 0}^{2n-1} \binom{\ceiling{ \frac{1}{2}(x-i)}+n-1}{n} F_i.
\]
If $x$ is even we get
\[
\zeta_e(x) := \sum_{i = 0}^{n-1} \binom{ \frac{x}{2} - i +n-1}{n} (F_{2i}+F_{2i+1}) = \phi_A(x/2).
\]
If $x$ is odd we get
\[
\zeta_o(x) := \sum_{i = 0}^{n} \binom{ \frac{x-1}{2} - i +n}{n} (F_{2i-1}+F_{2i}) = \phi_B((x-1)/2)
\]
with $F_{-1} = F_{2n} = 0$.
Thus $\phi_A(x/2) = \zeta_e(x)$ and $\phi_B((x-1)/2) = \zeta_o(x)$.
By analyzing the parity of $x$ and $y$, we have the following:
\begin{align*}
\zeta_e(x)\zeta_e(y) &= \zeta_e(xy) \\
\zeta_e(x)\zeta_o(y) &= \zeta_e(xy) \\
\zeta_o(x)\zeta_e(y) &= \zeta_e(xy) \\
\zeta_o(x)\zeta_o(y) &= \zeta_o(xy).
\end{align*}
We already knew that $\phi_A(x/2)\phi_A(y/2) = \phi_A(xy/2)$ and $\phi_B((x-1)/2)\phi_B((y-1)/2) = \phi_B((xy-1)/2)$ but this also tells us that $\phi_A(x/2)\phi_B((y-1)/2) = \phi_A(xy/2)$ and $\phi_B((x-1)/2)\phi_A(y/2) = \phi_A(xy/2)$.
On the level of the orthogonal idempotents $a_i$ and $b_i$ which span the type~$A$ and type~$B$ Eulerian descent algebras respectively, we have $a_ib_i = a_i = b_i a_i$ and $a_ib_j = 0 = b_ja_i$ if $i\neq j$. 
This result leads us to the study of ideals in the next section.

\looseness-1 For $i=1,\ldots,n$ let $f_{2i} = a_i$ and for $i=0,\ldots,n$ set $f_{2i+1} = b_i - a_i$ with $a_0 := 0$.
Since $a_n = b_n$, we see that $f_{2n+1} = 0$.
However, we claim that $\{f_i \, : \, i\in [2n]\}$ is a basis for the flag descent algebra.
We already know that the $a_i$ are orthogonal idempotents and hence so are the $f_{2i}$ terms.
Next we see that
\[
f_{2i+1}^2 =  (b_i - a_i)^2 = b_i - b_ia_i - a_i b_i + a_i = b_i -a_i = f_{2i+1}.
\]
If $i\neq j$ then clearly $f_{2i+1}f_{2j+1}= 0$ and $f_{2i}f_{2j+1} = 0$.
Lastly,
\[
f_{2i}f_{2i+1} = a_i (b_i - a_i) = a_i-a_i = 0.
\]
Since the $f_i$ span the flag descent algebra, we see that they form a basis of orthogonal idempotents.
This also shows that the algebra spanned by the orthogonal idempotents $a_i$ and $b_i$ is the flag descent algebra.
We comment more on this in the next section.
The style of these idempotents is similar to the orthogonal idempotents in \cite[Theorem 2]{CelliniII1995}.

\begin{ex}
Let $n=2$ and let $F_i$ be the sum of all permutations in $\mathfrak{B}_2$ with flag descent number $i$.
Then
\begin{align*}
a_2 &= \frac{1}{8} (F_0 + F_1 + F_2 + F_3) & b_2 &= \frac{1}{8} (F_0 + F_1 + F_2 + F_3) \\
a_1 &= \frac{1}{8} (2\cdot F_0 + 2\cdot F_1 - 2\cdot F_2 - 2 \cdot F_3) & b_1 &= \frac{1}{8} (4\cdot F_0 - 4 \cdot F_3) \\
& & b_0 &= \frac{1}{8} (3\cdot F_0 - F_1 - F_2 + 3\cdot F_3) .
\end{align*}
Hence the following orthogonal idempotents form a basis for the flag descent algebra:
\begin{align*}
f_1 &= \frac{1}{8} (3\cdot F_0 - F_1 - F_2 + 3\cdot F_3) \\
f_2 &= \frac{1}{8} (2\cdot F_0 + 2\cdot F_1 - 2\cdot F_2 - 2 \cdot F_3) \\
f_3 &= \frac{1}{8} (2\cdot F_0 - 2\cdot F_1 + 2\cdot F_2 - 2 \cdot F_3) \\
f_4 &= \frac{1}{8} (F_0 + F_1 + F_2 + F_3). 
\end{align*}
\end{ex}

%%%%%%%%%%%%%%%%%%%%%%%%%%%%%%%%%%%%%%%%%%%%%%%%

\section{Ideals}\label{sec: ideals}

In \cite{Petersen2005}, Petersen gives a formula for the product of the type $B$ structure polynomial $\phi_B(x)$ with the augmented structure polynomial $\psi(y)$ and in doing so proves that the augmented Eulerian descent algebra is an ideal in the algebra spanned by the set of basis elements from the two algebras.
We extend this result to pairs of algebras selected from amongst the type $A$ Eulerian descent algebra, the type $B$ Eulerian descent algebra, and the augmented Eulerian descent algebra.
The proofs follow those from the previous section but we now consider $P$-partitions for barred posets in which the permutation statistic associated with the type of $P$-partitions used and the permutation statistic associated with the type of zig-zag posets used are allowed to be different.
For notational ease, we let $\Omega^{1}$ indicate $P$-partitions of type 1 where the type is either type $A$, type $B$, or augmented.
Let $Z^2(n)$ denote the set of all possible type 2 zig-zag sets where the type is chosen from the same list and let $Z^2(I,\pi)$ denote the type $2$ zig-zag poset corresponding to $I \in Z^2(n)$.
Similarly, let $C^2(n)$ denote the set of all possible type 2 chain sets where the type is chosen from the same list and let $C^2(I,\pi)$ denote the type $2$ chain poset corresponding to $I \in C^2(n)$.

\begin{thm}\label{thm: generic zig-zag order poly}
Define $\Omega^{1}_{Z^2(I,\pi)}(j,k)$ to be the number of ordered pairs $(f,P)$ where $P$ is a barred $Z^2(I,\pi)$ poset with $k$ bars and $f$ is a $Z^2(I,\pi)$-partition of type~$1$ with parts less than or equal to $j$.
Then
\[
\sum_{I \in Z^2(n)} \Omega^1_{Z^2(I,\pi)} (j,k) = \sum_{\sigma \tau = \pi} \Omega^1_{\sigma}(j)\Omega^2_{\tau}(k).
\]
\end{thm}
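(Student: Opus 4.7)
The plan is to mimic the arguments leading to equations~\eqref{eq: barred zig order sum}, \eqref{eq: barred type b zig-zag order polys}, \eqref{eq: barred type A zig-zag order polys}, and \eqref{eq: barred aug zig-zag order polys}, observing that nothing in those arguments requires the type of $P$-partition used to agree with the type of zig-zag poset used. The three ingredients of the earlier proofs (the appropriate Fundamental Theorem of $P$-partitions, the count of barred zig-zag posets with a fixed underlying poset, and the description of the linear extensions of a zig-zag poset) are each independent of the other, so they can be combined freely.

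First, I would apply the Fundamental Theorem of $P$-partitions of type~$1$ to the signed poset $Z^2(I,\pi)$, yielding
\[
\Omega^1_{Z^2(I,\pi)}(j) = \sum_{\sigma \in \Lin(Z^2(I,\pi))} \Omega^1_\sigma(j).
\]
Next, I would count the barred $Z^2(I,\pi)$ posets with $k$ bars, which in each of the three choices of type~$2$ amounts to forcing $|I|$ bars into the ``descent'' spaces of the underlying zig-zag poset and distributing the remaining $k-|I|$ bars among the $n+1$ available spaces, giving $\mchoose{n+1}{k-|I|} = \binom{k+n-|I|}{n}$ barred posets. Combining these two observations gives
\[
\Omega^1_{Z^2(I,\pi)}(j,k) = \sum_{\sigma \in \Lin(Z^2(I,\pi))} \Omega^1_\sigma(j)\binom{k+n-|I|}{n}.
\]

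The type~$2$ analogue of Lemma~\ref{lem: Zig-Zag Extensions} (that is, Lemma~\ref{lem: type A zig-zag extensions}, Lemma~\ref{lem: type B zig-zag extensions}, or Lemma~\ref{lem: aug zig-zag extensions}) then gives $\sigma \in \Lin(Z^2(I,\pi))$ if and only if $\Des_2(\sigma^{-1}\pi) = I$, where $\Des_2$ is the appropriate descent set. In particular $|I| = \des_2(\sigma^{-1}\pi)$, and the uniform formula $\Omega^2_\tau(k) = \binom{k+n-\des_2(\tau)}{n}$ valid across all three type~$2$ choices yields $\binom{k+n-|I|}{n} = \Omega^2_{\sigma^{-1}\pi}(k)$. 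Setting $\tau = \sigma^{-1}\pi$ and summing over $I \in Z^2(n)$ collects each factorization $\sigma\tau = \pi$ exactly once, since every $\tau \in \hypn$ satisfies $\Des_2(\tau) \in Z^2(n)$ in each of the three cases, producing the claimed identity.

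The main obstacle is really just the recognition that the three parallel computations in Sections~\ref{sec: type B Eulerian descent algebra}, \ref{sec: type A Eulerian descent algebra}, and \ref{sec: augmented descent algebra} share two key uniformities: the count of bar placements is $\binom{k+n-|I|}{n}$ independent of the type~$2$ construction, and the ``linear'' order polynomial $\Omega^2_\tau(k)$ equals $\binom{k+n-\des_2(\tau)}{n}$ independent of the type~$2$ interpretation of descents. Once these coincidences are noted, the three case-by-case arguments collapse to a single uniform one, and there is no further work.
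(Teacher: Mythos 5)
Your proof is correct and takes essentially the same approach as the paper's own (rather terser) proof: both rely on the fact that each $\sigma\in\hypn$ is a linear extension of a unique type-$2$ zig-zag poset, that the type-$1$ fundamental theorem decomposes $\Omega^1_{Z^2(I,\pi)}(j)$ over linear extensions, and that the bar-placement count $\binom{k+n-|I|}{n}$ coincides with $\Omega^2_{\sigma^{-1}\pi}(k)$ uniformly across the three type-$2$ choices. Your version helpfully makes explicit the two uniformities (the bar count and the linear order-polynomial formula) that the paper invokes without comment, and correctly notes that every $\tau\in\hypn$ has $\Des_2(\tau)\in Z^2(n)$, including the augmented case where $\aDes(\tau)$ is always a nonempty proper subset of $[0,n]$.
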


\begin{proof}
Each $\sigma \in \hypn$ is a linear extension of a unique type $2$ zig-zag poset $Z^2(I,\pi)$.
The number of $\sigma$-partitions of type $1$ with parts less than or equal to $j$ is $\Omega_{\sigma}^1(j)$.
If $\sigma \in \Lin(Z^2(I,\pi))$ then $\Omega_{\sigma^{-1}\pi}^2(k)$ counts the number of barred $Z^2(I,\pi)$ posets with $k$ bars.
\end{proof}

The following theorem shows that we can still shift from barred zig-zag posets to barred chain posets.

\begin{thm}\label{thm: generic zig-zag chain equivalence}
\[
\sum_{I \in Z^2(n)} \Omega^1_{Z^2(I,\pi)}(j,k) = \sum_{I \in C^2(n)} \Omega^1_{C^2(I,\pi)}(j,k)
\]
\end{thm}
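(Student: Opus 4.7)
The plan is to imitate the uniform bijective argument already used in Lemmas~\ref{Barred Poset Extensions}, \ref{lem: barred type B zig chain equiv}, \ref{lem: barred type A zig chain equiv}, and \ref{lem: barred aug zig chain equiv}. First I would rewrite both sides as sums over $\sigma \in \hypn$. Since every $\sigma \in \hypn$ is a linear extension of a unique type~2 zig-zag poset $Z^2(I,\pi)$ (the set $I$ is determined by the appropriate descent set of $\sigma^{-1}\pi$), we may pull the number of type 1 $\sigma$-partitions out and write
\[
\sum_{I \in Z^2(n)} \Omega^1_{Z^2(I,\pi)}(j,k) = \sum_{\sigma \in \hypn} \Omega^1_\sigma(j)\, N^{Z^2}_\sigma(k),
\]
where $N^{Z^2}_\sigma(k)$ counts barred type~2 zig-zag posets (with underlying poset equal to the unique $Z^2(I,\pi)$ of which $\sigma$ is a linear extension) having exactly $k$ bars. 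A similar rewriting applies to the right side, with $N^{Z^2}_\sigma(k)$ replaced by $\sum_{I \supseteq I_0} N^{C^2}_{\sigma,I}(k)$ where $I_0$ is the descent set of $\sigma^{-1}\pi$ and $N^{C^2}_{\sigma,I}(k)$ counts barred $C^2(I,\pi)$ posets with $k$ bars.

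Next I would exhibit the bijection: given a barred $Z^2(I,\pi)$ poset with $k$ bars, remove the covering relation between $\pi(i)$ and $\pi(i+1)$ for every internal space $i$ that contains at least one bar. The output is a barred chain poset $C^2(J,\pi)$, with $J\supseteq I$ determined by the set of spaces that contain at least one bar (together with the distinguished positions required by the type). The inverse map takes a barred $C^2(J,\pi)$ poset and reinserts the zig-zag relation $\pi(i) <_Z \pi(i+1)$ or $\pi(i) >_Z \pi(i+1)$ in every space $i \notin J$, recovering a unique $Z^2(I,\pi)$ with $I \subseteq J$ determined by $\sigma^{-1}\pi$ via the appropriate descent set. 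Since no bars are added or removed, $k$ is preserved, and since the linear ordering of the ground set implied by the permutation $\sigma$ is untouched, $\sigma$ remains a linear extension of the underlying (zig-zag or chain) poset.

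The main obstacle is verifying that this single bijection behaves correctly under the three different bar-placement conventions appearing in earlier sections: no bars to the left of $0$ in the type~$B$ case, no bars on either end in the augmented case, and no restriction in the type~$A$ case. The uniformity comes from the fact that in each case the rules for allowable \emph{boundary} bar positions are exactly the same for zig-zag posets and chain posets of the corresponding type; only the rules for \emph{internal} bars differ (zig-zag requires at least one bar in each $i\in I$, chain allows bars only in positions of $I$), and these are precisely the conditions matched by the bijection above. Hence for each choice of type~2 and each $\sigma \in \hypn$ we have $N^{Z^2}_\sigma(k) = \sum_{I \supseteq I_0} N^{C^2}_{\sigma,I}(k)$, and summing $\Omega^1_\sigma(j)$ times this equality over $\sigma \in \hypn$ yields the theorem. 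Note that the $P$-partition type~$1$ plays no role in the bijection, which is why it may be chosen independently of type~$2$.
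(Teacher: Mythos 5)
Your proof is correct and takes essentially the same approach as the paper, which argues in one sentence that the analogous bijection works uniformly because $\sigma$ is a linear extension of a unique $Z^2(I,\pi)$ and of exactly those $C^2(J,\pi)$ with $J\supseteq I$; you simply make explicit the factorization $\Omega^1_{Z^2(I,\pi)}(j,k)=\Omega^1_\sigma(j)\,N^{Z^2}_\sigma(k)$ via the type-1 FTPP and the fact that the barred-poset-to-barred-poset map ignores the $P$-partition type. One small slip: the inverse map should reinsert the zig-zag relation $\pi(i)<_Z\pi(i+1)$ or $\pi(i)>_Z\pi(i+1)$ at every space $i\in J$ (those are the spaces with bars whose relations were removed), not $i\notin J$ --- the chain poset already carries the ascending relation at every $i\notin J$.
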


\begin{proof}
The analogous bijection works for all possible cases because it relies solely on the fact that $\sigma \in \Lin(Z^2(I,\pi))$ for a unique $I \in Z^2(n)$ and $\sigma \in \Lin(C^2(J,\pi))$ for all $J \in C^2(n)$ such that $J \supseteq I$.
\end{proof}

\begin{thm}\label{thm: desa desb ideal}
For every $\pi \in \hypn$,
\begin{equation}\label{eq: desa desb ideal}
\sum_{j,k \geq 0} \binom{2jk + j + 2k + n - \desa(\pi)}{n} s^j t^k = \frac{\dsum_{\sigma\tau = \pi} s^{\desa(\sigma)} t^{\desb(\tau)}}{(1-s)^{n+1}(1-t)^{n+1}}
\end{equation}
and
\begin{equation}\label{eq: desb desa ideal}
\sum_{j,k \geq 0} \binom{2jk + 2j + k + n - \desa(\pi)}{n} s^j t^k = \frac{\dsum_{\sigma\tau = \pi} s^{\desb(\sigma)} t^{\desa(\tau)}}{(1-s)^{n+1}(1-t)^{n+1}}.
\end{equation}
\end{thm}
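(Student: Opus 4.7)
The plan is to prove both equations via the generic framework of Theorems~\ref{thm: generic zig-zag order poly} and~\ref{thm: generic zig-zag chain equivalence}, combined with a compartment-concatenation argument in the style of the proofs of Theorems~\ref{thm: type B algebra} and~\ref{thm: type A algebra}. For equation~\eqref{eq: desa desb ideal} I would instantiate the framework with ``type~$1=A$'' and ``type~$2=B$''. Expanding the right-hand side via the binomial theorem exactly as in the opening step of the proof of Theorem~\ref{thm: type B algebra}, then applying Theorem~\ref{thm: generic zig-zag order poly} followed by Theorem~\ref{thm: generic zig-zag chain equivalence}, reduces the identity to
\[
\sum_{I \subseteq [0,n-1]} \OmegaA_{\chainbipi}(j,k) = \binom{2jk+j+2k+n-\desa(\pi)}{n}.
\]

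For this remaining step I would describe a bijection between ordered pairs $(f,P)$, with $P$ a barred $\chainbipi$ poset containing $k$ bars and $f$ a type~$A$ $P$-partition with parts at most~$j$, and type~$A$ $\pi$-partitions with parts at most $2jk+j+2k$. The $k$ bars of $P$ partition the labels of $\pi$ into $k+1$ compartments $0,1,\ldots,k$, with $0$ always in compartment~$0$ since no bar may lie to its left. The chain relation $0<_C\pi(l)$ (whenever $\pi(l)$ shares compartment~$0$ with $0$) forces the type~$A$ $P$-partition to take values in $\{\zeroplus,1,\ldots,j\}$ on the letters of compartment~$0$, a set of size $j+1$, while on every other compartment it takes values freely in $\{-j,\ldots,-1,\zerominus,\zeroplus,1,\ldots,j\}$, a set of size $2j+2$. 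Concatenating these compartment ranges in order produces a totally ordered set of size $(j+1)+k(2j+2)=2jk+j+2k+1$, and the concatenated function is exactly a type~$A$ $\pi$-partition with parts at most $2jk+j+2k$. Hence the sum equals $\OmegaA_\pi(2jk+j+2k)=\binom{2jk+j+2k+n-\desa(\pi)}{n}$.

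Equation~\eqref{eq: desb desa ideal} would be proved analogously with the roles swapped (``type~$1=B$'' and ``type~$2=A$''), using $\chainaipi$ in place of $\chainbipi$. Now $\chainaipi$ contains no relation involving~$0$, so a type~$B$ $P$-partition (which fixes $f(0)=0$ but places no further chain constraint) takes values in $\{-j,\ldots,0,\ldots,j\}$ of size $2j+1$ freely on each compartment, and because bars are allowed on either end there are $k+1$ compartments. The concatenated range thus has size $(k+1)(2j+1)=2jk+2j+k+1$, and since no compartment contains $0$ the concatenated function is again a type~$A$ $\pi$-partition, now with parts at most $2jk+2j+k$, yielding $\binom{2jk+2j+k+n-\desa(\pi)}{n}$.

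The main delicate point is verifying that these compartment-concatenation maps really are bijections onto the asserted sets of type~$A$ $\pi$-partitions. The compartment ranges for distinct compartments are disjoint by construction, so the compartment of each letter is uniquely determined by its value under the concatenated function, and the bar placement is reconstructed uniquely from the compartment labels subject to the conventions ``no bar to the left of~$0$'' (for $\chainbipi$) and ``bars allowed on either end'' (for $\chainaipi$). Injectivity and surjectivity then follow as in the proofs of Theorems~\ref{thm: type B algebra} and~\ref{thm: type A algebra}.
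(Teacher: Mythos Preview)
Your proposal is correct and follows essentially the same route as the paper: invoke Theorems~\ref{thm: generic zig-zag order poly} and~\ref{thm: generic zig-zag chain equivalence} with the appropriate choice of types, then finish with a compartment-concatenation bijection onto type~$A$ $\pi$-partitions with the stated upper bound. The only cosmetic differences are that the paper writes out explicit integer intervals for each compartment (e.g.\ $[j+1+(i-1)(2j+2),\,j+1+(i-1)(2j+2)+2j+1]$) rather than describing them as sets of a given size, and in the second identity the paper remarks explicitly that ``a descent in position~$0$ does not affect $\OmegaB_{P(\pi_i)}(j)$'' to justify using $\Desa$ in the concatenated count, which is precisely your observation that no compartment contains the $0$ element.
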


\begin{proof}
If we expand the right side of equation \eqref{eq: desa desb ideal} and compare the coefficient of $s^jt^k$ on both sides then we have
\begin{equation}\label{eq: jk desa desb ideal}
\binom{2jk+j+2k+n-\desa(\pi)}{n} = \sum_{\sigma\tau = \pi} \binom{j+n-\desa(\sigma)}{n} \binom{k+n-\desb(\tau)}{n}.
\end{equation}
Using Theorem \ref{thm: generic zig-zag order poly}, we see that
\[
\sum_{\sigma\tau = \pi} \binom{j+n-\desa(\sigma)}{n} \binom{k+n-\desb(\tau)}{n} = \sum_{I \subseteq [0,n-1]} \OmegaA_{\zigbipi} (j,k)
\]
and, using Theorem \ref{thm: generic zig-zag chain equivalence}, we see that
\[
\sum_{I \subseteq [0,n-1]} \OmegaA_{\zigbipi} (j,k) = \sum_{I \subseteq [0,n-1]} \OmegaA_{\chainbipi}(j,k).
\]
Thus to prove equation~\eqref{eq: desa desb ideal} it remains to show that
\[
\sum_{I \subseteq [0,n-1]} \OmegaA_{\chainbipi}(j,k) = \binom{2jk+j+2k+n-\desa(\pi)}{n}.
\]

First we note that $\sum_{I \subseteq [0,n-1]} \OmegaA_{\chainbipi}(j,k)$ counts ordered pairs $(f,P)$ where $P$ is a barred $\chainbipi$ poset with $k$ bars for some $I \subseteq [0,n-1]$ and $f$ is a $\chainbipi$-partition of type~$A$ with parts less than or equal to $j$.
Fix a barred $\chainbipi$ poset with $k$ bars and use the bars to define compartments labeled $0,\ldots,k$ from left to right.
Then define $\pi_i$ to be the (possibly empty) subword of $\pi$ in compartment $i$ and denote the length of $\pi_i$ by $L_i$.
Then
\[
\OmegaA_{\chainbipi}(j) = \OmegaA_{\pi_0}(j)\prod_{i=1}^{k} \OmegaA_{P(\pi_i)}(j) .
\]
First we let $\OmegaA_{\pi_0}(j)$ count solutions to the inequalities
\[
\zeroplus \leq s_{0_1} \leq  \cdots \leq s_{0_{L_0}} \leq  j
\]
with $s_{0_l} < s_{0_{l+1}}$ if $l \in \Desa(\pi_0)$.
For $i=1,\ldots,k$, we let $\OmegaA_{P(\pi_i)}(j)$ count solutions to the inequalities
\[
j+1+(i-1)(2j+2) \leq s_{i_1} \leq  \cdots \leq s_{i_{L_i}} \leq  j+1+ (i-1)(2j+2) + 2j+1
\]
with $s_{i_l} < s_{i_{l+1}}$ if $l \in \Desa(\pi_i)$.
By concatenating these inequalities, we see that if we sum over all $I \subseteq [0,n-1]$ and all barred $\chainbipi$ posets with $k$ bars then $\sum_{I \subseteq [0,n-1]} \OmegaA_{\chainbipi}(j,k)$ is equal to the number of solutions to the inequalities
\[
\zeroplus \leq s_1 \leq \cdots \leq s_n \leq 2jk+j+2k
\]
with $s_i < s_{i+1}$ if $i \in \Desa(\pi)$.
Hence we conclude that
\[
\sum_{I \subseteq [0,n-1]} \OmegaA_{\chainbipi}(j,k)  = \binom{2jk+j+2k +n-\desa(\pi)}{n}.
\]

Next we prove equation~\eqref{eq: desb desa ideal}.
If we expand the right side of equation \eqref{eq: desb desa ideal} and compare the coefficient of $s^jt^k$ on both sides then we have
\begin{equation}\label{eq: jk desb desa ideal}
\binom{2jk+2j+k+n-\desa(\pi)}{n} = \sum_{\sigma\tau = \pi} \binom{j+n-\desb(\sigma)}{n} \binom{k+n-\desa(\tau)}{n}.
\end{equation}
Using Theorem \ref{thm: generic zig-zag order poly}, we see that
\[
\sum_{\sigma\tau = \pi} \binom{j+n-\desb(\sigma)}{n} \binom{k+n-\desa(\tau)}{n} = \sum_{I \subseteq [n-1]} \OmegaB_{\zigaipi} (j,k)
\]
and, using Theorem \ref{thm: generic zig-zag chain equivalence}, we see that
\[
\sum_{I \subseteq [n-1]} \OmegaB_{\zigaipi} (j,k) = \sum_{I \subseteq [n-1]} \OmegaB_{\chainaipi}(j,k).
\] 
Thus it remains to show that
\[
\sum_{I \subseteq [n-1]} \OmegaB_{\chainaipi}(j,k) = \binom{2jk+2j+k+n-\desa(\pi)}{n}.
\]

First we note that $\sum_{I \subseteq [n-1]} \OmegaB_{\chainaipi}(j,k)$ counts ordered pairs $(f,P)$ where $P$ is a barred $\chainaipi$ poset with $k$ bars for some $I \subseteq [n-1]$ and $f$ is a $\chainaipi$-partition of type~$B$ with parts less than or equal to $j$.
Fix a barred $\chainaipi$ poset with $k$ bars and use the bars to define compartments labeled $0,\ldots,k$ from left to right.
Then define $\pi_i$ to be the (possibly empty) subword of $\pi$ in compartment $i$ and denote the length of $\pi_i$ by $L_i$.
Then
\[
\OmegaB_{\chainaipi}(j) = \prod_{i=0}^{k} \OmegaB_{P(\pi_i)}(j) .
\]
For $i=0,\ldots,k$, we let $\OmegaB_{P(\pi_i)}(j)$ count solutions to the inequalities
\[
i(2j+1) \leq s_{i_1} \leq  \cdots \leq s_{i_{L_i}} \leq  i(2j+1) + 2j
\]
with $s_{i_l} < s_{i_{l+1}}$ if $l \in \Desa(\pi_i)$.
Note that a descent in position $0$ does not affect $\OmegaB_{P(\pi_{i})}(j)$ and so we use $\Desa$ instead of $\Desb$.
By concatenating these inequalities, we see that if we sum over all $I \subseteq [n-1]$ and all barred $\chainaipi$ posets with $k$ bars then $\sum_{I \subseteq [n-1]} \OmegaB_{\chainaipi}(j,k)$ is equal to the number of solutions to the inequalities
\[
0 \leq s_1 \leq \cdots \leq s_n \leq 2jk+2j+k
\]
with $s_i < s_{i+1}$ if $i \in \Desa(\pi)$.
Hence we conclude that
\[
\sum_{I \subseteq [n-1]} \OmegaB_{\chainaipi}(j,k)  = \binom{2jk+2j+k +n-\desa(\pi)}{n}.
\qedhere
\]
\end{proof}

Since equations \eqref{eq: jk desa desb ideal} and \eqref{eq: jk desb desa ideal} hold for all $j,k \geq 0$, we have the following theorem.

\begin{thm}\label{thm: desa desb order polys}
As polynomials in $x$ and $y$ with coefficients in the group algebra of $\hypn$,
\[
\phi_A(x/2)\phi_B((y-1)/2) = \phi_A(xy/2)
\]
and
\[
\phi_B((x-1)/2)\phi_A(y/2) = \phi_A(xy/2).
\]
\end{thm}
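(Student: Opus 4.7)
The plan is to extract these two identities directly from Theorem~\ref{thm: desa desb ideal} by imitating the passage, used earlier in the chapter, from a ``numerical'' binomial identity to a ``structural'' identity between structure polynomials (as in the derivation of $\phi_A(x)\phi_A(y)=\phi_A(2xy)$).

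First I would compare coefficients of $s^j t^k$ on both sides of equation~\eqref{eq: desa desb ideal} to obtain, for every $\pi \in \hypn$ and all $j,k\geq 0$,
\[
\binom{2jk+j+2k+n-\desa(\pi)}{n} = \sum_{\sigma\tau=\pi}\binom{j+n-\desa(\sigma)}{n}\binom{k+n-\desb(\tau)}{n}.
\]
Multiplying by $\pi$ and summing over $\pi \in \hypn$, the right-hand side factors in the group algebra as $\phi_A(j+1)\,\phi_B(k)$: the factor $\binom{j+n-\desa(\sigma)}{n} = \binom{(j+1)+n-1-\desa(\sigma)}{n}$ is precisely the coefficient of $\sigma$ in $\phi_A(j+1)$, and $\binom{k+n-\desb(\tau)}{n}$ is the coefficient of $\tau$ in $\phi_B(k)$. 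For the left-hand side the key arithmetic observation is the factorization
\[
2jk+j+2k+1 = (j+1)(2k+1),
\]
so the left-hand side equals $\phi_A\!\bigl((j+1)(2k+1)\bigr)$. This produces the identity $\phi_A(j+1)\,\phi_B(k) = \phi_A\!\bigl((j+1)(2k+1)\bigr)$ for all nonnegative integers $j,k$. Since both sides are polynomials in $j$ and $k$ with coefficients in the group algebra of $\hypn$ and they agree on an infinite grid of integer points, the identity lifts to one of polynomials in $j,k$. Substituting $j \mapsto x/2 - 1$ and $k \mapsto (y-1)/2$ gives $j+1 = x/2$, $2k+1 = y$, and $(j+1)(2k+1)=xy/2$, yielding $\phi_A(x/2)\,\phi_B((y-1)/2) = \phi_A(xy/2)$ as desired.

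The second identity is proved by the same argument applied to equation~\eqref{eq: desb desa ideal}, this time using the companion factorization $2jk+2j+k+1 = (2j+1)(k+1)$ and the substitution $j \mapsto (x-1)/2$, $k \mapsto y/2 - 1$.

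There is no serious obstacle here: the bijective barred-poset argument in Theorem~\ref{thm: desa desb ideal} already did the combinatorial work. The only minor care required is aligning the binomial coefficients with the stated definitions of $\phi_A$ (which carries an $n-1$ shift) and $\phi_B$ (which carries an $n$ shift), and correctly performing the two linear changes of variable needed to bring $(j+1)(2k+1)$ and $(2j+1)(k+1)$ into the form $xy/2$.
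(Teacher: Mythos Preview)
Your proposal is correct and follows essentially the same route as the paper: the paper simply notes that equations~\eqref{eq: jk desa desb ideal} and~\eqref{eq: jk desb desa ideal} hold for all $j,k\geq 0$ and therefore the structure-polynomial identities follow. Your write-up spells out the factorizations $2jk+j+2k+1=(j+1)(2k+1)$ and $2jk+2j+k+1=(2j+1)(k+1)$ and the variable substitutions explicitly, which the paper leaves implicit, but the argument is the same.
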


This theorem shows that the type $A$ Eulerian descent algebra is a two-sided ideal in the algebra spanned by the orthogonal idempotents $a_i$ and $b_i$ which, as we saw in Section \ref{sec: flag descent algebra}, is actually the flag descent algebra.
Next we examine the algebra spanned by the orthogonal idempotents $b_i$ and $\augversion{b}_i$.

\begin{thm}\label{thm: desb ades ideal}
For every $\pi \in \hypn$,
\begin{equation}\label{eq: desb ades ideal}
\sum_{j,k \geq 0} \binom{2jk + k + n - \ades(\pi)}{n} s^j t^k = \frac{\dsum_{\sigma\tau = \pi} s^{\desb(\sigma)} t^{\ades(\tau)}}{(1-s)^{n+1}(1-t)^{n+1}}
\end{equation}
and
\begin{equation}\label{eq: ades desb ideal}
\sum_{j,k \geq 0} \binom{2jk + j + n - \ades(\pi)}{n} s^j t^k = \frac{\dsum_{\sigma\tau = \pi} s^{\ades(\sigma)} t^{\desb(\tau)}}{(1-s)^{n+1}(1-t)^{n+1}}.
\end{equation}
\end{thm}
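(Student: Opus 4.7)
The plan is to establish both identities by extending the compartment-and-shift technique from the proof of Theorem~\ref{thm: desa desb ideal}. I will describe the argument for equation~\eqref{eq: desb ades ideal} in detail; equation~\eqref{eq: ades desb ideal} follows by a mirror argument with the roles of the two descent statistics exchanged.

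The first step is to expand the right-hand side of~\eqref{eq: desb ades ideal} and compare coefficients of $s^{j}t^{k}$, reducing the identity to
\[
\binom{2jk + k + n - \ades(\pi)}{n} = \sum_{\sigma\tau = \pi} \binom{j + n - \desb(\sigma)}{n}\binom{k + n - \ades(\tau)}{n}.
\]
Applying Theorem~\ref{thm: generic zig-zag order poly} with $\Omega^{1} = \OmegaB$ and $\Omega^{2} = \Omegaaug$ rewrites the right side as $\sum_{I \in \augzigsets} \OmegaB_{\augzigipi}(j,k)$, and then Theorem~\ref{thm: generic zig-zag chain equivalence} converts this into $\sum_{I \in \augchainsets} \OmegaB_{\augchainipi}(j,k)$.

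The heart of the proof is to show that this last sum equals $\binom{2jk + k + n - \ades(\pi)}{n}$. The plan is to fix a barred augmented chain poset with $k$ bars and label its compartments $0, 1, \ldots, k$ from left to right: the leftmost contains the left $0$ together with a prefix $\pi_0$ of $\pi$, the rightmost contains a suffix $\pi_k$ together with the right $0$, and the intermediate compartments carry subwords $\pi_i$ of $\pi$. Since a type~$B$ $P$-partition forces $f(0) = 0$, the order polynomial factors as
\[
\OmegaB_{\augchainipi}(j) = \OmegaB_{0\pi_0}(j)\cdot\OmegaB_{\pi_k 0}(j)\cdot\prod_{i=1}^{k-1}\OmegaA_{\pi_i}(j),
\]
where the outer factors count weakly increasing sequences in $\{0,\ldots,j\}$ and $\{-j,\ldots,0\}$ respectively (strict at the appropriate descents) and the middle factors count sequences in $\{-j,\ldots,j\}$. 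Following the proofs of Theorems~\ref{thm: type B algebra} and~\ref{thm: augmented algebra}, shifting compartment~$0$ to occupy $\{0,\ldots,j\}$ (length $j+1$), each middle compartment a window of length $2j+1$, and compartment~$k$ a window of length $j+1$ ending at $2jk+k$, the concatenated sequence lies in $\{0,\ldots,2jk+k\}$ and strict inequalities across compartment boundaries are automatic from the shift. Summing over all bar placements with $k$ bars then corresponds bijectively to enumerating weakly increasing sequences $0 \leq s_1 \leq \cdots \leq s_n \leq 2jk + k$ with strict inequalities exactly at the positions of $\aDes(\pi)$ (position~$0$ strict iff $\pi(1)<0$ and position~$n$ strict iff $\pi(n)>0$), yielding the desired count.

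Equation~\eqref{eq: ades desb ideal} is handled analogously, reducing to $\sum_{I \subseteq [0,n-1]} \Omegaaug_{\chainbipi}(j,k) = \binom{2jk + j + n - \ades(\pi)}{n}$. A type~$B$ chain poset has only the left $0$, but condition~(iv) in the definition of augmented $P$-partitions enforces $f(i) < x_\infty$ for positive $i$, which acts as an implicit ``right boundary'' playing the role that the right $0$ plays in an augmented chain poset. The compartment-and-shift analysis then gives compartment~$0$ a window of length $j+1$ and each of the remaining $k$ compartments a window of effective length $2j$ (the implicit upper bound from~(iv) reducing each by one compared to the type~$B$ case), producing a total range $\{0,\ldots,2jk+j\}$ and the desired binomial coefficient. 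The main obstacle is the careful bookkeeping of how the boundary conditions---explicit $0$'s on the ends of the augmented chain poset versus the implicit $x_\infty$ cap from~(iv)---translate into the correct compartment ranges and shifts for each identity; once the shifts are chosen consistently, the concatenation produces the binomial coefficient uniformly, exactly as in the proofs of Theorems~\ref{thm: type B algebra}, \ref{thm: augmented algebra}, and~\ref{thm: desa desb ideal}.
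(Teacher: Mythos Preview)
Your approach matches the paper's: expand and compare coefficients of $s^jt^k$, invoke Theorems~\ref{thm: generic zig-zag order poly} and~\ref{thm: generic zig-zag chain equivalence} to pass to barred chain posets, factor the order polynomial over compartments, and concatenate shifted windows to obtain the binomial count. The only refinement the paper adds is in handling~\eqref{eq: ades desb ideal}: rather than attributing an ``effective length $2j$'' to each non-leftmost compartment via condition~(iv), it assigns each such compartment a window of length $2j+1$ and then identifies adjacent boundary points $j+i(2j+1)\sim j+i(2j+1)+1$ for $i=0,\ldots,k-1$ (the augmented-lexicographic trick from the proof of Theorem~\ref{thm: augmented algebra}); this is what makes the concatenated inequalities align with $\aDes(\pi)$, since condition~(iv) alone constrains only positive labels and does not uniformly shrink each window by one.
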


\begin{proof}
If we expand the right side of equation \eqref{eq: desb ades ideal} and compare the coefficient of $s^jt^k$ on both sides then we have
\begin{equation}\label{eq: jk desb ades ideal}
\binom{2jk+k+n-\ades(\pi)}{n} = \sum_{\sigma\tau = \pi} \binom{j+n-\desb(\sigma)}{n} \binom{k+n-\ades(\tau)}{n}.
\end{equation}
Using Theorem \ref{thm: generic zig-zag order poly}, we see that
\[
\sum_{\sigma\tau = \pi} \binom{j+n-\desb(\sigma)}{n} \binom{k+n-\ades(\tau)}{n} = \sum_{I \in Z(n)} \OmegaB_{\augzigipi} (j,k)
\]
and, using Theorem \ref{thm: generic zig-zag chain equivalence}, we see that
\[
\sum_{I \in Z(n)} \OmegaB_{\augzigipi} (j,k) = \sum_{I \in C(n)} \OmegaB_{\augchainipi}(j,k).
\]
Thus to prove equation~\eqref{eq: desb ades ideal} it remains to show that
\[
\sum_{I \in C(n)} \OmegaB_{\augchainipi}(j,k) = \binom{2jk+k+n-\ades(\pi)}{n}.
\]

First we note that $\sum_{I \in \augchainsets} \OmegaB_{\augchainipi}(j,k)$ counts ordered pairs $(f,P)$ where $P$ is a barred $\augchainipi$ poset with $k$ bars for some $I \in \augchainsets$ and $f$ is a $\augchainipi$-partition of type~$B$ with parts less than or equal to $j$.
Fix a barred $\augchainipi$ poset with $k$ bars and use the bars to define compartments labeled $0,\ldots,k$ from left to right.
Then define $\pi_i$ to be the (possibly empty) subword of $\pi$ in compartment $i$ and denote the length of $\pi_i$ by $L_i$.
Then
\[
\OmegaB_{\augchainipi}(j) = \OmegaB_{\pi_0}(j)\OmegaB_{\overleftarrow{\pi_k}}(j)\prod_{i=1}^{k-1} \OmegaB_{P(\pi_i)}(j) .
\]
First, we let $\OmegaB_{\pi_0}(j)$ count solutions to the inequalities
\[
0 = s_{0_0} \leq s_{0_1} \leq  \cdots \leq s_{0_{L_0}} \leq  j
\]
with $s_{0_l} < s_{0_{l+1}}$ if $l \in \Desb(\pi_0)$.
For $i=1,\ldots,k-1$, we let $\OmegaB_{P(\pi_i)}(j)$ count solutions to the inequalities
\[
j+1+(i-1)(2j+1) \leq s_{i_1} \leq  \cdots \leq s_{i_{L_i}} \leq  j+1+(i-1)(2j+1) + 2j
\]
with $s_{i_l} < s_{i_{l+1}}$ if $l \in \Desa(\pi_i)$.
Note that a descent in position $0$ does not affect $\OmegaB_{P(\pi_{i})}(j)$ and so we use $\Desa$ instead of $\Desb$.
Finally, we let $\OmegaB_{\overleftarrow{\pi_k}}(j)$ count solutions to the inequalities
\[
2jk-j+k \leq s_{k_1} \leq  \cdots \leq s_{k_{L_k}} \leq  2jk+k
\]
with $s_{k_l} < s_{k_{l+1}}$ if $l \in \Desa(\pi_i)$ and with $s_{k_{L_k}} < 2jk+k$ if $\pi(n) > 0$.
By concatenating these inequalities, we see that if we sum over all $I \in \augchainsets$ and all barred $\augchainipi$ posets with $k$ bars then $\sum_{I \in \augchainsets} \OmegaB_{\augchainipi}(j,k)$ is equal to the number of solutions to the inequalities
\[
0 = s_0 \leq s_1 \leq \cdots \leq s_n \leq s_{n+1} =  2jk+k
\]
with $s_i < s_{i+1}$ if $i \in \aDes(\pi)$.
Hence
\[
\sum_{I \in \augchainsets} \OmegaB_{\augchainipi}(j,k)  = \binom{2jk+k +n-\ades(\pi)}{n}.
\]

Next we prove equation~\eqref{eq: ades desb ideal}.
If we expand the right side of equation \eqref{eq: ades desb ideal} and compare the coefficient of $s^jt^k$ on both sides then we have
\begin{equation}\label{eq: jk ades desb ideal}
\binom{2jk+j+n-\ades(\pi)}{n} = \sum_{\sigma\tau = \pi} \binom{j+n-\ades(\sigma)}{n} \binom{k+n-\desb(\tau)}{n}.
\end{equation}
Using Theorem \ref{thm: generic zig-zag order poly}, we see that
\[
\sum_{\sigma\tau = \pi} \binom{j+n-\ades(\sigma)}{n} \binom{k+n-\desb(\tau)}{n} = \sum_{I \subseteq [0,n-1]} \Omegaaug_{\zigbipi} (j,k)
\]
and, using Theorem \ref{thm: generic zig-zag chain equivalence}, we see that
\[
\sum_{I \subseteq [0,n-1]} \Omegaaug_{\zigbipi} (j,k) = \sum_{I \subseteq [0,n-1]} \Omegaaug_{\chainbipi}(j,k).
\]
Thus it remains to show that
\[
\sum_{I \subseteq [0,n-1]} \Omegaaug_{\chainbipi}(j,k) = \binom{2jk+j+n-\ades(\pi)}{n}.
\]

First we note that $\sum_{I \subseteq [0,n-1]} \Omegaaug_{\chainbipi}(j,k)$ counts ordered pairs $(f,P)$ where $P$ is a barred $\chainbipi$ poset with $k$ bars for some $I \subseteq [0,n-1]$ and $f$ is an augmented $\chainbipi$-partition with parts less than or equal to $j$.
Fix a barred $\chainbipi$ poset with $k$ bars and use the bars to define compartments labeled $0,\ldots,k$ from left to right.
Then define $\pi_i$ to be the (possibly empty) subword of $\pi$ in compartment $i$ and denote the length of $\pi_i$ by $L_i$.
Then
\[
\Omegaaug_{\chainbipi}(j) = \Omegaaug_{\pi_0}(j)\prod_{i=1}^{k} \Omegaaug_{P(\pi_i)}(j) .
\]
First we let $\Omegaaug_{\pi_0}(j)$ count solutions to the inequalities
\[
0 = s_{0_0} \leq s_{0_1} \leq  \cdots \leq s_{0_{L_0}} \leq s_{0_{L_0+1}} =  j
\]
with $s_{0_l} < s_{0_{l+1}}$ if $l \in \aDes(\pi_0)$.
For $i=1,\ldots,k$, we let $\Omegaaug_{P(\pi_i)}(j)$ count solutions to the inequalities
\[
j+1+(i-1)(2j+1) = s_{i_0} \leq s_{i_1} \leq  \cdots \leq s_{i_{L_i}} \leq s_{i_{L_i+1}} =   j+1+ (i-1)(2j+1) + 2j
\]
with $s_{i_l} < s_{i_{l+1}}$ if $l \in \aDes(\pi_i)$.
Since we are using augmented $P$-partitions, we form equivalence classes by equating $j+i(2j+1)$ and $j+i(2j+1)+1$ for $i=0,\ldots,k-1$, effectively removing $k$ points from the set.
By concatenating these inequalities, we see that if we sum over all $I \subseteq [0,n-1]$ and all barred $\chainbipi$ posets with $k$ bars then $\sum_{I \subseteq [0,n-1]} \Omegaaug_{\chainbipi}(j,k)$ is equal to the number of solutions to the inequalities
\[
0 = s_0 \leq s_1 \leq \cdots \leq s_n \leq s_{n+1} = 2jk+j
\]
with $s_i < s_{i+1}$ if $i \in \aDes(\pi)$.
Hence we conclude that
\[
\sum_{I \subseteq [0,n-1]} \Omegaaug_{\chainbipi}(j,k)  = \binom{2jk+j +n-\ades(\pi)}{n}.  \qedhere
\]
\end{proof}

Since equations \eqref{eq: jk desb ades ideal} and \eqref{eq: jk ades desb ideal} hold for all $j,k \geq 0$, we have the following theorem.

\begin{thm}\label{thm: desb ades order polys}
As polynomials in $x$ and $y$ with coefficients in the group algebra of $\hypn$,
\[
\phi_B((x-1)/2)\psi(y/2) = \psi(xy/2)
\]
and
\[
\psi(x/2)\phi_B((y-1)/2) = \psi(xy/2).
\]
\end{thm}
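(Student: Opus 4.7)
The plan is to derive both polynomial identities directly from the numerical equalities \eqref{eq: jk desb ades ideal} and \eqref{eq: jk ades desb ideal} established in Theorem~\ref{thm: desb ades ideal}, following exactly the template used in the proofs of Theorems~\ref{thm: desb order poly}, \ref{thm: ades order poly}, and \ref{thm: desa desb order polys}. The heavy lifting has already been done by the barred chain poset arguments in Theorem~\ref{thm: desb ades ideal}; this theorem is essentially a repackaging of those identities into the language of structure polynomials.

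For the first identity, I would fix nonnegative integers $j,k \geq 0$, multiply both sides of \eqref{eq: jk desb ades ideal} by $\pi$, and sum over all $\pi \in \hypn$. Using the definitions of $\phi_B$ and $\psi$, the left side becomes $\psi(2jk+k)$, while the right side, after rearranging the double sum as a convolution in the group algebra of $\hypn$, becomes the product $\phi_B(j)\psi(k)$. This yields
\[
\phi_B(j)\psi(k) = \psi(2jk+k)
\]
for all integers $j,k \geq 0$. Substituting $j \leftarrow (x-1)/2$ and $k \leftarrow y/2$ converts this into $\phi_B((x-1)/2)\psi(y/2) = \psi(xy/2)$, an identity which holds whenever $(x,y) = (2j+1, 2k)$ with $j,k \geq 0$. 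Since both sides are polynomials in $x$ and $y$ and they agree on an infinite two-dimensional lattice, they agree as polynomials.

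The second identity is proved by the entirely analogous argument starting from equation~\eqref{eq: jk ades desb ideal}: multiplying by $\pi$ and summing gives $\psi(j)\phi_B(k) = \psi(2jk+j)$ for all $j,k \geq 0$, and the substitution $j \leftarrow x/2$, $k \leftarrow (y-1)/2$ produces $\psi(x/2)\phi_B((y-1)/2) = \psi(xy/2)$, again valid on an infinite lattice and hence as a polynomial identity.

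There is no real obstacle here; the only point requiring care is verifying that the substitutions line up correctly. For the first identity: $\phi_B((x-1)/2)$ evaluated at $x = 2j+1$ is $\phi_B(j)$, $\psi(y/2)$ at $y = 2k$ is $\psi(k)$, and $\psi(xy/2)$ at $(x,y) = (2j+1, 2k)$ is $\psi((2j+1)k) = \psi(2jk+k)$, matching the numerical identity exactly. The analogous check works for the second identity. As remarked after Theorem~\ref{thm: desa desb order polys}, this result together with its type~$A$ counterpart establishes that the augmented Eulerian descent algebra is a two-sided ideal in the algebra spanned by the orthogonal idempotents $b_i$ and $\augversion{b}_i$.
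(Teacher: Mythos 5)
Your proof is correct and is exactly the approach the paper takes: the paper's own proof is the one-line remark that the binomial identities \eqref{eq: jk desb ades ideal} and \eqref{eq: jk ades desb ideal} hold for all $j,k \geq 0$, and you have simply spelled out the convolution step and the substitution $(x,y) = (2j+1, 2k)$ resp.\ $(2j, 2k+1)$ that the paper leaves implicit. The minor aside at the end misattributes the two-sided-ideal remark (for the $b_i$ and $b_i^{(a)}$ span it follows this theorem, not Theorem~\ref{thm: desa desb order polys}), but that does not affect the proof.
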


We note that Petersen proved a one-sided version of this theorem in \cite[Theorem 1.5]{Petersen2005} and a two-sided version in \cite[Theorem 2.3.2]{PetersenThesis}.
This theorem shows that the augmented Eulerian descent algebra is a two-sided ideal in the algebra spanned by the orthogonal idempotents $b_i$ and $b^{(a)}_i$.
This result is mentioned by Cellini in \cite{Cellini1998}.
Taken together, Theorem \ref{thm: desb order poly}, Theorem \ref{thm: ades order poly}, and Theorem \ref{thm: desb ades order polys} imply Cellini's Theorem A in \cite{CelliniII1995}
where Cellini's $x_{2k+1}$ is the image of  $\phi_B(k)$ under the map that sends $\pi \mapsto \pi^{-1}$ and $x_{2k}$ is the image of  $\psi(k)$ under the same map.

Finally, we examine the algebra spanned by the orthogonal idempotents $a_i$ and $\augversion{b}_i$.
Here we have the somewhat unexpected behavior that both the type $A$ Eulerian descent algebra and the augmented Eulerian descent algebra are only one-sided ideals in the larger algebra.

\begin{thm}\label{thm: desa ades ideal}
For every $\pi \in \hypn$,
\begin{align}
\sum_{j,k \geq 0} \binom{2jk + 2k + n -1- \desa(\pi)}{n} s^j t^k &= \frac{\dsum_{\sigma\tau = \pi} s^{\desa(\sigma)} t^{\ades(\tau)}}{(1-s)^{n+1}(1-t)^{n+1}} \label{eq: desa ades ideal} \\
\intertext{and}
\sum_{j,k \geq 0} \binom{2jk + 2j + n - \ades(\pi)}{n} s^j t^k &= \frac{\dsum_{\sigma\tau = \pi} s^{\ades(\sigma)} t^{\desa(\tau)}}{(1-s)^{n+1}(1-t)^{n+1}} \label{eq: ades desa ideal} .
\end{align}
\end{thm}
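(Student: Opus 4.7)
The plan is to parallel the structure of Theorems \ref{thm: desa desb ideal} and \ref{thm: desb ades ideal}, handling both identities of the theorem simultaneously. First I would expand each right-hand side and match coefficients of $s^j t^k$ to reduce \eqref{eq: desa ades ideal} to the combinatorial equality
\[
\binom{2jk+2k+n-1-\desa(\pi)}{n} = \sum_{\sigma\tau=\pi}\binom{j+n-\desa(\sigma)}{n}\binom{k+n-\ades(\tau)}{n},
\]
and \eqref{eq: ades desa ideal} to the analogous formula with $\desa$ and $\ades$ interchanged on the right and $\binom{2jk+2j+n-\ades(\pi)}{n}$ on the left. Applying Theorem \ref{thm: generic zig-zag order poly} with type~$1$/type~$2$ chosen as type~$A$/augmented and augmented/type~$A$ rewrites the right-hand sides as $\sum_{I \in \augzigsets} \OmegaA_{\augzigipi}(j,k)$ and $\sum_{I\subseteq[n-1]}\Omegaaug_{\zigaipi}(j,k)$, and Theorem \ref{thm: generic zig-zag chain equivalence} then replaces zig-zag posets by the corresponding chain posets.

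The task reduces to evaluating $\sum_{I \in \augchainsets} \OmegaA_{\augchainipi}(j,k)$ and $\sum_{I \subseteq [n-1]} \Omegaaug_{\chainaipi}(j,k)$ directly, using the compartment analysis that appears at the end of Theorem \ref{thm: desb ades ideal}. For the first sum, I fix a barred $\augchainipi$ with $k$ bars, producing $k+1$ compartments with subwords $\pi_0,\ldots,\pi_k$. Type~$A$ $P$-partitions do not assign a value to $0$, but the two boundary $0$'s of the augmented structure still constrain the adjacent chain entries through the sign-symmetry $f(-i) = -f(i)$, forcing $\zeroplus \leq f(\pi_0(1))$ and $f(\pi_k(L_k)) \leq \zerominus$. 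Thus the two end compartments each contribute intervals of $j+1$ values while each of the $k-1$ interior compartments contributes an interval of $2j+2$ consecutive values in $\aversion{\Z}$, giving a total of $2(j+1) + (k-1)(2j+2) = 2jk+2k$ positions and the count $\binom{2jk+2k+n-1-\desa(\pi)}{n}$. For the second sum, a barred $\chainaipi$ with $k$ bars yields $k+1$ compartments on equal footing, each receiving an augmented $P$-partition with values in an interval of $2j+1$ elements of $\pm X$; the augmented lexicographic equivalence (cf.\ Theorem \ref{thm: augmented algebra}) identifies one pair of endpoints at each of the $k$ internal boundaries, leaving $(k+1)(2j+1) - k = 2jk+2j+1$ effective positions and the count $\binom{2jk+2j+n-\ades(\pi)}{n}$.

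The main obstacle is the asymmetric boundary behavior that distinguishes the two identities. In \eqref{eq: desa ades ideal}, the augmented chain's boundary $0$'s act as one-sided inequalities for the adjacent chain letters even though type~$A$ $P$-partitions omit $0$ from their domain, producing shortened end compartments of size $j+1$ together with full-sized interior compartments of size $2j+2$. In \eqref{eq: ades desa ideal}, the type~$A$ chain has no boundary $0$'s, so all $k+1$ compartments are uniform of size $2j+1$ and the augmented structure appears only through the internal boundary identifications. Correctly reconciling these two mixed settings — in particular verifying that the resulting concatenated inequalities count weakly increasing sequences in sets of sizes $2jk+2k$ and $2jk+2j+1$, respectively, with strict inequalities exactly at the type~$A$ or augmented descents of $\pi$ — is where the careful bookkeeping lies.
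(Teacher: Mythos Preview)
Your proposal is correct and follows essentially the same route as the paper's proof: expand the right-hand sides, invoke Theorems~\ref{thm: generic zig-zag order poly} and~\ref{thm: generic zig-zag chain equivalence} to pass to barred chain posets, and then evaluate the chain sums by a compartment count. The only cosmetic difference is in the first identity's last compartment: the paper replaces $\pi_k$ by $\overleftarrow{\pi_k}$ (reverse and negate) so that every compartment is mapped into a nonnegative interval before concatenation, whereas you invoke the sign-symmetry constraint $f(\pi_k(L_k)) \leq 0^{-}$ directly to get the $j+1$ values; these are equivalent reformulations of the same bijection, and your interval sizes $(j+1) + (k-1)(2j+2) + (j+1) = 2jk+2k$ and $(k+1)(2j+1) - k = 2jk+2j+1$ match the paper's exactly.
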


\begin{proof}
If we expand the right side of equation \eqref{eq: desa ades ideal} and compare the coefficient of $s^jt^k$ on both sides then we have
\begin{equation}\label{eq: jk desa ades ideal}
\binom{2jk+2k+n-1-\desa(\pi)}{n} = \sum_{\sigma\tau = \pi} \binom{j+n-\desa(\sigma)}{n} \binom{k+n-\ades(\tau)}{n}.
\end{equation}
Using Theorem \ref{thm: generic zig-zag order poly}, we see that
\[
\sum_{\sigma\tau = \pi} \binom{j+n-\desa(\sigma)}{n} \binom{k+n-\ades(\tau)}{n} = \sum_{I \in Z(n)} \OmegaA_{\augzigipi} (j,k)
\]
and, using Theorem \ref{thm: generic zig-zag chain equivalence}, we see that
\[
\sum_{I \in Z(n)} \OmegaA_{\augzigipi} (j,k) = \sum_{I \in C(n)} \OmegaA_{\augchainipi}(j,k).
\]
Thus to prove equation~\eqref{eq: desa ades ideal} it remains to show that
\[
\sum_{I \in C(n)} \OmegaA_{\augchainipi}(j,k) = \binom{2jk+2k+n-1-\desa(\pi)}{n}.
\]

First we note that $\sum_{I \in \augchainsets} \OmegaA_{\augchainipi}(j,k)$ counts ordered pairs $(f,P)$ where $P$ is a barred $\augchainipi$ poset with $k$ bars for some $I \in \augchainsets$ and $f$ is a $\augchainipi$-partition of type~$A$ with parts less than or equal to $j$.
Fix a barred $\augchainipi$ poset with $k$ bars and use the bars to define compartments labeled $0,\ldots,k$ from left to right.
Then define $\pi_i$ to be the (possibly empty) subword of $\pi$ in compartment $i$ and denote the length of $\pi_i$ by $L_i$.
Then
\[
\OmegaA_{\augchainipi}(j) = \OmegaA_{\pi_0}(j)\OmegaA_{\overleftarrow{\pi_k}}(j)\prod_{i=1}^{k-1} \OmegaA_{P(\pi_i)}(j) .
\]
First we let $\OmegaA_{\pi_0}(j)$ count solutions to the inequalities
\[
0  \leq s_{0_1} \leq  \cdots \leq s_{0_{L_0}} \leq  j
\]
with $s_{0_l} < s_{0_{l+1}}$ if $l \in \Desa(\pi_0)$.
For $i=1,\ldots,k-1$, we let $\OmegaA_{P(\pi_i)}(j)$ count solutions to the inequalities
\[
j+1+(i-1)(2j+2) \leq s_{i_1} \leq  \cdots \leq s_{i_{L_i}} \leq  j+1+(i-1)(2j+2) + 2j+1
\]
with $s_{i_l} < s_{i_{l+1}}$ if $l \in \Desa(\pi_i)$.
Finally, we let $\OmegaA_{\overleftarrow{\pi_k}}(j)$ count solutions to the inequalities
\[
 j+1+(k-2)(2j+2) + 2j+2 \leq s_{k_1} \leq  \cdots \leq s_{k_{L_k}} \leq  2jk+2k-1
\]
with $s_{k_l} < s_{k_{l+1}}$ if $l \in \Desa(\pi_i)$.
By concatenating these inequalities, we see that if we sum over all $I \in \augchainsets$ and all barred $\augchainipi$ posets with $k$ bars then $\sum_{I \in \augchainsets} \OmegaA_{\augchainipi}(j,k)$ is equal to the number of solutions to the inequalities
\[
0 \leq s_1 \leq \cdots \leq s_n \leq   2jk+2k-1
\]
with $s_i < s_{i+1}$ if $i \in \Desa(\pi)$.
Hence
\[
\sum_{I \in \augchainsets} \OmegaA_{\augchainipi}(j,k)  = \binom{2jk+2k +n-1-\desa(\pi)}{n}.
\]

Next we prove equation~\eqref{eq: ades desa ideal}.
If we expand the right side of equation \eqref{eq: ades desa ideal} and compare the coefficient of $s^jt^k$ on both sides then we have
\begin{equation}\label{eq: jk ades desa ideal}
\binom{2jk+2j+n-\ades(\pi)}{n} = \sum_{\sigma\tau = \pi} \binom{j+n-\ades(\sigma)}{n} \binom{k+n-\desa(\tau)}{n}.
\end{equation}
Using Theorem \ref{thm: generic zig-zag order poly}, we see that
\[
\sum_{\sigma\tau = \pi} \binom{j+n-\ades(\sigma)}{n} \binom{k+n-\desa(\tau)}{n} = \sum_{I \subseteq [n-1]} \Omegaaug_{\zigaipi} (j,k)
\]
and, using Theorem \ref{thm: generic zig-zag chain equivalence}, we see that
\[
\sum_{I \subseteq [n-1]} \Omegaaug_{\zigaipi} (j,k) = \sum_{I \subseteq [n-1]} \Omegaaug_{\chainaipi}(j,k).
\]
Thus it remains to show that
\[
\sum_{I \subseteq [n-1]} \Omegaaug_{\chainaipi}(j,k) = \binom{2jk+2j+n-\ades(\pi)}{n}.
\]

First we note that $\sum_{I \subseteq [n-1]} \Omegaaug_{\chainaipi}(j,k)$ counts ordered pairs $(f,P)$ where $P$ is a barred $\chainaipi$ poset with $k$ bars for some $I \subseteq [n-1]$ and $f$ is an augmented $\chainaipi$-partition with parts less than or equal to $j$.
Fix a barred $\chainaipi$ poset with $k$ bars and use the bars to define compartments labeled $0,\ldots,k$ from left to right.
Then define $\pi_i$ to be the (possibly empty) subword of $\pi$ in compartment $i$ and denote the length of $\pi_i$ by $L_i$.
Then
\[
\Omegaaug_{\chainaipi}(j) = \prod_{i=0}^{k} \Omegaaug_{P(\pi_i)}(j) .
\]
For $i=0,\ldots,k$, we let $\Omegaaug_{P(\pi_i)}(j)$ count solutions to the inequalities
\[
i(2j+1) = s_{i_0} \leq s_{i_1} \leq  \cdots \leq s_{i_{L_i}} \leq s_{i_{L_i+1}} =  i(2j+1) + 2j
\]
with $s_{i_l} < s_{i_{l+1}}$ if $l \in \aDes(\pi_i)$.
Since we are using augmented $P$-partitions, we form equivalence classes by equating $2j+i(2j+1)$ and $2j+i(2j+1)+1$ for $i=0,\ldots,k-1$, effectively removing $k$ points from the set.
By concatenating these inequalities, we see that if we sum over all $I \subseteq [n-1]$ and all barred $\chainaipi$ posets with $k$ bars then $\sum_{I \subseteq [n-1]} \Omegaaug_{\chainaipi}(j,k)$ is equal to the number of solutions to the inequalities
\[
0 = s_0 \leq s_1 \leq \cdots \leq s_n \leq s_{n+1} = 2jk+2j
\]
with $s_i < s_{i+1}$ if $i \in \aDes(\pi)$.
Hence
\[
\sum_{I \subseteq [n-1]} \Omegaaug_{\chainaipi}(j,k)  = \binom{2jk+2j +n-\ades(\pi)}{n}.
\qedhere
\]
\end{proof}

Since equations \eqref{eq: jk desa ades ideal} and \eqref{eq: jk ades desa ideal} hold for all $j,k \geq 0$, we have the following theorem.

\begin{thm}\label{thm: desa ades order polys}
As polynomials in $x$ and $y$ with coefficients in the group algebra of $\hypn$,
\begin{align*}
\phi_A(x/2)\psi(y/2) &= \phi_A(xy/2) \\
\psi(x/2)\phi_A(y/2) &= \psi(xy/2).
\end{align*}
\end{thm}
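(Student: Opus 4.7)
The plan is to deduce both identities directly from Theorem~\ref{thm: desa ades ideal}, which was established immediately above, by mimicking the pattern already used to derive Theorems~\ref{thm: desa desb order polys} and~\ref{thm: desb ades order polys} from their corresponding generating-function identities.

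First I would compare the coefficient of $s^j t^k$ on the two sides of equations~\eqref{eq: desa ades ideal} and~\eqref{eq: ades desa ideal}, which (as noted in the text just after Theorem~\ref{thm: desa ades ideal}) yields
\[
\binom{2jk+2k+n-1-\desa(\pi)}{n} = \sum_{\sigma\tau=\pi}\binom{j+n-\desa(\sigma)}{n}\binom{k+n-\ades(\tau)}{n}
\]
and
\[
\binom{2jk+2j+n-\ades(\pi)}{n} = \sum_{\sigma\tau=\pi}\binom{j+n-\ades(\sigma)}{n}\binom{k+n-\desa(\tau)}{n},
\]
both valid for all $j,k \geq 0$. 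Multiplying each identity by $\pi$ and summing over $\pi \in \hypn$, and then recognizing the sums against the definitions
\[
\phi_A(x) = \sum_{\pi \in \hypn}\binom{x+n-1-\desa(\pi)}{n}\pi, \qquad \psi(x) = \sum_{\pi \in \hypn}\binom{x+n-\ades(\pi)}{n}\pi,
\]
gives $\phi_A(j+1)\psi(k) = \phi_A(2(j+1)k)$ and $\psi(j)\phi_A(k+1) = \psi(2j(k+1))$ as elements of the group algebra of $\hypn$, for every $j,k \geq 0$.

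Finally, since each of these equalities between polynomial-in-$j,k$ expressions holds for infinitely many nonnegative integer values of $j$ and $k$, it holds as an identity of polynomials in two variables with coefficients in the group algebra. Substituting $x \leftarrow 2(j+1)$, $y \leftarrow 2k$ (respectively $x \leftarrow 2j$, $y \leftarrow 2(k+1)$) and then rescaling $x \mapsto x/2$, $y \mapsto y/2$ turns these into $\phi_A(x/2)\psi(y/2) = \phi_A(xy/2)$ and $\psi(x/2)\phi_A(y/2) = \psi(xy/2)$, which is exactly the assertion of the theorem.

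Because all of the combinatorial work has already been absorbed into Theorem~\ref{thm: desa ades ideal}, there is no genuine obstacle here: the proof is a purely formal coefficient extraction followed by the usual ``polynomials agreeing on infinitely many integers'' step. The only thing to be careful about is matching the offsets ($j+1$ versus $j$, and $k$ versus $k+1$) so that the arguments of $\phi_A$ and $\psi$ land exactly on $2(j+1)k$ and $2j(k+1)$; these are precisely the shifts needed so that the two simplifications $\phi_A(2(j+1)k)$ and $\psi(2j(k+1))$ match the left-hand sides of the binomial identities.
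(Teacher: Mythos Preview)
Your proposal is correct and follows exactly the paper's approach: the paper simply states that the theorem follows because equations~\eqref{eq: jk desa ades ideal} and~\eqref{eq: jk ades desa ideal} hold for all $j,k\geq 0$, and you have spelled out this deduction in detail. Your handling of the offsets (so that the identities read $\phi_A(j+1)\psi(k)=\phi_A(2(j+1)k)$ and $\psi(j)\phi_A(k+1)=\psi(2j(k+1))$) is correct, and the passage from integer values to a polynomial identity is the same ``agree on infinitely many integers'' step used throughout the paper.
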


This theorem shows that in the algebra spanned by the orthogonal idempotents $a_i$ and $b^{(a)}_i$, the type $A$ Eulerian descent algebra and the augmented Eulerian descent algebra are both right ideals.
It isn't too hard to find an example which shows that neither is a left ideal.

%%%%%%%%%%%%%%%%%%%%%%%%%%%%%%%%%%%%%%%%%%%%%%%%%%%%%%%%%%%%%%%%%%%%%%%%%%%%%%%%%%%%%%%%%%%%%%%%

%\addtocontents{toc}{\protect\newpage}

\chapter{Colored Permutation Groups}\label{chpt: Colored Permutation Groups}

In 1995, Mantaci and Reutenauer~\cite{MantaciReutenauer1995} proved the existence of an algebra whose basis elements are formal sums of ``colored permutations'' with the same associated colored compositions.
These colored compositions describe both the colors of the letters of a colored permutation and the descent set of each monochromatic run.
The Mantaci-Reutenauer algebra is well studied and fills much the same role as Solomon's descent algebra does in the theory of the group algebra of the symmetric group (see \cite{BaumannHohlweg2008, Poirier1998}).
While not all colored permutation groups are Coxeter groups, we can still extend some of the theory from previous chapters to study subalgebras of the corresponding group algebra.
The main results of this chapter are the proof of the existence of an Eulerian subalgebra called the ``colored Eulerian descent algebra'' and the description of analogues of the Eulerian idempotents in Sections \ref{sec: Eulerian descent algebra} and \ref{sec: type B Eulerian descent algebra}.
The chapter is organized as follows.

In Section~\ref{sec: colored permutation groups}, we formally define ``colored permutations'' which can be thought of as permutations from $\symn$ together with a choice of a ``color'' from a finite cyclic group for each letter in a given permutation.
We also define the descent set of a colored permutation, taking a definition equivalent to that originally used by Steingr{\'{\i}}msson in \cite{Steingrimsson1994}.
In Section~\ref{sec: colored des maj}, we define colored multiset permutations and count colored multiset permutations by descent number and major index, mirroring the results of Chow and Mansour~\cite{ChowMansour2011} in their study of the flag major index of colored permutations.
In Section~\ref{sec: colored fmaj}, we show how a variable substitution into formulas from Section~\ref{sec: colored des maj} produces multiset analogues of their formulas counting permutations by descent number and flag major index.
We note that the multiset distribution of flag major index alone was computed by Haglund, Loehr, and Remmel in \cite{HaglundLoehrRemmel2005}.

After finishing our sections on enumeration, we begin the study of the colored Eulerian descent algebra.
In Section~\ref{sec: colored posets and p-partitions}, we define colored posets and colored \linebreak $P$-partitions which are different from those defined by Hsiao and Petersen in \cite{HsiaoPetersen2010}.
They define colored posets to be posets with colored labels and by doing so obtain results about the Mantaci-Reutenauer algebra.
Our approach differs in the way we define the set of linear extensions of a given colored poset and allows us to count colored permutations by descent number.
Finally, we show in Section~\ref{sec: colored descent algebra} that descent number induces an algebra which is a subalgebra of the Mantaci-Reutenauer algebra and describe a set of colored Eulerian idempotents which span this colored Eulerian descent algebra.
The colored Eulerian idempotents reduce to the well-known Eulerian idempotents in the group algebra of $\symn$ when considering permutations of a single color.

%%%%%%%%%%%%%%%%%%%%%%%%%%%%%%%%%%%%%%%%%%%%%%%%

\section{The colored permutation groups $\colpermrn$}\label{sec: colored permutation groups}

For every totally ordered set $X$,  we denote by $\rfoldx$ the set $[0,r-1] \times X$ ordered lexicographically.
Thus $(i,x) < (j,y)$ in $\rfoldx$ if $i<j$ or if $i=j$ and $x < y$.
For notational ease, we denote the subset $\{k\}\times X$ by $X_k$ and write $x_k$ in place of $(k,x)$ whenever doing so is unambiguous. 
For example, $[n]_{(r)} = \{i_j \, | \, 1 \leq i \leq n, 0 \leq j < r \}$ with total order $$1_0 < \cdots < n_0 < 1_1 < \cdots < n_1 < \cdots < 1_{r-1} < \cdots < n_{r-1}.$$
The group of \emph{$r$-colored permutations} $\colpermrn$ is the group of all bijections $\pi: [n]_{(r)} \rightarrow [n]_{(r)}$ such that $\pi(i_j) = k_l$ implies that $\pi(i_{j+a}) = k_{l+a}$ for $1 \leq a < r$ with the sums $j+a$ and $l+a$ evaluated modulo $r$.

All colored permutations are determined by $\pi(i_0)$ for $1 \leq i \leq n$ and so we write colored permutations in one-line notation as $\pi = \pi(1_0) \pi(2_0) \cdots \pi(n_0)$.
In practice, we often suppress this notation and write $\pi(i)$ in place of $\pi(i_0)$.
Every colored permutation is built from an underlying permutation in $\symn$ obtained from $\pi$ by ignoring the colors of the letters.
We denote this permutation by $|\pi|:=|\pi(1)|\cdots |\pi(n)|$ where $|x_k| := x$ for every $x_k \in \rfoldx$.
Define the color map $\epsilon$ by $\epsilon(x_k) = k$ for all $x_k \in \rfoldx$.
Lastly, for $i=0,\ldots,r-1$, let $N_i(\pi)$ be the number of letters of $\pi$ of color $i$.

\begin{ex}
The colored permutation $\pi = 2_0 1_3 3_1 5_2 4_2$ is in $G_{4,5}$. We have $|\pi| = 21354$, $\epsilon(5_2) = 2$, and $N_2(\pi) = 2$.
\end{ex}

The group operation in $\colpermrn$ is function composition.
In one-line notation, this says that if $\pi(i) = j_k$ and $\sigma(j) = l_p$ then $(\sigma \circ \pi)(i) = l_{k+p}$ where, as always, the sum is evaluated modulo $r$.
Thus colored permutation composition depends on $r$ and so even though $\pi = 2_0 1_3 3_1 5_2 4_2$ and $\sigma = 3_1 1_1 5_0 2_1 4_3$ are colored permutations in both $G_{4,5}$ and $G_{5,5}$, $\sigma \pi$ as a permutation in $G_{4,5}$ is different from $\sigma \pi$ as a permutation in~$G_{5,5}$.
As a group, $\colpermrn$ is isomorphic to the wreath product $\Z_r \wr \symn$.

\begin{ex}
If $\pi = 2_0 1_3 3_1 5_2 4_2$ and $\sigma = 3_1 1_1 5_0 2_1 4_3$ are colored permutations in $G_{4,5}$ then $\sigma \pi = 1_1 3_0 5_1 4_1 2_3$.
\end{ex}

Define the descent set $\Des(\pi)$ of a colored permutation $\pi$ to be the set of all $i \in [n]$ such that $\pi(i) > \pi(i+1)$ with respect to the total order on $[0,n]_{(r)}$ and with $\pi(n+1) := 0_1$.
Thus $n \in \Des(\pi)$ if and only if $\epsilon(\pi(n)) \neq 0$.
As before, define $\des(\pi) = |\Des(\pi)|$ and $\maj(\pi) = \sum_{i \in \Des(\pi)} i$.
In preparation for later results, we also define the internal descent set $\intDes(\pi) = \Des(\pi) \cap [n-1]$ and the internal descent number $\intdes(\pi) = |\intDes(\pi)|$.

\begin{ex}
If $\pi = 3_1 1_1 4_0 2_3$ then $\Des(\pi) = \{1,2,4\}$ with $\des(\pi) = 3$ and $\maj(\pi) = 7$.
Also, $\intDes(\pi) = \{1,2\}$ and $\intdes(\pi) = 2$.
\end{ex}

Our definition of the descent set of a colored permutation is equivalent to that of Steingr{\'{\i}}msson \cite[Definition 2]{Steingrimsson1994}.
We could, however, have defined descents by setting $\pi(n+1) = 0_j$ and $\pi(0) = 0_k$ for arbitrary $j$ and $k$.
Our methods allow for the enumeration of colored permutations by these various descent numbers but we do not consider such results here because, as we will see later on, only Steingr{\'{\i}}msson's definition induces an algebra.

%%%%%%%%%%%%%%%%%%%%%%%%%%%%%%%%%%%%%%%%%%%%%%%%

\section{Descent number and major index}\label{sec: colored des maj}

In this section, we count colored permutations by descent number and major index.
Since the barred permutation technique used here works equally well for multiset permutations, we first count ``colored multiset permutations'' and then reduce to $\colpermrn$.
Following \cite{HaglundLoehrRemmel2005}, we define a \emph{colored multiset permutation} to be a multiset permutation from $\multn$ together with a choice of a ``color'' from $\Z_r$ for each letter in the permutation.
If $\alpha = (\alpha_1,\ldots,\alpha_m)$ is a weak composition of $n$ then we define $\multrn$ to be the set of all $r$-colored permutations of the multiset $\{1^{\alpha_1},\ldots, m^{\alpha_m}\}$.
The descent set $\Des(\pi)$ of a colored multiset permutation $\pi$ is the set of all $i \in [n]$ such that $\pi(i) > \pi(i+1)$ with $\pi(n+1) = 0_1$.
We let
\[
M_{\alpha}^r (t,q) = \sum_{\pi \in \multrn} t^{\des(\pi)} q^{\maj(\pi)}
\]
and
\[
M_{\alpha}^r (t,q,z_0,\ldots,z_{r-1}) = \sum_{\pi \in \multrn} t^{\des(\pi)} q^{\maj(\pi)} z_0^{N_0(\pi)} \cdots z_{r-1}^{N_{r-1}(\pi)}.
\]
Next we define a \emph{barred colored multiset permutation} to be a shuffle of a colored multiset permutation with a sequence of bars such that between any two bars the letters of the colored multiset permutation are weakly increasing and such that only letters of color 0 are allowed in the rightmost compartment.
We prove the following theorem by counting weighted barred colored multiset permutations in two different ways.

\begin{thm}\label{thm: col mult perm des maj}
If $\alpha = (\alpha_1,\ldots,\alpha_m)$ and $|\alpha| = n$ then
\begin{equation}\label{eq: col mult perm des maj}
\sum_{j \geq 0} t^j \prod_{i = 1}^m \left( \sum_{L \vDash \alpha_i} \qbinom{j + L_0}{L_0} z_0^{L_0} \prod_{k=1}^{r-1} \qbinom{j + L_k -1}{L_k} q^{L_k}z_k^{L_k}  \right)  = \frac{M_{\alpha}^r (t,q,z_0,\ldots,z_{r-1})}{(t;q)_{n+1}}
\end{equation}
where the inner sum is over all weak compositions $L$ of $\alpha_i$ with $r$ parts.
\end{thm}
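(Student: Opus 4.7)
The plan is to mimic the barred-multiset-permutation arguments of Theorem~\ref{thm: MacMahon's Formula} and Section~\ref{sec: signed multiset permutations} by counting weighted barred colored multiset permutations in two different ways. First observe that with respect to the total order on $[0,n]_{(r)}$ one has $n_0<0_1<1_1$, so $\pi(n)>\pi(n+1)=0_1$ if and only if $\epsilon(\pi(n))\ne 0$. Consequently the definitional requirement that only color-$0$ letters may occupy the rightmost compartment of a barred colored multiset permutation is equivalent to requiring a mandatory bar at position $n$ precisely when $n\in\Des(\pi)$; this will be the key translation between the two counts.

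For the first count, I would weight each bar in space $i$ by $tq^i$ and each letter of color $k$ by $z_k$, and declare the weight of a barred configuration to be the product of its bar and letter weights. Fixing a $\pi\in\multrn$, the mandatory bars at positions of $\Des(\pi)$ contribute $t^{\des(\pi)}q^{\maj(\pi)}$, while the optional bars placed freely into the $n+1$ spaces contribute $1/(t;q)_{n+1}$; the coloring of $\pi$ supplies an additional weight $z_0^{N_0(\pi)}\cdots z_{r-1}^{N_{r-1}(\pi)}$. Summing over $\pi$ yields $M_{\alpha}^{r}(t,q,z_0,\ldots,z_{r-1})/(t;q)_{n+1}$, which is the right-hand side of~\eqref{eq: col mult perm des maj}.

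For the second count, I would use the standard reinterpretation: weight each bar by $t$ and weight each letter in compartment $i$ by $q^i$, where compartments are labeled $0,1,\ldots,j$ from right to left. Fixing $j$ bars (hence the factor $t^j$), distribute letters into the $j+1$ compartments subject to the weakly-increasing condition; since any multiset of common value--color pairs admits a unique weakly-increasing arrangement within each compartment, the order $q$-polynomial factors over the values $i=1,\dots,m$. For each $i$, choose a weak composition $L=(L_0,\ldots,L_{r-1})\vDash\alpha_i$ recording how many of the $\alpha_i$ copies receive each color. The $L_0$ color-$0$ copies may go into any of the $j+1$ compartments, contributing $\qbinom{j+L_0}{L_0}z_0^{L_0}$. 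For $k\ge 1$, the $L_k$ copies of color $k$ are forbidden from compartment $0$ by the translation in the first paragraph, so they are distributed among the $j$ compartments $1,\ldots,j$; pulling out the minimal weight $q^{L_k}$ reduces this to a distribution into $j$ compartments $0,\ldots,j-1$, giving $q^{L_k}\qbinom{j+L_k-1}{L_k}z_k^{L_k}$. Taking the product over $i$, multiplying by $t^j$, and summing over $j\ge 0$ produces the left-hand side of~\eqref{eq: col mult perm des maj}.

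The delicate step, and the one I expect to require the most care, is the boundary analysis: verifying that the convention $\pi(n+1)=0_1$ translates to the single constraint ``no nonzero-color letter in compartment $0$'' and imposes no extra condition at the left end (for example, no analogue of the type-$B$ restriction against negative letters in the leftmost compartment in Section~\ref{sec: signed multiset permutations}). Once this is pinned down, the $q^{L_k}$-shift for each color class is routine, and the two counts match term-by-term.
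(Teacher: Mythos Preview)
Your proposal is correct and follows essentially the same argument as the paper's proof: a two-way count of weighted barred colored multiset permutations, with the same bar/letter weightings, the same factorization of the order $q$-polynomial over $i=1,\dots,m$, and the same computation of each factor via the color-composition $L$. Your boundary analysis is also right---the paper's definition of $\Des(\pi)\subseteq[n]$ with $\pi(n+1)=0_1$ imposes exactly the single constraint that nonzero-color letters are excluded from compartment~$0$ (the rightmost), and there is no constraint at the left end since no $\pi(0)$ is fixed.
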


\begin{proof}
If we begin with a colored multiset permutation $\pi \in \multrn$ then we must first place a bar in each descent.
We weight each bar in position $i$ by $tq^i$ and each letter of color $i$ by $z_i$.
Define the weight of a barred colored multiset permutation to be the product of the weights of the bars and the weights of the letters.
The first bars placed in descents contribute a factor of $t^{\des(\pi)}q^{\maj(\pi)}$.
We are now free to place bars in any of the $n+1$ positions and so $\pi$ contributes
\[
\frac{t^{\des(\pi)}q^{\maj(\pi)} z_0^{N_0(\pi)} \cdots z_{r-1}^{N_{r-1}(\pi)}}{(t;q)_{n+1}}
\]
to the sum of the weights of all barred colored multiset permutations.
Summing over all $\pi \in \multrn$, we have
\[
\frac{\dsum_{\pi \in \multrn} t^{\des(\pi)} q^{\maj(\pi)} z_0^{N_0(\pi)} \cdots z_{r-1}^{N_{r-1}(\pi)}}{(t;q)_{n+1}} = \frac{M_{\alpha}^r (t,q,z_0,\ldots,z_{r-1})}{(t;q)_{n+1}}.
\]

Next we count the same barred colored multiset permutations but instead start with $j$ bars and thus have $j+1$ compartments.
Recall that the compartments are labeled $0,1,\ldots, j$ from right to left.
As before, rather than weighting each bar in position $i$ by $tq^i$, we instead weight each bar by $t$ and each letter of color $k$ in compartment $l$ by $q^l z_k$.
The weight of a barred colored multiset permutation is still the product of the weights of each of the bars and each of the letters of $\pi$ and the two different weighting methods agree for every barred colored multiset permutation.

Define $\Omega_\alpha (j,q,z_0,\ldots,z_{r-1})$ to be the sum of the products of the weights of the letters over all barred colored multiset permutations with $j$ bars.
To create a barred colored multiset permutation given $j$ initial bars, we must choose a color and a compartment for each element of $\{1^{\alpha_1}, \ldots, m^{\alpha_m}\}$.
Given such a choice, there is a unique way to place all of the colored letters within a compartment in weakly increasing order (up to permutation of identical letters).
This implies that we can make the necessary choices for each collection of $\alpha_i$ $i$'s independently of the others and so $\Omega_\alpha (j,q,z_0,\ldots,z_{r-1})$ factors as
\[
\Omega_\alpha (j,q,z_0,\ldots,z_{r-1}) = \prod_{i=1}^m \Omega_{(\alpha_i)} (j,q,z_0,\ldots,z_{r-1}).
\]

Consider the placement and color selection of the $\alpha_i$ $i$'s.
First we assign colors to each $i$.
Each choice of how many $i$'s are color $0$, how many are color $1$, and so on can be represented by a weak composition $L = (L_0, \ldots, L_{r-1})$ of $\alpha_i$ with $L_k$ equal to the number of $i$'s colored $k$.
The same argument that there is a unique way to place letters within a compartment in weakly increasing order shows that
\[
\Omega_{(\alpha_i)} (j,q,z_0,\ldots,z_{r-1}) = \sum_{L \vDash \alpha_i} \prod_{k=0}^{r-1} \Omega_{(L_k)} (j,q,z_0,\ldots,z_{r-1})
\]
where the sum is over all weak compositions $L$ of $\alpha_i$ with $r$ parts.

Letters assigned color $0$ can be placed in any of the $j+1$ compartments and the sum of the products of their possible weights is
\[
\Omega_{(L_0)} (j,q,z_0,\ldots,z_{r-1}) = \qbinom{j+L_0}{L_0} z_0^{L_0}.
\]
Letters assigned color $k\neq 0$ can be placed in any compartment except the rightmost compartment and the sum of the products of their possible weights is
\[
\Omega_{(L_k)} (j,q,z_0,\ldots,z_{r-1}) = \qbinom{j+L_k -1}{L_k} q^{L_k} z_k^{L_k}.
\]
Hence we have
\[
\Omega_{(\alpha_i)} (j,q,z_0,\ldots,z_{r-1}) =  \sum_{L \vDash \alpha_i} \qbinom{j + L_0}{L_0} z_0^{L_0} \prod_{k=1}^{r-1} \qbinom{j + L_k -1}{L_k} q^{L_k} z_k^{L_k}
\]
and thus the sum of the products of the weights of the letters of all barred colored multiset permutations with $j$ bars is
\[
\Omega_\alpha (j,q,z_0,\ldots,z_{r-1}) = \prod_{i = 1}^m \left( \sum_{L \vDash \alpha_i} \qbinom{j + L_0}{L_0} z_0^{L_0} \prod_{k=1}^{r-1} \qbinom{j + L_k -1}{L_k} q^{L_k} z_k^{L_k} \right).
\]
Finally, multiplying by $t^j$ and summing over $j$ completes the proof. \\
\end{proof}

Equation \eqref{eq: col mult perm des maj} is not aesthetically pleasing since $\Omega_{(\alpha_i)} (j,q,z_0,\ldots,z_{r-1})$ is a sum which cannot be evaluated and so we now consider several ways to reduce this equation to something simpler.
One option is to define a new statistic which allows us to evaluate the sum $\Omega_{(\alpha_i)} (j,q,z_0,\ldots,z_{r-1})$.
This is the approach we take in the next section.
Another option is to set $q=1$ and $z_i = 1$ for $i=0,\ldots,r-1$ and thus count colored multiset permutations by descents alone.
This reduces $q$-binomial coefficients to binomial coefficients and shows that
\begin{align*}
\frac{\dsum_{\pi \in \multrn} t^{\des(\pi)}}{(1-t)^{n+1}} &= \sum_{j \geq 0} t^j \prod_{i = 1}^m \left( \sum_{L \vDash \alpha_i} \binom{j + L_0}{L_0} \prod_{k=1}^{r-1} \binom{j + L_k -1}{L_k}  \right) \\
&= \sum_{j \geq 0} t^j \prod_{i = 1}^m \binom{rj + \alpha_i}{\alpha_i} .
\end{align*}
Lastly, we assume that $\alpha = (1,\ldots,1)$ so that $\multrn = \colpermrn$.
We define the colored $q$-Eulerian polynomials $C_{r,n}(t,q)$ by
\[
C_{r,n} (t,q) = \dsum_{\pi \in \colpermrn} t^{\des(\pi)} q^{\maj(\pi)}.
\]
Following the proof of the previous theorem, we see that if $\alpha = (1,\ldots, 1)$ then
\[
\Omega_{(1)} (j,q,z_0,\ldots,z_{r-1}) = [j+1]_q z_0 + (z_1 + \cdots + z_{r-1}) q [j]_q.
\]
If we set $z_0 = \cdots = z_{r-1} = 1$ then
\[
\Omega_{(1)} (j,q) =  [j+1]_q + (r-1)q[j]_q = 1+ rq[j]_q
\]
and we have the following corollary.

\begin{cor}\label{col perm des maj}
\begin{equation}\label{eq: col perm des maj}
\sum_{j \geq 0} (1+rq[j]_q)^n t^j = \frac{C_{r,n}(t,q)}{(t;q)_{n+1}}
\end{equation}
\end{cor}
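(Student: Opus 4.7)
The plan is to obtain the corollary directly from Theorem~\ref{thm: col mult perm des maj} by specializing $\alpha=(1,1,\ldots,1)$ (so $m=n$ and each $\alpha_i=1$) and setting $z_0=z_1=\cdots=z_{r-1}=1$. Under this specialization $\multrn$ becomes $\colpermrn$ and the $z_i$'s vanish from the tracking statistic, so the right-hand side of \eqref{eq: col mult perm des maj} reduces to $C_{r,n}(t,q)/(t;q)_{n+1}$, exactly what we want.

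The only real work is to evaluate the $i$-th factor on the left-hand side, i.e.\ $\Omega_{(1)}(j,q,1,\ldots,1)$. When $\alpha_i=1$, the weak compositions of $1$ into $r$ parts are the $r$ indicator vectors $e_0,e_1,\ldots,e_{r-1}$. The composition $L=e_0$ contributes $\qbinom{j+1}{1}_q=[j+1]_q$, while for $k\neq 0$ the composition $L=e_k$ contributes $\qbinom{j}{1}_q q = q[j]_q$. Summing gives
\[
\Omega_{(1)}(j,q,1,\ldots,1) = [j+1]_q + (r-1)q[j]_q.
\]
Using $[j+1]_q = 1 + q[j]_q$, this collapses to $1 + rq[j]_q$, which is precisely the base in the desired identity. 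Taking the product over $i=1,\ldots,n$ yields $(1+rq[j]_q)^n$, and multiplying by $t^j$ and summing over $j\geq 0$ gives the left-hand side of \eqref{eq: col perm des maj}.

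There is no real obstacle here; the argument is just bookkeeping. The one place to be careful is the trivial-looking algebraic identity $[j+1]_q + (r-1)q[j]_q = 1+rq[j]_q$, which is what makes the scattered $q$-binomials combine into the clean base $(1+rq[j]_q)$ appearing in the statement. Once that simplification is made, the corollary follows immediately from Theorem~\ref{thm: col mult perm des maj}.
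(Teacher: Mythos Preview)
Your proposal is correct and follows essentially the same route as the paper: specialize $\alpha=(1,\ldots,1)$ in Theorem~\ref{thm: col mult perm des maj}, set all $z_k=1$, and evaluate $\Omega_{(1)}(j,q,1,\ldots,1)=[j+1]_q+(r-1)q[j]_q=1+rq[j]_q$. The only cosmetic difference is that the paper first records $\Omega_{(1)}(j,q,z_0,\ldots,z_{r-1})=[j+1]_q z_0+(z_1+\cdots+z_{r-1})q[j]_q$ before specializing the $z_k$, but the computation and the key simplification are identical.
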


We can say more about these colored $q$-Eulerian polynomials.
Write
\[
C_{r,n}(t,q) = \sum_{k=0}^n C_{r,n,k}(q) t^k
\]
where $C_{r,n,k}(q) = \sum_{\des(\pi)=k} q^{\maj(\pi)}$.
Expanding equation \eqref{eq: col perm des maj} and comparing coefficients of $t^j$ gives the following corollary.

\begin{cor}\label{cor: q-Worpitzky}
\[
(1+rq[j]_q)^n = \sum_{k=0}^n C_{r,n,k}(q) \qbinom{j+n-k}{n}
\]
\end{cor}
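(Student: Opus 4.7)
The plan is to derive this identity directly from Corollary~\ref{col perm des maj} by expanding the right-hand side using the $q$-binomial theorem and then extracting the coefficient of $t^j$. Since Corollary~\ref{col perm des maj} is an identity of power series in $t$ (with polynomial coefficients in $q$), the corollary here is just the ``coefficient of $t^j$'' version of it.

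First I would split $C_{r,n}(t,q)$ according to descent number, writing
\[
\frac{C_{r,n}(t,q)}{(t;q)_{n+1}} = \sum_{k=0}^{n} C_{r,n,k}(q)\,\frac{t^{k}}{(t;q)_{n+1}}.
\]
Next, I would apply the $q$-binomial theorem
\[
\frac{1}{(t;q)_{n+1}} = \sum_{j \geq 0} \qbinom{j+n}{n} t^{j}
\]
to each summand and reindex. Multiplying by $t^k$ shifts the sum, giving
\[
\frac{t^{k}}{(t;q)_{n+1}} = \sum_{j \geq 0} \qbinom{j+n-k}{n} t^{j},
\]
with the convention that $\qbinom{a}{b}=0$ when $a<b$ handling the $j<k$ terms automatically.

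Swapping the order of summation over $j$ and $k$ then yields
\[
\frac{C_{r,n}(t,q)}{(t;q)_{n+1}} = \sum_{j \geq 0} \left( \sum_{k=0}^{n} C_{r,n,k}(q)\, \qbinom{j+n-k}{n} \right) t^{j}.
\]
Comparing this with the left-hand side of \eqref{eq: col perm des maj} and equating the coefficients of $t^j$ gives the identity. There is no real obstacle here since the Corollary is deliberately presented as an immediate consequence of \eqref{eq: col perm des maj}; the only bookkeeping is to justify the index shift in the $q$-binomial sum, which is automatic from the vanishing convention on $\qbinom{a}{b}$ for $a<b$.
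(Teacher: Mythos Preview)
Your proof is correct and follows exactly the approach indicated in the paper, which simply states that the corollary is obtained by expanding equation~\eqref{eq: col perm des maj} and comparing coefficients of $t^j$. You have filled in the routine details of that expansion.
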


Thus the $C_{r,n,k}(q)$ satisfy a $q$-Worpitzky identity.
Also, following the insertion proof described in \cite[Theorem 9(i)]{ChowMansour2011}, we have the following theorem.
The proof is straightforward but tedious and is omitted.

\begin{thm}\label{thm: recurrence}
Let $C_{r,0,0}(q) = 1$ and note that $C_{r,n,0}(q) = 1$ for $n \geq 1$. The colored $q$-Eulerian polynomials satisfy the following recurrence for $n,k \geq 1$.
\[
C_{r,n,k}(q) = (1+rq[k]_q) C_{r,n-1,k}(q) + (rq^k[n-k]_q + (r-1)q^n) C_{r,n-1,k-1}(q)
\]
\end{thm}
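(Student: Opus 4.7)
The plan is to prove the recurrence by an insertion argument. Given $\sigma \in G_{r,n-1}$, I construct every $\pi \in \colpermrn$ by inserting the letter $n$ with a chosen color $c \in \{0,1,\ldots,r-1\}$ into one of the $n$ available slots of $\sigma$ (before $\sigma(1)$, between $\sigma(i)$ and $\sigma(i+1)$ for $1 \leq i \leq n-2$, or after $\sigma(n-1)$). This defines a bijection between the $rn$ triples $(\sigma,\mathrm{slot},c)$ and the permutations in $\colpermrn$, so every summand $q^{\maj(\pi)}$ contributing to $C_{r,n,k}(q)$ can be rewritten as $q^{\maj(\sigma)+\Delta\maj}$ for a unique triple, and I partition the insertions by $\Delta\des := \des(\pi)-\des(\sigma)$.

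The key observation is that, because $|\sigma(i)| \leq n-1 < n$ for every $i$, the comparisons involving $n_c$ are determined entirely by colors: $\sigma(i) > n_c$ iff $c < \epsilon(\sigma(i))$, and $n_c > \sigma(i+1)$ iff $c \geq \epsilon(\sigma(i+1))$, while at the right boundary $n_c > 0_1$ iff $c \geq 1$. A short lexicographic argument using these equivalences shows that $\Delta\des \in \{0,1\}$ for every insertion (neither $-1$ nor $2$ can arise). The recurrence will therefore follow from two local claims: for every $\sigma$ with $\des(\sigma)=k$, summing $q^{\Delta\maj}$ over all insertions with $\Delta\des=0$ gives $1+rq[k]_q$; and for every $\sigma$ with $\des(\sigma)=k-1$, summing $q^{\Delta\maj}$ over all insertions with $\Delta\des=1$ gives $rq^k[n-k]_q + (r-1)q^n$.

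I expect the main technical work to be the bookkeeping behind these two local claims. For each slot $i$ and color $c$, the values of $\Delta\des$ and $\Delta\maj$ are determined by the color of $\sigma(i)$, the color of $\sigma(i+1)$ (or the boundary letter $0_1$), and whether slot $i$ was a descent of $\sigma$; the contribution to $\Delta\maj$ additionally absorbs the one-position rightward shift of every descent of $\sigma$ lying strictly to the right of slot $i$. The constant $1$ in $1+rq[k]_q$ comes from inserting $n_0$ after $\sigma(n-1)$ (no new descent, no shift), and the remaining $rk$ descent-preserving insertions pair off in groups of $r$ indexed by the descent positions of $\sigma$, producing the graded sum $rq[k]_q$. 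The summand $(r-1)q^n$ isolates the $r-1$ insertions of $n_c$ with $c \geq 1$ after $\sigma(n-1)$ (each creating the final descent at position $n$), while $rq^k[n-k]_q$ collects the remaining descent-creating insertions, grouped in blocks of $r$ by the position of the insertion slot among the $n-k$ non-descents of $\sigma$ (with the implicit $\sigma(n)=0_1$ boundary). Summing these contributions over all $\sigma$ and collecting by $\des(\pi)=k$ yields the stated recurrence.
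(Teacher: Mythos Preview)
Your approach is exactly the one the paper has in mind: it omits the proof and refers to the insertion argument of Chow--Mansour, and your outline \emph{is} that insertion argument. The bijection between triples $(\sigma,\text{slot},c)$ and $\pi\in\colpermrn$, the color-only comparisons with $n_c$, the fact that $\Delta\des\in\{0,1\}$, and the reduction to the two local identities are all correct, and those two identities are indeed true for every $\sigma$.

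One caution about your informal bookkeeping: the attribution of individual summands to specific insertions is not quite right as stated. For instance, you say $(r-1)q^n$ comes from the $r-1$ insertions of $n_c$, $c\geq 1$, at the last slot; this is only literally true when position $n-1$ is an ascent of $\sigma$. If $\epsilon(\sigma(n-1))=e\geq 1$ then position $n-1$ is already a descent, and inserting $n_c$ there with $c\geq e$ gives $\Delta\des=0$, not $1$. Likewise, the ``groups of $r$ indexed by the descent positions'' picture is not a literal partition of the descent-preserving insertions. What actually makes the two local sums collapse to $1+rq[k]_q$ and $rq^{k}[n-k]_q+(r-1)q^n$ is a telescoping/Abel-summation step: writing $c_i=\epsilon(\sigma(i))$ with $c_0=0$, $c_n=1$, and $R_i=|\{j>i:j\in\Des(\sigma)\}|$, the slot-by-slot contributions combine via $\sum_i (c_{i+1}-c_i)q^{R_i}$ and its analogue with $S_i=i+R_i$, and the dependence on the individual colors $c_i$ cancels. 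Once you carry out that cancellation your two claimed identities fall out exactly. So the strategy is sound; just replace the heuristic grouping by the telescoping computation when you write up the ``tedious'' part.
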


Next define the colored Eulerian polynomials $C_{r,n}(t)$ by
\[
C_{r,n}(t) = \dsum_{\pi \in \colpermrn} t^{\des(\pi)}.
\]
Setting $q=1$ in Corollary \ref{col perm des maj}, we have the following corollary, originally proven by Steingr{\'{\i}}msson \cite[Theorem 17]{Steingrimsson1994}.

\begin{cor}[Steingr{\'{\i}}msson]\label{Col Eul Poly}
\[
\sum_{j \geq 0} (rj+1)^n t^j = \frac{C_{r,n}(t)}{(1-t)^{n+1}}
\]
\end{cor}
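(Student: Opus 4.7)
The plan is to derive Steingrímsson's identity as a direct $q \to 1$ specialization of Corollary \ref{col perm des maj}, which has already been proved via barred colored multiset permutations. So there is essentially nothing to do beyond carefully verifying that each of the four ingredients in
\[
\sum_{j \geq 0} (1+rq[j]_q)^n t^j = \frac{C_{r,n}(t,q)}{(t;q)_{n+1}}
\]
behaves correctly under the substitution $q = 1$. I would first note that $[j]_q = 1+q+\cdots+q^{j-1}$ evaluates to $j$ at $q=1$, so the factor $1 + rq[j]_q$ becomes $1 + rj$, and the left side becomes $\sum_{j \geq 0}(rj+1)^n t^j$ as desired. Next, on the right side, the polynomial $C_{r,n}(t,q) = \sum_{\pi \in \colpermrn} t^{\des(\pi)} q^{\maj(\pi)}$ collapses at $q=1$ to $\sum_{\pi \in \colpermrn} t^{\des(\pi)} = C_{r,n}(t)$. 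Finally, $(t;q)_{n+1} = \prod_{i=0}^{n}(1-tq^i)$ specializes to $(1-t)^{n+1}$.

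Combining these three specializations yields the claimed identity. There is no genuine obstacle here — since the $q$-deformation already carries more information than the plain Eulerian formula, the only thing to double-check is that the identity in Corollary \ref{col perm des maj} holds as a formal identity in $\Q(q)[[t]]$ so that the substitution $q=1$ is legitimate; but this is immediate because both sides are polynomials in $q$ (after clearing the denominator $(t;q)_{n+1}$) with well-defined limits as $q \to 1$. Alternatively, one could give a parallel standalone proof by repeating the argument of Theorem \ref{thm: col mult perm des maj} with $q=1$ and $\alpha = (1,\ldots,1)$: beginning with $j$ bars forming $j+1$ compartments, each of the $n$ distinct letters has $j+1$ choices of compartment if given color $0$ and $j$ choices otherwise, yielding $(r \cdot j + 1)^n$ placements (there are $r$ color choices for each letter, but only letters of color $0$ may occupy the rightmost compartment, which accounts for the asymmetry). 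This recovers the left side directly and gives an independent derivation should one prefer to avoid the $q$-specialization route.
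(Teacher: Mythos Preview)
Your proof is correct and takes essentially the same approach as the paper: the paper simply states that setting $q=1$ in Corollary~\ref{col perm des maj} yields the result. Your careful verification of each specialization and the alternative direct combinatorial argument are both sound, though more detailed than what the paper provides.
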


When $r=1$, this reduces to the familiar formula for the Eulerian polynomials and when $r=2$ we have a formula identical to the one used to compute the signed Eulerian polynomials (even though here descents are defined differently and a different total order is used).
When $r=3$, the first few $r$-colored Eulerian polynomials are shown in Table \ref{tab: three colored eulerian polys}.
When $r=4$, the first few $r$-colored Eulerian polynomials are shown in Table \ref{tab: four colored eulerian polys}.

While not symmetric like the Eulerian polynomials, they are in fact unimodal \cite[Theorem 19]{Steingrimsson1994}.
Chow and Mansour \cite{ChowMansour2011} conjecture a similar property for the colored $q$-Eulerian polynomials.
The following proposition gives a recurrence relation for the colored Eulerian polynomials which can easily be derived from \cite[Theorem 9(ii)]{ChowMansour2011} by setting $q=1$.

\begin{table}[htdp]
\begin{center}
\begin{tabular}{ccl} \hline $n$ & & $C_{3,n}(t)$ \\ \hline $1$ & & $1+2t$ \\ $2$ & & $1+13t + 4t^2$ \\ $3$ & & $1+60t + 93t^2 + 8t^3$ \\ $4$ & & $1+251t + 1131t^2 + 545t^3 + 16t^4$ \\ \hline
\end{tabular}
\caption{$C_{3,n}(t)$ for $n=1,2,3,4$\label{tab: three colored eulerian polys}}
\end{center}
\end{table}

\begin{table}[htdp]
\begin{center}
\begin{tabular}{ccl} \hline $n$ & & $C_{4,n}(t)$ \\ \hline $1$ & & $1+3t$ \\ $2$ & & $1+22t + 9t^2$ \\ $3$ & & $1+121t + 235t^2 + 27t^3$ \\ $4$ & & $1+620t + 3446t^2 + 1996 t^3 + 81t^4$ \\ \hline
\end{tabular}
\caption{$C_{4,n}(t)$ for $n=1,2,3,4$\label{tab: four colored eulerian polys}}
\end{center}
\end{table}

\begin{prop}\label{Col Eul Poly Recurrence}
If we set $C_{r,0}(t) = 1$ then the $r$-colored Eulerian polynomials satisfy the following recurrence relation for $r,n \geq 1$.
\[
C_{r,n}(t) = (1+(rn-1)t)C_{r,n-1}(t) + rt(1-t)C'_{r,n-1}(t)
\]
\end{prop}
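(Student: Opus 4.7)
The plan is to derive the recurrence directly from the generating function identity in Corollary~\ref{Col Eul Poly} rather than from a combinatorial insertion argument. Define the auxiliary series
\[
A_n(t) = \sum_{j \geq 0} (rj+1)^n t^j,
\]
so that Corollary~\ref{Col Eul Poly} reads $A_n(t) = C_{r,n}(t)/(1-t)^{n+1}$. My first step will be to establish a simple recursion for $A_n(t)$: writing $(rj+1)^n = (rj+1)(rj+1)^{n-1}$ and using the fact that $\sum_{j \geq 0} j (rj+1)^{n-1} t^j = tA_{n-1}'(t)$, I immediately get
\[
A_n(t) = A_{n-1}(t) + rt\, A_{n-1}'(t).
\]

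Next, I will substitute $A_n(t) = C_{r,n}(t)/(1-t)^{n+1}$ and $A_{n-1}(t) = C_{r,n-1}(t)/(1-t)^n$ into this recursion. Applying the product rule,
\[
\frac{d}{dt}\!\left(\frac{C_{r,n-1}(t)}{(1-t)^n}\right) = \frac{C_{r,n-1}'(t)}{(1-t)^n} + \frac{n\, C_{r,n-1}(t)}{(1-t)^{n+1}}.
\]
Substituting and clearing the common denominator $(1-t)^{n+1}$ gives
\[
C_{r,n}(t) = (1-t) C_{r,n-1}(t) + rt(1-t) C_{r,n-1}'(t) + rnt\, C_{r,n-1}(t),
\]
which simplifies to the claimed formula $C_{r,n}(t) = \bigl(1+(rn-1)t\bigr) C_{r,n-1}(t) + rt(1-t) C_{r,n-1}'(t)$.

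The argument is almost entirely algebraic, so there is no serious obstacle; the only thing to check carefully is that the boundary term at $j=0$ in the derivative manipulation does not contribute (which is automatic since the factor $j$ kills it). An alternative would be a combinatorial insertion proof tracking what happens to $\Des(\pi)$ when the letter with value $n$ is removed from $\pi \in \colpermrn$; this requires splitting into cases by the position and color $c$ of $n_c$ and by whether the neighboring letters have color $0$ or a higher color (since $n_c$ is not in general the maximum of $[n]_{(r)}$), and it is exactly the case analysis referred to as ``straightforward but tedious'' in the remark preceding the proposition. The generating-function derivation above avoids all of this bookkeeping and reduces the proposition to a one-line differentiation.
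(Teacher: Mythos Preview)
Your proof is correct and follows essentially the same approach as the paper: both derive the recurrence by differentiating the generating function identity of Corollary~\ref{Col Eul Poly}, using $(rj+1)^n = (rj+1)^{n-1} + rj(rj+1)^{n-1}$ together with $t\frac{d}{dt}t^j = jt^j$, and then clearing the denominator $(1-t)^{n+1}$. Your introduction of the auxiliary series $A_n(t)$ is a minor organizational difference, but the underlying computation is identical to the paper's.
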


\begin{proof}
Corollary \ref{Col Eul Poly} tells us that
\[
\sum_{j \geq 0} (rj+1)^{n-1} t^j = \frac{C_{r,n-1}(t)}{(1-t)^{n}}.
\]
Differentiating both sides with respect to $t$ and then multiplying by $rt$ gives us
\[
\sum_{j \geq 0} rj(rj+1)^{n-1} t^j = rnt\frac{C_{r,n-1}(t)}{(1-t)^{n+1}}+rt\frac{C'_{r,n-1}(t)}{(1-t)^{n}}.
\]
Adding $\sum_{j \geq 0} (rj+1)^{n-1} t^j$ to the left side and adding $C_{r,n-1}(t) / (1-t)^n$ to the right side of the previous equation we have
\[
\sum_{j \geq 0} (rj+1)^{n} t^j = rnt\frac{C_{r,n-1}(t)}{(1-t)^{n+1}}+\frac{C_{r,n-1}(t)}{(1-t)^{n}}+rt\frac{C'_{r,n-1}(t)}{(1-t)^{n}}.
\]
The left side of the equation is equal to $C_{r,n}(t) / (1-t)^{n+1}$ and so multiplying both sides of the equation by $(1-t)^{n+1}$ gives us
\begin{align*}
C_{r,n}(t) &= rntC_{r,n-1}(t)+(1-t)C_{r,n-1}(t)+rt(1-t)C'_{r,n-1}(t) \\
&= (1+(rn-1)t) C_{r,n-1}(t)+rt(1-t)C'_{r,n-1}(t). 
\qedhere
\end{align*}
\end{proof}

Next let us denote by $C_{r,n,k}$ the number of colored permutations in $\colpermrn$ with $k$ descents.
Then
\[
C_{r,n}(t) = \sum_{k=0}^n C_{r,n,k} t^k.
\]
Either by comparing coefficients of $t^k$ on both sides of the recurrence in the previous proposition or by setting $q=1$ in Theorem \ref{thm: recurrence}, we arrive at the following corollary originally proven by Steingr{\'{\i}}msson \cite[Lemma 16]{Steingrimsson1994}.

\begin{cor}[Steingr{\'{\i}}msson]
We have $C_{r,n,0} = 1$ for $n \geq 0$. The $C_{r,n,k}$ satisfy the recurrence relation
\[
C_{r,n,k} = (rk+1) C_{r,n-1,k} + (r(n+1)-(rk+1))C_{r,n-1,k-1}
\]
for $k \geq 1$.
\end{cor}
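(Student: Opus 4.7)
The plan is to derive this recurrence by specializing what we already have, and the excerpt in fact offers two routes. I would take the first one (comparing coefficients in Proposition~\ref{Col Eul Poly Recurrence}) as the primary approach, because it is self-contained and requires no machinery beyond elementary manipulation of generating functions in one variable. The second route (setting $q=1$ in Theorem~\ref{thm: recurrence}) I would include as a quick sanity check.

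First I would observe that $C_{r,n,0}=1$ for all $n\geq 0$, since the only colored permutation with no descents is $1_0 2_0 \cdots n_0$ (the definition of descent at position $n$ forces the last letter to have color $0$, and the internal positions then force the underlying permutation to be the identity with all letters of color $0$). Next I would write $C_{r,n-1}(t)=\sum_{k\geq 0} C_{r,n-1,k}\,t^k$, so that $C'_{r,n-1}(t)=\sum_{k\geq 0} k\,C_{r,n-1,k}\,t^{k-1}$, and substitute into the recurrence
\[
C_{r,n}(t) = (1+(rn-1)t)\,C_{r,n-1}(t) + rt(1-t)\,C'_{r,n-1}(t).
\]
Expanding the right-hand side gives four sums: the constant part contributes $C_{r,n-1,k}$ to the coefficient of $t^k$, the linear shift contributes $(rn-1)\,C_{r,n-1,k-1}$, the term $rt\,C'_{r,n-1}(t)$ contributes $rk\,C_{r,n-1,k}$, and the term $-rt^2\,C'_{r,n-1}(t)$ contributes $-r(k-1)\,C_{r,n-1,k-1}$.

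Collecting, the coefficient of $t^k$ on the right becomes $(1+rk)\,C_{r,n-1,k}+\bigl((rn-1)-r(k-1)\bigr)\,C_{r,n-1,k-1}$, which simplifies to $(rk+1)\,C_{r,n-1,k}+\bigl(r(n+1)-(rk+1)\bigr)\,C_{r,n-1,k-1}$, as claimed. As a cross-check, I would set $q=1$ in Theorem~\ref{thm: recurrence}: since $[k]_q\to k$, $[n-k]_q\to n-k$, and both $q^k$ and $q^n$ become $1$, the factor $rq^k[n-k]_q+(r-1)q^n$ collapses to $r(n-k)+(r-1)=r(n+1)-(rk+1)$, and $1+rq[k]_q\to rk+1$, recovering the same recurrence. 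There is no real obstacle here; the only point that requires care is the bookkeeping between $k$ and $k-1$ when differentiating and multiplying by $t$, which the plan above handles by separating the two derivative contributions.
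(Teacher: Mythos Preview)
Your proposal is correct and follows exactly the two routes the paper itself indicates: comparing coefficients of $t^k$ in Proposition~\ref{Col Eul Poly Recurrence} and, as a check, setting $q=1$ in Theorem~\ref{thm: recurrence}. The coefficient bookkeeping and the $q\to 1$ specialization are both carried out accurately.
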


%%%%%%%%%%%%%%%%%%%%%%%%%%%%%%%%%%%%%%%%%%%%%%%%

\section{Flag major index}\label{sec: colored fmaj}

In Section \ref{sec: signed multiset permutations}, we noted that the flag major index gives a generalization of the Carlitz identity to $\hypn$.
We now extend this generalization to $\colpermrn$.
We define the \emph{flag major index} of a colored permutation $\pi \in \colpermrn$, denoted here by $\fmaj$, by
\[
\fmaj(\pi) = r\cdot \maj(\pi) - \sum_{i=1}^n \epsilon(\pi(i)).
\]
This definition is equivalent to that of Chow and Mansour in \cite{ChowMansour2011}, although they define $\fmaj$ differently and our definition is actually Theorem 5 in their paper.
It turns out that the barred colored permutation technique used to prove Theorem \ref{thm: col mult perm des maj} allows us to compute the joint distribution $(\des, \fmaj)$ for colored multiset permutations.

\begin{thm}
\begin{equation}\label{eq: colored mult Carlitz}
\sum_{j \geq 0} t^j \prod_{i=1}^m \qbinom{rj+\alpha_i}{\alpha_i} = \frac{\dsum_{\pi \in \multrn} t^{\des(\pi)} q^{\fmaj(\pi)}}{(t;q^r)_{n+1}}
\end{equation}
\end{thm}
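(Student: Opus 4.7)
The plan is to obtain equation~\eqref{eq: colored mult Carlitz} directly from Theorem~\ref{thm: col mult perm des maj} by specializing the bookkeeping variables $q$ and $z_0, \ldots, z_{r-1}$. Specifically, in equation~\eqref{eq: col mult perm des maj} I will substitute $q \leftarrow q^r$ and $z_k \leftarrow q^{-k}$ for $k = 0, \ldots, r-1$. Under this substitution the weight of each colored multiset permutation becomes
\[
t^{\des(\pi)} q^{r\maj(\pi)} \prod_{k=0}^{r-1} q^{-k N_k(\pi)} = t^{\des(\pi)} q^{r\maj(\pi) - \sum_{i=1}^n \epsilon(\pi(i))} = t^{\des(\pi)} q^{\fmaj(\pi)},
\]
and the denominator $(t;q)_{n+1}$ becomes $(t;q^r)_{n+1}$, so the right side of~\eqref{eq: col mult perm des maj} turns into exactly the right side of~\eqref{eq: colored mult Carlitz}. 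The problem therefore reduces to showing that each inner factor on the left becomes $\qbinom{rj+\alpha_i}{\alpha_i}$.

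Under the substitution, the $k=0$ summand keeps the form $\qbinomplain{j+L_0}{L_0}_{q^r}$, while for $k \geq 1$ the factor $\qbinom{j+L_k-1}{L_k} q^{L_k} z_k^{L_k}$ becomes $\qbinomplain{j+L_k-1}{L_k}_{q^r} q^{(r-k)L_k}$. After reindexing the inner summation by $k \mapsto r-k$ (which permutes $\{1,\ldots,r-1\}$ and sends $q^{(r-k)L_k}$ to $q^{k L_k}$), the inner sum takes the form
\[
\sum_{L \vDash \alpha_i} \qbinomplain{j+L_0}{L_0}_{q^r} \prod_{k=1}^{r-1} q^{k L_k} \qbinomplain{j+L_k-1}{L_k}_{q^r}.
\]
It remains to identify this with $\qbinom{rj+\alpha_i}{\alpha_i}$, which is a $q$-interleaving identity established by a direct combinatorial argument: any weakly increasing sequence $0 \leq i_1 \leq \cdots \leq i_{\alpha_i} \leq rj$ is uniquely determined by writing each $i_l = r m_l + k_l$ with $k_l \in \{0,1,\ldots,r-1\}$ and then sorting within residue class. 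The range of $m_l$ is $\{0,\ldots,j\}$ when $k_l = 0$ and $\{0,\ldots,j-1\}$ when $k_l \geq 1$, and because distinct residues modulo $r$ give distinct values, the merging of the subsequences contributes no extra factor. Recording $L_k = \#\{l : k_l = k\}$ and tracking the weight $q^{k_l}$ of each element in its residue class yields the displayed sum.

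The main technical obstacle is keeping the bookkeeping straight: the substitution $z_k \mapsto q^{-k}$ temporarily introduces negative powers of $q$ that must cancel in the end, and the reindexing $k \mapsto r-k$ inside the inner sum is easy to lose track of. Once these details are in hand, the argument is the $r$-color generalization of the $r=2$ calculation carried out in the proofs of Theorems~\ref{thm: mult fmaja} and~\ref{thm: mult fmajb famaj}, where the identity $[j+1]_{q^2} + q[j]_{q^2} = [2j+1]_q$ played the analogous role.
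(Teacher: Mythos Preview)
Your proof is correct and follows the same overall strategy as the paper: specialize Theorem~\ref{thm: col mult perm des maj} via the substitution $q\leftarrow q^{r}$, $z_k\leftarrow q^{-k}$, and then verify that each inner factor becomes $\qbinom{rj+\alpha_i}{\alpha_i}$. The only point of divergence is how that last identity is checked. The paper recognizes $\Omega_{(\alpha_i)}(j,q,z_0,\dots,z_{r-1})$ as the coefficient of $t^{\alpha_i}$ in $\bigl((tz_0;q)_{j+1}\prod_{k=1}^{r-1}(tqz_k;q)_j\bigr)^{-1}$, and after the substitution the Pochhammer factors interleave to give $1/(t;q)_{rj+1}$, from which the $q$-binomial drops out immediately. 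Your route instead manipulates the sum over compositions directly, reindexes $k\mapsto r-k$, and then gives a combinatorial proof by splitting $\{0,1,\dots,rj\}$ into residue classes modulo~$r$. Both arguments are valid; the paper's generating-function interleaving avoids the reindexing and the bijective bookkeeping, while your residue-class argument makes the combinatorics behind the identity more transparent (and is the direct $r$-color analogue of the compartment-splitting used in the hyperoctahedral case).
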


\begin{proof}
The proof follows from substituting $q\leftarrow q^r$ and $z_k \leftarrow q^{-k}$ for $k = 0,1,\ldots, r-1$ into equation \eqref{eq: col mult perm des maj}.
It is clear that such a substitution yields the right side of equation \eqref{eq: colored mult Carlitz}.
We must, however, show that such a substitution allows us to evaluate the sum $\Omega_{(\alpha_i)} (j,q^r,1,q^{-1},\ldots,q^{-(r-1)})$.
First note that $\Omega_{(\alpha_i)} (j,q,z_0,\ldots,z_{r-1})$ is the coefficient of $t^{\alpha_{i}}$ in
\[
\frac{1}{(tz_0,q)_{j+1} \prod_{k=1}^{r-1} (tqz_k;q)_j}.
\]
If we substitute $q \leftarrow q^r$ and $z_k \leftarrow q^{-k}$ for $k=0,1,\ldots,r-1$ then this becomes
\[
\frac{1}{(t,q^r)_{j+1} \prod_{k=1}^{r-1} (tq^{r-k};q^r)_j} = \frac{1}{(t;q)_{rj+1}}.
\]
Hence
\[
\Omega_{(\alpha_i)} (j,q^r,1,q^{-1},\ldots,q^{-(r-1)}) = \qbinom{rj+\alpha_i}{\alpha_i}.
\qedhere
\]
\end{proof}

If we set $\alpha = (1,\ldots,1)$ then we have the following corollary originally proven by Chow and Mansour \cite[Theorem 9(iv)]{ChowMansour2011}.

\begin{cor}[Chow and Mansour]
\begin{equation}\label{eq: colored Carlitz}
\sum_{j \geq 0} [rj+1]_q^n t^j = \frac{\dsum_{\pi \in \colpermrn} t^{\des(\pi)} q^{\fmaj(\pi)}}{(t;q^r)_{n+1}}
\end{equation}
\end{cor}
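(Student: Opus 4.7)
The plan is to derive this corollary as a direct specialization of the preceding theorem (equation~\eqref{eq: colored mult Carlitz}) to the case where the underlying multiset has all distinct letters, i.e., $\alpha = (1,1,\ldots,1)$ with $|\alpha| = n$. In that case $\multrn = \colpermrn$, so the right-hand side of \eqref{eq: colored mult Carlitz} becomes exactly
\[
\frac{\dsum_{\pi \in \colpermrn} t^{\des(\pi)} q^{\fmaj(\pi)}}{(t;q^r)_{n+1}},
\]
which matches the right-hand side of the desired identity.

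All that remains is to simplify the product on the left of \eqref{eq: colored mult Carlitz} under this substitution. With each $\alpha_i = 1$, the $q$-binomial coefficient collapses to
\[
\qbinom{rj+1}{1} = [rj+1]_q,
\]
so the product over $i = 1,\ldots, n$ becomes $[rj+1]_q^n$. Multiplying by $t^j$ and summing over $j \geq 0$ produces exactly the left-hand side of the statement.

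There is essentially no obstacle here, since all the combinatorial work (the barred colored permutation argument and the $q^r$-substitution that evaluates the inner sum $\Omega_{(\alpha_i)}$) has already been done in proving \eqref{eq: colored mult Carlitz}. The entire proof amounts to observing that $\multrn$ and $\colpermrn$ coincide when $\alpha = (1^n)$, together with the one-line simplification $\qbinomplain{rj+1}{1}_q = [rj+1]_q$.
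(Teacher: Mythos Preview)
Your proposal is correct and matches the paper's approach exactly: the paper obtains this corollary simply by setting $\alpha = (1,\ldots,1)$ in equation~\eqref{eq: colored mult Carlitz}, which is precisely what you do, including the reduction $\qbinom{rj+1}{1} = [rj+1]_q$.
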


The corollary shows that the polynomials $G_{r,n}(t,q) := \sum_{\pi \in \colpermrn} t^{\des(\pi)} q^{\fmaj(\pi)}$ satisfy a generalized Carlitz identity.
It should be noted that the flag major index used here is different from the flag major index originally defined by Adin and Roichman~\cite{AdinRoichman2001} and used by Bagno and Biagioli~\cite{BagnoBiagioli2007} in an alternate generalization of the Carlitz identity to $\colpermrn$.
The same techniques used here (modified slightly) could also be used to count colored permutations by this alternate flag major index.

%%%%%%%%%%%%%%%%%%%%%%%%%%%%%%%%%%%%%%%%%%%%%%%%

\section{Colored posets and colored $P$-partitions}\label{sec: colored posets and p-partitions}

In \cite{ChowThesis2001}, Chow defined signed posets and signed $P$-partitions to capture the group structure of $\hypn$.
The obvious extension of this definition to $\colpermrn$ is to define colored $P$-partitions as maps $f:P \rightarrow \N_{(r)}$ such that if $x<_P y$ then $f(x) \leq f(y)$ in $\N_{(r)}$ with the added restrictions that if $x<_P y$ and $x>y$ in $[n]_{(r)}$ then $f(x) < f(y)$ in $\N_{(r)}$ and if $f(x_j) = k_l$ then $f(x_{j+a}) = k_{l+a}$ for $a=1,\ldots,r-1$.
However, this approach does not work because these conditions may be contradictory.
As colors cycle through $\Z_r$, the relationship between $x$ and $y$ can change.
For example, if $\pi = 2_0 1_1$ is a colored permutation in $G_{3,2}$ and we define a poset $P(\pi)$ by $2_0 <_P 1_1$ then we would allow $f(2_0) = f(1_1)$ but this would imply that $f(2_2) = f(1_0)$ which would not be allowed since $2_2 > 1_0$.
This shifting relationship is the biggest hurdle to overcome in extending previous definitions to colored permutation groups.

Despite these limitations we can say enough to prove the existence of an algebra induced by descent number.
For every $\pi \in \colpermrn$, let $\shufflezeropi$ denote the set of shuffles of $\pi$ with the word $\vec{0} = 0_10_2\cdots 0_{r-1}$.
We define the set of \emph{anchored $r$-colored permutations of length $n$}, denoted by $\anchoredcolpermrn$, by
\[
\anchoredcolpermrn = \coprod_{\pi \in \colpermrn} \shufflezeropi.
\]
Let $\pos$ denote the set of positive integers.
We define an \emph{$r$-colored poset} $P$ to be a partially ordered finite subset of $\pos_{(r)} \cup \{0_1,\ldots,0_{r-1}\}$ such that the elements from $\pos_{(r)}$ all have distinct absolute values and such that $0_1 <_P 0_2 <_P \cdots <_P 0_{r-1}$.
Figure \ref{fig: 4-colored poset} gives an example of an $r$-colored poset.

\begin{figure}[htbp]
\[\xymatrix{ & & & 0_3 \\ & & 3_1\ar@{-}[ur] & \\  & & 1_0\ar@{-}[u] & \\  & 0_2\ar@{-}[ur] & & 2_1\ar@{-}[ul]  \\  0_1\ar@{-}[ur] & & &  }\] 
\caption{\; A $4$-colored poset. \label{fig: 4-colored poset}}
\end{figure}
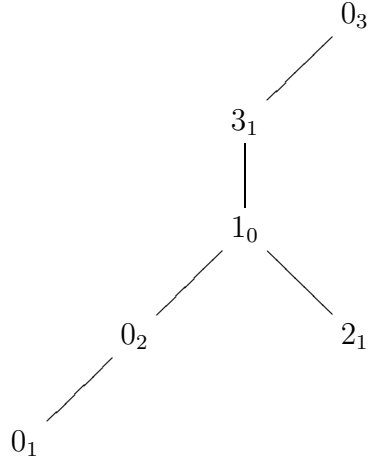

We define colored linear extensions of these colored posets in two stages.
A word $w\in \anchoredcolpermrn$ is compatible with a poset $P$ if whenever $x <_P y$ then $x$ comes before $y$ in $w$.
We define the set of linear extensions of $P$, denoted by $\Lin(P)$, to be the set of words from $\anchoredcolpermrn$ compatible with $P$.
Next we know that each $w\in \anchoredcolpermrn$ is a shuffle of a permutation $\pi \in \colpermrn$ with the word $0_1\cdots 0_{r-1}$.
The $0_i$ letters split the permutation $\pi$ into $r$ (possibly empty) subwords.
Let $\pi_0$ denote the subword to the left of $0_1$ in~$w$ and let $\pi_{r-1}$ denote the subword obtained by subtracting $r-1$ from the color of each letter to the right of $0_{r-1}$ in $w$.
Then for $i=1,\ldots,r-2$ define $\pi_i$ to be the subword between $0_i$ and $0_{i+1}$ in $w$ with $i$ subtracted from each color.
We then define the colored linear extensions of $w \in \anchoredcolpermrn$ by $\collin(w) := \shuffle{\pi_0,\ldots,\pi_{r-1}}.$
Finally, we define the \emph{colored linear extensions} of a poset $P$ by
\[
\collin(P) = \coprod_{w \in \Lin(P)} \shuffle{\pi_0,\ldots,\pi_{r-1}}.
\]

\begin{ex}
Let $P$ be the colored poset in Figure \ref{fig: 4-colored poset}.
Then
\[
\Lin(P) = \{0_1 0_2 2_1 1_0 3_1 0_3, 0_1 2_1 0_2 1_0 3_1 0_3, 2_1 0_1 0_2 1_0 3_1 0_3\}
\]
and
\begin{align*}
\collin(P) &= \{2_3 1_2 3_3\} \cup \shuffle{2_0,1_23_3} \cup \shuffle{2_1, 1_2 3_3} \\
&= \{1_22_03_3, 1_22_13_3, 1_23_32_0, 1_23_32_1, 2_01_23_3, 2_11_23_3, 2_31_23_3\}.
\end{align*}
\end{ex}

We are now ready to define colored $P$-partitions.
Let $X = \{x_0,x_1,\ldots,x_{\infty}\}$ be a totally ordered countable set with a maximal element and total order
\[
x_0 < x_1 < \cdots < x_{\infty}
\]
and let $P$ be an $r$-colored poset.
A \emph{colored $P$-partition} is a function $f: P \rightarrow \rfoldx$ such that:
\begin{enumerate}
\item[(i)] $f(0_k) = (k,x_{0})$ for every $0_k \in P$
\item[(ii)] $f(a) \leq f(b)$ if $a <_P b$
\item[(iii)] if $f(a),f(b) \in X_k$ then $f(a) < f(b)$ if $a <_P b$ and  $|a|_{\epsilon(a)-k} > |b|_{\epsilon(b)-k}$
\item[(iv)] if $f(a) = (k,x_{\infty})$ then $\epsilon(a) = k$.
\end{enumerate}

Condition $\mathrm{(iii})$ says that when $f(a)$ and $f(b)$ are both mapped into the same color block $X_k$ then the inequality between them is weak or strict as determined by their shifted relationship, not their relationship in the one-line notation of $\pi$.
Let $\mathcal{A}(P)$ denote the set of all colored $P$-partitions.
We can represent every colored permutation $\pi \in \colpermrn$ by the colored poset
\[
\pi(1) <_P \pi(2) <_P \cdots <_P \pi(n) <_P 0_1 <_P \cdots <_P 0_{r-1}.
\]
For notational ease, we call these colored $P$-partitions simply colored $\pi$-partitions and denote the set of all such colored $\pi$-partitions by $\A(\pi)$.
Similarly, if $w\in \anchoredcolpermrn$ then we denote the set of all colored $w$-partitions by $\A(w)$ with $w$ represented by the poset
\[
w(1) <_P w(2) <_P \cdots <_P w(n+r-1).
\]
We have the following theorem which resembles the fundamental theorems from previous chapters and so is called the \emph{Fundamental Theorem of Colored $P$-partitions}.

\begin{thm}[FTCPP]
There is a bijection between the set of all colored $P$-partitions $\A(P)$ and the disjoint union of the set of colored $\pi$-partitions over all colored linear extensions $\pi$ of $P$:
\[
\A(P) \leftrightarrow \coprod_{\pi \in \collin(P)} \A(\pi).
\]
\end{thm}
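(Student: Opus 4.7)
The plan is to mimic the proof of the ordinary Fundamental Theorem of $P$-partitions (the version appearing earlier in this chapter) by induction on the number of incomparable pairs in $P$. Two flavors of incomparable pair arise: pairs in $P\cap\pos_{(r)}$ (which necessarily have distinct absolute values by the definition of an $r$-colored poset) and pairs consisting of an element of $P\cap\pos_{(r)}$ together with some $0_k$. In either case, for an incomparable pair $a,b\in P$ I would form $P_{ab}$ by adjoining $a<_P b$ and $P_{ba}$ by adjoining $b<_P a$; it is immediate from the definitions that $\Lin(P)=\Lin(P_{ab})\sqcup\Lin(P_{ba})$, and hence that $\collin(P)=\collin(P_{ab})\sqcup\collin(P_{ba})$ via the formula $\collin(P)=\coprod_{w\in\Lin(P)}\shuffle{\pi_0,\ldots,\pi_{r-1}}$.

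The substantive half of the inductive step is the partition $\A(P)=\A(P_{ab})\sqcup\A(P_{ba})$, obtained by comparing $f(a)$ and $f(b)$ in $\rfoldx$. The cases $f(a)<f(b)$ and $f(a)>f(b)$ assign unambiguously, while the equality case $f(a)=f(b)\in X_k$ is resolved by condition~(iii): $f$ lands in $\A(P_{ab})$ when $|a|_{\epsilon(a)-k}<|b|_{\epsilon(b)-k}$ (so that adjoining $a<_P b$ imposes only a weak inequality, which holds), and in $\A(P_{ba})$ otherwise. The distinct absolute values of $a$ and $b$ (automatic in the first flavor of pair and clear in the second, since $|0_k|=0$) ensure that exactly one alternative holds for each $f$.

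The base case is $P$ a total order, so $\Lin(P)=\{w\}$ for a unique $w\in\anchoredcolpermrn$, and it remains to show $\A(w)\leftrightarrow\coprod_{\pi\in\shuffle{\pi_0,\ldots,\pi_{r-1}}}\A(\pi)$. First I would observe that condition~(iii) applied at each $0_{k+1}$ boundary forces strict inequality $f(a)<f(0_{k+1})=(k+1,x_0)$ whenever $a<_P 0_{k+1}$, and propagating this through block $k$ places every block-$k$ letter's $f$-value inside $X_k$. The blocks then interact only through the fixed values $f(0_k)=(k,x_0)$, so $\A(w)$ factors as $\A(\pi_0)\times\cdots\times\A(\pi_{r-1})$, with each factor a colored $\pi_k$-partition valued in a translated copy of $X$. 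The final step is the colored shuffle identity $\A(\pi_0)\times\cdots\times\A(\pi_{r-1})\leftrightarrow\coprod_{\pi\in\shuffle{\pi_0,\ldots,\pi_{r-1}}}\A(\pi)$, which I would establish by a direct merging bijection: given $(g_0,\ldots,g_{r-1})$, interleave the letters in the order of their $g$-values with ties broken using the shifted comparison dictated by condition~(iii), yielding a shuffle $\pi$ and a corresponding $\pi$-partition; the inverse restricts a $\pi$-partition to each $\pi_k$.

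The main obstacle will be managing the tie-breaking cleanly throughout the proof, since the color $k$ governing condition~(iii) depends on the partition $f$ rather than on the poset alone. This means the rule for assigning $f$ to $\A(P_{ab})$ versus $\A(P_{ba})$ in the inductive step, as well as the tie-breaking in the base-case merging bijection, varies across partitions, and the verification that every $f$ is assigned exactly once is precisely where the colored setting genuinely differs from the classical FTPP. The distinct-absolute-values hypothesis on $P\cap\pos_{(r)}$ together with the fixed values of $f$ on the $0_k$'s are the ingredients that make the bookkeeping close up.
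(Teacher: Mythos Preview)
Your proposal is correct and follows the same two-stage architecture as the paper's proof: an induction on incomparable pairs to obtain $\A(P)=\coprod_{w\in\Lin(P)}\A(w)$, followed by a bijection $\A(w)\leftrightarrow\coprod_{\sigma\in\collin(w)}\A(\sigma)$ for each totally ordered $w$. The only organizational differences are that the paper handles the $(a,0_k)$ pairs first (so that when it reaches a nonzero pair $(a,b)$ it may assume $f(a),f(b)$ lie in a common $X_k$ determined by the poset, avoiding exactly the $f$-dependent tie-breaking you flag as the main obstacle), and that for the base case the paper writes down the bijection $\A(w)\leftrightarrow\coprod_\sigma\A(\sigma)$ directly rather than passing through the factorization $\A(w)\cong\prod_k\A(\pi_k)$ and a subsequent merging bijection; your route makes the role of the color shift more transparent, while the paper's ordering of the induction sidesteps the variable tie-breaking rule.
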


\begin{proof}
We first prove that
\[
\A(P) = \coprod_{w \in \Lin(P)} \A(w).
\]
The proof is by induction on the number of incomparable pairs $(a, b)$.
If $a$ and $b$ are incomparable in $P$ then let $P_{ab}$ denote the poset obtained from $P$ by adding the relation $a <_P b$ and similarly let $P_{ba}$ denote the poset obtained from $P$ by adding the relation $b <_P a$.
Then $\Lin(P) = \Lin(P_{ab}) \sqcup \Lin(P_{ba})$.
If we assume that $|a| \neq 0$ and $b = 0_k$ for some $k\in [0,r-1]$ then it is clear that $\Lin(P) = \Lin(P_{ab}) \sqcup \Lin(P_{ba})$ implies that $\mathcal{A}(P) = \mathcal{A}(P_{ab}) \sqcup \mathcal{A}(P_{ba})$ since $\A(P_{ab})$ contains all colored $P$-partitions $f$ such that $f(a) \in X_l$ for $l < k$ and $\A(P_{ba})$ contains all colored $P$-partitions $f$ such that $f(a) \in X_l$ for $l \geq k$.
Thus, by induction, we can assume that the set $\{a, 0_1, \ldots, 0_{r-1}\}$ is totally ordered for every $a \in P$ with $|a| \neq 0$.

Next we consider incomparable pairs $(a, b)$ with $|a|\neq 0 $ and $|b|\neq 0$.
By the previous argument, we can assume that $f(a), f(b) \in X_k$ for some $k\in [0,r-1]$.
We again let $P_{ab}$ denote the poset obtained from $P$ by adding the relation $a <_P b$ and similarly let $P_{ba}$ denote the poset obtained from $P$ by adding the relation $b <_P a$.
As before, $\Lin(P) = \Lin(P_{ab}) \sqcup \Lin(P_{ba})$ and so $\mathcal{A}(P) = \mathcal{A}(P_{ab}) \sqcup \mathcal{A}(P_{ba})$, which follows from condition $(\mathrm{iii})$.
More explicitly, if $|a|_{\epsilon(a)-k} < |b|_{\epsilon(b)-k}$ then $\A(P_{ab})$ contains all colored $P$-partitions $f$ such that $f(a) \leq f(b)$ and $\A(P_{ba})$ contains all colored $P$-partitions $f$ such that $f(b) < f(a)$.
Similarly, if $|a|_{\epsilon(a)-k} > |b|_{\epsilon(b)-k}$ then $\A(P_{ab})$ contains all colored $P$-partitions $f$ such that $f(a) < f(b)$ and $\A(P_{ba})$ contains all colored $P$-partitions $f$ such that $f(b) \leq f(a)$.
Note that condition $\mathrm{(iii})$ also ensures that if $f(a) = (k,x_{\infty})$ then $f$ is in exactly one of $\mathcal{A}(P_{ab})$ and $\mathcal{A}(P_{ba})$.

The final step is to describe a bijection between $\A(w)$ and $\coprod_{\sigma \in \collin(w)}\A(\sigma)$.
First, given a colored $\sigma$-partition $g$ for some $\sigma \in \collin(w)$, we define a colored $w$-partition $f$.
Suppose $w$ has underlying colored permutation $\pi$.
Since $\sigma \in \collin(w)$, we know that $\sigma(k)$ is a letter in $\pi_i$ for some $i\in [0,r-1]$.
Hence there exists an $l\in [n]$ such that $\sigma(k) = |\pi(l)|_{\epsilon(\pi(l))-i}$.
We then define $f(\pi(l)) = |g(\sigma(k))|_{i}$.
Repeating this for every $k \in [n]$ defines the desired colored $w$-partition $f$.
Next, given a colored $w$-partition $f$, we define a colored $\sigma$-partition $g$ for some $\sigma \in \collin(w)$.
Since $\sigma(k) = |\pi(l)|_{\epsilon(\pi(l))-i}$, we set $g(\sigma(k)) = |f(\pi(l))|_0$.
Repeating this for every $k\in [n]$ defines a colored $\sigma$-partition and the composition of these two bijections is the identity. \\
\end{proof}

\begin{ex}
Let $P$ be the colored poset in Figure \ref{fig: 4-colored poset}.
We first see that $0_2 <_P 1_0 <_P 3_1 <_P 0_3$ and so both $f(1_0)\in X_2$ and $f(3_1) \in X_2$.
Next we see that $1_{0-2} = 1_2 < 3_3 = 3_{1-2}$ and so $f(1_0) \leq f(3_1)$.
Then, since $\epsilon(3_1) \neq 2$, we have $f(3_1) < (2,x_{\infty})$.
Taken together, we have $(2,x_0) \leq f(1_0) \leq f(3_1) < (2,x_{\infty})$.
By examining the image of $2_1$, $\A(P)$ can be decomposed into the three sets.
If $f(2_1) \in X_2$ then we have the set
\[
\A(0_1 0_2 2_1 1_0 3_1 0_3) = \{f \, : \, (2,x_{0}) \leq f(2_1) < f(1_0) \leq f(3_1) < (2,x_{\infty}) \}
\]
where $f(2_1) < f(1_0)$ because $2_1 <_P 1_0$ and $2_{1-2} = 2_3 > 1_2 = 1_{0-2}$.
If $f(2_1) \in X_1$ then we have the set
\[
\A(0_1 2_1 0_2 1_0 3_1 0_3) =  \{f \, : \,  (1,x_0) \leq f(2_1) \leq (1,x_{\infty}), (2,x_{0}) \leq f(1_0) \leq f(3_1) < (2,x_{\infty})\}
\]
where $f(2_1) \leq (1,x_{\infty})$ because $\epsilon(2_1) = 1$.
If $f(2_1) \in X_0$ then we have the set
\[
\A(2_1 0_1 0_2 1_0 3_1 0_3) = \{f \, : \,  (0,x_0) \leq f(2_1) < (0,x_{\infty}), (2,x_{0}) \leq f(1_0) \leq f(3_1) < (2,x_{\infty})\}
\]
where $f(2_1) < (0,x_{\infty})$ because $\epsilon(2_1) \neq 0$.
Thus we have shown that
\[
\A(P) = \A(0_1 0_2 2_1 1_0 3_1 0_3) \sqcup \A(0_1 2_1 0_2 1_0 3_1 0_3) \sqcup \A(2_1 0_1 0_2 1_0 3_1 0_3).
\]
Next we examine the sets $\A(w)$ for $w\in \Lin(P)$ and illustrate the bijection between $\A(w)$ and $\coprod_{\sigma \in \collin(w)} \A(\sigma)$.
It is easy to see the bijection between the two sets
\begin{align*}
\A(0_1 0_2 2_1 1_0 3_1 0_3) &=  \{f \, : \, (2,x_{0}) \leq f(2_1) < f(1_0) \leq f(3_1) < (2,x_{\infty}) \} \\
\A(2_3 1_2 3_3) &=  \{f \, : \, (0,x_{0}) \leq f(2_3) < f(1_2) \leq f(3_3) < (0,x_{\infty}) \}.
\end{align*}
Then if we define
\[
\coprod_{\pi \in \shuffle{2_0,1_23_3}} \A(\pi) = \{f \, : \, (0,x_0) \leq f(2_0) \leq (0,x_{\infty}), (0,x_{0}) \leq f(1_2) \leq f(3_3) < (0,x_{\infty}) \}
\]
we also see the bijection
\[
\A(0_1 2_1 0_2 1_0 3_1 0_3) \leftrightarrow \A(1_22_03_3) \sqcup \A(1_23_32_0) \sqcup \A(2_01_23_3).
\]
Note that every $f\in \A(0_1 2_1 0_2 1_0 3_1 0_3)$ with $f(2_1) = (1,x_{\infty})$ is mapped to an element of $\A(1_23_32_0)$.
Finally, the bijection
\[
\A(2_1 0_1 0_2 1_0 3_1 0_3) \leftrightarrow \A(1_22_13_3) \sqcup \A(1_23_32_1) \sqcup \A(2_11_23_3)
\]
is analogous.
\end{ex}

The previous notation can be cumbersome and we will not need such generality to prove the existence of the colored Eulerian descent algebra.
Thus for the rest of the section we set $X = [0,j]$ and define the order polynomial $\Omega_{P} (j)$ to be the number of colored $P$-partitions with parts in $\rversion{[0,j]}$.
Also define $\Omega_{\pi}(j)$ to be the number of colored $\pi$-partitions with parts in $\rversion{[0,j]}$.
We then have the following corollary of the Fundamental Theorem of Colored $P$-partitions.

\begin{cor}
\[
\Omega_{P} (j) = \sum_{\pi \in \collin(P)} \Omega_{\pi} (j)
\]
\end{cor}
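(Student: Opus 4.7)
The plan is to derive this essentially as a direct counting consequence of the Fundamental Theorem of Colored $P$-partitions (FTCPP), after restricting to the finite set $X = [0,j]$. First I would observe that $\Omega_{P}(j)$ and $\Omega_{\pi}(j)$ are by definition the cardinalities of the finite sets $\A(P)$ and $\A(\pi)$ when the target set $X$ is taken to be $[0,j]$ (so $\rversion{[0,j]}$ replaces $\rversion{X}$ as the codomain). Since cardinality is additive over disjoint unions, the desired identity would follow immediately from a set-theoretic equality of the form $\A(P) = \coprod_{\pi \in \collin(P)} \A(\pi)$ at this finite level.

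Next I would invoke the FTCPP, which supplies a bijection between $\A(P)$ and $\coprod_{\pi \in \collin(P)} \A(\pi)$ for the general ordered set $X$. The key point is that this bijection is built in two stages in the proof of FTCPP: a decomposition $\A(P) = \coprod_{w \in \Lin(P)} \A(w)$ via repeated splitting at incomparable pairs, followed by the explicit bijection $\A(w) \leftrightarrow \coprod_{\sigma \in \collin(w)} \A(\sigma)$ that reads off the shuffle structure of $w = \pi_0 \, 0_1 \, \pi_1 \, 0_2 \, \cdots \, 0_{r-1} \, \pi_{r-1}$. I would verify that each of these maps is compatible with the restriction $X = [0,j]$: the splitting argument only compares images inside a single color block $X_k$, so if all images of the original $P$-partition lie in $\rversion{[0,j]}$, the same is true for the split pieces; and the shuffle bijection simply relabels colors of images via $|f(\pi(l))|_{i} \leftrightarrow |g(\sigma(k))|_{0}$, which preserves membership in $\rversion{[0,j]}$.

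Putting these pieces together, the bijection of the FTCPP restricts to a bijection
\[
\A(P) \cap \{f : f(x) \in \rversion{[0,j]} \text{ for all } x\} \;\leftrightarrow\; \coprod_{\pi \in \collin(P)} \left( \A(\pi) \cap \{g : g(x) \in \rversion{[0,j]} \text{ for all } x\} \right).
\]
Taking cardinalities and using the definitions of $\Omega_{P}(j)$ and $\Omega_{\pi}(j)$ then yields the claimed identity. I do not expect any substantive obstacle here: the only thing that needs care is the bookkeeping to confirm that the FTCPP bijection is ``parts-preserving'' in the sense above, but this is immediate from the explicit formulas in the proof of the theorem since the maps only permute (and shift the color labels of) values, never enlarging their absolute value.
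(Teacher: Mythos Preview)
Your proposal is correct and matches the paper's approach: the paper states this corollary without proof, treating it as an immediate consequence of the FTCPP after specializing to $X=[0,j]$ and taking cardinalities. Your additional verification that the FTCPP bijection is compatible with the restriction to $\rversion{[0,j]}$ is more detail than the paper gives, but is exactly the justification implicitly being invoked.
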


We know that $\Omega_\pi (j)$ counts colored $\pi$-partitions $f$ with
\[
0_0 \leq f(\pi(1)) \leq f(\pi(2)) \leq \cdots \leq f(\pi(n)) \leq j_0
\]
with $f(\pi(i)) < f(\pi(i+1))$ if $\pi(i) > \pi(i+1)$ and with $\pi(n+1) = 0_1$.
Thus
\begin{equation}\label{eq: colored order poly}
\Omega_{\pi}(j) = \mchoose{j+1-\des(\pi)}{n} = \binom{j+n-\des(\pi)}{n}
\end{equation}
and so
\[
\sum_{j \geq 0} \Omega_{\pi}(j) t^j = \frac{t^{\des(\pi)}}{(1-t)^{n+1}}.
\]
The previous corollary tells us that
\[
\sum_{j \geq 0} \Omega_{P}(j)  t^j = \frac{\dsum_{\pi \in \collin(P)} t^{\des(\pi)}}{(1-t)^{n+1}}.
\]

Let $P_1 \sqcup P_2$ denote the disjoint union of the two colored posets $P_1$ and $P_2$.
If $\{|a| \, : \, a \in P_1, |a| \neq 0 \}$ and $\{|a| \, : \, a \in P_2, |a| \neq 0 \}$ are disjoint then the map which sends a colored $(P_1 \sqcup P_2)$-partition $f$ to the ordered pair $(g,h)$ where $g = f|_{P_1}$ and $h = f|_{P_2}$ gives us the following theorem.

\begin{thm}
There is a bijection between the set $\mathcal{A}(P_1 \sqcup P_2)$ and the cartesian product $\mathcal{A}(P_1) \times \mathcal{A}(P_2)$.
\[
\mathcal{A}(P_1 \sqcup P_2) \leftrightarrow \mathcal{A}(P_1) \times \mathcal{A}(P_2)
\]
\end{thm}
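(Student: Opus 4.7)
The plan is to verify directly that the restriction map $\Phi : f \mapsto (f|_{P_1}, f|_{P_2})$ is the desired bijection. The hypothesis on absolute values ensures that $P_1 \sqcup P_2$ is itself a legitimate $r$-colored poset in the sense of the definition (the nonzero elements have distinct absolute values, and the required chain $0_1 <_P \cdots <_P 0_{r-1}$ survives the gluing because the $0_k$ elements, if present in both $P_1$ and $P_2$, are identified). Note that there are no relations between $P_1 \setminus \{0_1,\ldots,0_{r-1}\}$ and $P_2 \setminus \{0_1,\ldots,0_{r-1}\}$ in $P_1 \sqcup P_2$.

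First I would check that $\Phi$ is well-defined, i.e.\ that $g := f|_{P_1}$ is a colored $P_1$-partition (and similarly for $h := f|_{P_2}$). Condition (i) is pointwise on the $0_k$'s, and condition (iv) is pointwise on arbitrary elements, so both are inherited. For conditions (ii) and (iii), every relation $a <_{P_1} b$ is also a relation in $P_1 \sqcup P_2$, so the required inequality on $f$-values holds and is preserved under restriction.

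Next I would observe injectivity: since $P_1$ and $P_2$ together cover $P_1 \sqcup P_2$, the partition $f$ is determined by its restrictions. Any overlap on $0_k$ elements is harmless because condition (i) forces $g(0_k) = h(0_k) = (k, x_0)$.

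Finally I would verify surjectivity by describing the inverse. Given $(g, h) \in \A(P_1) \times \A(P_2)$, define $f$ on $P_1 \sqcup P_2$ by $f|_{P_1} = g$ and $f|_{P_2} = h$; this is consistent on any shared $0_k$ because both values equal $(k, x_0)$. Condition (i) and (iv) for $f$ follow pointwise from the same conditions for $g$ and $h$. For conditions (ii) and (iii), every relation $a <_P b$ in $P_1 \sqcup P_2$ lies entirely in $P_1$ or entirely in $P_2$ (since there are no mixed relations between the nonzero parts, and any relation involving a $0_k$ is a relation in whichever $P_i$ that element belongs to); hence the required inequality reduces to the corresponding condition for $g$ or $h$. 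The compositions $\Phi \circ \Phi^{-1}$ and $\Phi^{-1} \circ \Phi$ are obviously identities, completing the bijection. No step presents a genuine obstacle — the only subtlety is the bookkeeping for the shared $0_k$ elements, which is trivially handled by condition (i).
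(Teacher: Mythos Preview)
Your proof is correct and uses exactly the same restriction map $f \mapsto (f|_{P_1}, f|_{P_2})$ that the paper invokes; the paper simply states this map and moves on, whereas you spell out the verification of conditions (i)--(iv) and the handling of the shared $0_k$ elements in more detail.
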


\begin{cor}
\[
\Omega_{P_1 \sqcup P_2} (j) = \Omega_{P_1}(j) \Omega_{P_2}(j)
\]
\end{cor}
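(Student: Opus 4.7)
The plan is to derive this corollary directly as a counting consequence of the bijection in the theorem that immediately precedes it. Since the proof of that theorem is already in hand (the restriction map $f \mapsto (f|_{P_1}, f|_{P_2})$ is the bijection), essentially no new combinatorics is required — all that remains is to verify that the bijection respects the ``parts in $[0,j]_{(r)}$'' condition that defines $\Omega$.

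First I would recall that, by definition, $\Omega_{P_1 \sqcup P_2}(j)$ is the cardinality of the subset of $\mathcal{A}(P_1 \sqcup P_2)$ consisting of colored $P$-partitions $f$ whose image lies in $[0,j]_{(r)}$, and similarly for $\Omega_{P_1}(j)$ and $\Omega_{P_2}(j)$. The bijection $f \leftrightarrow (f|_{P_1}, f|_{P_2})$ from the preceding theorem sends a colored $(P_1 \sqcup P_2)$-partition to the pair of its restrictions, and these restrictions have the same image as $f$ itself (or a subset thereof). Thus $f$ takes values in $[0,j]_{(r)}$ if and only if both $f|_{P_1}$ and $f|_{P_2}$ do, so the bijection restricts to a bijection between the set of colored $(P_1 \sqcup P_2)$-partitions with parts in $[0,j]_{(r)}$ and the cartesian product of the analogous restricted sets for $P_1$ and $P_2$. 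Taking cardinalities on both sides and using the multiplicativity of $|{\cdot}\times{\cdot}|$ yields the claimed identity.

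There is no genuine obstacle here; the only subtlety worth a sentence of commentary is that condition~(i) of the definition of a colored $P$-partition forces $f(0_k)=(k,x_0)$, so the $0_k$ elements shared (in spirit) between the two summands do not cause any double counting — each copy of $0_k$ is pinned to the same value and contributes trivially. This ensures that the product decomposition of $\Omega$ is clean and that no extra factor needs to be inserted. A one-line proof in the paper therefore suffices: invoke the preceding bijection, note that it preserves the bound $f \leq j_0$ on parts, and apply the product rule for cardinalities.
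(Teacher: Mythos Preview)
Your proposal is correct and matches the paper's approach: the corollary is stated there without proof, as an immediate consequence of the restriction bijection $f\mapsto(f|_{P_1},f|_{P_2})$ from the preceding theorem, which manifestly preserves the condition that all parts lie in $[0,j]_{(r)}$. Your remark about the $0_k$ elements being pinned by condition~(i) is a reasonable sanity check but not strictly needed for the argument.
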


If $P$ is the singleton poset $a_0$ with $a \in \pos$ then $\Omega_P(j) = j+1 + (r-1)j  = rj+1$.
If $P$ is the antichain consisting of $1_0, 2_0, \ldots, n_0$ then $\collin(P) = \colpermrn$ and, by the previous corollary, we know that $\Omega_P(j) = (rj+1)^n$.
This gives an alternate proof of Corollary~\ref{Col Eul Poly}.

Next let $P(\pi)$ denote the union of the chains
\[
0_1 <_P 0_2 <_P \cdots <_P 0_{r-1}
\]
and
\[
\pi(1) <_P \pi(2) <_P \cdots <_P \pi(n).
\]
At first glance it seems difficult to compute $\Omega_{P(\pi)}(j)$.
However, if $\pi$ is monochromatic, i.e., if $\epsilon(\pi(1)) = \cdots = \epsilon(\pi(n))$ then it is easily computable and we have the following theorem.

\begin{thm}\label{thm: mono order poly}
If $\pi \in \colpermrn$ is monochromatic then
\[
\Omega_{P(\pi)}(j) = \binom{rj+n-\intdes(\pi)}{n}.
\]
\end{thm}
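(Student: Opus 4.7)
The plan is to reduce counting colored $P(\pi)$-partitions to counting weakly increasing sequences in a totally ordered subset of $[0,j]_{(r)}$ with a prescribed pattern of strict and weak inequalities, and then apply the standard shift argument from Section~\ref{sec: P-partitions}. Since the chain $0_1 <_P \cdots <_P 0_{r-1}$ is disjoint from $\pi(1) <_P \cdots <_P \pi(n)$ in $P(\pi)$, condition~(i) forces $f(0_k) = (k,0)$ and there is no interaction between the two chains; hence a colored $P(\pi)$-partition is determined by the restriction $(f(\pi(1)), \ldots, f(\pi(n)))$.

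Let $c$ denote the common color of $\pi(1), \ldots, \pi(n)$. Condition~(iv) says that $f(\pi(i)) = (k,j)$ is allowed only when $\epsilon(\pi(i)) = k$, so the forbidden images are exactly the top elements $(k,j)$ of the color blocks $X_k$ with $k \neq c$. Thus each $f(\pi(i))$ ranges over the totally ordered subset
\[
S = [0,j]_{(r)} \setminus \{(k,j) : k \in [0,r-1],\ k \neq c\},
\]
of cardinality $|S| = (j+1) + (r-1)j = rj + 1$. Condition~(iii) demands strict increase between $f(\pi(i))$ and $f(\pi(i+1))$ whenever both land in the same block $X_k$ and $|\pi(i)|_{c-k} > |\pi(i+1)|_{c-k}$. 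Because $\pi$ is monochromatic, the subscripts $c-k$ on either side of this comparison agree, so the condition collapses to $|\pi(i)| > |\pi(i+1)|$, which is precisely $i \in \intDes(\pi)$. When $f(\pi(i))$ and $f(\pi(i+1))$ lie in distinct color blocks of $S$, the weak inequality from~(ii) is already strict, so the net effect of (ii) and (iii) is exactly that $f(\pi(1)) \leq \cdots \leq f(\pi(n))$ in $S$ with strict inequality at every $i \in \intDes(\pi)$.

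Counting these is routine. Identifying $S$ with $\{0,1,\ldots,rj\}$ via the order-preserving bijection and then shifting away each prescribed strict inequality as in Example~\ref{ex: inequality shift} converts the problem to counting weakly increasing sequences of length $n$ in $\{0,1,\ldots,rj - \intdes(\pi)\}$, which gives
\[
\Omega_{P(\pi)}(j) = \mchoose{rj+1-\intdes(\pi)}{n} = \binom{rj + n - \intdes(\pi)}{n}.
\]

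The only delicate step is the reduction in Condition~(iii): one must verify that the shifted-label comparison really collapses to $|\pi(i)|$ versus $|\pi(i+1)|$ under monochromaticity, and that the cross-block case is automatically strict. Once this is pinned down, the remaining work mirrors the order-polynomial computations for $\pi$-partitions in Chapter~\ref{chpt: Symmetric Group}.
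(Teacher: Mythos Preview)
Your proof is correct and follows essentially the same approach as the paper: identify the allowable image set $S\subset[0,j]_{(r)}$ of size $rj+1$ by removing the forbidden top elements $(k,j)$ with $k\neq c$, observe that monochromaticity makes the shifted-label comparison in condition~(iii) independent of the block $k$ and equal to $|\pi(i)|>|\pi(i+1)|$, and then count weakly increasing sequences in $S$ with strict rises at the internal descents. The paper's version is terser but makes the same moves in the same order.
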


\begin{proof}
Suppose $\pi$ is monochromatic. The relationship between $f(\pi(i))$ and $f(\pi(i+1))$ in $X_k$ is the same for all $k=0,\ldots,r-1$ since $\epsilon(\pi(i))-k = \epsilon(\pi(i+1))-k$ for all $k=0,\ldots,r-1$.
For notational ease, let $\epsilon(\pi) = \epsilon(\pi(1))$.
We also have $f(\pi(i)) \neq j_k$ for $k \neq \epsilon(\pi)$.
Thus $\Omega_{P(\pi)}(j)$ counts maps $f$ from $P(\pi)$ to the totally ordered set
\[
0_0 < \cdots < (j-1)_0  < \cdots < 0_{\epsilon(\pi)} < \cdots < j_{\epsilon(\pi)} < \cdots < 0_{r-1} < \cdots < (j-1)_{r-1}
\]
such that $f(\pi(i)) \leq f(\pi(i+1))$ and $f(\pi(i)) < f(\pi(i+1))$ if $\pi(i) > \pi(i+1)$.
This image set has size $j+1+ (r-1)j = rj+1$ and a descent at position $n$ does not affect $\Omega_{P(\pi)} (j)$.
Hence
\[
\Omega_{P(\pi)}(j) = \binom{rj+n-\intdes(\pi)}{n}.
\qedhere
\]
\end{proof}

As it turns out, the previous theorem holds for all $\pi \in \colpermrn$ but to prove this we need an intermediate lemma.
If $\pi$ is not monochromatic then let $i$ be the largest value such that $\epsilon(\pi(1)) = \cdots = \epsilon(\pi(i))$.
Thus $i$ is the length of the monochromatic run beginning with $\pi(1)$.
Let $b=\epsilon(\pi(i+1))$ and define a new permutation $\rho(\pi)$ by setting $$\rho(\pi)(s) = \left\{\begin{array}{cl}|\pi(s)|_b, & \text{if } 1\leq s \leq i \\ \pi(s), & \text{if } i < s \leq n \end{array}\right. .$$
This map simply shifts the color of each of the letters in the initial monochromatic run to $b$ and in doing so increases the length of run.
 
\begin{lem}\label{lem: MonoRunShift}
If $\des(\pi) = \des(\rho(\pi))$ then $\Omega_{P(\pi)}(j) = \Omega_{P(\rho(\pi))}(j)$.
\end{lem}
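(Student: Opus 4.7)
The plan is to prove the lemma via a $\des$-preserving bijection between colored linear extensions. By the Fundamental Theorem of Colored $P$-partitions combined with equation \eqref{eq: colored order poly},
\[
\Omega_{P(\pi)}(j)=\sum_{\sigma\in\collin(P(\pi))}\binom{j+n-\des(\sigma)}{n},
\]
and analogously for $\rho(\pi)$, so it will suffice to show that the descent distributions on $\collin(P(\pi))$ and $\collin(P(\rho(\pi)))$ coincide.

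Each element of $\collin(P(\pi))$ is obtained by choosing a composition $c=(c_0,\ldots,c_{r-1})$ of $n$, splitting $\pi$ into consecutive subwords of lengths $c_0,\ldots,c_{r-1}$, shifting the colors of the $l$-th piece down by $l\bmod r$, and taking an arbitrary shuffle. Since $\pi$ and $\rho(\pi)$ differ only in the colors of the first $i$ letters ($a$ versus $b$), a natural first candidate bijection $\Phi$ is the one that fixes the composition and shuffle pattern and re-colors each initial-run letter by $+(b-a)\bmod r$. The hypothesis $\des(\pi)=\des(\rho(\pi))$ reduces, after cancelling identical descent contributions at positions other than $i$, to the single identity $[a>b]=[|\pi(i)|>|\pi(i+1)|]$, which is precisely what one needs to handle the boundary comparison at position $i$: comparisons internal to the initial run survive the simultaneous shift by $b-a$, comparisons after position $i$ are untouched, and the comparison at position $i$ itself reduces to the hypothesis.

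The hard part will be that $\Phi$ is not descent-preserving composition by composition: already in the small case $r=2$, $\pi=1_02_1$, $\rho(\pi)=1_12_1$, the descent distributions attached to a single composition $c$ differ between $\pi$ and $\rho(\pi)$, and only the totals agree. To deal with this I would bypass the composition decomposition and instead expand both $\Omega_{P(\pi)}(j)$ and $\Omega_{P(\rho(\pi))}(j)$ directly as sums over block assignments $k_1\leq\cdots\leq k_n$ with $f(\pi(s))\in X_{k_s}$, along with the within-block placement data. Summands disagree between the two sides in exactly two controlled ways: condition~(iv) of the colored $P$-partition definition makes the maximum value $j_k$ accessible to an initial-run position only in the block matching its color (so blocks $X_a$ and $X_b$ have swapped within-block counts), and when $k_i=k_{i+1}$ the descent indicator at position $i$ depends on the shifted colors in a way sensitive to whether we are in $\pi$ or $\rho(\pi)$. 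The hypothesis $[a>b]=[|\pi(i)|>|\pi(i+1)|]$ together with an exchange argument swapping the roles of $X_a$ and $X_b$ should balance these two sources of mismatch against each other and yield the desired equality.
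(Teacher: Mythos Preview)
Your reduction via the Fundamental Theorem to comparing descent distributions on $\collin(P(\pi))$ and $\collin(P(\rho(\pi)))$ is valid, and your identification of the hypothesis as the single condition $[a>b]=[|\pi(i)|>|\pi(i+1)|]$ is correct. You are also right that the naive re-coloring map $\Phi$ on colored linear extensions is not descent-preserving.

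The gap is in your fallback argument. Once you abandon $\Phi$ and return to counting colored $P$-partitions directly via block assignments $k_1\le\cdots\le k_n$, you propose ``an exchange argument swapping the roles of $X_a$ and $X_b$''. But the blocks $X_0,\ldots,X_{r-1}$ are linearly ordered inside $[0,j]_{(r)}$, so a swap of two of them is not a well-defined operation on $P$-partitions (it destroys the monotonicity $f(\pi(1))\le\cdots\le f(\pi(n))$). Moreover, when $k_i=k_{i+1}=k$, condition~(iii) makes the inequality at position $i$ strict in $\pi$ exactly when $(a-k)\bmod r>(b-k)\bmod r$, which varies with $k$, whereas in $\rho(\pi)$ it is the constant condition $|\pi(i)|>|\pi(i+1)|$. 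Your hypothesis guarantees these agree \emph{in total} over~$k$, but you have not produced the bijection that realizes this balance, and ``swap $X_a$ with $X_b$'' does not do it.

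The paper avoids this by never jumping directly from color $a$ to color $b$. It shifts the color of the initial monochromatic run by one step at a time (from $a$ to $a\pm1$), verifying at each intermediate step that the descent count is unchanged. For a single step from $a$ to $a+1$, only the boundary between $X_a$ and $X_{a+1}$ matters, and the paper writes down an explicit bijection on $P$-partitions: for $s\le i$ it cyclically shifts $f(\pi(s))$ by one position within the contiguous range $\{j_a,0_{a+1},1_{a+1},\ldots,j_{a+1}\}$, leaving all other values alone. This one-step shift within a contiguous interval---rather than an exchange of two separated blocks---is the missing concrete mechanism that makes the argument go through.
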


\begin{proof}
This proof is notationally unpleasant but is really just a careful description of a bijection between the sets $\A(P(\pi))$ and $\A(P(\rho(\pi)))$.
This bijection is obtained by shifting (or renaming) points in $\rversion{[0,j]}$.
Suppose that the length of the initial monochromatic run is $i$ and $a = \epsilon(\pi(i)) < \epsilon(\pi(i+1)) =b$.
Define a new colored permutation $\sigma$ by setting $\sigma(s) = |\pi(s)|_{a+1}$ for $1 \leq s \leq i$ and $\sigma(s) = \pi(s)$ for $i < s \leq n$.
If $\des(\pi) = \des(\sigma)$ then we can find a bijection between colored $P(\pi)$-partitions and colored $P(\sigma)$-partitions and thus show that $\Omega_{P(\pi)}(j) = \Omega_{P(\sigma)}(j)$.

First note that for $s \neq i$, if $\pi(s)$ and $\pi(s+1)$ are mapped to $X_k$ then the relation $f(\pi(s)) \sim f(\pi(s+1))$ is the same as the relation $f(\sigma(s)) \sim f(\sigma(s+1))$ when they are both mapped to $X_k$.
That is, they are either both weak or both strict inequalities.
This is because if $s < i$ then $\epsilon(\pi(s)) = \epsilon(\pi(s+1))$ and $\epsilon(\sigma(s)) = \epsilon(\sigma(s+1))$.
If $s > i$ then $\epsilon(\pi(s)) = \epsilon(\sigma(s))$.
Also, if $s \leq i$ then $f(\pi(s)) \neq j_k$ unless $k=a$ and $f(\sigma(s)) \neq j_k$ unless $k = a+1$.
If $s> i$ then, since $\epsilon(\pi(s)) = \epsilon(\sigma(s))$, both $f(\pi(s)) \neq j_k$ and $f(\sigma(s)) \neq j_k$ unless $\epsilon(\pi(s)) = k$.

Next consider $\pi(i)$ and $\pi(i+1)$.
We claim that if  $\pi(i)$ and $\pi(i+1)$ are mapped to $X_k$ for $k\neq a+1$ then the relation $f(\pi(i)) \sim f(\pi(i+1))$ is the same as the relation $f(\sigma(i)) \sim f(\sigma(i+1))$ when they are both mapped to $X_k$.
However, if they are both mapped to $X_{a+1}$ then $f(\pi(i)) < f(\pi(i+1))$ and $f(\sigma(i)) \leq f(\sigma(i+1))$.
Lastly, note that $f(\pi(i)) \neq j_k$ unless $k= a$ and $f(\sigma(i)) \neq j_k$ unless $k = a+1$.

We define a bijection between colored $P(\pi)$-partitions and colored $P(\sigma)$-partitions as follows.
For every colored $P(\pi)$-partition $f$, define a colored $P(\sigma)$-partition $g$ by setting $g(\sigma(s)) = f(\pi(s))$ for $i < s \leq n$.
If $1 \leq s \leq i$ then define
\[
g(\sigma(s)) = \left\{\begin{array}{rl} f(\pi(s)), & \text{if } f(\pi(s)) < j_a \\ 0_{a+1}, & \text{if } f(\pi(s)) = j_a  \\ (l+1)_{a+1}, &  \text{if } f(\pi(s)) = l_{a+1} \text{ and } 0 \leq l < j \\ f(\pi(s)), & \text{if } f(\pi(s)) > j_{a+1} \end{array}\right. .
\]
This bijection shows that $\Omega_{P(\pi)}(j) = \Omega_{P(\sigma)}(j)$.
This process can be repeated until $\epsilon(\pi(i))+1 = \epsilon(\pi(i+1))$.
Then, as long as doing so does not change the number of descents, it can be repeated once more so that $\sigma = \rho(\pi)$ and thus $\Omega_{P(\pi)}(j) = \Omega_{P(\rho(\pi))}(j)$.

Next suppose that the length of the initial monochromatic run is $i$ and \linebreak $a = \epsilon(\pi(i)) > \epsilon(\pi(i+1)) =b$.
Define a new permutation $\sigma$ by setting $\sigma(s) = |\pi(s)|_{a-1}$ for $1 \leq s \leq i$ and $\sigma(s) = \pi(s)$ for $i < s \leq n$.
If $\des(\pi) = \des(\sigma)$ then we can find a bijection between colored $P(\pi)$-partitions and colored $P(\sigma)$-partitions.

As before, for $s \neq i$, if $\pi(s)$ and $\pi(s+1)$ are mapped to $X_k$ then the relation $f(\pi(s)) \sim f(\pi(s+1))$ is the same as the relation $f(\sigma(s)) \sim f(\sigma(s+1))$ when they are both mapped to $X_k$.
This is because $\epsilon(\pi(s)) = \epsilon(\pi(s+1))$ and $\epsilon(\sigma(s)) = \epsilon(\sigma(s+1))$ for $s < i$ and $\epsilon(\pi(s)) = \epsilon(\sigma(s))$ for $s > i$.
Also, if $s \leq i$ then $f(\pi(s)) \neq j_k$ unless $k=a$ and $f(\sigma(s)) \neq j_k$ unless $k=a-1$.
If $s> i$ then $\epsilon(\pi(s)) = \epsilon(\sigma(s))$ and so both $f(\pi(s)) \neq j_k$ and $f(\sigma(s)) \neq j_k$ unless $\epsilon(\pi(s)) = k$.

Next consider $\pi(i)$ and $\pi(i+1)$.
We claim that if  $\pi(i)$ and $\pi(i+1)$ are both mapped to $X_k$ for $k\neq a$ then the relation $f(\pi(i)) \sim f(\pi(i+1))$ is the same as the relation $f(\sigma(i)) \sim f(\sigma(i+1))$ when they are both mapped to $X_k$.
However, if they are both mapped to $X_a$ then $f(\pi(i)) \leq f(\pi(i+1))$ and $f(\sigma(i)) < f(\sigma(i+1))$.
Lastly, note that $f(\pi(i)) \neq j_k$ unless $k = a$ and $f(\sigma(i)) \neq j_k$ unless $k = a-1$.

We define a bijection between colored $P(\pi)$-partitions and colored $P(\sigma)$-partitions as follows.
For every colored $P(\pi)$-partition $f$, define a colored $P(\sigma)$-partition $g$ by setting $g(\sigma(s)) = f(\pi(s))$ for $i < s \leq n$.
If $1 \leq s \leq i$ then define
\[
g(\sigma(s)) = \left\{\begin{array}{rl} f(\pi(s)), & \text{if } f(\pi(s)) < 0_a \\ j_{a-1}, & \text{if } f(\pi(s)) = 0_a  \\ (l-1)_a, &  \text{if } f(\pi(s)) = l_a \text{ and } 0 < l \leq j \\ f(\pi(s)), & \text{if } f(\pi(s)) > j_a \end{array}\right. .
\]
This bijection shows that $\Omega_{P(\pi)}(j) = \Omega_{P(\sigma)}(j)$.
This process can be repeated until $\epsilon(\pi(i))-1 = \epsilon(\pi(i+1))$.
Then, as long as doing so does not change the number of descents, it can be repeated once more so that $\sigma = \rho(\pi)$ and thus $\Omega_{P(\pi)}(j) = \Omega_{P(\rho(\pi))}(j)$.
\end{proof}

\begin{ex}
Table \ref{tab: gross p-partition shift} shows how the bijection from the previous lemma maps colored $P(\pi)$-partitions with $\pi = 2_0 1_0 3_2$ to colored $P(\rho(\pi))$-partitions with $\rho(\pi) = 2_2 1_2 3_2$.
\end{ex}

\begin{table}[htdp]
\begin{center}
\begin{tabular}{c|c|c} $P(\pi)$-partitions &  $P(\sigma)$-partitions  &  $P(\rho(\pi))$-partitions \\ \hline 
$(0_0,1_0,0_1)$ & $(0_0,0_1,0_1)$  & $(0_0,0_1,0_1)$ \\
$(0_0,1_0,0_2)$ & $(0_0,0_1,0_2)$  & $(0_0,0_1,0_2)$ \\
$(0_0,1_0,1_2)$ & $(0_0,0_1,1_2)$  & $(0_0,0_1,1_2)$ \\
$(0_0,0_1,0_2)$ & $(0_0,1_1,0_2)$  & $(0_0,0_2,0_2)$ \\
$(0_0,0_1,1_2)$ & $(0_0,1_1,1_2)$  & $(0_0,0_2,1_2)$ \\
$(0_0,0_2,1_2)$ & $(0_0,0_2,1_2)$  & $(0_0,1_2,1_2)$ \\ 
$(1_0,0_1,0_2)$ & $(0_1,1_1,0_2)$  & $(0_1,0_2,0_2)$ \\
$(1_0,0_1,1_2)$ & $(0_1,1_1,1_2)$  & $(0_1,0_2,1_2)$ \\
$(1_0,0_2,1_2)$ & $(0_1,0_2,1_2)$  & $(0_1,1_2,1_2)$ \\
$(0_1,0_2,1_2)$ & $(1_1,0_2,1_2)$  & $(0_2,1_2,1_2)$ 
\end{tabular}
\bigskip
\caption{Colored $P(\pi)$-partitions and colored $P(\rho(\pi))$-partitions for $X=\{0,1\}$ and $r=3$. \label{tab: gross p-partition shift}}
\end{center}
\end{table}

\begin{thm}\label{thm: P(pi) order poly}
If $\pi \in \colpermrn$ then
\[
\Omega_{P(\pi)}(j) = \binom{rj+n-\intdes(\pi)}{n}.
\]
\end{thm}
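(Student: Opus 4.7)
My plan is to argue by induction on the number $m$ of maximal monochromatic runs of $\pi$, using Lemma~\ref{lem: MonoRunShift} as the main reduction tool. The base case $m=1$ is immediate from Theorem~\ref{thm: mono order poly}, since for a monochromatic $\pi$ the formula $\binom{rj+n-\intdes(\pi)}{n}$ is exactly what that theorem asserts. For $m\ge 2$, I assume the formula for all permutations with fewer runs than $\pi$.

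Let $\pi^{(1)}$ and $\pi^{(2)}$ be the first two monochromatic runs, with colors $a$ and $b$, and let $i$ be the length of $\pi^{(1)}$. First I would apply the single-step shift of Lemma~\ref{lem: MonoRunShift} iteratively, moving the color of $\pi^{(1)}$ one step at a time toward $b$. Every intermediate shift produces a permutation whose initial run has a color still different from $b$, so the descent at position~$i$ is preserved (it depends only on whether the initial-run color is less than or greater than $b$). The Lemma therefore applies without restriction at each intermediate step, and we may replace $\pi$ by the shifted permutation without changing $\Omega_{P(\pi)}(j)$ or $\intdes(\pi)$. After finitely many steps we reach a permutation $\sigma$ whose initial run has a color adjacent to $b$.

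Now attempt the merge step $\sigma \mapsto \rho(\sigma)$. In the favorable case $\des(\sigma)=\des(\rho(\sigma))$ the Lemma gives $\Omega_{P(\sigma)}(j)=\Omega_{P(\rho(\sigma))}(j)$; since $\rho(\sigma)$ has $m-1$ runs and $\intdes(\rho(\sigma))=\intdes(\sigma)=\intdes(\pi)$, the inductive hypothesis finishes the argument. The main obstacle is the remaining case, in which the merge fails to preserve the descent number --- this happens precisely when $a<b$ and $|\sigma(i)|>|\sigma(i+1)|$, or when $a>b$ and $|\sigma(i)|<|\sigma(i+1)|$. Here Lemma~\ref{lem: MonoRunShift} is unusable for the merge step, and iterating it in any direction (including cyclically through $\Z_r$) likewise fails, so a direct argument is needed for this configuration.

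To handle the obstacle I would compute $\Omega_{P(\sigma)}(j)$ via a rank-based bijection. For each $s\in[n]$, let $V_s=\rversion{[0,j]}\setminus\{j_k : k\neq \epsilon(\sigma(s))\}$ be the set of values allowed for $f(\sigma(s))$ by condition~(iv), a totally ordered set with $rj+1$ elements, and let $\psi_s\colon V_s\to\{0,1,\ldots,rj\}$ be its rank function; explicitly, $\psi_s(v_l)=lj+v+[\epsilon(\sigma(s))<l]$ for $v<j$, and $\psi_s(j_{\epsilon(\sigma(s))})=(\epsilon(\sigma(s))+1)j$. I would show that $f\mapsto(\psi_1(f(\sigma(1))),\ldots,\psi_n(f(\sigma(n))))$ is a bijection from $\mathcal{A}(P(\sigma))$ onto the set of weakly increasing sequences in $\{0,\ldots,rj\}^n$ with $g_s<g_{s+1}$ for $s\in\intDes(\sigma)$, a set of cardinality $\binom{rj+n-\intdes(\sigma)}{n}$. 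Verification requires a case analysis of $(f(\sigma(s)),f(\sigma(s+1)))$ according to whether they lie in the same block $X_k$ or different blocks, and, in the same-block case, according to the positions of $\epsilon(\sigma(s))$ and $\epsilon(\sigma(s+1))$ relative to $k$; the Iverson-bracket correction in the rank formula together with condition~(iii) conspires to match the weak/strict pattern in the target set exactly. This case check is the technically heaviest part of the proof, but it both disposes of the obstacle and, in fact, gives an independent proof of the theorem.
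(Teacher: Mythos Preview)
Your rank bijection is correct, and as you yourself note at the end, it proves the theorem outright.  That makes the first two-thirds of your proposal (the induction on the number of runs and the intermediate single-step applications of Lemma~\ref{lem: MonoRunShift}) superfluous: once you have a map $f\mapsto(\psi_1(f(\sigma(1))),\ldots,\psi_n(f(\sigma(n))))$ that works for an arbitrary $\sigma$, there is no reason to reduce to a special configuration first.  A cleaner write-up would simply state the rank bijection for $\pi$ itself and carry out the case analysis.

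The paper handles the obstacle you identified in a completely different and much shorter way.  Rather than building a new bijection for the bad merge, it first \emph{standardizes} $\pi$: relabel the absolute values so that the smallest letter of $\pi$ becomes $1$, the next smallest becomes $2$, and so on, keeping all colors fixed.  This produces $\sigma$ with $\Des(\sigma)=\Des(\pi)$ and $\Omega_{P(\sigma)}(j)=\Omega_{P(\pi)}(j)$, but now $\Des(|\sigma|)=\intDes(\pi)$.  The point is that after standardization, the final merge step of Lemma~\ref{lem: MonoRunShift} can \emph{never} change the descent number: at the boundary position $i$, whether $|\sigma(i)|>|\sigma(i+1)|$ already agrees with whether $i\in\intDes(\sigma)$, so recoloring the initial run to match the next run preserves the descent there.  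Repeated application of the Lemma then reduces $\sigma$ all the way to a monochromatic permutation, and Theorem~\ref{thm: mono order poly} finishes.

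So both routes are valid.  The paper's standardization is a two-line relabeling trick that makes the existing Lemma do all the work; your rank bijection is a more hands-on argument that bypasses the Lemma entirely at the cost of a several-case verification (same block versus different blocks, with sub-cases according to where $\epsilon(\sigma(s))$ and $\epsilon(\sigma(s+1))$ sit relative to the block index).  If you pursue your route, that case analysis is the entire proof and deserves to be written out rather than asserted.
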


\begin{proof}
First we replace $\pi$ with an alternate permutation $\sigma$ such that $|\sigma|$ has the same descent set as the internal descent set of $\pi$.
We define $\sigma$ as follows.
If $\pi(i)$ is the smallest letter in $\pi$ then define $\sigma(i) = 1_{\epsilon(\pi(i))}$.
Similarly, if $\pi(j)$ is the second smallest letter in $\pi$ then set $\sigma(j) = 2_{\epsilon(\pi(j))}$.
Continue this process until all $n$ letters of $\sigma$ have been defined.
This produces a new permutation $\sigma$ with the same descent set as $\pi$.
To see why, note that $\epsilon(\pi(i)) = \epsilon(\sigma(i))$ for $i = 1,\ldots,n$.
Thus all descents between colors are preserved.
Also, if $\pi$ has a descent at position $i$ with $\epsilon(\pi(i)) = \epsilon(\pi(i+1))$ then $\pi(i) > \pi(i+1)$ and so $|\sigma|(i) > |\sigma|(i+1)$ and thus this descent is also preserved.
Next note that $\Omega_{P(\pi)}(j) = \Omega_{P(\sigma)}(j)$ because both descent set and the colors of the letters are preserved under the standardization.
The purpose of the standardization is to find a permutation $\sigma$ such that in applying Lemma \ref{lem: MonoRunShift} we can always shift the color of the initial monochromatic run to match the color of the next term without creating a new descent.
Hence repeatedly using Lemma \ref{lem: MonoRunShift} allows us to shift from $\sigma $ to $\overline{\sigma}$ defined by $\overline{\sigma}(i) = |\sigma(i)|_{\epsilon(\pi(n))}$ for $i = 1,\ldots, n$.
Since $\overline{\sigma}$ is monochromatic, $\Omega_{P(\overline{\sigma})}(j)$ is given by Theorem \ref{thm: mono order poly}.
Lastly, we know that $\intdes(\overline{\sigma}) = \intdes(\pi)$ and that $\Omega_{P(\pi)}(j) = \Omega_{P(\sigma)}(j) = \Omega_{P(\overline{\sigma})}(j)$.
Hence
\[
\Omega_{P(\pi)}(j) = \binom{rj+n-\intdes(\pi)}{n}.
\qedhere
\]
\end{proof}

In \cite{Steingrimsson1994}, Steingr{\'{\i}}msson described a shelling of the unit hypercube that corresponds to colored permutations and descents.
This suggests that a connection between colored $P$-partitions and the Ehrhart polynomials of regions bounded by certain hyperplanes in $\R^n$ should be possible but it is not immediately clear how this would work.
This will be the subject of future research.

%%%%%%%%%%%%%%%%%%%%%%%%%%%%%%%%%%%%%%%%%%%%%%%%

\section{The colored Eulerian descent algebra}\label{sec: colored descent algebra}

In this section, we will prove that the Mantaci-Reutenauer algebra contains a subalgebra induced by descent number with basis elements $C_i$ defined by
\[
C_i = \sum_{\des(\pi) = i} \pi.
\]
To prove that such an algebra exists, we compute $C_\pi(s,t) := \sum_{\sigma \tau = \pi} s^{\des(\sigma)}t^{\des(\tau)}$ and show that $C_\pi(s,t)$ is determined by $\des(\pi)$.

For every $I \subseteq [n]$ and $\pi \in \colpermrn$, define the \emph{colored zig-zag poset} $\zigipi$ by setting $\pi(i) <_Z \pi(i+1)$ if $i \notin I$ and $\pi(i) >_Z \pi(i+1)$ if $i \in I$ with $\pi(n+1) = 0_1$.
We still require that $0_1 < 0_2 < \cdots < 0_{r-1}$ but will not mention this explicitly except where necessary.

\begin{lem}\label{Col Zig-Zag Extensions}
Let $\sigma \in \colpermrn$ and consider $\zigipi$ with $\pi \in \colpermrn$ and $I \subseteq [n]$.
Then $\sigma \in \collin(\zigipi)$ if and only if $\Des(\sigma^{-1}\pi) = I$.
\end{lem}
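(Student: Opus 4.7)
The plan is to set up an explicit correspondence between colored linear extensions $\sigma \in \collin(\zigipi)$ and anchored words $w \in \Lin(\zigipi)$, and to show that the compatibility of $w$ with the zig-zag poset encodes exactly the descent condition on $\tau := \sigma^{-1}\pi$. The strategy mirrors the proof of Lemma \ref{lem: Zig-Zag Extensions}, but with an extra bijective unpacking step to handle the shuffle structure in the definition of $\collin$.

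First I would establish that any $\sigma \in \colpermrn$ determines a unique candidate word $w \in \anchoredcolpermrn$ such that $\sigma$ appears in the shuffle $\shuffle{\pi_0, \ldots, \pi_{r-1}}$ associated to $w$. For each $i \in [n]$, let $l_i$ denote the position of the letter $|\pi(i)|$ in $\sigma$. The key observation is that $\pi(i)$ must lie in the block of $w$ indexed by $\epsilon(\pi(i)) - \epsilon(\sigma(l_i)) \pmod{r}$, and that the letters within each block appear in $w$ in the same relative order as they do in $\sigma$. This block assignment is forced by the color-shifting recipe used to extract $\pi_0, \ldots, \pi_{r-1}$ from $w$: a letter appearing with original color $\epsilon(\pi(i))$ in $w$'s block $k$ acquires color $\epsilon(\pi(i)) - k$ when inserted into the subword $\pi_k$, and hence in any shuffle $\sigma$.

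Next I would compare the order of $\pi(i)$ and $\pi(i+1)$ in $w$ with the colored-permutation comparison of $\tau(i)$ and $\tau(i+1)$. Since $\tau(i) = \sigma^{-1}(\pi(i))$ has underlying letter $l_i$ and color equal to the block of $\pi(i)$ in $w$, the lexicographic order on $[n]_{(r)}$ makes these comparisons match: $\pi(i)$ precedes $\pi(i+1)$ in $w$ if and only if $\tau(i) < \tau(i+1)$ in $[n]_{(r)}$. Combined with the convention $\pi(n+1) = 0_1$ in the definition of $\Des$, together with the corresponding terminal relation between $\pi(n)$ and $0_1$ in $\zigipi$, compatibility of $w$ with the zig-zag relations becomes equivalent to $\Des(\tau) = I$.

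Assembling the two steps: $\sigma \in \collin(\zigipi)$ if and only if its associated word $w$ lies in $\Lin(\zigipi)$, which holds if and only if $\Des(\sigma^{-1}\pi) = I$. The main obstacle is the bookkeeping around the mod-$r$ color arithmetic and the terminal position $n$. One must carefully reconcile the convention $\pi(n+1) = 0_1$ in the descent definition with the chain $0_1 <_P \cdots <_P 0_{r-1}$ built into the colored poset, and verify that a letter of $\pi$ landing in block $k$ of $w$ corresponds exactly to its color dropping by $k$ in $\sigma$. Once these alignments are checked, the argument reduces structurally to the one given for Lemma \ref{lem: Zig-Zag Extensions}.
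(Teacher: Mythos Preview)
Your proposal is correct and follows essentially the same route as the paper. Both arguments compute $\sigma^{-1}\pi(i)$ as the pair (position of $|\pi(i)|$ in $\sigma$, block index of $\pi(i)$ in the anchored word $w$) and then verify that compatibility of $w$ with the zig-zag relations is exactly the lexicographic comparison $\tau(i) \lessgtr \tau(i+1)$; the paper simply does this more tersely, without explicitly naming the bijection $\sigma \leftrightarrow w$ that you set up as a separate first step.
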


\begin{proof}
Suppose $\pi(i) = A_a$ and $\pi(i+1) = B_b$.
If $i \notin I$ then we have $A_a <_Z B_b$ and thus there exist $c,d,C,D$ such that $\sigma^{-1}(A_{a-c}) = C_0$ and $ \sigma^{-1}(B_{b-d}) = D_0$ with $c\leq d$ and if $c < d$ then $C < D$.
Hence  $\sigma^{-1}\pi(i) = C_c < D_d = \sigma^{-1}\pi(i+1)$.
Similarly, if $i \in I$ then $\sigma^{-1}\pi(i)  > \sigma^{-1}\pi(i+1)$.
This also shows that $\sigma^{-1}\pi(n) = C_c$ where $c=0$ if and only if $n \notin I$.
Hence $n \in \Des(\sigma^{-1}\pi)$ if and only if $n \in I$.
\end{proof}

\begin{ex}
If $r=3$, $n=3$, $\pi = 2_1 1_2 3_2$, and $I = \{1\}$ then $\zigipi$ is the poset $2_1 >_Z 1_2 <_Z 3_2 <_Z 0_1$.
We have
\begin{align*}\collin(\zigipi) &= \{1_2 2_1 3_2\} \cup \{1_2 3_2 2_1\} \cup \shuffle{1_23_2,2_0} \cup \shuffle{1_23_2,2_2} \\
&= \{1_2 2_0 3_2, 1_2 2_1 3_2, 1_2 2_2 3_2, 1_2 3_2 2_0, 1_2 3_2 2_1, 1_2 3_2 2_2, 2_0 1_2 3_2, 2_2 1_2 3_2\}
\end{align*}
 and
 \[
 \coprod_{\sigma \in \collin(\zigipi)} \{\sigma^{-1}\pi  \} = \{2_0 1_0 3_0, 3_0 1_0 2_0, 1_1 2_0 3_0, 2_1 1_0 3_0, 3_1 1_0 2_0, 1_2 2_0 3_0, 2_2 1_0 3_0, 3_2 1_0 2_0\}.
 \]
Here $\Des(\sigma^{-1}\pi) = \{1\}$ for every $\sigma \in \collin(\zigipi)$.
\end{ex}

Next we consider a second collection of posets.
For every $I \subseteq [n]$ and $\pi \in \colpermrn$, define the \emph{colored chain poset} $\chainipi$ by setting $\pi(i) <_C \pi(i+1)$ if $i \notin I$ with $\pi(n+1) = 0_1$.
The following lemma about linear extensions of colored chain posets is similar to that for colored zig-zag posets and follows from the same proof.

\begin{lem}\label{Col Chain Extensions}
Let $\sigma \in \colpermrn$ and consider $\chainipi$ with $\pi \in \colpermrn$ and $I \subseteq [n]$.
Then $\sigma \in \collin(\chainipi)$ if and only if $\Des(\sigma^{-1}\pi) \subseteq I$.
\end{lem}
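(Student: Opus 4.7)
The plan is to mimic the proof of Lemma \ref{Col Zig-Zag Extensions} essentially verbatim, observing that the only structural change from a colored zig-zag poset to a colored chain poset is the removal of the relations indexed by $I$. Consequently, the compatibility conditions on a linear extension $\sigma$ become one-sided: the ``$i \notin I$'' conditions remain and force no descent at position $i$, while the ``$i \in I$'' positions are now unconstrained and can (but need not) give a descent in $\sigma^{-1}\pi$. This is exactly why the equality $\Des(\sigma^{-1}\pi) = I$ from the zig-zag lemma is replaced by the inclusion $\Des(\sigma^{-1}\pi) \subseteq I$.

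First I would handle the forward direction. Assume $\sigma \in \collin(\chainipi)$. For each $i \in [n-1] \setminus I$ we have $\pi(i) <_C \pi(i+1)$, so by the same color-shift argument as in Lemma \ref{Col Zig-Zag Extensions} — writing $\pi(i) = A_a$, $\pi(i+1) = B_b$ and choosing $c \leq d$ (with $C < D$ if $c = d$ is false) so that $\sigma^{-1}(A_{a-c}) = C_0$ and $\sigma^{-1}(B_{b-d}) = D_0$ — we conclude $\sigma^{-1}\pi(i) < \sigma^{-1}\pi(i+1)$, hence $i \notin \Des(\sigma^{-1}\pi)$. The boundary case $i = n$ is handled identically, using the convention $\pi(n+1) = 0_1$: if $n \notin I$ then $\pi(n) <_C 0_1$, so the same shift calculation (with $B_b = 0_1$) forces $\epsilon(\sigma^{-1}\pi(n)) = 0$, which means $n \notin \Des(\sigma^{-1}\pi)$. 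Contrapositively, $\Des(\sigma^{-1}\pi) \subseteq I$.

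For the reverse direction, assume $\Des(\sigma^{-1}\pi) \subseteq I$, and consider an arbitrary relation $\pi(i) <_C \pi(i+1)$ of $\chainipi$; by definition of the chain poset such a relation occurs precisely when $i \notin I$, and thus $i \notin \Des(\sigma^{-1}\pi)$. Running the argument of Lemma \ref{Col Zig-Zag Extensions} in reverse shows that $\sigma^{-1}\pi(i) < \sigma^{-1}\pi(i+1)$ forces $\sigma$ to place $\pi(i)$ before $\pi(i+1)$ (after expanding the anchored word using the shuffle with $\vec{0}$, as in the definition of $\collin$). Since the chain poset imposes no other relations beyond these and the fixed chain $0_1 <_C 0_2 <_C \cdots <_C 0_{r-1}$ that is built into every anchored word, $\sigma \in \collin(\chainipi)$.

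I expect no real obstacle: the proof is mechanical once one has internalized Lemma \ref{Col Zig-Zag Extensions}. The only subtle point is that the definition of $\collin$ first lifts $\sigma$ to an element of $\anchoredcolpermrn$ by shuffling with $\vec{0}$ and then takes compatibility in the usual poset sense, so one should be careful to phrase ``compatibility'' at the level of anchored words — this is precisely what is needed to make the $i = n$ boundary case (involving $0_1$) line up with the interior cases.
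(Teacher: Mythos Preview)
Your proposal is correct and follows exactly the approach the paper intends: the paper does not give a separate proof but simply states that the lemma ``is similar to that for colored zig-zag posets and follows from the same proof,'' and your argument carries out precisely this adaptation, dropping the $i\in I$ relations so that those positions become unconstrained and the equality weakens to an inclusion.
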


\begin{ex}
If $r=3$, $n=3$, $\pi = 2_1 1_2 3_2$, and $I = \{1\}$ then $\chainipi$ is the poset $ 1_2 <_C 3_2 <_C 0_1$ with no relation between $2_1$ and any other elements.
Then $\collin(\chainipi) = \collin(\zigipi) \cup \{2_1 1_2 3_2\}$ and so
\[
\{\sigma^{-1}\pi \, | \, \sigma \in \collin(\chainipi)\} = \{\sigma^{-1}\pi \, | \, \sigma \in \collin(\zigipi)\} \cup \{1_0 2_0 3_0\}
\]
and $\Des(1_0 2_0 3_0) = \varnothing$.
\end{ex}

Next we define barred versions of both colored zig-zag posets and colored chain posets.
First, a \emph{barred colored zig-zag poset} is defined to be a colored zig-zag poset $\zigipi$ with an arbitrary number of bars placed in each of the $n+1$ spaces to the left of $0_1$ such that between any two bars the elements of the poset (not necessarily their labels) are increasing.
This can be thought of as requiring at least one bar in each ``descent'' of the colored zig-zag poset.
Note that $0_1$ will always be to the right of all the bars.
Figure \ref{fig: barred col zig poset} gives an example of a barred $\zigipi$ poset with $I = \{3\}$ and $\pi = 2_0 1_2 3_1 4_1$.

\begin{figure}[htbp]
\[\xymatrix @!R @!C @C=5pt{ \ar@{-}[dddd] &  & \ar@{-}[dddd] & & &  &  \ar@{-}[dddd] &  \ar@{-}[dddd] & &  & \\   &  &  & & &  3_1 \ar@{-}[ddrrr] &  &   & &  & \\  &  &  & 1_2 \ar@{-}[urr] &  & & &  & & & 0_1 \\  & 2_0 \ar@{-}[urr]  &  &  & & &  &  & 4_1 \ar@{-}[urr]& &  \\  &&&&&&&&&& }\] 
\caption{\; A barred $\zigipi$ poset with $I = \{3\}$ and $\pi = 2_0 1_2 3_1 4_1$.}
\label{fig: barred col zig poset}
\end{figure}

If we begin with a colored zig-zag poset $\zigipi$ then we must first place a bar in space $i$ for each $i \in I$.
From there we are free to place any number of bars in any of the $n+1$ spaces.
Define $\Omega_{\zigipi}(j,k)$ to be the number of ordered pairs $(f,P)$ where $P$ is a barred $\zigipi$ poset with $k$ bars and $f$ is a colored $\zigipi$-partition with parts in $\rversion{[0,j]}$.
Recall that $\sigma \in \collin(\zigipi)$ if and only if $\Des(\sigma^{-1}\pi) = I$.
If we begin with the colored zig-zag poset $\zigipi$ then we first place one bar in space $i$ for each $i\in I$.
Next there are
\[
\mchoose{n+1}{k-|I|} = \binom{k+n - |I|}{n}
\]
ways to place the remaining $k-|I|$ bars in the $n+1$ spaces and hence there are $\binom{k+n-|I|}{n}$ barred $\zigipi$ posets with $k$ bars.
Thus
\[
\Omega_{\zigipi}(j,k) = \sum_{\sigma \in \collin(\zigipi)} \Omega_{\sigma}(j) \binom{k+n-\des(\sigma^{-1}\pi)}{n} = \sum_{\sigma \in \collin(\zigipi)} \Omega_{\sigma}(j) \Omega_{\sigma^{-1}\pi}(k).
\]
If we set $\tau = \sigma^{-1}\pi$ and sum over all $I \subseteq [n]$ then we have
\begin{equation}\label{eq: col barred equiv}
\sum_{I \subseteq [n]} \Omega_{\zigipi}(j,k) = \sum_{\sigma \tau = \pi} \Omega_{\sigma}(j) \Omega_{\tau}(k).
\end{equation}

Next we define a \emph{barred colored chain poset} to be a colored chain poset $\chainipi$ with at least one bar in space $i$ for each $i \in I$ and with an arbitrary number of bars placed on the left end.
No bars are allowed in space $i$ for $i\in [n] \setminus I$.
Thus we place at least one bar between each chain of the colored chain poset and allow for bars on the left end.
Figure \ref{fig: barred col chain poset} gives an example of a barred $\chainipi$ poset with $I = \{1,3\}$ and $\pi = 2_0 1_2 3_1 4_1$.

\begin{figure}[htbp]
\[\xymatrix @!R @!C @R=23pt @C=10pt{  \ar@{-}[ddd] &  & \ar@{-}[ddd]  & &  &  \ar@{-}[ddd] &  \ar@{-}[ddd] & &  \\   &  & & & 3_1   &  & &  &  0_1 \\  & 2_0  &  & 1_2\ar@{-}[ur] & & & & 4_1 \ar@{-}[ur] & \\  &   &  &  & & & & &  }\] 
\caption{\; A barred $\chainipi$ poset with $I = \{1, 3\}$ and $\pi = 2_0 1_2 3_1 4_1$.}
\label{fig: barred col chain poset}
\end{figure}

If we begin with a colored chain poset $\chainipi$ then we must first place a bar in space $i$ for each $i \in I$.
From there we are free to place any number of bars in space $i$ for any $i \in I$ and any number of bars on the left end.
This creates a collection of bars with each compartment between adjacent bars containing at most one nonempty chain.
Define $\Omega_{\chainbipi} (j,k)$ to be the number of ordered pairs $(f,P)$ where $P$ is a barred $\chainipi$ poset with $k$ bars and $f$ is a colored $\chainipi$-partition with parts in $\rversion{[0,j]}$.
Recall that $\sigma \in \collin(\chainipi)$ if and only if $\Des(\sigma^{-1}\pi) \subseteq I$.
If we begin with the colored chain poset $\chainipi$ then we first place one bar in space $i$ for each $i\in I$.
Next there are
\[
\mchoose{|I|+1}{k-|I|} = \binom{k}{k-|I|}
\]
ways to place the remaining $k-|I|$ bars in the $|I|+1$ allowable spaces.
Hence there are $\binom{k}{k-|I|}$ barred $\chainipi$ posets with $k$ bars and we see that
\[
\Omega_{\chainipi}(j,k) = \sum_{\sigma \in \collin(\chainipi)} \Omega_{\sigma}(j) \binom{k}{k-|I|}.
\]
The following lemma allows us to compare colored $P$-partitions for barred colored zig-zag posets and barred colored chain posets.

\begin{lem}\label{Barred Col Poset Extensions}
For every $\pi \in \colpermrn$,
\[
\sum_{I \subseteq [n]}  \Omega_{\zigipi}(j,k) = \sum_{I \subseteq [n]} \Omega_{\chainipi}(j,k).
\]
\end{lem}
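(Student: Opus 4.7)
The plan is to follow the same strategy used in Lemmas~\ref{Barred Poset Extensions}, \ref{Barred Cyclic Poset Extensions}, \ref{lem: barred type B zig chain equiv}, \ref{lem: barred type A zig chain equiv}, and \ref{lem: barred aug zig chain equiv}: rewrite both sides as sums indexed by colored permutations $\sigma$ rather than by subsets $I$, and then exhibit a natural bijection between barred colored zig-zag posets and barred colored chain posets that preserves both the bar count and the underlying linear extension.

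First, I would rewrite the identity as
\[
\sum_{\sigma \in \colpermrn} \Omega_{\sigma}(j)\,Z_{\sigma}(k) \;=\; \sum_{\sigma \in \colpermrn} \Omega_{\sigma}(j)\,C_{\sigma}(k),
\]
where $Z_\sigma(k)$ is the number of barred colored zig-zag posets with $k$ bars for which $\sigma$ is a colored linear extension of the underlying zig-zag poset, and $C_\sigma(k)$ is the analogous count for barred colored chain posets. By Lemma~\ref{Col Zig-Zag Extensions}, each $\sigma$ is a colored linear extension of $Z(I,\pi)$ for a unique $I \subseteq [n]$, namely $I = \Des(\sigma^{-1}\pi)$; by Lemma~\ref{Col Chain Extensions}, $\sigma$ is a colored linear extension of $C(J,\pi)$ precisely for those $J \supseteq I$. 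Thus it suffices to show $Z_\sigma(k) = C_\sigma(k)$ for every fixed $\sigma$.

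The bijection is the obvious one: given a barred colored zig-zag poset with underlying poset $Z(I,\pi)$ and $k$ bars, produce a barred colored chain poset by erasing the covering relation between $\pi(i)$ and $\pi(i+1)$ in every space $i$ that contains at least one bar, keeping the bars themselves unchanged. The resulting underlying poset is $C(J,\pi)$ where $J$ is the set of spaces containing bars; since each $i \in I$ must contain a bar (by the definition of a barred zig-zag poset), we automatically have $J \supseteq I$, so $\sigma \in \collin(C(J,\pi))$. The inverse map reinserts the zig-zag relation between $\pi(i)$ and $\pi(i+1)$ for each space $i$ containing a bar, using the fact that the signs of these reinserted inequalities are determined by comparing $\pi(i)$ and $\pi(i+1)$ in the total order on $[0,n]_{(r)}$. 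Both maps preserve $\sigma$ as a linear extension and the number of bars.

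The only potential wrinkle, and the one point worth checking carefully, is the treatment of space $n$ and the ``anchor'' $0_1$: bars in the colored setting are placed in the $n+1$ spaces strictly to the left of $0_1$, and the definition of $Z(I,\pi)$ and $C(I,\pi)$ makes use of the convention $\pi(n+1) = 0_1$, so $n \in I$ is possible exactly when $\epsilon(\pi(n)) \neq 0$. One must verify that the bijection behaves correctly in space $n$ — namely, that placing a bar there in a zig-zag poset with $n \in I$ and then erasing the relation between $\pi(n)$ and $0_1$ yields a valid barred chain poset (and vice versa). This is straightforward since bars are allowed in every one of the $n+1$ spaces in both definitions. With that verification in hand, the bijection gives $Z_\sigma(k) = C_\sigma(k)$ and the lemma follows.
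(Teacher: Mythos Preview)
Your proof is correct and follows essentially the same approach as the paper: rewrite each side as a sum over $\sigma$, then use the ``erase relations at barred spaces'' bijection, relying on Lemmas~\ref{Col Zig-Zag Extensions} and \ref{Col Chain Extensions}. One small inaccuracy to clean up: your parenthetical remark that ``$n \in I$ is possible exactly when $\epsilon(\pi(n)) \neq 0$'' is not right---the sum is over all $I \subseteq [n]$ with no restriction, and whether $n$ lies in $\Des(\sigma^{-1}\pi)$ depends on $\sigma$ as well as $\pi$---but this side comment is not load-bearing and the bijection works uniformly in space $n$ regardless.
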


\begin{proof}
We show that for every $\sigma \in \colpermrn$ there is a bijection between barred colored zig-zag posets with $k$ bars such that $\sigma$ is a linear extension of the underlying colored zig-zag poset and barred colored chain posets with $k$ bars such that $\sigma$ is a linear extension of the underlying colored chain poset.
This bijection is simply the map that sends each barred colored zig-zag poset to the barred colored chain poset obtained by removing the relation between $\pi(i)$ and $\pi(i+1)$ for every space $i$ containing at least one bar.
\end{proof}

The bijection in the previous lemma maps Figure \ref{fig: barred col zig poset} to Figure \ref{fig: barred col chain poset}.
Now that all the pieces are in place, we are ready to prove the existence of the colored Eulerian descent algebra.

\begin{thm}\label{Colored Descent Alg}
For every $\pi \in \colpermrn$,
\begin{equation}\label{eq: col des algebra}
\sum_{j,k \geq 0} \binom{ rjk+j+k+n-\des(\pi)}{n} s^j t^k = \frac{\dsum_{\sigma\tau = \pi} s^{\des(\sigma)} t^{\des(\tau)}}{(1-s)^{n+1}(1-t)^{n+1}}.
\end{equation}
\end{thm}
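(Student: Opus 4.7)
The plan is to adapt the proof of Theorem~\ref{Symmetric Algebra Thm} to the colored setting using the colored $P$-partition machinery. First I would use $\sum_{j\geq 0}\binom{j+n}{n}s^j = (1-s)^{-(n+1)}$ together with equations~\eqref{eq: colored order poly} and~\eqref{eq: col barred equiv} to rewrite the right-hand side as
\[
\sum_{j,k \geq 0} \sum_{I \subseteq [n]} \Omega_{\zigipi}(j,k)\, s^j t^k,
\]
and then invoke Lemma~\ref{Barred Col Poset Extensions} to switch from barred zig-zag posets to barred chain posets. This reduces the theorem to proving the identity
\[
\sum_{I \subseteq [n]} \Omega_{\chainipi}(j,k) = \binom{rjk+j+k+n-\des(\pi)}{n}.
\]

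To establish this identity, I would fix a barred $\chainipi$ poset with $k$ bars, creating $k+1$ compartments labeled $0,1,\ldots,k$ from left to right, with the $0$-chain always sitting in compartment $k$. Writing $\pi_i$ for the (possibly empty) subword of $\pi$ in compartment $i$ and $L_i$ for its length, the factorization of the colored order polynomial over disjoint chains together with Theorem~\ref{thm: P(pi) order poly} gives $\Omega_{P(\pi_i)}(j) = \binom{rj + L_i - \intdes(\pi_i)}{L_i}$ for each $i < k$. I would interpret this factor as counting weakly increasing sequences $s_{i,1} \leq \cdots \leq s_{i,L_i}$ in the interval $[i(rj+1),\, i(rj+1)+rj]$ with strict inequalities at positions in $\intDes(\pi_i)$.

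Compartment $k$ requires separate treatment. When $n \notin I$, the chain $\pi_k$ is attached to the $0$-chain, and conditions~(ii) and~(iii) of the colored $P$-partition definition force every $f(\pi_k(\ell))$ to lie in $X_0$, a set of size $j+1$; condition~(iv) then forbids $f(\pi_k(L_k)) = j_0$ exactly when $\epsilon(\pi(n)) \neq 0$, i.e., when $n \in \Des(\pi)$. I would encode this as a weakly increasing sequence in $[k(rj+1),\, k(rj+1)+j]$ with strict at $\intDes(\pi_k)$ plus an additional strict upper bound when $n \in \Des(\pi)$. When $n \in I$, compartment $k$ consists of the forced $0$-chain alone and contributes $1$, while $\pi(n)$ lies in an earlier compartment and the bar separation automatically ensures $s_n < rjk+j+k$.

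Concatenating all compartments and summing over $I$ and bar placements produces a bijection with weakly increasing sequences $0 \leq s_1 \leq s_2 \leq \cdots \leq s_n \leq rjk+j+k$ that are strict at precisely the positions of $\Des(\pi)$, so a standard stars-and-bars count yields $\binom{rjk+j+k+n-\des(\pi)}{n}$. The main obstacle is the careful handling of compartment $k$: one must verify that its smaller image set of size $j+1$ combines correctly with the $rj+1$-sized image sets of the earlier compartments, and that the case split $n \in I$ versus $n \notin I$ produces the terminal strictness in $\Des(\pi)$ uniformly via either condition~(iv) or bar separation, so that the resulting count collapses to the single clean binomial coefficient on the right-hand side.
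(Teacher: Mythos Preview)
Your proposal is correct and follows essentially the same route as the paper: the same reduction via equations~\eqref{eq: colored order poly} and~\eqref{eq: col barred equiv}, the same switch to chain posets through Lemma~\ref{Barred Col Poset Extensions}, and the same concatenation argument identifying $\sum_{I}\Omega_{\chainipi}(j,k)$ with the single binomial coefficient. The only cosmetic difference is that the paper compresses your careful treatment of compartment~$k$ into the line ``by Theorem~\ref{thm: P(pi) order poly} we can assume that $\pi$ is monochromatic,'' whereas you unpack conditions~(ii)--(iv) directly and make the $n\in I$ versus $n\notin I$ split explicit; both reach the same interval $[k(rj+1),\,rjk+j+k]$ of size $j+1$ with the terminal strictness governed by $n\in\Des(\pi)$.
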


\begin{proof}
First we see that
\begin{align*}
\frac{\dsum_{\sigma\tau = \pi} s^{\des(\sigma)} t^{\des(\tau)}}{(1-s)^{n+1}(1-t)^{n+1}} &=  \sum_{j,k \geq 0} \sum_{\sigma \tau = \pi} \binom{j+n}{n}\binom{k+n}{n}s^{j+\des(\sigma)} t^{k+\des(\tau)}\\
&= \sum_{j,k \geq 0} \sum_{\sigma \tau = \pi} \binom{j+n-\des(\sigma)}{n}\binom{k+n-\des(\tau)}{n}s^j t^k \\ 
&= \sum_{j,k \geq 0} \sum_{I \subseteq [n]} \Omega_{\zigipi}(j,k) s^j t^k
\end{align*}
where the last equality follows from \eqref{eq: colored order poly} and \eqref{eq: col barred equiv}.
By Lemma \ref{Barred Col Poset Extensions}, we can switch from colored zig-zag posets to colored chain posets and we have
\[
\frac{\dsum_{\sigma\tau = \pi} s^{\des(\sigma)} t^{\des(\tau)}}{(1-s)^{n+1}(1-t)^{n+1}} = \sum_{j,k \geq 0} \sum_{I \subseteq [n]} \Omega_{\chainipi}(j,k) s^j t^k.
\]
The only remaining step is to prove that
\[
\sum_{I \subseteq [n]} \Omega_{\chainipi}(j,k) =  \binom{rjk+j+k+n-\des(\pi)}{n}.
\]

First we note that $\sum_{I \subseteq [n]} \Omega_{\chainipi}(j,k)$ counts ordered pairs $(f,P)$ where $P$ is a barred $\chainipi$ poset with $k$ bars for some $I \subseteq [n]$ and $f$ is a colored $\chainipi$-partition with parts in $\rversion{[0,j]}$.
Fix a barred $\chainipi$ poset with $k$ bars and use the bars to define compartments labeled $0,\ldots,k$ from left to right.
Then define $\pi_i$ to be the (possibly empty) subword of $\pi$ in compartment $i$ and denote the length of $\pi_i$ by $L_i$.
Then
\[
\Omega_{\chainipi}(j) = \Omega_{\pi_k}(j)\prod_{i=0}^{k-1} \Omega_{P(\pi_i)}(j) 
\]
By Theorem \ref{thm: P(pi) order poly}, we can assume that $\pi$ is monochromatic.
For $i=0,\ldots,k-1$, we let $\Omega_{P(\pi_i)}(j)$ count solutions to the inequalities
\[
i(rj+1) \leq s_{i_1} \leq  \cdots \leq s_{i_{L_i}} \leq i(rj+1) + rj
\]
with $s_{i_l} < s_{i_{l+1}}$ if $l \in \intDes(\pi_i)$.
Next we let $\Omega_{\pi_k}(j)$ count solutions to the inequalities
\[
k(rj+1) \leq s_{k_1} \leq \cdots \leq s_{k_{L_k}} \leq rjk+j+k
\]
with $s_{k_l} < s_{k_{l+1}}$ if $l \in \intDes(\pi_k)$ and with $s_{k_{L_k}} < rjk+j+k$ if $n \in \Des(\pi)$.
By concatenating these inequalities, we see that if we sum over all $I \subseteq [n]$ and all barred $\chainipi$ posets with $k$ bars then $\sum_{I \subseteq [n]} \Omega_{\chainipi}(j,k)$ is equal to the number of solutions to the inequalities
\[
0 \leq s_1 \leq \cdots \leq s_n \leq rjk+j+k
\]
with $s_i < s_{i+1}$ if $i \in \intDes(\pi)$ and with $s_n < rjk+j+k$ if $n \in \Des(\pi)$.
Hence we conclude that
\[
\sum_{I \subseteq [n]} \Omega_{\chainipi}(j,k)  = \binom{rjk+j+k +n-\des(\pi)}{n} . \qedhere
\]
\end{proof}

Figure \ref{fig: every barred colored chain poset} depicts every barred $C(I,2_11_3)$ poset with $2$ bars and $I \subseteq \{1,2\}$.
Thus we can identify each barred $\chainipi$ poset with a barred colored permutation with underlying colored permutation $\pi$ such that $0_1$ is in the rightmost compartment.
This provides a visual representation of the final step in the proof of the previous theorem.

\begin{figure}[htbp]
\[\xymatrix  @R=8pt @C=8pt{ && \ar@{-}[ddd] & \ar@{-}[ddd] &&&&&&&&&& \ar@{-}[ddd] & \ar@{-}[ddd] && \\   & 1_3 &&&&&& \ar@{-}[dd]  && \ar@{-}[dd]  &&&&&&& 0_1 \\ 2_1\ar@{-}[ur] &&&& 0_1 & \qquad & 2_1 && 1_3 && 0_1 & \qquad & 2_1 &&& 1_3\ar@{-}[ur] &   \\  &&&&&&&&&&&&&&&& \\  &&&&&&&&&&&&&&&& \\ &&&&&&&&&&&& \ar@{-}[dddd]  & \ar@{-}[dddd] &&& \\  \ar@{-}[ddd] &&& \ar@{-}[ddd] &&& \ar@{-}[ddd] && \ar@{-}[ddd] &&&&&&&& 0_1 \\ && 1_3 &&&&&&&& 0_1 &&&&& 1_3\ar@{-}[ur] & \\ & 2_1\ar@{-}[ur] &&& 0_1 &&& 2_1 && 1_3\ar@{-}[ur] &&&&& 2_1\ar@{-}[ur] && \\ &&&&&&&&&&&&&&&&      }\] 
\caption{\; Barred $C(I,2_11_3)$ posets with $2$ bars and $I \subseteq \{1,2\}$.}
\label{fig: every barred colored chain poset}
\end{figure}

Define the colored structure polynomial $\phi(x)$ by
\[
\phi(x) = \sum_{\pi \in \colpermrn} \binom{x+n-\des(\pi)}{n} \pi.
\]
If we expand both sides of equation \eqref{eq: col des algebra} and compare coefficients of $s^j t^k$ we see that
\[
\binom{rjk+j+k+n-\des(\pi)}{n} = \sum_{\sigma\tau = \pi}  \binom{j+n -\des(\sigma)}{n} \binom{k+n - \des(\tau)}{n}.
\]
This implies that $\phi(j)\phi(k) = \phi(rjk + j +k)$ for all $j,k \geq 0$ and so we have the following theorem.

\begin{thm}
As polynomials in $x$ and $y$ with coefficients in the group algebra of $\colpermrn$,
\[
\phi(x)\phi(y) = \phi(rxy+x+y).
\]
\end{thm}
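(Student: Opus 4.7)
The plan is to deduce the polynomial identity directly from the preceding scalar identity using a standard Zariski-density argument. The key observation is that $\phi(x)$, as defined, is literally a polynomial in $x$ of degree $n$ with coefficients in the group algebra of $\colpermrn$, because each coefficient $\binom{x+n-\des(\pi)}{n}$ is a polynomial in $x$ of degree $n$ (interpreted via the usual $\binom{x+n-d}{n} = \frac{(x+n-d)(x+n-d-1)\cdots(x+1-d)}{n!}$). Consequently $\phi(x)\phi(y)$ is a polynomial in $\Q[x,y]$ tensored with the group algebra, and similarly $\phi(rxy+x+y)$ is a polynomial in $\Q[x,y]$ tensored with the group algebra, obtained by substituting $rxy+x+y$ for the single variable of $\phi$.

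First I would make this polynomiality explicit by writing $\phi(x) = \sum_{i=0}^{n} e_i\, x^i$ for some uniquely determined elements $e_i$ in the group algebra of $\colpermrn$. Then $\phi(x)\phi(y) = \sum_{i,j} e_i e_j\, x^i y^j$ while $\phi(rxy+x+y) = \sum_i e_i (rxy+x+y)^i$, and both sides are honest bivariate polynomials with coefficients in the group algebra.

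Next I would invoke the just-derived scalar relation. Comparing coefficients of $s^j t^k$ in \eqref{eq: col des algebra} and multiplying by $\pi$, then summing over $\pi \in \colpermrn$, yields the element-level identity $\phi(j)\phi(k) = \phi(rjk+j+k)$ for every pair of nonnegative integers $(j,k)$. Reading this coordinatewise in the (finite) basis $\colpermrn$ of the group algebra, it says that each coordinate polynomial of $\phi(x)\phi(y) - \phi(rxy+x+y)$ vanishes on the infinite grid $\N \times \N \subset \Q^2$.

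The final step is the standard fact that a polynomial in $\Q[x,y]$ which vanishes on $\N \times \N$ must be the zero polynomial (iterate the one-variable statement: fix any $j \in \N$, obtain a univariate polynomial in $y$ vanishing on all of $\N$, hence identically zero; then let the coefficients, viewed as polynomials in $x$, vanish on all $j \in \N$). Applying this to each of the finitely many coordinate polynomials gives the desired equality of polynomials with group-algebra coefficients. There is no real obstacle here; the only thing to be careful about is making clear that $\phi(rxy+x+y)$ is defined to be the result of formally substituting the polynomial $rxy+x+y$ into the polynomial $\phi$, not some extension of $\phi$ to a new domain.
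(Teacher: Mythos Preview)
Your proposal is correct and follows essentially the same approach as the paper: compare coefficients of $s^j t^k$ in equation~\eqref{eq: col des algebra} to obtain $\phi(j)\phi(k) = \phi(rjk+j+k)$ for all nonnegative integers $j,k$, then conclude the identity as polynomials. You are simply more explicit than the paper about the final Zariski-density step, which the paper leaves implicit.
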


If we substitute $x \leftarrow (x-1)/r$ and $y \leftarrow (y-1)/r$ into the previous theorem we see that $\phi((x-1)/r)\phi((y-1)/r) = \phi((xy-1)/r)$.
Thus if we expand $\phi((x-1)/r)$ we have
\[
\phi((x-1)/r) = \sum_{\pi \in \colpermrn} \binom{\frac{x-1}{r}+n-\des(\pi)}{n} \pi = \sum_{i=0}^n c_i x^i
\]
and the $c_i$ are orthogonal idempotents which span the colored Eulerian descent algebra.
The $c_i$ reduce to the Eulerian idempotents $e_i$ when $r=1$.

\begin{ex}
Let $r=5$ and $n=3$.
If $C_i$ is the sum of all permutations in $G_{5,3}$ with $i$ descents then we end up with the following orthogonal idempotents:
\begin{align*}
c_0 &= \frac{1}{750} (504 C_0 - 36 C_1 + 24 C_2 - 66 C_3) \\
c_1 &= \frac{1}{750} (218 C_0 + 23 C_1 - 22 C_2 + 83 C_3) \\
c_2 &= \frac{1}{750} (27 C_0 + 12 C_1 - 3 C_2 - 18 C_3) \\
c_3 &= \frac{1}{750} (C_0 + C_1 + C_2 + C_3) .
\end{align*}
\end{ex}

We mentioned earlier that we need not have set $\pi(n+1) = 0_1$ but did so because this induces an algebra.
If fact, if we consider all possible descent numbers defined by setting $\pi(0) = 0_a$ and $\pi(n+1) = 0_b$ then the definition we use induces the only set partition that gives rise to an algebra.

\begin{ex}
Consider $G_{2,2}$.
We represent the four distinct set partitions by setting $\pi(0) = 0_0$ or $0_1$ and $\pi(n+1) = 0_1$ or $0_2$.
We have already shown that setting $\pi(0) = 0_0$ and $\pi(n+1) = 0_1$ induces an algebra and so now examine the other possibilities.

Case 1: If we begin each colored permutation with $0_0$ and end with $0_2$ then we have two basis elements:
\begin{align*}
C_0 &= 1_0 2_0 + 1_0 2_1 + 1_1 2_1 + 2_0 1_1  \\
C_1 &= 1_1 2_0 + 2_0 1_0 + 2_1 1_0 + 2_1 1_1.
\end{align*}
However,
\[
C_0^2 = 3\cdot 1_0 2_0 + 2\cdot 1_0 2_1 + 2\cdot 1_1 2_0 + 3\cdot 1_1 2_1 + 2_0 1_0 + 2\cdot 2_0 1_1 + 2\cdot 2_1 1_0 + 2_1 1_1
\]
and the difference in coefficients of $1_0 2_0$ and $1_0 2_1$ shows that this product cannot be written as a linear combination of the basis elements.

Case 2: If we begin each colored permutation with $0_1$ and end with $0_2$ then we have three basis elements:
\begin{align*}
C_0 &= 1_1 2_1  \\
C_1 &= 1_0 2_0 + 1_0 2_1 + 1_1 2_0 + 2_0 1_1 + 2_1 1_0 + 2_1 1_1 \\
C_2 &= 2_0 1_0.
\end{align*}
However, $C_0^2 = 1_0 2_0$ which cannot be written as a linear combination of the basis elements because $1_0 2_0$ is not a singleton element.

Case 3: If we begin each colored permutation with $0_1$ and end with $0_1$ then we have two basis elements:
\begin{align*}
C_1 &= 1_0 2_0 + 1_1 2_0 + 1_1 2_1 + 2_1 1_0 \\
C_2 &= 1_0 2_1 + 2_0 1_0 + 2_0 1_1 + 2_1 1_1.
\end{align*}
However,
\[
C_1^2  = 3\cdot 1_0 2_0 + 2\cdot 1_0 2_1 + 2\cdot 1_1 2_0 + 3\cdot 1_1 2_1 + 2_0 1_0 + 2\cdot 2_0 1_1 + 2\cdot 2_1 1_0 + 2_1 1_1
\]
and the difference in coefficients of $1_0 2_0$ and $1_1 2_0$ shows that this product cannot be written as a linear combination of the basis elements.
\end{ex}

Finally, it is natural to ask whether there is a colored descent algebra induced by descent set.
The descent set statistic induces a set partition of $\colpermrn$ that would fall between the set partition induced by descent number and the set partition corresponding to the Mantaci-Reutenauer algebra and thus the descent set statistic would induce an intermediate algebra.
Unfortunately, we can see that the descent set statistic already fails to induce an algebra in the case of $G_{2,2}$.

\begin{ex}
The three permutations in $G_{2,2}$ with descent set $\{2\}$ are $1_0 2_1$, $1_1 2_1$, and $2_0 1_1$.
The only permutation in $G_{2,2}$ with descent set $\{1,2\}$ is $2_1 1_1$.
However,
\[
(1_0 2_1 + 1_1 2_1 + 2_0 1_1)(2_1 1_1) = 1_0 2_1 + 2_0 1_0 + 2_0 1_1
\]
and both $1_0 2_1$ and $2_0 1_1$ appear in this expansion but $1_1 2_1$ does not.
Hence the product cannot be written as a linear combination of basis elements.
\end{ex}

% REQUIRED: The start of arabic-numbered plain pages.
% GSAS formatting allowance: The back may be single-spaced.
\backmatter
\singlespacing
\bibliographystyle{amsplain}% Bibliography.
\bibliography{/Users/mmoynihan/Desktop/Research/MasterBib}

\end{document}